\documentclass[11pt,a4paper,twoside]{amsart}
\usepackage{amssymb,amsmath,amsthm,graphicx,color
}
\theoremstyle{plain}
\newtheorem{theorem}{Theorem}[section]
\newtheorem{definition}[theorem]{Definition}
\newtheorem{lemma}[theorem]{Lemma}
\newtheorem{corollary}[theorem]{Corollary}
\newtheorem{proposition}[theorem]{Proposition}

\theoremstyle{remark}
\newtheorem{remark}[theorem]{Remark}

\usepackage{hyperref}


\numberwithin{equation}{section}

\newcommand{\C}{\mathbb{C}}
\newcommand{\R}{\mathbb{R}}
\newcommand{\Z}{\mathbb{Z}}
\newcommand{\N}{\mathbb{N}}

\newcommand{\F}{\mathcal{F}}

\newcommand{\I}{\infty}
\newcommand{\abs}[1]{\left\lvert #1\right\rvert}
\newcommand{\norm}[1]{\left\lVert #1\right\rVert}
\newcommand{\Lebn}[2]{\left\lVert #1 \right\rVert_{L^{#2}}}

\newcommand{\Mn}[5]{\left\lVert #1 \right\rVert_{\dot{M}^{#2}_{{#3},{#4}}({#5})}}
\newcommand{\LMn}[4]{\left\lVert #1 \right\rVert_{L^{{#2},2}(\R,\dot{M}^{#3}_{{#4},{2}})}}
\newcommand{\LMnI}[5]{\left\lVert #1 \right\rVert_{L^{{#2},2}({#5},\dot{M}^{#3}_{{#4},{2}})}}

\newcommand{\LrMn}[5]{\left\lVert #1 \right\rVert_{L^{{#2},{#5}}(\R,\dot{M}^{#3}_{{#4},2})}}
\newcommand{\M}[4]{\dot{M}^{#1}_{{#2},{#3}}({#4})}
\newcommand{\Ms}[3]{\dot{M}^{#1}_{{#2},{#3}}}
\newcommand{\Mne}[5]{\left\lVert |#5|^s M(-{#5}){#1} \right\rVert_{\dot{B}^{#2}_{{#3},{#4}}}}
\newcommand{\hSobn}[2]{\left\lVert #1 \right\rVert_{\dot{H}^{#2}}}
\newcommand{\hBn}[4]{\left\lVert #1 \right\rVert_{\dot{B}^{#2}_{{#3},{#4}}}}
\newcommand{\Jbr}[1]{\left\langle #1 \right\rangle}

\newcommand{\IN}{\quad\text{in }}
\newcommand{\wIN}{\quad\text{weakly in }}
\newcommand{\Xtn}[4]{\left\lVert #1 \right\rVert_{\dot{X}^{#2}_{#3}({#4})}}
\newcommand{\LXtn}[4]{\left\lVert #1 \right\rVert_{L^{{#2},2}(\R,\dot{X}^{#3}_{#4})}}
\newcommand{\Xt}[3]{\dot{X}^{#1}_{#2}({#3})}
\newcommand{\Xts}[2]{\dot{X}^{#1}_{#2}}
\newcommand{\Xtne}[4]{\left\lVert |#4|^s M(-{#4}){#1} \right\rVert_{\dot{H}^{#2}_{#3}}}

\def\({\left(}
\def\){\right)}
\def\<{\left\langle}
\def\>{\right\rangle}
\def\le{\leqslant}
\def\ge{\geqslant}

\def\d{{\partial}}

\newcommand{\al}{\alpha}

\newcommand{\eps}{\varepsilon}

\newcommand{\pst}{p_{\mathrm{St}}}

\DeclareMathOperator{\supp}{supp}
\newcommand{\FHsc}{\F \dot{H}^{s_c}}


\begin{document}
\title[Mass-subcritical NLS]
{A sharp scattering condition\\ for focusing mass-subcritical nonlinear
Schr\"odinger equation}
\author
{Satoshi Masaki}
\address{Laboratory of Mathematics\\
Institute of Engineering\\
Hiroshima University\\
Higashihiroshima Hirhosima, 739-8527, Japan}
\email{masaki@amath.hiroshima-u.ac.jp}
\date{}
\maketitle
\vskip-5mm
\begin{abstract}
This article is concerned with time global behavior of solutions to
focusing mass-subcritical nonlinear Schr\"odinger equation
of power type with data in a critical homogeneous weighted $L^2$ space.
We give a sharp sufficient condition for scattering by
proving existence of a threshold solution
 which does not scatter at least for one time direction 
and of which initial data attains minimum value of a norm of the weighted $L^2$ space
in all initial value of non-scattering solution.
Unlike in the mass-critical or -supercritical case, ground state is not a threshold.
This is an extension of previous author's result to
the case where the exponent of nonlinearity is below so-called Strauss number.
A main new ingredient is a stability estimate in a Lorenz-modified-Bezov type spacetime norm.
\end{abstract}

\section{Introduction}
In this article, we continue our study in \cite{Ma}
on time global behavior of solutions to
the following focusing mass-subcritical nonlinear Schr\"odinger equation
\begin{equation}\tag{NLS}\label{eq:NLS}
	i \d_t u + \Delta u =- |u|^{p-1} u, \quad (t,x)\in \R^{1+N}
\end{equation}
with initial condition
\begin{equation}\tag{IC}\label{eq:IC}
	u(0)=u_0 \in \FHsc,
\end{equation}
where $N\ge 1$, the exponent of the nonlinearity is in the range
\begin{equation}\label{cond:p}
	\max\(1+\frac2N, 1+\frac4{N+2} \) < p < 1+\frac4{N},
\end{equation}
and $s_c := \frac2{p-1} -\frac{N}2$.
Here, the function space in which initial data lie is 
homogeneous weighted $L^2$ space
\[
	\FHsc=\FHsc(\R^N) :=\{ f \in S^\prime \ |\ \F^{-1} f \in \dot{H}^{s_c} (\R^N) \}
\]
with norm
$\norm{f}_{\FHsc} = \Lebn{|x|^{s_c}f}2$, where $\F$ stands for usual Fourier transform in $\R^N$
and a homogeneous Sobolev space $\dot{H}^{s}(\R^N)$ is defined as
 \[
	\dot{H}^{s}(\R^N) = \{ f \in L^{\frac{2N}{N-2s}}(\R^N)\ |\ \norm{|\nabla|^s f}_{L^2} <\I \}
 \]
for $0<s<N/2$.
Remark that $0< s_c <\min (1,N/2)$ as long as \eqref{cond:p}.
The equation \eqref{eq:NLS} has the following scaling;
if $u(t,x)$ is a solution to \eqref{eq:NLS} then
for all $\lambda>0$
\begin{equation}\label{eq:scaling}
	u_{[\lambda]} (t,x) = \lambda^{\frac{2}{p-2}}u(\lambda^2 t, \lambda x)
\end{equation}
is also a solution to \eqref{eq:NLS}. 
The initial space $\FHsc$ is scaling critical
in such a sense that $\norm{u_{0\{\lambda\}}}_{\FHsc} = \norm{u_0}_{\FHsc}$
holds for any $\lambda>0$, where
\begin{equation}\label{eq:scaling2}
	f_{\{\lambda\}}(x)=\lambda^{\frac{2}{p-2}}f(\lambda x).
\end{equation}
For solutions to \eqref{eq:NLS}, mass
\[
	M(u) := \int_{\R^N} |u|^2 dx,
\]
and energy
\begin{equation}\label{def:energy}
	E(u) := \int_{\R^N} \(\frac12 |\nabla u|^2 + \frac1{p+1}|u|^{p+1}\) dx,
\end{equation}
are well-known conservative quantities.
The case $p=1+\frac4N$ is referred to as a mass-critical case since
the scaling \eqref{eq:scaling2} leaves the mass invariant.
Similarly, we call the case $p=1+\frac4{N-2}$ ($N\ge3$) an energy-critical case.
A pseudo conformal energy
\begin{equation}\label{def:cenergy}
	\int_{\R^N} \( |(x+2it\nabla) u|^2 + \frac{8t^2}{p+1} |u|^{p+1} \) dx,
\end{equation}
is known to be useful to obtain a priori decay estimate of solution,
while it is conserved only in the mass-critical case $p =1+\frac4N$.
In our setting $u_0 \in \FHsc$, 
none of the above quantities make sense in general.

Our aim here is analysis of time global behavior of solutions to \eqref{eq:NLS}-\eqref{eq:IC}
in mass-subcritical range $p<1+\frac4N$.
Nakanishi and Ozawa show in \cite{NO} that, under an assumption on $p$ weaker than \eqref{cond:p},
if an initial data belongs to $\FHsc \cap L^2$ and if it is small with respect to the $\FHsc$ norm,
then the solution to \eqref{eq:NLS}-\eqref{eq:IC} 
exists globally in time and behaves like a linear solution near $t=\pm\I$ (see also \cite{CW1,GOV}).
This behavior, which is referred to as \emph{scattering}, occurs when the
linear dispersive effect becomes dominant for large time. 
Remark that smallness of datum gives that of corresponding solution, which
is closely related to weakness of nonlinear effect relative to linear effect.
On the other hand, there exists a non-scattering solution.
A typical example is a standing-wave solution $e^{it} \phi(x)$, where $\phi$ is a solution to 
the elliptic equation
\begin{equation}\label{eq:sp}
	-\Delta \phi + \phi = |\phi|^{p-1} \phi.
\end{equation}
There exists a unique positive radial solution $Q(x)$ to \eqref{eq:sp},
provided $1<p < 1 + \frac{4}{N-2}$ ($1<p<\I$ if $N=1,2$),
see \cite{CazBook} and references therein.
It is called a ground state and is
characterized as the function minimizing the energy $E(u)$
among all nontrivial $H^1$ functions under mass constraint.
Another behavior is blowup (in finite time).
Fibich shows in \cite{Fi} that a self-similar blowup solution exists in the 
mass-subcritical case $1<p<1+4/N$.

Recently, remarkable progresses are made on classification of initial space
according to global behavior of corresponding solution.
In the energy-critical case,
global behavior of
$\dot{H}^1$-solutions of energy less than that of $W=(1+\frac{|x|^2}{N(N-2)})^{-\frac{N-2}2}$,
which is a solution to an elliptic equation $\Delta W + |W|^{\frac4{d-2}}W=0$,
is precisely analyzed.
It is shown that initial space consists of two disjoint sets, say $S$ and $B$,
and that if an initial data belongs to $S$ then a corresponding solution of \eqref{eq:NLS}
scatters for both time directions,
while if data belongs to $B$ (and if some further assumption is satisfied)
then a solution blows up in finite time.
This is first given by Kenig and Merle \cite{KM} for $3\le N\le 5$
under radial assumption, and is extended by Killip and Visan \cite{KV}
to non-radial $N\ge5$ case.
In \cite{AN,HR}, a similar classification is given in $H^1$ framework
in the energy-subcritical and mass-supercritical case $1+\frac4N<p< 1+\frac4{N-2}$
($1+\frac4N<p<\I$ if $N=1,2$).
See also \cite{DHR,FXC}.
In \cite{NS-CVPDE}, Nakanishi and Schlag consider \eqref{eq:NLS} with $p=N=3$
and radial $H^1$ data of energy at most slightly above that of the ground state $Q$,
and show that initial space splits into nine nonempty disjoint sets
which are characterized by distinct behavior of solution.
For the mass-critical case,
Dodson shows in \cite{Do} that if an $L^2$-solution $u(t)$ satisfies
$\Lebn{u(0)}2<\Lebn{Q}2$ then the solution exists globally in time and
scatters in $L^2$ for both positive and negative time (see also \cite{W-CMP}).

As for this subject, in contrast, the mass-subcritical case $p<1+4/N$ is less understood.
In the previous paper \cite{Ma},
the case $\pst<p<1+4/N$ and $u_0 \in \F H^1:=\{f\in \mathcal{S}' \ |\ \F^{-1} f \in H^1(\R^N)\}$ is treated,
where
\begin{equation}\label{def:pst}
	\pst := 1+ \frac{2-N+\sqrt{N^2+12N+4}}{2N} 
\end{equation}
is a Strauss exponent,
and a sharp sufficient condition for scattering is given by
demonstrating that there exists a special solution $\widetilde{u}_c(t)$ such that
$\widetilde{u}_c(t)$ does not scatter either for positive or negative time,
and that if $u_0\in \F H^1$
satisfies $\ell_{\F H^1}(u_0) < \ell_{\F H^1} (\widetilde{u}_c(0))$
then the corresponding solution $u(t)$ to \eqref{eq:NLS}-\eqref{eq:IC} scatters for both positive and negative time,
where $\ell_{\F H^1}$ is defined as
\begin{equation}\label{def:lfh1}
	\ell_{\F H^1}(f) := \norm{f}_{L^2(\R^N)}^{1-s_c} \norm{|x|f}_{L^2(\R^N)}^{s_c} 
	= 2^{-\frac12}\inf_{\lambda>0} \norm{f_{\{\lambda\}}}_{\F H^1}
\end{equation}
for $f\in \F H^1$ with $\norm{f}_{\F H^1}^2 = \Lebn{f}2^2+\Lebn{|x|f}2^2$.
It is obvious that $\ell_{\F H^1}(\cdot )$ is invariant under \eqref{eq:scaling2}.
This criterion is similar to that in the mass critical case.
However, it is also proven that standing wave solutions are not
the threshold $u_c(t)$ any more.

In this article, we improve this result in the following two directions.
Firstly, the restriction $p>\pst$ is removed and the possible range of $p$ is extended as in \eqref{cond:p}.
Secondly, we take an initial data from $\FHsc$ which is wider than $\F H^1$. 

The Strauss exponent $\pst$, which is a positive root of $N p^2 -(N+2)p -2 =0$, appears
at least in the following two contexts.
One is time global behavior of defocussing case where the right hand side of 
\eqref{eq:NLS} has the opposite sign, $+|u|^{p-1}u$.
It is known that asymptotic completeness holds in $\F H^1$ for $p\ge\pst$.
More precisely, if $p\ge \pst$ then a priori decay estimate of $\F H^1$-solution, 
which follows from a priori estimate on the pseudo conformal energy \eqref{def:cenergy}, is sufficient
to show that every $\F H^1$-solution scatters both for positive and negative times
(see \cite{CW1,NO,Tsu2}).
Another is validity of a stability type estimate called ``long-time perturbation''.
When $p>\pst$, we are able to obtain a stability type estimate
with a Lebesgue-Lebesgue type space-time norm $L^\rho_t L^q_x$ (see \cite{Ka,Ma}).

The restriction $p>\pst$ in the previous study \cite{Ma}  comes from the latter point.
To remove this, we establish a new stability estimate 
with respect to a Lorentz-modified-Bezov type space. 
It is known that spatial decay of initial data yields time decay of
solution to free Schr\"odinger equation (see \cite{CazBook,GOV,NO}).
As long as the linear dispersive effect is dominant, nonlinear solution inherits this property.
The Lorentz-modified-Besov space, introduce in \cite{NO}, enables us to
take time decay effect due to fractionally weighted data into account efficiently and
to treat fractional weights in a way very similar to that for fractional derivatives.
To obtain the stability estimate, we need
the restriction $p>1+\frac{4}{N+2}$ for $N\ge 3$ (see Remark \ref{rmk:lbd_of_p2}).
The other restriction $p>1+\frac2N$ is not a technical one.
It is known that if $p\le 1+ \frac2N$ then any non-trivial 
$L^2$-solution does not scatter (see \cite{Ba,St}).

We would emphasize that, in our setting, finite-mass assumption $u_0\in L^2$ is removed.
The finite mass property is closely related to blowup phenomenon.
In the mass-subcritical case $1<p<1+4/N$, 
a solution with finite mass always exists globally in time due to 
the mass conservation law (see \cite{Tsu1}),
while blowup phenomenon can occur without the finite mass assumption
as shown in \cite{Fi}.
Note that, however, it is not clear so far whether or not
our assumption $u_0 \in \FHsc$ admits a finite-time blowup solution.
It is worth mentioning that local well-posedness in $\FHsc$ is not found in the previous results
\cite{CW1,GOV,Ma,NO}.
The difference is that continuous dependence property was based on 
the finite mass assumption $u_0 \in L^2$.
Our stability estimate is a key for removing this assumption.
For $N\ge 3$ and $p$ such that $0<s_c<1/2$, 
local well-posedness holds true in $\dot{H}^{-s_c} \supset \FHsc$ under radial assumption
(see \cite{Hi})

\subsection{Main results}
Before presenting our main results,
we introduce several notations.
The notion of an $\FHsc$-solution to \eqref{eq:NLS}-\eqref{eq:IC} is given in Definition \ref{def:sol} below.
Here, we merely note that an $\FHsc$-solution $u(t)$ is such that $t \mapsto U(-t)u(t)$ is
continuous $\R \supset I \to \FHsc$, where $U(t):= e^{it\Delta}$ is the free Schr\"odinger group
and $I$ is an time interval. In what follows, we say $u(t)$ scatters as $t\to\I$ (resp. $t\to-\I$) if
$I$ contains $(\tau, \I)$ (resp. $(-\I,\tau)$) for some $\tau \in \R$
and the limit $\lim_{t\to\I} U(-t) u(t)$
(resp. $\lim_{t\to-\I} U(-t) u(t)$) exists in $\FHsc$ sense, unless otherwise stated.
Let
\[
	S_\pm := \left\{u_0 \in \FHsc \ \Big|\ 
	\begin{aligned}
	&\text{solution}\ u(t) \text{ to }\eqref{eq:NLS} \text{ with} \\
	&u(0)= u_0 \text{ scatters as } t \to \pm \I. 
	\end{aligned}
	\right\}.
\]
We also set a scattering set $S=S_+ \cap S_-$.
Let us define
\begin{equation}\label{def:lc}
	\ell_{c}
	:= \inf\{ \norm{f}_{\FHsc} \ |\ f \in \FHsc \setminus S \}.
\end{equation}
Then, we have $\ell_c \in (0,\I)$.
Indeed, by small data scattering,
there exists a constant $\eta>0$ such that $\ell_c \ge \eta$
(see Remark \ref{rmk:lc_low}).
On the other hand, $\ell_c \le \ell (Q)$
follows from the fact that the ground state solution $e^{it}Q(x)$ is a non-scattering solution.
Remark that $Q \in \FHsc$, that is, $\ell (Q)<\I$ since
$Q(x)$ is smooth and decays exponentially as $|x|\to \I$.

Our main theorems are as follows.
\begin{theorem}\label{thm:main1}
Suppose \eqref{cond:p}. Then, the Cauchy problem \eqref{eq:NLS}-\eqref{eq:IC} is locally well-posed in $\FHsc$.
Further, there exists
a initial data $u_{0,c} \in \FHsc \setminus S_+$ such that
$\norm{u_{0,c}}_{\FHsc} = \ell_c$.
\end{theorem}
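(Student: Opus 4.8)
The plan is to run the Kenig--Merle concentration--compactness scheme, transplanted to the critical weighted space $\FHsc$ and, crucially, with every spacetime norm taken in the Lorentz-modified-Besov scale $L^{\rho,2}(\R,\Ms{\sigma}{q}{2})$ rather than in a Lebesgue--Lebesgue Strichartz norm. This is exactly what keeps the argument alive below the Strauss exponent $\pst$: the long-time-perturbation lemma, which in \cite{Ma,Ka} needed $p>\pst$ to close in $L^\rho_tL^q_x$, will instead be proved in the modified norm, where fractional spatial weights on the data get converted into time decay of the solution and are handled just like fractional derivatives.

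\emph{Local well-posedness.} First I would fix an admissible triple $(\rho,\sigma,q)$ and solve the Duhamel equation
\[
u(t)=U(t)u_0+i\int_0^t U(t-s)\bigl(|u|^{p-1}u\bigr)(s)\,ds
\]
by the contraction mapping principle in a ball of $C(I;\FHsc)\cap L^{\rho,2}(I,\Ms{\sigma}{q}{2})$. The ingredients are the homogeneous and inhomogeneous Strichartz estimates in these spaces (in the spirit of \cite{NO,GOV}), which bound $\|U(t)u_0\|_{L^{\rho,2}(\R,\dot M)}\lesssim\|u_0\|_{\FHsc}$ and control the Duhamel term by the dual norm of the nonlinearity, together with a H\"older/product estimate for $|u|^{p-1}u$ in the modified-Besov scale valid for $p$ as in \eqref{cond:p}; this last step is where the restriction $p>1+\frac4{N+2}$ for $N\ge3$ enters, cf. Remark \ref{rmk:lbd_of_p2}. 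Smallness of $\|U(t)u_0\|_{L^{\rho,2}(\R,\dot M)}$ --- automatic on short intervals by absolute continuity, and global for small data --- closes the contraction and yields a unique solution together with a blow-up alternative. Continuous dependence, which in \cite{CW1,GOV,NO,Ma} rested on $u_0\in L^2$, is now extracted from the stability estimate: if $\tilde u$ solves \eqref{eq:NLS} up to an error small in the relevant dual norm and $\|\tilde u\|$ is bounded in the modified spacetime norm, then a genuine solution with nearby $\FHsc$ data exists and stays close. This gives local well-posedness in $\FHsc$ with no finite-mass assumption.

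\emph{The minimal-norm non-scattering datum.} Take $u_{0,n}\in\FHsc\setminus S_+$ with $\|u_{0,n}\|_{\FHsc}\to\ell_c$ and let $u_n$ be the corresponding solutions. Apply a linear profile decomposition for $\{U(t)u_{0,n}\}$ adapted to $\FHsc$ and to the norm $L^{\rho,2}(\R,\Ms{\sigma}{q}{2})$: along a subsequence,
\[
u_{0,n}=\sum_{j=1}^{J}G_n^j\psi^j+r_n^J,
\qquad
\lim_{J\to\infty}\limsup_{n\to\infty}\bigl\|U(t)r_n^J\bigr\|_{L^{\rho,2}(\R,\dot M)}=0,
\]
where the $G_n^j$ are built from the symmetries compatible with $\FHsc$ --- the dilations \eqref{eq:scaling2}, Galilean modulations $e^{ix\cdot\xi_n^j}$, and time-translation cores --- with the Pythagorean expansion $\|u_{0,n}\|_{\FHsc}^2=\sum_{j=1}^{J}\|\psi^j\|_{\FHsc}^2+\|r_n^J\|_{\FHsc}^2+o_n(1)$. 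If two or more profiles were nontrivial, then $\|\psi^j\|_{\FHsc}<\ell_c$ for every $j$, so each nonlinear profile scatters for both time directions with finite global spacetime norm; their asymptotically orthogonal superposition, together with the smallness of $U(t)r_n^J$, is an approximate solution of \eqref{eq:NLS} with error small in the dual norm, so the stability estimate forces $u_n$ to scatter as $t\to+\infty$ for large $n$, contradicting $u_{0,n}\notin S_+$. The same dichotomy rules out $\|\psi^1\|_{\FHsc}<\ell_c$ in the single-profile case. Hence exactly one profile survives, it satisfies $\|\psi^1\|_{\FHsc}=\ell_c$ and $r_n^J\to0$, and after undoing the symmetry $u_{0,c}:=\psi^1$ is the desired datum: $\|u_{0,c}\|_{\FHsc}=\ell_c$ and $u_{0,c}\notin S_+$. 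Feeding the forward-in-time non-scattering solution $u_c$ with $u_c(0)=u_{0,c}$ back into the same argument along a sequence $u_c(t_n)$ shows moreover that $\{U(-t)u_c(t)\}$ is precompact in $\FHsc$ modulo the symmetry group, the usual compactness property of the threshold element.

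\emph{Main obstacle.} The crux is the stability (long-time perturbation) lemma in the Lorentz-modified-Besov spacetime norm together with the compatible orthogonality/decoupling of these norms along the profile decomposition. Unlike the $L^\rho_tL^q_x$ setting, the nonlinear estimates now involve fractional weights realized through the metaplectic factor $M(t)$ and dyadic time-dilations, so the product and difference estimates for $|u|^{p-1}u$, the bound on the error produced by superposing nonlinear profiles, and the vanishing of cross terms must all be pushed through in this less rigid functional framework; moreover $\FHsc$ carries no $L^2$ structure, so the mass/energy bookkeeping of the classical arguments is unavailable and everything has to be phrased through the weighted norm and the free-flow decay. Establishing these modified-Besov estimates all the way down to $p>1+\frac4{N+2}$ for $N\ge3$ (and $p>1+\frac2N$ in general) is the technical heart of the matter.
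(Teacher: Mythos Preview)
Your strategy matches the paper's: local well-posedness via Strichartz estimates in the Lorentz--modified-Besov scale plus a stability lemma, and existence of the minimizer via profile decomposition followed by long-time perturbation. Two points deserve comment.

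First, the symmetry group in the profile decomposition. You list ``dilations, Galilean modulations, and time-translation cores,'' but $U(t_0)$ is \emph{not} an isometry of $\FHsc$, so time-translated profiles would spoil the Pythagorean expansion. The paper's decomposition (Proposition~\ref{prop:pd}) uses only scalings and Galilean boosts,
\[
\phi_n=\sum_{j=1}^{l}e^{ix\cdot\xi_n^j}\psi^j_{\{h_n^j\}}+W_n^l,
\]
and the role that time translation plays in the $L^2$ or $\dot H^s$ theory is taken over by quadratic phases $e^{is|x|^2}$, which \emph{are} $\FHsc$-isometries. Profiles with $|s_n|\to\infty$ are not retained as nonlinear profiles (which would require a wave-operator construction) but are absorbed into the remainder using Proposition~\ref{prop:ods}: rapid quadratic oscillation makes the linear evolution small in $L^{\rho,2}(\R,\Ms{s}{q}{2})$. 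This is the device that replaces the ``solutions living at $t=\pm\infty$'' step of the standard Kenig--Merle scheme, and it is what your outline is missing.

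Second, your final sentence about precompactness of $\{U(-t)u_c(t)\}$ modulo symmetries is not part of Theorem~\ref{thm:main1} and is not established in the paper; the theorem asserts only existence of the minimizer $u_{0,c}\in\FHsc\setminus S_+$.
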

\begin{theorem}\label{thm:main2}
Suppose \eqref{cond:p}. If $u_0 \in \dot{H}^1 \cap \FHsc$ and
if $E(u_0)<0$ then $u_0 \not\in S_+\cup S_-$ and $\norm{u_0}_{\FHsc} > \ell_c$.
In particular, a solution of which initial data is $u_{0,c}$ (given in Theorem \ref{thm:main1}) is not 
a standing wave solution.
\end{theorem}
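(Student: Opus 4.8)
The two conclusions $u_0\notin S_+\cup S_-$ and $\norm{u_0}_{\FHsc}>\ell_c$, as well as the final remark, all reduce to the non-scattering assertion, which I would prove by combining conservation of the energy with the dispersion of scattering solutions. The first and technically heaviest ingredient is \emph{persistence of $\dot{H}^1$-regularity together with conservation of $E$}: if $u_0\in\dot{H}^1\cap\FHsc$ then the $\FHsc$-solution $u$ of Theorem~\ref{thm:main1} satisfies $u(t)\in\dot{H}^1$ on its whole maximal existence interval, with $E(u(t))=E(u_0)$ there. Granting the $\FHsc$ local theory of Theorem~\ref{thm:main1}, I would establish this in the standard way: on each compact subinterval the critical (Lorentz-modified-Besov type) space-time norm of $u$ is finite, and feeding this into the Strichartz estimates at regularity $\dot{H}^1$ together with a Gronwall argument propagates the $\dot{H}^1$-Strichartz norm of $u$; conservation of $E$ then follows by regularizing $u_0$ and passing to the limit, using the continuous dependence supplied by the stability estimate of the present paper. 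I expect this to be the main obstacle, since the finite-mass hypothesis is no longer available to play the role it did in \cite{CW1,GOV,NO,Ma}.

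For the non-scattering assertion, suppose $u_0\in S_+$ toward a contradiction (the case of $S_-$ being symmetric). Scattering in $\FHsc$ as $t\to\I$ is equivalent to finiteness of the critical space-time norm of $u$ on some interval $(\tau,\I)$; the bootstrap above then gives a global $\dot{H}^1$-Strichartz bound there, so $u_+:=\lim_{t\to\I}U(-t)u(t)$ lies in $\dot{H}^1\cap\FHsc$ and $U(-t)u(t)\to u_+$ also in $\dot{H}^1$. Since $U(t)$ is unitary on $\dot{H}^1$, this yields $\tLebn{\nabla u(t)}2\to\tLebn{\nabla u_+}2$ as $t\to\I$. On the other hand, writing $u(t)=U(t)u_++U(t)\bigl(U(-t)u(t)-u_+\bigr)$ and invoking the dispersive decay $\tLebn{U(t)g}{p+1}\lesssim|t|^{-\gamma}\norm{g}_{\FHsc}$ for the free evolution of $\FHsc$-data, valid with some $\gamma>0$ throughout the range \eqref{cond:p} (see \cite{CW1,GOV,NO}), gives $\tLebn{u(t)}{p+1}\to0$. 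Hence the potential part of $E(u(t))$ tends to $0$ and the kinetic part to $\tfrac12\tLebn{\nabla u_+}2^2\ge0$; as $E(u(t))=E(u_0)$ is constant, this forces $E(u_0)\ge0$, contradicting $E(u_0)<0$. Therefore $u_0\notin S_+$, and symmetrically $u_0\notin S_-$.

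The strict inequality is then elementary. For $u_0\in\dot{H}^1\cap\FHsc$ with $E(u_0)<0$, one has $E((1-\eps)u_0)=(1-\eps)^2\bigl(\tfrac12\tLebn{\nabla u_0}2^2-\tfrac{(1-\eps)^{p-1}}{p+1}\tLebn{u_0}{p+1}^{p+1}\bigr)$; the bracket is continuous in $\eps$ and equals $E(u_0)<0$ at $\eps=0$, hence stays negative on some $[0,\eps_0]$ with $\eps_0\in(0,1)$, so $E((1-\eps_0)u_0)<0$. Since $(1-\eps_0)u_0\in\dot{H}^1\cap\FHsc$, the part already proved puts it in $\FHsc\setminus S$, whence $(1-\eps_0)\norm{u_0}_{\FHsc}=\norm{(1-\eps_0)u_0}_{\FHsc}\ge\ell_c$, i.e.\ $\norm{u_0}_{\FHsc}\ge\ell_c/(1-\eps_0)>\ell_c$. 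Finally, every solution $\phi$ of \eqref{eq:sp} decays exponentially and so belongs to $\dot{H}^1\cap\FHsc$, and a combination of the two Pohozaev identities for \eqref{eq:sp} gives $E(\phi)=c_{N,p}\tLebn{\phi}2^2$ with an explicit constant $c_{N,p}$ that is negative precisely in the mass-subcritical range $p<1+\tfrac4N$; in particular $\norm{\phi}_{\FHsc}>\ell_c=\norm{\ucz}_{\FHsc}$ by Theorem~\ref{thm:main1}. Since the $\FHsc$-norm is invariant under the scaling, phase and modulation symmetries of \eqref{eq:NLS}, $\ucz$ cannot equal $\phi$ or any symmetry image of $\phi$, and hence the solution with initial datum $\ucz$ is not a standing wave.
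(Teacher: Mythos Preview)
Your overall strategy is sound, but you are working much harder than necessary because you overlook one line that the paper uses at the very start: Hardy's inequality gives the embedding $\FHsc\hookrightarrow\dot{H}^{-s_c}$, so the hypothesis $u_0\in\dot{H}^1\cap\FHsc$ already forces $u_0\in\dot{H}^1\cap\dot{H}^{-s_c}\subset H^1$. In other words the finite-mass hypothesis \emph{is} available, for free. Your ``technically heaviest ingredient'' (persistence of $\dot{H}^1$ and conservation of $E$ without mass) then evaporates: the paper simply invokes the classical $H^1$ global well-posedness with conservation of both mass and energy, and identifies this $H^1$-solution with the $\FHsc$-solution by uniqueness.

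For the non-scattering step the paper also takes a different and shorter route. Rather than arguing by contradiction via dispersion of $\tLebn{u(t)}{p+1}$, it reads off directly from energy conservation the uniform lower bound $\tLebn{u(t)}{p+1}^{p+1}\ge -(p+1)E(u_0)>0$, then interpolates $\tLebn{u(t)}{p+1}\le\tLebn{u(t)}2^{1-\theta}\tLebn{u(t)}{q(L)}^{\theta}$ with mass conservation to obtain a uniform lower bound on $\tLebn{u(t)}{q(L)}$; hence $\norm{u}_{S(\R_+)}=\infty$ and Proposition~\ref{prop:nscond} gives $u_0\notin S_+$. Your dispersion argument is a legitimate alternative once mass is on the table, but note that the pointwise decay $\tLebn{U(t)g}{p+1}\lesssim|t|^{-\gamma}\norm{g}_{\FHsc}$ you invoke is not generally available from $\FHsc$ alone (the Sobolev exponent associated with $\FHsc$ is $q_0=\tfrac{N(p-1)}{N(p-1)-2}$, which can be smaller than $p+1$ in the range~\eqref{cond:p}); one needs to interpolate with the conserved $L^2$ norm, which again requires the Hardy observation. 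Your treatment of the strict inequality and of the standing-wave remark matches the paper.
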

\begin{remark}
By definition of $\ell_c$, we obtain the following criterion for scattering;
if $u_0 \in \FHsc$ is such that $\norm{u_0}_{\FHsc} < \norm{u_{0,c}}_{\FHsc}$ then $u_0 \in S$.
Notice that Theorem \ref{thm:main1} implies that this criterion is sharp.
Namely, the above criterion actually fails when $\norm{u_0}_{\FHsc} = \norm{u_{0,c}}_{\FHsc}$. 
\end{remark}
\begin{remark}
In Theorem \ref{thm:main1}, the assumption that the equation is focusing is used only 
for justifying $\ell_c <\I$.
Namely, the conclusion of Theorem \ref{thm:main1} holds true
also for the defocussing equation, provided we assume $\ell_c <\I$.
As for the short-range defocussing equations, a conjecture is that asymptotic
completeness holds, that is, $\ell_c=+\I$ or equivalently $S=\FHsc$.
This conjecture is true when $\pst \le p < 1+4/N$ and initial space is $\F H^1 $
(see \cite{CW1,NO,Tsu2}).
In the case $1+2/N < p < \pst$, it is shown in \cite{TY} that 
if initial data is taken from $H^1 \cap \F H^1 $ then a solution globally in time and
scatters in $L^2$ for both time directions.
Theorem \ref{thm:main1} reduces the conjecture to nonexistence of a critical element.
If existence of the critical element $u_{0,c}$, the conclusion of Theorem \ref{thm:main1},
yields any contradiction, then we obtain the conjecture.
\end{remark}
\begin{remark}
In the mass-critical case, the ground state $ Q(x)$ plays the role of $u_{0,c}$ (\cite{Do}).
Theorem \ref{thm:main2} says that the situation is completely different in the mass-subcritical case.
It seems reasonable because the ground state solution $e^{it}Q(x)$ is 
orbitally stable in $H^1$ in the mass subcritical case (see \cite{CazBook} and references therein).
\end{remark}

\begin{remark}
Rigorously speaking, the statement in
Theorems \ref{thm:main1} and \ref{thm:main2} is not a direct extension of that in \cite{Ma}.
It is because the minimizing problem with respect to $\norm{\cdot}_{\FHsc}$ is 
different from that with respect to $\ell_{\F H^1}(\cdot)$.
One sees this from the fact that there exist functions $\psi_1,\psi_2 \in \F H^1$ such that
$\norm{\psi_1}_{\FHsc}<\norm{\psi_2}_{\FHsc}$ and $\ell_{\F H^1}(\psi_1)>\ell_{\F H^1}(\psi_2)$
(such example will be given in Remark \ref{rmk:FHscFH1}).
Nevertheless, the result of \cite{Ma} can be also extended
to the case \eqref{cond:p} as mentioned in Remark \ref{rmk:FH1extension}.
In other words, our argument enables us to consider the both minimizing problems.
In this sense, Theorems \ref{thm:main1} and \ref{thm:main2} are ``extensions''
of results of \cite{Ma}.
\end{remark}

If an initial data has a rapid quadratic oscillation, then a solution is global and scatters.
This kind of result is known for $p>\pst$ and $u_0 \in \F H^1$ (see \cite{CW1,Ma}). 
As a byproduct of our main theorems,
we improve this result as follows.
\begin{theorem}\label{thm:main3}
Suppose \eqref{cond:p}. For any $\psi \in \FHsc$, there exists $b_0>0$ such that
if $|b|>b_0$ then $e^{ib |x|^2} \psi \in S$.
\end{theorem}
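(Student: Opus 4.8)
The plan is to use the pseudo-conformal (lens) transform to trade the rapid quadratic oscillation for a time-dependent coefficient in front of the nonlinearity, to observe that this coefficient becomes small --- in a Lorentz norm in time --- as $|b|\to\I$, and then to solve the resulting Cauchy problem globally by the contraction scheme behind Theorem \ref{thm:main1}.

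Fix $\psi\in\FHsc$ and $b\ne0$. I would introduce the transform $\mathcal{P}_b$ defined by
\[
	(\mathcal{P}_b v)(t,x)=\frac{1}{(1+4bt)^{N/2}}\,e^{\frac{ib|x|^2}{1+4bt}}\,v\Big(\tfrac{t}{1+4bt},\,\tfrac{x}{1+4bt}\Big).
\]
Since $\mathcal{P}_b$ maps solutions of the free Schr\"odinger equation to solutions of the free Schr\"odinger equation, a standard computation shows that $u$ is an $\FHsc$-solution of \eqref{eq:NLS} with $u(0)=e^{ib|x|^2}\psi$ if and only if $v=\mathcal{P}_b^{-1}u$ is an $\FHsc$-solution of the modified equation
\[
	i\d_s v+\Delta v=-\lambda_b(s)\,|v|^{p-1}v,\qquad v(0)=\psi,
\]
with $\lambda_b(s)=|1-4bs|^{-\theta}$, $\theta:=2-\tfrac{N(p-1)}2$ (modulo locally constant unimodular factors that play no role in any estimate below). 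The hypothesis $1+\tfrac2N<p<1+\tfrac4N$ is exactly $\theta\in(0,1)$, so $\lambda_b\in L^1_{\mathrm{loc}}(\R)$, and a direct (scale-invariant) computation gives $\|\lambda_b\|_{L^{1/\theta,\infty}(\R)}\lesssim|b|^{-\theta}$. The time substitution $s=t/(1+4bt)$ is a M\"obius bijection of $\R\cup\{\I\}$ taking $t=\pm\I$ to the finite value $s=(1/4b)^{\mp}$ and the finite time $t=-1/(4b)$ to $s=\pm\I$; transporting the scattering notion through $\mathcal{P}_b$, the point is that it will suffice, in order to conclude $e^{ib|x|^2}\psi\in S$, to produce a global $\FHsc$-solution $v$ of the modified equation on $\R$ that scatters as $s\to\pm\I$ --- regularity of $v$ across the single point $s=1/(4b)$, which is harmless because $\lambda_b\in L^1_{\mathrm{loc}}$ and $v$ stays bounded there, encodes scattering of $u$ in both time directions, while scattering of $v$ at $s=\pm\I$ encodes that $u$ remains a good $\FHsc$-solution at the finite time $t=-1/(4b)$.

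The remaining task is to solve this modified problem globally, with small scattering norm, for all large $|b|$. Let $X$ be the Lorentz-modified-Besov scattering space underlying Theorem \ref{thm:main1}, so that the associated homogeneous and inhomogeneous Strichartz estimates and nonlinear estimates are at hand and $\|U(\cdot)\psi\|_X\lesssim\|\psi\|_{\FHsc}<\I$; I would set $\Phi(v):=U(\cdot)\psi+i\int_0^sU(s-\sigma)\lambda_b(\sigma)|v|^{p-1}v(\sigma)\,d\sigma$. Because the homogeneity of $\lambda_b$ is exactly the one generated by the conformal symmetry, the scaling bookkeeping of Theorem \ref{thm:main1} will survive with a single extra factor $\|\lambda_b\|_{L^{1/\theta,\infty}}$ coming from a H\"older inequality in the Lorentz time variable --- pairing $L^{1/\theta,\infty}$ in time against the time exponent produced by $|v|^{p-1}v$, which leaves the second Lorentz index unchanged --- so that
\[
	\Big\|\int_0^sU(s-\sigma)\lambda_b|v|^{p-1}v\,d\sigma\Big\|_X\lesssim\|\lambda_b\|_{L^{1/\theta,\infty}(\R)}\,\|v\|_X^p ,
\]
together with the corresponding difference bound. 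Since $\|\lambda_b\|_{L^{1/\theta,\infty}(\R)}\lesssim|b|^{-\theta}\to0$, there will be a $b_0=b_0(\|\psi\|_{\FHsc})$ so that for $|b|>b_0$ the map $\Phi$ is a contraction on the ball of radius $2\|U(\cdot)\psi\|_X$ in $X$, producing a global $v$ with $\|v\|_X\lesssim\|\psi\|_{\FHsc}$; smallness of its Duhamel part gives scattering of $v$ as $s\to\pm\I$, and local integrability in time of $\lambda_b|v|^{p-1}v$ makes $v$ a genuine global $\FHsc$-solution on $\R$. Undoing $\mathcal{P}_b$ then gives $e^{ib|x|^2}\psi\in S$ for all $|b|>b_0$, and since $\lambda_b$ depends on $b$ only through $|b|$ the two signs are treated identically.

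The step I expect to be the main obstacle is the nonlinear estimate displayed above: one must show that multiplication by the time-singular weight $\lambda_b(s)=|1-4bs|^{-\theta}$, $\theta\in(0,1)$, costs no more than its weak-$L^{1/\theta}$ norm in time when $|v|^{p-1}v$ is measured in the spacetime space dual to $X$. This amounts to (i) checking that the scaling balance behind Theorem \ref{thm:main1} leaves room for precisely a factor of time-homogeneity $s^{-\theta}$ --- which it should, since $\lambda_b$ comes from the conformal transform and $p<1+4/N$ forces $\theta>0$ --- so that a dual-admissible pair for the Duhamel term can be chosen with its time exponent shifted by $\theta$, and (ii) verifying that the Lorentz--H\"older inequality holds at those exponents with the second Lorentz index preserved, in conjunction with the fractional-weight product estimates in the modified-Besov spatial norm that already underlie the proof of Theorem \ref{thm:main1}. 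Once this H\"older-type gain in the Lorentz time variable is secured --- this being exactly the feature the Lorentz-modified-Besov framework is built for --- the remainder is a routine global perturbation in which only the nonlinear coefficient is small while $U(\cdot)\psi$ retains its fixed finite scattering norm, and this closes the contraction on all of $\R$ and yields scattering in both time directions.
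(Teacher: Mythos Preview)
Your approach is genuinely different from the paper's. The paper factors the argument into two independent steps: (i) Proposition~\ref{prop:ods}, which shows via the pseudo-conformal transform that the \emph{linear} flow $U(t)e^{ib|x|^2}\psi$ becomes small in the Lorentz--Lebesgue space $L(\R)$ as $|b|\to\I$ (while $\norm{e^{ib|x|^2}\psi}_{\FHsc}=\norm{\psi}_{\FHsc}$ stays fixed), and (ii) Proposition~\ref{prop:sds2}, a small-data scattering statement where only the $L$-norm of the linear evolution is small but the $\FHsc$-norm is allowed to be arbitrarily large. Step (ii) is a one-line application of the long-time perturbation Theorem~\ref{thm:lpt} with $\widetilde u=U(t)u_0$ and $e=|U(t)u_0|^{p-1}U(t)u_0$, so that $\norm{e}_{F(\R)}\le C\norm{U(t)u_0}_{L(\R)}^{p-1}\norm{U(t)u_0}_{W_1(\R)}\le C\eta^{p-1}M$. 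In short: the pseudo-conformal transform is used only at the linear level, and the nonlinear control is outsourced entirely to the stability theory already built in Section~\ref{sec:3}.

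Your route applies the pseudo-conformal transform to the full nonlinear equation and contracts against the smallness of $\norm{\lambda_b}_{L^{1/\theta,\I}}\sim|b|^{-\theta}$, $\theta=(p-1)s_c$. This is the classical Cazenave--Weissler strategy, and it has the advantage of being self-contained: it would not invoke the long-time perturbation at all. What it costs is precisely the obstacle you flag: you must produce, in the Lorentz--modified-Besov framework, an inhomogeneous Strichartz estimate for a \emph{non-admissible} dual pair whose time exponent is shifted by $\theta$ relative to $F(\R)$, and then check that the nonlinear product estimate (the analogue of Lemma~\ref{lem:lpt_nonlinearest1}) still closes at that shifted pair. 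With the paper's fixed choices $(\rho(W_1),q(W_1))$, $(\rho(F),q(F))$ this fails, since the admissibility relation leaves no room for the extra $\theta$; one has to pass through Proposition~\ref{prop:imprvStr} with a genuinely non-admissible pair on one side, and simultaneously match the spatial exponents so that the weighted product rule survives. This should be doable (it is exactly the ``scaling bookkeeping'' you describe, and the room comes from the gap $s_c>0$), but it is not already contained in the paper's toolbox and requires a new explicit choice of exponents. The paper sidesteps this by doing the conformal change of variables only on the free flow, where no product estimate is needed, and then paying for the nonlinearity once with Theorem~\ref{thm:lpt}.
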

\begin{remark}
Theorem \ref{thm:main3} holds true also for $\psi \in \F H^1$:
An $\F H^1$-solution of \eqref{eq:NLS} with data $e^{ib |x|^2} \psi$ scatters in $\F H^1$-sense
for both time directions, provided $|b|$ is sufficiently large (see Remark \ref{rmk:odsFH1}).
This is a direct extension of the result of \cite{CW1,Ma} to the case \eqref{cond:p}.
\end{remark}
Theorem \ref{thm:main3} yields an unboundedness property of the scattering set $S$.
\begin{corollary}
Suppose \eqref{cond:p}. It holds that
\[
	\sup_{u_0 \in S} \inf_{x_0 \in \R^N} \norm{u_0(\cdot + x_0)}_{\FHsc} = +\I.
\]
\end{corollary}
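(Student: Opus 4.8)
The plan is to build, for arbitrarily large $R$, an element of $S$ whose $\FHsc$-norm stays above $R$ no matter how it is translated; the mechanism is Theorem \ref{thm:main3}, which turns \emph{any} prescribed profile into a scattering datum after a sufficiently fast quadratic modulation, combined with the observation that such a modulation cannot shrink the spatial spread of the profile. First I would record the elementary identity
\[
	\norm{u_0(\cdot + x_0)}_{\FHsc}^2 = \int_{\R^N} \abs{y - x_0}^{2s_c} \abs{u_0(y)}^2\,dy,
\]
obtained by the substitution $y = x + x_0$ in $\norm{u_0(\cdot+x_0)}_{\FHsc}^2 = \int \abs{x}^{2s_c}\abs{u_0(x+x_0)}^2\,dx$. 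In particular $\inf_{x_0 \in \R^N}\norm{u_0(\cdot + x_0)}_{\FHsc}$ depends on $u_0$ only through $\abs{u_0}$, hence is unchanged when $u_0$ is replaced by $e^{ib\abs{x}^2}u_0$ for any $b \in \R$.

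Next I would exhibit a one-parameter family of profiles with unboundedly large translation-invariant spread. Fix a nonzero $\chi \in C_c^\infty(\R^N)$, so that in particular $\chi \in \FHsc$, and for $\rho > 0$ set $\psi_\rho := \chi(\cdot/\rho) \in \FHsc$. Scaling $y = \rho w$ in the identity above gives
\[
	\inf_{x_0 \in \R^N}\norm{\psi_\rho(\cdot + x_0)}_{\FHsc}^2 = \rho^{N + 2s_c}\, c_0, \qquad c_0 := \inf_{z \in \R^N} \int_{\R^N} \abs{w - z}^{2s_c}\abs{\chi(w)}^2\,dw.
\]
The constant $c_0$ is strictly positive: the map $z \mapsto \int \abs{w-z}^{2s_c}\abs{\chi(w)}^2\,dw$ is continuous, is positive for every $z$ (since $\chi \not\equiv 0$ while the weight $\abs{w-z}^{2s_c}$ vanishes only at the single point $w = z$), and tends to $+\I$ as $\abs{z} \to \I$ because $\abs{w - z} \ge \abs{z} - \sup_{\supp\chi}\abs{w}$ on $\supp\chi$; hence it attains a positive minimum. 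Consequently $\inf_{x_0}\norm{\psi_\rho(\cdot + x_0)}_{\FHsc} = \sqrt{c_0}\,\rho^{(N+2s_c)/2} \to \I$ as $\rho \to \I$.

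Finally I would combine the two steps. For each $\rho > 0$, Theorem \ref{thm:main3} applied to $\psi_\rho$ produces $b_\rho > 0$ with $u_0^{(\rho)} := e^{ib_\rho\abs{x}^2}\psi_\rho \in S$; by the modulation invariance from the first step, $\inf_{x_0}\norm{u_0^{(\rho)}(\cdot + x_0)}_{\FHsc} = \sqrt{c_0}\,\rho^{(N+2s_c)/2}$, which is unbounded in $\rho$. Since every $u_0^{(\rho)}$ lies in $S$, this yields $\sup_{u_0 \in S}\inf_{x_0 \in \R^N}\norm{u_0(\cdot + x_0)}_{\FHsc} = +\I$, as claimed. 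There is no serious analytic obstacle here — all the hard work is contained in Theorem \ref{thm:main3}; the only points needing care are the elementary verification that $c_0 > 0$, correct bookkeeping of the scaling exponent $N + 2s_c$, and the (immediate) fact that multiplying by the unimodular factor $e^{ib\abs{x}^2}$ leaves the relevant weighted $L^2$ quantity untouched.
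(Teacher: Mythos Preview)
Your proof is correct and follows precisely the approach the paper intends: the corollary is stated immediately after Theorem~\ref{thm:main3} with no separate proof, the paragraph following it making clear that the mechanism is rapid quadratic oscillation rather than translation. You have simply written out the implicit argument --- that $\inf_{x_0}\norm{u_0(\cdot+x_0)}_{\FHsc}$ depends only on $|u_0|$, that one can choose $\psi\in\FHsc$ with this quantity arbitrarily large, and that Theorem~\ref{thm:main3} then places $e^{ib|x|^2}\psi$ in $S$ --- with correct scaling bookkeeping and a clean verification that $c_0>0$.
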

Unboundedness of $S$ itself is trivial in view of Galilean transform.
Indeed, if $u_0 \in S\cap L^2_{\mathrm{loc}}$ then,  for any $x_0\in \R^N$,
$u(t,x-x_0)$ is also a solution with data $u_0(x-x_0)$ and 
further it scatters since $U(-t)(u(t,\cdot -x_0))(x) = (U(-t)u(t))(x-x_0)$.
Thus, $u_0(\cdot - x_0) \in S$ for any $x_0\in \R^N$.
The unboundedness of $S$ then follows from the fact that $\norm{u_0(\cdot-x_0)}_{\FHsc}\to \I$ as $|x_0|\to\I$.
However, the unboundedness above follows by a completely different cause, 
a rapid oscillation of initial data.
Theorem \ref{thm:main3} also suggest that some quantity other than $\ell(u_0)$ should be
taken into account for complete classification of behavior of solutions.

The rest of the paper is organized as follows.
We first define function spaces which we work with and
collect several preliminary facts and estimates in Section \ref{sec:2}.
We then begin our study with local well-posedness in $\FHsc$ (Theorem \ref{thm:lwp}) and
a stability type estimate, long time perturbation,
in a Lorentz-modified-Bezov space (Theorem \ref{thm:lpt}) in Section \ref{sec:3}.
A necessary and sufficient condition for scattering is also given in this section (Proposition \ref{prop:nscond}).
Section \ref{sec:4} is devoted to 
proof of concentration compactness
for sequences bounded in $\FHsc$ (Proposition \ref{prop:pd}).
Smallness due to rapid oscillation of initial data is considered there (Proposition \ref{prop:ods}).
We then prove our main theorems in Sections
\ref{sec:6}, \ref{sec:7}, and \ref{sec:5}.

\section{Preliminaries}\label{sec:2}
\subsection{Function spaces}
For our analysis, we prepare several notations and function spaces.

We first define a function space $\dot{X}^s_{q}(t)$ defined by
the following norm:
\begin{equation}\label{def:Xn}
	\Xtn{f}sqt := \norm{U(t)|x|^sU(-t)f}_{L^q(\R^N)}
\end{equation}
for $s,t \in \R$ and $1\le q\le\I$, where $U(t)=e^{it\Delta}$.
For $t\neq 0$,
let $M(t)$ be a $t$-dependent multiplication operator defined by
$M(t):=e^{i\frac{|x|^2}{4t}} \times$
and let $D(t)$ be a dilation operator defined by
	$D(t)f:= (2\pi i t)^{-N/2} f\(\frac{x}{2t}\)$.
One deduces from the well-known factorization
$U(t)=M(t)D(t)\F M(t)$ that
the identity 
\begin{equation}\label{eq:prelim1}
	U(t)[\phi U(-t)f]  = M(t) [\phi(2it\nabla) (M(-t)f)]
\end{equation}
holds for a suitable function $\phi$.
Hence, we have the following equivalent norm
for $t\neq0$:
\begin{equation}\label{eq:Xn_alt}
	\Xtn{f}sqt \sim \Xtne{f}sqt.
\end{equation}
The space $\dot{X}^s_{q}(t)$ is regarded as a modified Sobolev space in this sense.

Let a sequence $\{ \varphi_j \}_{j\in\Z}$ be a Littlewood-Paley decomposition, that is,
$\varphi_j(\xi) = \varphi_0 (2^{-j}\xi) \in C_0^\I(\R^N)$,
$\varphi_0\ge0$, $\supp \varphi_0 \subset \{2^{-1}<|\xi|<2\}$,
and $\sum_{j\in\Z} \varphi_j(\xi)=1$ for $\xi\neq0$.
We use a function space defined by the following norm
\begin{equation}\label{def:Mn}
	\Mn{f}sqrt := \norm{2^{js} U(t) \varphi_j U(-t)f}_{\ell_j^r(\Z,L^q_x)}
\end{equation}
for $s,t\in \R$, and $1\le q,r \le \I$.
By \eqref{eq:prelim1}, we have the following equivalent norm
\begin{equation}\label{eq:Mn_alt}
	\Mn{\psi}sqrt \sim \Mne{\psi}sqrt
\end{equation}
for $t\neq0$, where $\hBn{\cdot}sqr$ is a norm of usual homogeneous Besov space.
We refer the space $\M{s}qrt$ to as a modified Besov space.

The following fundamental properties of $\dot{X}^s_q(t)$ and $\dot{M}^s_{p,q}(t)$
are summarized as follows. 
\begin{lemma}\label{lem:embedding1}
Let $s,t \in \R$, $1\le  q  \le \I$, and $1\le r_1 \le r_2 \le \I$.
Then, the followings hold.
\begin{enumerate}
\item $\dot{X}^0_q(t) =L^q$.
\item $\dot{M}^s_{q,r_1}(t) \hookrightarrow \dot{M}^s_{q,r_2}(t)$.
\item $\dot{M}^s_{q,\min(q,2)}(t) \hookrightarrow \dot{X}^s_q(t) \hookrightarrow \dot{M}^s_{q,\max(q,2)}(t)$ if $t\neq0$ and $1<  q  < \I$.
In particular, $\dot{M}^{s}_{2,2}(t) =\dot{X}^s_2(t)$ and $\dot{M}^0_{q,\min(q,2)}(t)\hookrightarrow L^q$.
\item $\dot{M}^s_{q,q}(0)=\dot{X}^s_{q}(0)$.
In particular, $\dot{M}^{s}_{2,2}(0) =\dot{X}^s_2(0)= \F \dot{H}^s$.
\end{enumerate}
\end{lemma}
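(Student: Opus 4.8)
\emph{Plan of proof.}
Parts (1) and (2) are formal. For (1), $|x|^{0}\equiv1$, so $U(t)\,|x|^{0}\,U(-t)=U(t)U(-t)=\mathrm{Id}$ and $\Xtn{f}0qt=\norm{f}_{L^{q}}$ directly from \eqref{def:Xn}. For (2), the inclusion is the elementary nesting $\norm{\cdot}_{\ell^{r_{2}}(\Z)}\le\norm{\cdot}_{\ell^{r_{1}}(\Z)}$ ($r_{1}\le r_{2}$) applied to the $L^{q}_{x}$-valued sequence $\{2^{js}U(t)\varphi_{j}U(-t)f\}_{j\in\Z}$ that defines the norm in \eqref{def:Mn}. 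For (4) I would evaluate both norms at $t=0$, where $U(0)=\mathrm{Id}$ and $\varphi_{j}$ acts as multiplication by $\varphi_{j}(x)$: then $\Xtn{f}sq0=\norm{\,|x|^{s}f\,}_{L^{q}}$ and, both the inner and outer exponents being $q$, Tonelli gives $\Mn{f}sqq0^{q}=\int_{\R^{N}}|f(x)|^{q}\sum_{j\in\Z}2^{jsq}\varphi_{j}(x)^{q}\,dx$. It then suffices to check the pointwise equivalence $\sum_{j\in\Z}2^{jsq}\varphi_{j}(x)^{q}\sim|x|^{sq}$ for $x\ne 0$, which holds because $\supp\varphi_{j}\subset\{2^{j-1}<|\cdot|<2^{j+1}\}$ forces $2^{j}\sim|x|$ on the at most two contributing indices, while $\varphi_{j}\ge0$, $\sum_{j}\varphi_{j}\equiv1$ and the finite-overlap property force $\sum_{j}\varphi_{j}(x)^{q}\in[2^{-q},1]$. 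The ``in particular'' clause of (4) is then immediate from $\dot X^{s}_{2}(0)=\{f:\norm{\,|x|^{s}f\,}_{L^{2}}<\I\}=\F\dot H^{s}$.

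The genuine content is (3). For $t\ne 0$, the equivalent norms \eqref{eq:Xn_alt} and \eqref{eq:Mn_alt} identify $\Xtn{f}sqt$ with $|t|^{s}\norm{M(-t)f}_{\dot H^{s}_{q}}$ and $\Mn{f}sqrt$ with $|t|^{s}\norm{M(-t)f}_{\dot B^{s}_{q,r}}$, where $\dot H^{s}_{q}=|\nabla|^{-s}L^{q}$ is the homogeneous Riesz potential (Sobolev) space with norm $\norm{\,|\nabla|^{s}\cdot\,}_{L^{q}}$, which coincides with the homogeneous Triebel--Lizorkin space $\dot F^{s}_{q,2}$ for $1<q<\I$. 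Since $g\mapsto M(-t)g$ is a bijection of $\mathcal{S}'$, the chain of embeddings in (3) is equivalent to the classical one
\[
	\dot B^{s}_{q,\min(q,2)}\hookrightarrow\dot H^{s}_{q}\hookrightarrow\dot B^{s}_{q,\max(q,2)}\qquad(1<q<\I).
\]
For $q=2$ this reduces to the Littlewood--Paley/Plancherel identity $\dot B^{s}_{2,2}=\dot H^{s}$, which gives the equalities $\dot M^{s}_{2,2}(t)=\dot X^{s}_{2}(t)$ asserted in (3); and $\dot M^{0}_{q,\min(q,2)}(t)\hookrightarrow L^{q}$ is the case $s=0$ of the left-hand embedding together with (1).

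To establish the displayed chain for general $q$ I would invoke the Littlewood--Paley square-function estimate $\norm{\,|\nabla|^{s}f\,}_{L^{q}}\sim\norm{(\sum_{j}2^{2js}|\varphi_{j}(D)f|^{2})^{1/2}}_{L^{q}}$ and compare the mixed norm $L^{q}_{x}(\ell^{2}_{j})$ with $\ell^{\min(q,2)}_{j}(L^{q}_{x})$ and $\ell^{\max(q,2)}_{j}(L^{q}_{x})$. With $g_{j}=2^{js}\varphi_{j}(D)f$: if $q\ge2$ the left embedding is the triangle inequality in $L^{q/2}$ and the right one the pointwise bound $\sum_{j}|g_{j}|^{q}\le(\sum_{j}|g_{j}|^{2})^{q/2}$; if $q\le2$ the left embedding uses $(\sum_{j}a_{j})^{q/2}\le\sum_{j}a_{j}^{q/2}$ (valid for $a_{j}\ge0$ since $q/2\le1$) and the right one is Minkowski's integral inequality $\norm{(g_{j})}_{\ell^{2}_{j}(L^{q}_{x})}\le\norm{(g_{j})}_{L^{q}_{x}(\ell^{2}_{j})}$. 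The only real obstacle is bookkeeping: selecting the correct inequality according to whether $q\le2$ or $q\ge2$, and carrying along the fact --- already encoded in \eqref{eq:Xn_alt}--\eqref{eq:Mn_alt} via the factorization \eqref{eq:prelim1} --- that the multiplication operators $\varphi_{j}$ defining $\dot M^{s}_{q,r}(t)$ turn into genuine Fourier multipliers $\varphi_{j}(2it\nabla)$ under conjugation by $U(t)$. None of this uses the equation \eqref{eq:NLS} itself.
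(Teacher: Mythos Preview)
Your proposal is correct and follows essentially the same route as the paper: the paper simply states that (1) and (4) are obvious by definition and that (2) and (3) are derived from standard properties of Sobolev and Besov spaces via the equivalent norms \eqref{eq:Xn_alt} and \eqref{eq:Mn_alt}, citing \cite{BL-Book}. You have filled in those details --- the $\ell^{r}$-nesting for (2), the pointwise equivalence $\sum_{j}2^{jsq}\varphi_{j}(x)^{q}\sim|x|^{sq}$ for (4), and the mixed-norm comparisons behind $\dot B^{s}_{q,\min(q,2)}\hookrightarrow\dot H^{s}_{q}\hookrightarrow\dot B^{s}_{q,\max(q,2)}$ for (3) --- exactly as the paper intends.
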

The first and the last properties are obvious by definition.
The rest are easily derived from properties of Sobolev and Besov spaces (see \cite{BL-Book})
by means of \eqref{eq:Xn_alt} and \eqref{eq:Mn_alt}.

We also use the space-time function space of the form $L^{\rho,\gamma}(I, \M{s}qrt)$ for an interval $I$,
where $1\le \rho \le \I$ and $1\le \gamma \le \I$ and $L^{\rho,\gamma}(I)$ is the Lorentz space.
Here, the case $\rho=\I$ and $\gamma<\I$ is excluded.
For simplicity, we sometimes omit $(t)$ in this kind of norm if it is clear from the context.
One of the fundamental tool for analysis in Lorentz space is 
the following.
\begin{proposition}[Generalized H\"older's inequality \cite{Na}]\label{prop:gH}
Let $1\le \rho,\rho_1,\rho_2<\I$ and $1\le \gamma,\gamma_1,\gamma_2\le\I$
satisfy $1/\rho=1/\rho_1+1/\rho_2$ and $1/\gamma=1/\gamma_1+1/\gamma_2$.
Then,
\[
	\norm{fg}_{L^{\rho,\gamma}}
	\le C\norm{f}_{L^{\rho_1,\gamma_1}} \norm{g}_{L^{\rho_2,\gamma_2}}.
\]
\end{proposition}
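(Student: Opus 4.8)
The plan is to reduce everything to the one–dimensional ``Hardy'' characterization of the Lorentz quasi-norm in terms of decreasing rearrangements and then apply the ordinary H\"older inequality on the half-line equipped with the multiplicative measure $dt/t$. Recall that for $1\le \rho<\I$ and $1\le \gamma<\I$ one has
\[
	\norm{f}_{L^{\rho,\gamma}} \approx \left(\int_0^\I \left(t^{1/\rho} f^*(t)\right)^\gamma \frac{dt}{t}\right)^{1/\gamma},
\]
with $f^*$ the decreasing rearrangement of $f$ and the implied constants depending only on $\rho$; when $\rho>1$ this functional differs from the genuine Lorentz norm (defined via the maximal function $f^{**}(t)=t^{-1}\int_0^t f^*$) only by a bounded factor, which is immaterial for an estimate up to a multiplicative constant, and for $\gamma=\I$ the right-hand side is replaced by $\sup_{t>0} t^{1/\rho} f^*(t)$.

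First I would establish the submultiplicativity of the rearrangement operation: for any measurable $f,g$ and all $t_1,t_2>0$,
\[
	(fg)^*(t_1+t_2)\le f^*(t_1)\,g^*(t_2).
\]
This is immediate from the set inclusion $\{|fg|>ab\}\subset\{|f|>a\}\cup\{|g|>b\}$: taking $a=f^*(t_1)$ and $b=g^*(t_2)$ shows that the distribution function of $fg$ at level $ab$ does not exceed $t_1+t_2$. Specializing to $t_1=t_2=t$ gives the convenient form $(fg)^*(2t)\le f^*(t)\,g^*(t)$.

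With these two ingredients the proof is essentially one line. Using $t^{1/\rho}=t^{1/\rho_1}\cdot t^{1/\rho_2}$ (which is exactly $1/\rho=1/\rho_1+1/\rho_2$) and applying H\"older's inequality on $\bigl((0,\I),\,dt/t\bigr)$ with exponents $\gamma_1,\gamma_2$ satisfying $1/\gamma=1/\gamma_1+1/\gamma_2$ (with the obvious sup-modification when one of them equals $\I$), one gets
\[
	\left(\int_0^\I \bigl(t^{1/\rho}(fg)^*(2t)\bigr)^\gamma \frac{dt}{t}\right)^{1/\gamma}
	\le \left(\int_0^\I \bigl(t^{1/\rho_1}f^*(t)\bigr)^{\gamma_1}\frac{dt}{t}\right)^{1/\gamma_1}
	\left(\int_0^\I \bigl(t^{1/\rho_2}g^*(t)\bigr)^{\gamma_2}\frac{dt}{t}\right)^{1/\gamma_2}.
\]
The change of variable $s=2t$ turns the left-hand side into $2^{-1/\rho}$ times the rearrangement functional of $fg$, hence into a constant (at most $2$, since $\rho\ge1$) times $\norm{fg}_{L^{\rho,\gamma}}$ after invoking the equivalence above, while the two factors on the right are comparable to $\norm{f}_{L^{\rho_1,\gamma_1}}$ and $\norm{g}_{L^{\rho_2,\gamma_2}}$; collecting the constants yields the asserted inequality.

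The only point requiring mild care — and the part I would single out as the ``main obstacle,'' although it is wholly routine — is the bookkeeping of absolute constants: passing between the rearrangement functional and the genuine Lorentz (quasi-)norm, the dilation factor $2^{1/\rho}$, and the endpoint cases $\rho_i=1$ or $\gamma_i=\I$. None of this affects the statement, which only claims an inequality up to a finite constant $C$; one merely has to check that $C$ can be chosen uniformly over the admissible parameter range, which it can, precisely because the hypotheses $\rho,\rho_1,\rho_2<\I$ exclude the genuinely singular $L^{\I,\gamma}$ with $\gamma<\I$.
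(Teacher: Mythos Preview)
Your proof is correct and is the standard argument (essentially O'Neil's inequality). Note, however, that the paper does not supply its own proof of this proposition: it is simply quoted as a known fact with a citation to \cite{Na}, so there is no in-paper argument to compare against.
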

The following embedding is useful for our analysis.
\begin{lemma}\label{lem:LMn_inclusion}
Let $\rho_1, \rho_2 \in [1,\I)$, $s_1,s_2 \in \R$, $q_1,q_2 \in [1,\I]$ satisfy
\[
	s_2-s_1 = \frac1{\rho_1}-\frac1{\rho_2} = \frac{N}{q_2}-\frac{N}{q_1} \ge0.
\]
Then, we have
\[
	\LMn{f}{\rho_1}{s_1}{q_1} \le C \LMn{f}{\rho_2}{s_2}{q_2}
\]
\end{lemma}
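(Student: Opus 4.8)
The plan is to deduce the embedding from a \emph{pointwise-in-time} inequality that gains a negative power of $|t|$, and then to integrate in time via the generalized H\"older inequality in Lorentz spaces (Proposition \ref{prop:gH}). Write $f(t)$ for the time-$t$ slice and set
\[
	\alpha := s_2-s_1 = \frac1{\rho_1}-\frac1{\rho_2} = \frac{N}{q_2}-\frac{N}{q_1}\ge 0 .
\]
If $\alpha=0$ the three triples $(\rho_i,s_i,q_i)$ coincide and there is nothing to prove, so assume $\alpha>0$; note that then $0<\alpha\le \tfrac1{\rho_1}\le 1$.

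\emph{Step 1 (pointwise bound).} Since $\{0\}\subset\R$ is a null set we may restrict to $t\ne 0$, and there the equivalent norm \eqref{eq:Mn_alt} gives, for $i=1,2$,
\[
	\Mn{f(t)}{s_i}{q_i}{2}{t}\sim |t|^{s_i}\,\hBn{M(-t)f(t)}{s_i}{q_i}{2}.
\]
The relation defining $\alpha$ is exactly $q_2\le q_1$ together with $s_1-\tfrac{N}{q_1}=s_2-\tfrac{N}{q_2}$, i.e. the scaling condition for the homogeneous Besov embedding $\dot{B}^{s_2}_{q_2,2}\hookrightarrow\dot{B}^{s_1}_{q_1,2}$ (see \cite{BL-Book}). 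Feeding this into the two displays and cancelling the common factor $\hBn{M(-t)f(t)}{s_2}{q_2}{2}$ yields
\[
	\Mn{f(t)}{s_1}{q_1}{2}{t}\le C\,|t|^{s_1-s_2}\,\Mn{f(t)}{s_2}{q_2}{2}{t}=C\,|t|^{-\alpha}\,\Mn{f(t)}{s_2}{q_2}{2}{t}\qquad(t\ne 0).
\]

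\emph{Step 2 (integrate in time).} The weight $t\mapsto|t|^{-\alpha}$ lies in the weak Lebesgue space $L^{1/\alpha,\infty}(\R)$, since $\bigl|\{t:|t|^{-\alpha}>\lambda\}\bigr|=2\lambda^{-1/\alpha}$; moreover $1\le 1/\alpha<\infty$ by the bound on $\alpha$. Taking the $L^{\rho_1,2}(\R)$-norm in $t$ of the estimate from Step 1 and applying Proposition \ref{prop:gH} to the product $|t|^{-\alpha}\cdot\Mn{f(t)}{s_2}{q_2}{2}{t}$, with the exponent relations $\tfrac1{\rho_1}=\alpha+\tfrac1{\rho_2}$ and $\tfrac12=\tfrac1\infty+\tfrac12$ (all hypotheses of Proposition \ref{prop:gH} are met since $\rho_1,\rho_2<\infty$ and $1/\alpha\in[1,\infty)$), one obtains
\[
	\LMn{f}{\rho_1}{s_1}{q_1}\le C\,\bigl\||t|^{-\alpha}\bigr\|_{L^{1/\alpha,\infty}(\R)}\,\LMn{f}{\rho_2}{s_2}{q_2}=C\,\LMn{f}{\rho_2}{s_2}{q_2},
\]
which is the claim.

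I do not expect a genuine obstacle here: the three inputs — the equivalent norm \eqref{eq:Mn_alt}, the classical homogeneous Besov embedding, and the Lorentz H\"older inequality — are all available. The only point calling for care is Step 1, namely verifying that the power of $|t|$ supplied by \eqref{eq:Mn_alt} is precisely $|t|^{s_i}$ with $s_i$ the Besov regularity index, so that the two factors combine into the \emph{negative} power $|t|^{-\alpha}$, hence into a weight absorbable in weak $L^{1/\alpha}(\R)$; were $\alpha$ negative the scheme would break down, which is why the sign hypothesis $s_2-s_1\ge 0$ is indispensable.
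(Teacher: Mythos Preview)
Your proof is correct and follows essentially the same route as the paper: the paper obtains Lemma \ref{lem:LMn_inclusion} as the special case $\theta=1$ of the interpolation inequality Lemma \ref{lem:interpolation}, whose proof likewise establishes a Besov-space inequality, converts it via \eqref{eq:Mn_alt} into a modified-Besov estimate carrying the factor $|t|^{-\alpha}$, and then absorbs that weight by the generalized H\"older inequality (Proposition \ref{prop:gH}) using $|t|^{-\alpha}\in L^{1/\alpha,\infty}$. The only difference is cosmetic: you invoke the embedding $\dot B^{s_2}_{q_2,2}\hookrightarrow\dot B^{s_1}_{q_1,2}$ as a black box from \cite{BL-Book}, while the paper rederives it at the Littlewood--Paley level as part of the more general interpolation statement.
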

This is a special case of the following interpolation type inequality.
\begin{lemma}\label{lem:interpolation}
Let $\rho,\rho_1, \rho_2 \in [1,\I)$, $s,s_1,s_2 \in \R$, $q,q_1,q_2 \in [1,\I]$, and $r, r_1,r_2 \in (0.\I]$ satisfy
\[
	(\theta s_1+(1-\theta)s_2)-s = \frac1{\rho}-\(\frac\theta{\rho_1}+\frac{1-\theta}{\rho_2}\) 
	= N\(\frac{\theta}{q_1}+\frac{1-\theta}{q_2}\)-\frac{N}{q} \ge0
\]
and $\frac1r \ge \frac\theta{r_1}+\frac{1-\theta}{r_2}$
for some $\theta\in[0,1]$. 
Then, we have
\[
	\LrMn{f}{\rho}{s}{q}r \le C \LrMn{f}{\rho_1}{s_1}{q_1}{r_1}^\theta
	\LrMn{f}{\rho_1}{s_1}{q_1}{r_2}^{1-\theta}.
\]
\end{lemma}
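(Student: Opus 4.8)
The plan is to reduce the claimed inequality to a pointwise-in-$t$ interpolation estimate for the modified Besov spaces $\M{s}{q}{r}{t}$, followed by a single application of the generalized H\"older inequality (Proposition~\ref{prop:gH}) in the time variable. Introduce the barycentric indices $\bar s:=\theta s_1+(1-\theta)s_2$, $\tfrac1{\bar q}:=\tfrac\theta{q_1}+\tfrac{1-\theta}{q_2}$, $\tfrac1{\bar\rho}:=\tfrac\theta{\rho_1}+\tfrac{1-\theta}{\rho_2}$, $\tfrac1{\bar r}:=\tfrac\theta{r_1}+\tfrac{1-\theta}{r_2}$, and set $\delta:=\bar s-s$. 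The scaling hypothesis says precisely that $\delta=\tfrac N{\bar q}-\tfrac Nq=\tfrac1\rho-\tfrac1{\bar\rho}\ge0$, so relative to the barycentric point the target triple $(s,q,\rho)$ carries one common surplus $\delta$ --- a regularity deficit $\delta$, a matching integrability gain, and a matching gain in the time exponent --- while the remaining hypothesis only constrains the summability indices $r$ and $\bar r$. \emph{Step 1 (interpolation at the barycentre):} fix $t$ and write $g_j:=U(t)\varphi_j U(-t)f$. By log-convexity of the $L^q$ norms, $\|g_j\|_{L^{\bar q}}\le\|g_j\|_{L^{q_1}}^{\theta}\|g_j\|_{L^{q_2}}^{1-\theta}$, and since $2^{j\bar s}=(2^{js_1})^{\theta}(2^{js_2})^{1-\theta}$, H\"older's inequality for the $\ell_j$-sum, preceded where necessary by the trivial sequence-space inclusion recorded in Lemma~\ref{lem:embedding1}(2), gives
\[
	\Mn{f}{\bar s}{\bar q}{r}{t}\le C\,\Mn{f}{s_1}{q_1}{r_1}{t}^{\theta}\,\Mn{f}{s_2}{q_2}{r_2}{t}^{1-\theta},\qquad t\in\R.
\]

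\emph{Step 2 (converting the surplus into a time weight).} I claim $\Mn{f}{s}{q}{r}{t}\le C\,|t|^{-\delta}\,\Mn{f}{\bar s}{\bar q}{r}{t}$ for $t\neq0$. Indeed, by \eqref{eq:prelim1} the operator $U(t)\varphi_j U(-t)$ equals, up to the unimodular factor $M(\pm t)$, a Littlewood--Paley piece localized at frequency $\sim 2^j/|t|$, so Bernstein's inequality from $L^{\bar q}$ to $L^{q}$ (legitimate since $\bar q\le q$) supplies exactly the factor $(2^j/|t|)^{\delta}$ for each $j$, which becomes $|t|^{-\delta}$ after the $\ell_j$-sum; equivalently, via the identification \eqref{eq:Mn_alt} of $\Mn{f}{a}{u}{r}{t}$ with $|t|^{a}\,\hBn{M(-t)f}{a}{u}{r}$, this is the homogeneous Besov embedding $\dot B^{\bar s}_{\bar q,r}\hookrightarrow\dot B^{s}_{q,r}$, valid because $\bar s-\tfrac N{\bar q}=s-\tfrac Nq$. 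Combining with Step~1,
\[
	\Mn{f}{s}{q}{r}{t}\le C\,|t|^{-\delta}\,\Mn{f}{s_1}{q_1}{r_1}{t}^{\theta}\,\Mn{f}{s_2}{q_2}{r_2}{t}^{1-\theta},\qquad t\neq0.
\]

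\emph{Step 3 (H\"older in time).} Take the $L^{\rho,2}(\R)$-norm in $t$. The right-hand side is the product of $|t|^{-\delta}$, which lies in $L^{1/\delta,\infty}(\R)$, and the two functions $t\mapsto\Mn{f}{s_i}{q_i}{r_i}{t}^{\alpha_i}$ with $\alpha_1=\theta$, $\alpha_2=1-\theta$, whose $L^{\rho_i/\alpha_i,\,2/\alpha_i}$-norms equal $\big(\LMnr{f}{\rho_i}{s_i}{q_i}{r_i}\big)^{\alpha_i}$ by the rescaling identity $\|h^{\alpha}\|_{L^{p,q}}=\|h\|_{L^{p\alpha,q\alpha}}^{\alpha}$. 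The first Lorentz exponents combine correctly since $\tfrac1\rho=\delta+\tfrac\theta{\rho_1}+\tfrac{1-\theta}{\rho_2}$, and the second since $\tfrac12=0+\theta\cdot\tfrac12+(1-\theta)\cdot\tfrac12$, so two applications of Proposition~\ref{prop:gH} yield the asserted bound. When $\delta=0$ the weight is absent and only the two-factor H\"older is needed; the endpoint cases $\theta\in\{0,1\}$ collapse to a pure space-time embedding and recover Lemma~\ref{lem:LMn_inclusion}.

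\emph{Expected main difficulty.} The substance of the argument is the index bookkeeping in Steps~2--3: one must track the single parameter $\delta$ simultaneously as the Besov-embedding defect, the dilation exponent of the weight $|t|^{-\delta}$, and the time-integrability shift $\tfrac1\rho-\tfrac1{\bar\rho}$, and one must check that the equivalent-norm formula \eqref{eq:Mn_alt} --- equivalently, the Bernstein estimate for the conjugated dyadic pieces --- together with the Lorentz H\"older inequality remain valid throughout the stated ranges $q,\bar q\in[1,\infty]$ and $r,r_i\in(0,\infty]$, including the quasi-norm regime and, if it occurs, $1/\delta<1$. Since $\{t=0\}$ is null it plays no role in the $L^{\rho,2}_t$ integral, and all remaining manipulations are routine.
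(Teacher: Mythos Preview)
Your approach---pointwise-in-$t$ H\"older to reach the barycentric Besov norm, a Bernstein/Besov embedding that produces the weight $|t|^{-\delta}$, then the generalized H\"older inequality in time---is exactly the paper's method; the paper merely folds your Steps~1 and~2 together by applying Bernstein to each factor separately (from $q_i$ up to auxiliary exponents $a_i$ with $\tfrac1q=\tfrac\theta{a_1}+\tfrac{1-\theta}{a_2}$) rather than once at the end.

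There is, however, a misreading of the statement. The macro $\LrMn{f}{\rho}{s}{q}{r}$ expands to $\|f\|_{L^{\rho,r}(\R,\dot M^{s}_{q,2})}$: the parameters $r,r_1,r_2$ are the \emph{second Lorentz exponents in the time variable}, while the Besov summability index is fixed at~$2$ throughout. In your Step~1 you treat $r,r_1,r_2$ as the third index of $\dot M^{s}_{q,r}$ (invoking the sequence-space inclusion Lemma~\ref{lem:embedding1}(2)), and in Step~3 you then take an $L^{\rho,2}$ time norm; the inequality you have actually argued is therefore a bound on $\|f\|_{L^{\rho,2}(\R,\dot M^{s}_{q,r})}$ by $\prod_i\|f\|_{L^{\rho_i,2}(\R,\dot M^{s_i}_{q_i,r_i})}^{\alpha_i}$, which is not what the lemma claims. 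The repair is immediate: keep the Besov index equal to~$2$ in Steps~1--2 (so Step~1 yields $\Mn{f}{\bar s}{\bar q}{2}{t}\le\Mn{f}{s_1}{q_1}{2}{t}^{\theta}\Mn{f}{s_2}{q_2}{2}{t}^{1-\theta}$, exactly as in the paper), and in Step~3 place the two factors in $L^{\rho_i/\alpha_i,\,r_i/\alpha_i}$ rather than $L^{\rho_i/\alpha_i,\,2/\alpha_i}$, so that the second Lorentz indices combine via $\tfrac1r=0+\tfrac{\theta}{r_1}+\tfrac{1-\theta}{r_2}$, the hypothesis on $r$ handling any slack. In the only case the paper actually uses, namely $r=r_1=r_2=2$ (cf.\ Lemma~\ref{lem:LMn_inclusion} and the later applications), your argument and the paper's coincide verbatim.
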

\begin{proof}
Since $1/q \le \theta/{q_1} + (1-\theta)/{q_2}$, we 
can take $ a_i\in [q_i,\I]$ ($i=1,2$) so that
$1/q=\theta/{a_1} + (1-\theta)/{a_2}$.
Then, H\"older's inequality gives us
\[
	\Lebn{\hat{\varphi}_j*f}q 
	\le \Lebn{\hat{\varphi}_j*f}{a_1}^\theta
	\Lebn{\hat{\varphi}_j*f}{a_2}^{1-\theta}.
\]
By definition, $\hat{\varphi}_j(x)=2^{jN}\hat{\varphi}_0(2^jx)$ and so
$\Lebn{\hat{\varphi}_j}p = 2^{jN(1-\frac1p)}\Lebn{\hat{\varphi}_0}p$ for $p\in[1,\I]$.
Since $(\varphi_{j-1}+\varphi_j+\varphi_{j+1})\varphi_j=\varphi_j$, we have
\begin{align*}
	\Lebn{\hat{\varphi}_j*f}{a_i} 
	={}& \Lebn{(\hat{\varphi}_{j-1}+\hat{\varphi}_j+\hat{\varphi}_{j+1})*(\hat{\varphi}_j*f)}{a_i}\\
	\le {}& C 2^{jN(1-\frac1{r_i})} \Lebn{\hat{\varphi}_j*f}{q_i}
\end{align*}
for $i=1,2$,
where $r_i \in [1,\I]$ is such that
\[
	\frac1{a_i} = \frac1{r_i}+\frac1{q_i}-1.
\]
Remark that such $r_i$ exists because $a_i\ge q_i\ge1$.
Therefore,
\[
	\Lebn{\hat{\varphi}_j*f}q 
	\le C 2^{jN(\frac{\theta}{q_1}+\frac{1-\theta}{q_2})-j\frac{N}q} \Lebn{\hat{\varphi}_j*f}{q_1}^\theta
	\Lebn{\hat{\varphi}_j*f}{q_2}^{1-\theta}.
\]
By the assumption $(\theta s_1+(1-\theta)s_2)-s = N\(\frac{\theta}{q_1}+\frac{1-\theta}{q_2}\)-\frac{N}{q}$,
\[
	2^{js} \Lebn{\hat{\varphi}_j*f}q 
	\le C  (2^{js_1}\Lebn{\hat{\varphi}_j*f}{q_1})^\theta
	(2^{js_2}\Lebn{\hat{\varphi}_j*f}{q_2})^{1-\theta}.
\]
Taking $\ell^2$ norm in $j$ and using H\"older's inequality, we obtain
\[
	\hBn{f}sq2 \le \hBn{f}{s_1}{q_1}2^\theta \hBn{f}{s_2}{q_2}2^{1-\theta}.
\]
From this estimate,
\begin{align*}
	\Mn{f}sq2t \sim{}& |t|^s \hBn{M(-t) f}sq2\\
	\le{}& |t|^s \hBn{M(-t)f}{s_1}{q_1}2^\theta \hBn{M(-t)f}{s_2}{q_2}2^{1-\theta}\\
	\sim{}& |t|^{s-(\theta s_1+(1-\theta)s_2)} \Mn{f}{s_1}{q_1}2t^\theta \Mn{f}{s_2}{q_2}2t^{1-\theta}.
\end{align*}
Then, the result follows from the generalized H\"older inequality
(Proposition \ref{prop:gH}).
We only note that $|t|^{-\al} \in L^{1/\al,\I}(\R)$ for $\al\ge0$.
\end{proof}

\begin{proposition}\label{prop:LM_approx}
Let $1<\rho_1,\rho_2<\I$, $s_1,s_2 \in \R$, and $1 \le q_1,q_2, r_1, r_2 <\I$.
Let $v \in L^{\rho_1,2}(\R, \Ms{s_1}{q_1}{r_1}) \cap L^{\rho_2,2}(\R, \Ms{s_2}{q_2}{r_2})$.
For any number $\eps>0$ there exists a function $\widetilde{v}(t,x)$ defined on $\R^{1+N}$ such that
$\supp \widetilde{v} \subset \{ (t,x) \in \R^{1+N}\ |\ \delta\le |t| \le M,\, |x| \le R \}$
for some positive numbers $\delta,M,R$, and that
\[
	\sum_{j=1,2}\norm{v- \widetilde{v}}_{L^{\rho_j,2}(\R, \Ms{s_j}{q_j}{r_j})} \le \eps.
\]
\end{proposition}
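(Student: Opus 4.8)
The plan is a density argument: I would approximate $v$ by cutting it off successively in time, in frequency, and in space, \emph{in that order}, making the error small at each stage by dominated convergence in the Lorentz spaces $L^{\rho_j,2}(\R)$, which is legitimate because $\rho_j<\I$ and the second Lorentz index $2$ is finite, so these spaces have absolutely continuous norm. Throughout I would pass to $w(t):=M(-t)v(t)$ and invoke the equivalence \eqref{eq:Mn_alt}, under which $\Mn{f}{s_j}{q_j}{r_j}t$ becomes $|t|^{s_j}\hBn{M(-t)f}{s_j}{q_j}{r_j}$; thus the hypothesis $v\in L^{\rho_j,2}(\R,\Ms{s_j}{q_j}{r_j})$ means exactly that $g_j(t):=|t|^{s_j}\hBn{w(t)}{s_j}{q_j}{r_j}$ lies in $L^{\rho_j,2}(\R)$, and a fixed multiple of $g_j$ will dominate the error at every stage.

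First I would put $v_1(t):=\mathbf 1_{\{\delta\le|t|\le M\}}(t)\,v(t)$. Multiplying by the time cutoff only shrinks $g_j$ pointwise, and $\mathbf 1_{\{\delta\le|t|\le M\}}\,g_j\to g_j$ a.e.\ as $\delta\downarrow0$ and $M\uparrow\I$, so dominated convergence gives $\sum_{j=1,2}\norm{v-v_1}_{L^{\rho_j,2}(\R,\Ms{s_j}{q_j}{r_j})}<\eps/3$ once $\delta$ is small and $M$ large; fix such $\delta$ and $M$. Next, letting $P_L:=\sum_{|j|\le L}\varphi_j(\nabla)$ be a Littlewood--Paley truncation (a Fourier multiplier), I would set $v_2(t):=M(t)\,P_L\,M(-t)\,v_1(t)$. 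Uniform boundedness of the partial sums of the Littlewood--Paley decomposition on $\hBn{\cdot}{s_j}{q_j}{r_j}$ (valid for all real $s_j$ and all $q_j,r_j\ge1$) bounds $|t|^{s_j}\hBn{w(t)-P_Lw(t)}{s_j}{q_j}{r_j}$ by a fixed multiple of $g_j(t)$, while for a.e.\ $t$ this quantity is the tail of a convergent series and tends to $0$ as $L\to\I$ --- here $r_j<\I$ is used. Dominated convergence again gives $\sum_{j=1,2}\norm{v_1-v_2}_{L^{\rho_j,2}(\R,\Ms{s_j}{q_j}{r_j})}<\eps/3$ for $L$ large; fix such $L$.

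Finally, choosing $\chi\in C_0^\I(\R^N)$ with $\chi\equiv1$ near the origin and $\supp\chi\subset\{|x|\le2\}$, I would take $\widetilde v(t,x):=\chi(x/R)\,v_2(t,x)$, which is supported in $\{\delta\le|t|\le M\}\times\{|x|\le2R\}$ since $v_2$ vanishes for $|t|\notin[\delta,M]$. Because $M(-t)$ and $\chi(\cdot/R)$ are both multiplication operators they commute, so the remaining error is governed by $|t|^{s_j}\hBn{(1-\chi(\cdot/R))\,h_t}{s_j}{q_j}{r_j}$, where $h_t:=P_Lw(t)$ has Fourier support in a fixed annulus depending only on $L$. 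For such band-limited $h_t$ one has $\hBn{h_t}{s_j}{q_j}{r_j}\le C_L\norm{h_t}_{L^{q_j}}$, and --- using that $\widehat\chi$ is Schwartz, so that the frequencies of $\chi(\cdot/R)h_t$ escaping a slightly larger annulus cost arbitrary powers of $R^{-1}$, uniformly in $R\ge1$ --- an elementary frequency-spreading estimate yields both $\hBn{(1-\chi(\cdot/R))h_t}{s_j}{q_j}{r_j}\le C_L'\hBn{w(t)}{s_j}{q_j}{r_j}$ for $R\ge1$ and $(1-\chi(\cdot/R))h_t\to0$ in $\hBn{\cdot}{s_j}{q_j}{r_j}$ as $R\to\I$ (here $q_j<\I$ enters, via $\chi(\cdot/R)h_t\to h_t$ in $L^{q_j}$). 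A last dominated convergence gives $\sum_{j=1,2}\norm{v_2-\widetilde v}_{L^{\rho_j,2}(\R,\Ms{s_j}{q_j}{r_j})}<\eps/3$ for $R$ large; summing the three bounds proves the assertion.

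The one point that genuinely needs care --- and the reason the frequency localization must precede the spatial cutoff --- is that multiplication by $\chi(\cdot/R)$, which does not decay at infinity, is \emph{not} uniformly bounded on the homogeneous Besov space $\hBn{\cdot}{s_j}{q_j}{r_j}$ when $s_j\ge N/q_j$, because of the behaviour near frequency zero; cutting the frequencies off first removes this obstruction entirely and is what lets the statement hold for every $s_j\in\R$. The remaining ingredients --- absolute continuity of vector-valued Lorentz norms for finite exponents, uniform boundedness of Littlewood--Paley partial sums on homogeneous Besov spaces, and the frequency-spreading estimate above --- are all routine.
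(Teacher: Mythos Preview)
Your route differs from the paper's: after the same time cutoff the paper invokes the Frazier--Jawerth atomic decomposition of $M(-t)v(t)\in\dot B^{s_1}_{q_1,r_1}\cap\dot B^{s_2}_{q_2,r_2}$ and takes a finite partial sum of atoms, obtaining a compactly supported approximant in one stroke; you instead perform a Littlewood--Paley truncation followed by a smooth spatial cutoff. Your argument is more hands-on and works whenever $s_j>-N(1-1/q_j)$, in particular for $s_j=s_c>0$, which is the only case the paper actually uses downstream.

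For the proposition \emph{as stated}, however, with $s_j\in\R$ arbitrary, the spatial-cutoff step has a gap on the \emph{low}-frequency side. Your frequency-spreading bound gives the $k$-th Littlewood--Paley piece of $\chi(\cdot/R)h_t$ an $L^{q_j}$ norm of size $O_{L,K}(R^{-K})\|h_t\|_{L^{q_j}}$ whenever $\supp\varphi_k$ misses the annulus containing $\widehat{h_t}$, but this bound carries \emph{no decay in $k$} as $k\to-\infty$: the distance from $\supp\varphi_k$ to that annulus stays bounded below by a constant depending only on $L$, rather than growing. The only extra decay one can extract comes from the compact spatial support of $\chi(\cdot/R)h_t$, which yields a factor $2^{kN(1-1/q_j)}$ and restores $\ell^{r_j}$-summability of $2^{ks_j}\|\cdot\|_{L^{q_j}}$ precisely when $s_j>-N(1-1/q_j)$. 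Below that threshold $\chi(\cdot/R)h_t$ is a $C_0^\infty$ function with, generically, nonzero integral, and such a function does not belong to $\dot B^{s_j}_{q_j,r_j}$ at all --- so your claimed uniform bound $\hBn{(1-\chi(\cdot/R))h_t}{s_j}{q_j}{r_j}\le C_L'\hBn{w(t)}{s_j}{q_j}{r_j}$ is false there. The paper's approximants, being finite sums of smooth atoms with vanishing moments of arbitrarily high order, lie in $\dot B^{s}_{q,r}$ for every real $s$, which is what lets that proof cover the full stated range. Your diagnosis that the danger sits at $s_j\ge N/q_j$ is thus slightly misplaced: the genuine obstruction to a naive spatial cutoff after frequency localization is at $s_j\le -N(1-1/q_j)$.
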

\begin{proof}
Since $\rho_1,\rho_2<\I$, we can take $\delta,M>0$ so that
\[
	\sum_{j=1,2}\norm{v}_{L^{\rho_j,2}((-\I,-M)\cup (-\delta,\delta)\cup(M,\I), \Ms{s_j}{q_j}{r_j})}
 \le \frac{\eps}2.
\]
Then, it suffices to approximate $v(t) \in \M{s_1}{q_1}{r_1}t \cap \M{s_2}{q_2}{r_2}t$
by a function which has a compact support, for each fixed $t \in [-M,-\delta]\cup[\delta,M]$.
By the equivalent representation \eqref{eq:Mn_alt},
approximation of $w:=M(-t)v(t)$ in $\dot{B}^{s_1}_{q_1,r_1} 
\cap \dot{B}^{s_2}_{q_2,r_2}$ is sufficient.
This is an immediate consequence of the atomic decomposition.
Indeed, for $w\in \dot{B}^{s_1}_{q_1,r_1}$, we have the
following decomposition;
\begin{equation}\label{eq:Mnalt1}
	w = \sum_{j\in \Z} \sum_{\ell(Q)=2^{-j}} s_Q a_Q,
\end{equation}
where $Q$'s are dyadic cubes, $s_Q$'s are constants satisfying
\begin{equation}\label{eq:Mnalt2}
	\(\sum_{j\in \Z} \(\sum_{\ell(Q)=2^{-j}} |s_Q|^{q_1}\)^{r_1/q_1}\)^{1/r_1} \le C
	\norm{w}_{\dot{B}^{s_1}_{q_1,r_1}}
\end{equation}
and $a_Q$'s are the $(s_1,q_1)$-atoms.
Remark that we can construct $\{s_Q\}$ and $\{a_Q\}$ 
independently of $s$, $q$, and $r$ (see \cite[Theorem 2.6]{FJ}).
Therefore, $w\in \dot{B}^{s_2}_{q_2,r_2}$ implies that the above decomposition holds
in $\dot{B}^{s_2}_{q_2,r_2}$ with the same $a_Q$ and $s_Q$.
Now, a linear combination of sufficiently large (but finite) number of $s_Qa_Q$
is a desired approximation of $w$ since the $\dot{B}^{s_1}_{q_1,r_1}$ norm
is equivalent to the infimum of the left hand side of \eqref{eq:Mnalt2} in all
decompositions of the form \eqref{eq:Mnalt1} and since $q_1,q_2, r_1,r_2 <\I$.
\end{proof}

\subsection{Strichartz' estimates}
We use the notation $\delta(q)=N(\frac12-\frac1q)$.
Set $2^{*}=\I$ for $N=1,2$ and $2^*=2N/(N-2)$ for $N\ge3$.
\begin{definition}
A pair $(\rho,q)$ is said to be \emph{acceptable} if
$1<\rho<\I$, $2<q<2^*$, and $\rho \delta(q)>1$.
We say a pair $(\rho,q)$ is \emph{admissible} if $(\rho,q)$ is acceptable and
$\rho \delta(q)=2$.
\end{definition}
We emphasize that, in our terminology, the end points $(\rho,q)=(\I,2)$, $(1,2^*)$
($(\rho,q)=(\I,2)$, $(2,\I)$ if $N=1$) are excluded from admissible pairs.
Figure 1 shows the range of acceptable pair for $N\ge3$.
Here, $A=(1/2^*,0)$, $B=(1/2,0)$, $C=(1/2^*,1)$, and $D=(1/2^*,1/2)$.
A pair $(\rho, q)$ is acceptable
if $(1/q,1/\rho)$ is in interior of triangle $ABC$.
The lines $BC$ and $BD$ correspond to $\frac1\rho-\delta(q)=0$ and
$\frac2\rho-\delta(q)=0$, respectively.
Therefore, a pair $(\rho, q)$ is admissible if  $(1/q,1/\rho)$ lies on
the line segment $BD$.
Let $F=(1/2^*,(1-k)/2)$ and $H=(1/2^*,(1+k)/2)$ for some $0<k<1$.
The lines $EF$ and $GH$ are parallel to $BD$.
Then, $EF$ corresponds to $\frac2\rho-\delta(q)=k$, and
$GH$ to $\frac2\rho-\delta(q)=-k$.
\begin{center}
\unitlength 0.1in
\begin{picture}( 22.0000, 22.5000)(  0.0000,-23.0000)
%
{\color[named]{Black}{%
\special{pn 13}%
\special{pa 200 2300}%
\special{pa 200 100}%
\special{fp}%
\special{sh 1}%
\special{pa 200 100}%
\special{pa 180 168}%
\special{pa 200 154}%
\special{pa 220 168}%
\special{pa 200 100}%
\special{fp}%
\special{pa 0 2100}%
\special{pa 2200 2100}%
\special{fp}%
\special{sh 1}%
\special{pa 2200 2100}%
\special{pa 2134 2080}%
\special{pa 2148 2100}%
\special{pa 2134 2120}%
\special{pa 2200 2100}%
\special{fp}%
}}%
%
{\color[named]{Black}{%
\special{pn 8}%
\special{pa 2000 2300}%
\special{pa 2000 100}%
\special{dt 0.045}%
\special{pa 2200 300}%
\special{pa 0 300}%
\special{dt 0.045}%
\special{pa 0 1200}%
\special{pa 2200 1200}%
\special{dt 0.045}%
\special{pa 600 2300}%
\special{pa 600 100}%
\special{dt 0.045}%
}}%
%
{\color[named]{Black}{%
\special{pn 8}%
\special{pa 600 300}%
\special{pa 600 2100}%
\special{pa 2000 2100}%
\special{pa 600 300}%
\special{pa 600 2100}%
\special{fp}%
}}%
%
{\color[named]{Black}{%
\special{pn 4}%
\special{pa 600 360}%
\special{pa 626 334}%
\special{fp}%
\special{pa 600 420}%
\special{pa 652 368}%
\special{fp}%
\special{pa 600 480}%
\special{pa 680 402}%
\special{fp}%
\special{pa 600 540}%
\special{pa 706 436}%
\special{fp}%
\special{pa 600 600}%
\special{pa 732 470}%
\special{fp}%
\special{pa 600 660}%
\special{pa 758 502}%
\special{fp}%
\special{pa 600 720}%
\special{pa 784 536}%
\special{fp}%
\special{pa 600 780}%
\special{pa 810 570}%
\special{fp}%
\special{pa 600 840}%
\special{pa 836 604}%
\special{fp}%
\special{pa 600 900}%
\special{pa 862 638}%
\special{fp}%
\special{pa 600 960}%
\special{pa 890 672}%
\special{fp}%
\special{pa 600 1020}%
\special{pa 916 706}%
\special{fp}%
\special{pa 600 1080}%
\special{pa 942 740}%
\special{fp}%
\special{pa 600 1140}%
\special{pa 968 772}%
\special{fp}%
\special{pa 600 1200}%
\special{pa 994 806}%
\special{fp}%
\special{pa 600 1260}%
\special{pa 1020 840}%
\special{fp}%
\special{pa 600 1320}%
\special{pa 1046 874}%
\special{fp}%
\special{pa 600 1380}%
\special{pa 1072 908}%
\special{fp}%
\special{pa 600 1440}%
\special{pa 1100 942}%
\special{fp}%
\special{pa 600 1500}%
\special{pa 1126 976}%
\special{fp}%
\special{pa 600 1560}%
\special{pa 1152 1010}%
\special{fp}%
\special{pa 600 1620}%
\special{pa 1178 1042}%
\special{fp}%
\special{pa 600 1680}%
\special{pa 1204 1076}%
\special{fp}%
\special{pa 600 1740}%
\special{pa 1230 1110}%
\special{fp}%
\special{pa 600 1800}%
\special{pa 1256 1144}%
\special{fp}%
\special{pa 600 1860}%
\special{pa 1282 1178}%
\special{fp}%
\special{pa 600 1920}%
\special{pa 1310 1212}%
\special{fp}%
\special{pa 600 1980}%
\special{pa 1336 1246}%
\special{fp}%
\special{pa 600 2040}%
\special{pa 1362 1280}%
\special{fp}%
\special{pa 600 2100}%
\special{pa 1388 1312}%
\special{fp}%
\special{pa 660 2100}%
\special{pa 1414 1346}%
\special{fp}%
\special{pa 720 2100}%
\special{pa 1440 1380}%
\special{fp}%
\special{pa 780 2100}%
\special{pa 1466 1414}%
\special{fp}%
\special{pa 840 2100}%
\special{pa 1492 1448}%
\special{fp}%
\special{pa 900 2100}%
\special{pa 1520 1482}%
\special{fp}%
\special{pa 960 2100}%
\special{pa 1546 1516}%
\special{fp}%
\special{pa 1020 2100}%
\special{pa 1572 1550}%
\special{fp}%
\special{pa 1080 2100}%
\special{pa 1598 1582}%
\special{fp}%
\special{pa 1140 2100}%
\special{pa 1624 1616}%
\special{fp}%
\special{pa 1200 2100}%
\special{pa 1650 1650}%
\special{fp}%
\special{pa 1260 2100}%
\special{pa 1676 1684}%
\special{fp}%
\special{pa 1320 2100}%
\special{pa 1702 1718}%
\special{fp}%
\special{pa 1380 2100}%
\special{pa 1730 1752}%
\special{fp}%
\special{pa 1440 2100}%
\special{pa 1756 1786}%
\special{fp}%
\special{pa 1500 2100}%
\special{pa 1782 1820}%
\special{fp}%
\special{pa 1560 2100}%
\special{pa 1808 1852}%
\special{fp}%
\special{pa 1620 2100}%
\special{pa 1834 1886}%
\special{fp}%
\special{pa 1680 2100}%
\special{pa 1860 1920}%
\special{fp}%
\special{pa 1740 2100}%
\special{pa 1886 1954}%
\special{fp}%
\special{pa 1800 2100}%
\special{pa 1912 1988}%
\special{fp}%
}}%
%
{\color[named]{Black}{%
\special{pn 4}%
\special{pa 1860 2100}%
\special{pa 1940 2022}%
\special{fp}%
\special{pa 1920 2100}%
\special{pa 1966 2056}%
\special{fp}%
\special{pa 1980 2100}%
\special{pa 1992 2090}%
\special{fp}%
}}%
%
{\color[named]{Black}{%
\special{pn 13}%
\special{pa 2000 2100}%
\special{pa 600 1200}%
\special{fp}%
\special{pa 1580 2100}%
\special{pa 600 1470}%
\special{fp}%
\special{pa 600 930}%
\special{pa 1580 1560}%
\special{fp}%
}}%
\put(6.4000,-22.5000){\makebox(0,0)[lb]{$A$}}%
\put(18.2000,-22.7000){\makebox(0,0)[lb]{$B$}}%
\put(6.4000,-2.5000){\makebox(0,0)[lb]{$C$}}%
\put(5.5000,-12.5000){\makebox(0,0)[rt]{$D$}}%
\put(5.4000,-14.2000){\makebox(0,0)[rt]{$F$}}%
\put(5.4000,-8.8000){\makebox(0,0)[rt]{$H$}}%
\put(14.4000,-22.6000){\makebox(0,0)[lb]{$E$}}%
\put(16.0000,-15.2000){\makebox(0,0)[lb]{$G$}}%
\put(21.3000,-22.6000){\makebox(0,0)[lb]{$1/q$}}%
\put(2.6000,-1.8000){\makebox(0,0)[lb]{$1/\rho$}}%
\end{picture}%
\\
Figure 1. range of acceptable pair $(\rho,q)$. 
\end{center}
The following is a Strichartz' estimate adopted to the Lorentz-modified-Besov and
Lorentz-modified-Sobolev spaces.
\begin{proposition}[Strichartz' estimate \cite{NO}]\label{prop:Strichartz}
Let $s\in \R$ and $t_0 \in I \subset \R$.
\begin{enumerate}
\item For any admissible pair $(\rho,q)$, we have
\begin{multline*}
	\norm{U(t)f}_{L^\I(\R,\Ms{s}22)} + 
	\LMn{U(t)f}{\rho}sq \\
	+ \norm{U(t)f}_{L^\I(\R,\Xts{s}2)} + 
	\LXtn{U(t)f}{\rho}sq
	\le C_{\rho} \norm{f}_{\F \dot{H}^s}.
\end{multline*}
\item For any admissible pairs $(\rho_1,q_1)$ and $(\rho_2,q_2)$,
\begin{multline*}
	\norm{ \int_0^t U(t-s) f(s) ds}_{L^\I(I,\dot{M}^s_{2,2})} + 
	\LMnI{\int_0^t U(t-s) f(s) ds}{\rho_1}s{q_1}I \\
	\le C \LMnI{f}{\rho_2^\prime}s{q_2^\prime}I,
\end{multline*}
\begin{multline*}
	\norm{ \int_0^t U(t-s) f(s) ds}_{L^\I(I,\dot{X}^s_{2})} + 
	\norm{\int_0^t U(t-s) f(s) ds}_{L^{\rho_1,2}(I;\dot{X}^s_{q_1})}\\
	\le C \norm{f}_{L^{\rho_2',2}(I;\dot{X}^s_{q_2'})}.
\end{multline*}
\end{enumerate}
\end{proposition}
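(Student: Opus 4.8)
The plan is to reduce both parts to the classical Strichartz estimates for $U(t)=e^{it\Delta}$ on $\R^N$ (with the standard Lorentz improvement in the time variable), exploiting the fact that the modified spaces $\dot{X}^s_q(t)$ and $\dot{M}^s_{q,r}(t)$ are nothing but an $L^q$–type norm conjugated by the free flow. The only analytic inputs I would invoke are: for every admissible pair $(\rho,q)$,
\[
	\|U(t)g\|_{L^\I_t L^2_x}+\|U(t)g\|_{L^{\rho,2}_t L^q_x}\le C\|g\|_{L^2_x},
\]
and, for admissible $(\rho_1,q_1),(\rho_2,q_2)$,
\[
	\Bigl\|\int_0^t U(t-\tau)h(\tau)\,d\tau\Bigr\|_{L^\I_t L^2_x}+\Bigl\|\int_0^t U(t-\tau)h(\tau)\,d\tau\Bigr\|_{L^{\rho_1,2}_t L^{q_1}_x}\le C\|h\|_{L^{\rho_2',2}_t L^{q_2'}_x}.
\]
The Lorentz refinement in $t$ is legitimate because every admissible pair in our terminology satisfies $\rho>2$ (since $\delta(q)<1$ for $2<q<2^{*}$), and the endpoint $(\I,2)$ only ever occurs on the left, as usual.

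First I would treat the modified Sobolev estimates. Since $\|g\|_{\dot{X}^s_q(t)}=\|U(t)|x|^sU(-t)g\|_{L^q}$ and $U(-t)U(t)=\mathrm{id}$, one has $\|U(t)f\|_{\dot{X}^s_q(t)}=\|U(t)(|x|^sf)\|_{L^q}$ and, for $q=2$, $\|U(t)f\|_{\dot{X}^s_2(t)}=\||x|^sf\|_{L^2}=\|f\|_{\F\dot{H}^s}$. Thus the homogeneous $\dot{X}$–estimate is literally the scalar Strichartz estimate above applied to the $L^2$ function $|x|^sf$. For the Duhamel term I would use the group law in the form $U(t)|x|^sU(-\tau)=U(t-\tau)\bigl[U(\tau)|x|^sU(-\tau)\bigr]$ to rewrite
\[
	U(t)|x|^sU(-t)\int_0^t U(t-\tau)F(\tau)\,d\tau=\int_0^t U(t-\tau)\,G(\tau)\,d\tau,\qquad G(\tau):=U(\tau)|x|^sU(-\tau)F(\tau),
\]
and, noting that $\|G(\tau)\|_{L^p}=\|F(\tau)\|_{\dot{X}^s_p(\tau)}$ for every $p$, the inhomogeneous $\dot{X}$–estimate follows from the scalar inhomogeneous Strichartz estimate applied to $G$.

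Next I would treat the modified Besov estimates, where $\varphi_j$ is the multiplication operator $f\mapsto\varphi_0(2^{-j}\cdot)f$, so that $\|U(t)f\|_{\dot{M}^s_{q,2}(t)}=\bigl\|2^{js}U(t)(\varphi_jf)\bigr\|_{\ell^2_j L^q_x}$; in particular $\|f\|_{\dot{M}^s_{2,2}(0)}=\bigl\|2^{js}\varphi_jf\bigr\|_{\ell^2_j L^2_x}\sim\|f\|_{\F\dot{H}^s}$ by Lemma~\ref{lem:embedding1}\,(4) (finite overlap of the $\supp\varphi_j$). Since $\rho,q\ge 2$ for an admissible pair, Minkowski's inequality lets me pull the $\ell^2_j$ sum outside the $L^{\rho,2}_t L^q_x$ norm; applying the scalar Strichartz estimate to each $\varphi_jf$ then gives $\|U(t)f\|_{L^{\rho,2}_t\dot{M}^s_{q,2}(t)}\le C\bigl\|2^{js}\|\varphi_jf\|_{L^2}\bigr\|_{\ell^2_j}=C\|f\|_{\F\dot{H}^s}$, while the $L^\I_t\dot{M}^s_{2,2}$ term reduces to $\|f\|_{\F\dot{H}^s}$ via $\dot{M}^s_{2,2}(t)=\dot{X}^s_2(t)$. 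For the Duhamel term, with $G_j(\tau):=U(\tau)\varphi_jU(-\tau)F(\tau)$ the same group-law computation gives $U(t)\varphi_jU(-t)\int_0^t U(t-\tau)F\,d\tau=\int_0^t U(t-\tau)G_j(\tau)\,d\tau$ and $\bigl\|2^{js}\|G_j(\tau)\|_{L^p}\bigr\|_{\ell^2_j}=\|F(\tau)\|_{\dot{M}^s_{p,2}(\tau)}$, so applying the scalar inhomogeneous Strichartz estimate for each $j$ and commuting the $\ell^2_j$ sum through the $L^{\rho_1,2}_t$ (resp.\ $L^\I_t$) norm from outside and through the $L^{\rho_2',2}_t$ norm from inside yields the assertion.

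The only genuine content is the classical Lorentz-refined Strichartz estimate recalled at the start; the rest is the conjugation identities and Minkowski's inequality. The step that requires the most care — and the main (mild) obstacle — is tracking the Lorentz indices through the mixed Lorentz–$\ell^2$ norms and checking that the Minkowski interchanges point in the right direction: this is legitimate precisely because $\rho_2'\le2\le\rho_1,q_1$ for admissible pairs, i.e.\ because every admissible pair satisfies $\rho>2$. Alternatively, one could argue directly from the equivalent representations in \eqref{eq:Xn_alt}--\eqref{eq:Mn_alt}, reducing to Strichartz-type estimates for $M(-t)f$ in ordinary Sobolev and Besov spaces, but the dilation factors make the bookkeeping heavier.
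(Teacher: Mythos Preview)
The paper does not supply its own proof of this proposition; it is quoted directly from \cite{NO}. Your argument is correct and is essentially the standard one: the conjugation identity $U(t)\phi U(-t)\circ U(t)=U(t)\phi$ (with $\phi=|x|^s$ or $\phi=\varphi_j$) collapses the $\dot{X}^s_q(t)$- and $\dot{M}^s_{q,2}(t)$-norms of $U(t)f$ and of the Duhamel term to the classical Lorentz-refined Strichartz estimates for the $L^2$ functions $|x|^s f$ and $G_j=U(\tau)\varphi_jU(-\tau)F(\tau)$, and the $\ell^2_j\leftrightarrow L^{\rho,2}_t$ interchanges you single out are legitimate precisely because $L^{\rho,2}$ is $2$-convex for $\rho\ge2$ and $2$-concave for $\rho\le2$, which is exactly the regime forced by admissibility ($\rho_1>2$, $\rho_2'<2$).
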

An improved version of inhomogeneous Strichartz' estimate (the second estimate)
for the Lorentz-modified-Sobolev space will be required in our analysis.
Before stating it,
we introduce an immediate consequence of Strichartz' estimate
and Lemma \ref{lem:LMn_inclusion}.
\begin{proposition}\label{prop:LMn_bdd}
For pair $(\rho,q)$ satisfying $0< \delta(q)-(s_c-s) < \min(1,N/2)$
and $\frac2\rho-\delta(q)+s=s_c$
for some $s\le s_c$, it holds that
\[
	\LMn{U(t)f}\rho{s}q \le C \norm{f}_{\FHsc}.
\]
\end{proposition}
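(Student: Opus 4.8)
The plan is to deduce Proposition~\ref{prop:LMn_bdd} from the homogeneous Strichartz estimate of Proposition~\ref{prop:Strichartz}(1) together with the scaling-type embedding of Lemma~\ref{lem:LMn_inclusion}. First I would pick an auxiliary admissible pair $(\rho_0,q_0)$, i.e.\ one with $\rho_0\delta(q_0)=2$ and $2<q_0<2^*$; for such a pair Proposition~\ref{prop:Strichartz}(1) gives
\[
	\LMn{U(t)f}{\rho_0}{s_c}{q_0}\le C\norm{f}_{\FHsc},
\]
where I have chosen the regularity index in the modified Besov scale to be exactly $s_c$ (this is the natural scaling-critical choice, since $\norm{f}_{\F\dot H^{s_c}}=\norm{f}_{\FHsc}$). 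The point is then to pass from $(\rho_0,s_c,q_0)$ to the pair $(\rho,s,q)$ in the statement using Lemma~\ref{lem:LMn_inclusion}, whose hypothesis is precisely a matched triple of differences $s_c-s=\frac1{\rho_0}-\frac1\rho=\frac N q-\frac N{q_0}\ge 0$.

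Next I would check that the exponent relations in the hypothesis of Proposition~\ref{prop:LMn_bdd} force exactly this compatibility. From $\frac2\rho-\delta(q)+s=s_c$ we get $\frac2\rho-\delta(q)=s_c-s$, while admissibility of $(\rho_0,q_0)$ gives $\frac2{\rho_0}-\delta(q_0)=0$; subtracting,
\[
	\Big(\tfrac2\rho-\tfrac2{\rho_0}\Big)-\big(\delta(q)-\delta(q_0)\big)=s_c-s .
\]
Since $\delta(q)-\delta(q_0)=N(\frac1{q_0}-\frac1q)$, this is one linear relation among $\rho$, $q$ and the target $s_c-s$; combined with the freedom to choose $(\rho_0,q_0)$ along the admissible line, I can arrange $\frac1{\rho_0}-\frac1\rho=\frac1{q_0}\cdot0+\cdots$ — more precisely, I would choose $q_0$ so that $\frac N{q}-\frac N{q_0}=s_c-s$ (possible iff $q_0\le q$ and $q_0\in(2,2^*)$, which is where the constraint $0<\delta(q)-(s_c-s)<\min(1,N/2)$ enters: it says $\delta(q_0)=\delta(q)-(s_c-s)\in(0,\min(1,N/2))$, equivalently $q_0\in(2,2^*)$), and then the admissibility condition $\frac2{\rho_0}=\delta(q_0)$ pins down $\rho_0$, after which the remaining relation forces $\frac1{\rho_0}-\frac1\rho=s_c-s$ as well. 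Thus the triple $(\rho_0,s_c,q_0)\to(\rho,s,q)$ satisfies the hypothesis of Lemma~\ref{lem:LMn_inclusion} with $s_2-s_1=s_c-s$, and applying that lemma pointwise (after the time-Lorentz norm) yields
\[
	\LMn{U(t)f}\rho s q\le C\,\LMn{U(t)f}{\rho_0}{s_c}{q_0}\le C\norm{f}_{\FHsc},
\]
which is the claim.

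The main obstacle I anticipate is purely bookkeeping of the exponent arithmetic: one must verify that the two displayed linear constraints in the statement are jointly equivalent to the single scaling relation needed by Lemma~\ref{lem:LMn_inclusion}, and that the open condition $0<\delta(q)-(s_c-s)<\min(1,N/2)$ is exactly what guarantees the auxiliary pair $(\rho_0,q_0)$ is genuinely admissible (in particular $q_0<2^*$, which is where $N\ge3$ versus $N=1,2$ would be handled by the convention $2^*=\I$ for $N=1,2$). There is a minor subtlety in that Lemma~\ref{lem:LMn_inclusion} is stated for the spaces $L^{\rho,2}(\R,\dot M^s_{q,2})$ with second Besov index $2$, which matches the output of Proposition~\ref{prop:Strichartz}(1); no interpolation with a second endpoint is needed here, so the more general Lemma~\ref{lem:interpolation} is not required — the special case suffices. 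Once the index matching is confirmed, the proof is just a two-line chain of inequalities.
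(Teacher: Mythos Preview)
Your proposal is correct and follows exactly the paper's own proof: define the auxiliary admissible pair $(\rho_0,q_0)$ via $\delta(q_0)=\delta(q)-(s_c-s)$ and $\frac{2}{\rho_0}=\delta(q_0)$, invoke Proposition~\ref{prop:Strichartz}(1) at regularity $s_c$, then apply Lemma~\ref{lem:LMn_inclusion}. There are two harmless sign slips in your bookkeeping (it should read $\frac{N}{q_0}-\frac{N}{q}=s_c-s$ and $\frac{1}{\rho}-\frac{1}{\rho_0}=s_c-s$), but your stated conclusion $\delta(q_0)=\delta(q)-(s_c-s)\in(0,\min(1,N/2))$ is right and is exactly the admissibility check the paper performs.
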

\begin{proof}
Take $\widetilde{\rho}$ and $\widetilde{q}$ so that
\[
	\frac1{\widetilde{\rho}} = \frac1\rho - (s_c-s),\qquad
	\frac{N}{\widetilde{q}}= \frac{N}q +(s_c-s).
\]
One verifies that
\[
	\frac2{\widetilde{\rho}}= \delta(\widetilde{q})= \delta(q) - (s_c-s)\in (0,\min(1,N/2)),
\]
which implies that
$(\widetilde{\rho},\widetilde{q})$ is an admissible pair.
Hence, $\LMn{U(t)f}{\widetilde{\rho}}{s_c}{\widetilde{q}} \le C\norm{f}_{\FHsc}$
by the Strichartz estimate.
Hence the desired result is an immediate consequence of
Lemma \ref{lem:LMn_inclusion},
\end{proof}
\begin{remark}
Notice that a pair $(\rho,q)$ is not always acceptable in the preceding proposition.
Indeed, when $N\ge3$ the case $\I> q\ge 2^*$, which is equivalent to $\delta(q)\ge N/2$, is allowed.
\end{remark}
We now state the inhomogeneous Strichartz' estimate for the Lorentz-modified-Sobolev space
with non-admissible pairs.
See \cite{Fo,Ka,Ko,Vi} for this kind of extension with respect to $L^\rho_t L^q_x$ type norm.
\begin{proposition}\label{prop:imprvStrLS}
Let $s\in \R$.
Let $(\rho_i,q_i)$ ($i=1,2$) be two acceptable pairs.
If
\[
	\frac2{\rho_1} - \delta(q_1) + \frac2{\rho_2} - \delta(q_2) =0 
\]
then it holds that
\begin{equation}\label{eq:imprvStrLS}
	\norm{\int_0^t U(t-s) f(s) ds}_{L^{\rho_1,2}(I;\dot{X}^s_{q_1})}
	\le C \norm{f}_{L^{\rho_2',2}(I;\dot{X}^s_{q_2'})}.
\end{equation}
\end{proposition}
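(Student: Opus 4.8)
The plan is to reduce \eqref{eq:imprvStrLS} by an exact change of unknown to the case $s=0$, where $\dot{X}^0_{q}(t)=L^{q}$, and then to obtain the Lorentz-in-time estimate from the inhomogeneous Strichartz estimates with Lebesgue time exponents (available in the works cited just above the statement) by real interpolation in the time variable along the scaling line. Note first that the hypothesis $\frac{2}{\rho_1}-\delta(q_1)+\frac{2}{\rho_2}-\delta(q_2)=0$ is just the scaling identity $\frac{1}{\rho_1}+\frac{1}{\rho_2}=\frac{N}{2}\bigl(1-\frac{1}{q_1}-\frac{1}{q_2}\bigr)$.

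For the reduction I would first take $f$ in a dense subclass for which every step below is classical, and set $v(t)=\int_0^t U(t-\sigma)f(\sigma)\,d\sigma$. Since $U(-t)v(t)=\int_0^t U(-\sigma)f(\sigma)\,d\sigma$, inserting $U(\sigma)U(-\sigma)=\mathrm{id}$ and moving the multiplication operator $|x|^s$ inside the $\sigma$-integral gives $|x|^s U(-t)v(t)=\int_0^t U(-\sigma)g(\sigma)\,d\sigma$ with $g(\sigma):=U(\sigma)|x|^s U(-\sigma)f(\sigma)$, and hence
\[
  \norm{v(t)}_{\dot{X}^s_{q_1}(t)}=\norm{U(t)|x|^s U(-t)v(t)}_{L^{q_1}}=\norm{\int_0^t U(t-\sigma)g(\sigma)\,d\sigma}_{L^{q_1}}.
\]
By the very definition of the $\dot{X}^s_{q}(\sigma)$ norm, $\norm{g(\sigma)}_{L^{q_2'}}=\norm{f(\sigma)}_{\dot{X}^s_{q_2'}(\sigma)}$ for each $\sigma$, so $f\mapsto g$ is norm preserving from $L^{\rho_2',2}(I;\dot{X}^s_{q_2'})$ onto a dense subset of $L^{\rho_2',2}(I;L^{q_2'})$. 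Therefore \eqref{eq:imprvStrLS} follows from, and is in fact equivalent to, its $s=0$ instance, and from now on I take $s=0$.

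For $s=0$ and Lebesgue time norms, the bound $\norm{\int_0^t U(t-\sigma)g(\sigma)\,d\sigma}_{L^{\rho_1}(I;L^{q_1})}\le C\norm{g}_{L^{\rho_2'}(I;L^{q_2'})}$ for acceptable pairs satisfying the scaling identity is known; see \cite{Fo,Ka,Ko,Vi}. To upgrade the time integrability to the Lorentz indices I would keep $q_1,q_2$ fixed and perturb $(\rho_1,\rho_2)$ along the line $1/\rho_1+1/\rho_2=\text{const}$: since both pairs are \emph{strictly} acceptable, $(1/\rho_1,1/\rho_2)$ lies in the interior of the segment cut out on that line by the acceptability constraints, so I can write it as $(1-\theta)(1/\rho_1^{(0)},1/\rho_2^{(0)})+\theta(1/\rho_1^{(1)},1/\rho_2^{(1)})$ for two distinct acceptable choices $(\rho_i^{(j)},q_i)$, each still satisfying the scaling identity. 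Applying the Lebesgue-time estimate to the fixed linear operator $g\mapsto\int_0^t U(t-\sigma)g(\sigma)\,d\sigma$ for $j=0,1$ and interpolating by the real method with second index $2$, using $(L^{p_0}(I;B),L^{p_1}(I;B))_{\theta,2}=L^{p_\theta,2}(I;B)$ for Banach-space-valued Lebesgue spaces (with $B=L^{q_1}_x$ on the target and $B=L^{q_2'}_x$ on the source), gives exactly \eqref{eq:imprvStrLS}, because $1/\rho_1=(1-\theta)/\rho_1^{(0)}+\theta/\rho_1^{(1)}$ and $1/\rho_2=(1-\theta)/\rho_2^{(0)}+\theta/\rho_2^{(1)}$.

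The main obstacle is the second step: one must check that the present notion of \emph{acceptable} pair together with the scaling identity falls within the scope of the cited inhomogeneous Strichartz estimates---in particular that the extra off-scaling restrictions appearing in those works are automatically satisfied here---and that those estimates control the retarded integral $\int_0^t$ rather than only the non-retarded one (the passage between the two uses the Christ--Kiselev lemma when $1/\rho_1+1/\rho_2<1$ and a separate argument otherwise). An alternative that avoids this is to rerun the bilinear $TT^*$ argument directly: decompose $\int_0^t U(t-\sigma)g(\sigma)\,d\sigma$ dyadically in $|t-\sigma|$, bound each piece with the dispersive estimate $\norm{U(\tau)}_{L^{r'}\to L^{r}}\lesssim|\tau|^{-\delta(r)}$, and sum; since $|\tau|^{-\alpha}\in L^{1/\alpha,\infty}(\R)$, the Lorentz time norm on the right-hand side can then be handled from the outset through the generalized H\"older inequality (Proposition~\ref{prop:gH}), making the interpolation step unnecessary. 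The formal use of the unbounded operator $|x|^{-s}$ in the first step is justified by density.
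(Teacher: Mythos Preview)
Your proposal is correct and takes a genuinely different route from the paper. Your first step---the global conjugation $g(\sigma)=U(\sigma)|x|^{s}U(-\sigma)f(\sigma)$ reducing \eqref{eq:imprvStrLS} to its $s=0$ instance---is a clean observation that the paper does not isolate; the paper instead keeps the $\dot{X}^{s}_{q}$ weights throughout and rederives the dispersive bound $\Xtn{U(\tau)\varphi}{s}{q}{t}\le C|\tau|^{-\delta(q)}\Xtn{\varphi}{s}{q'}{t-\tau}$ inside the argument. For the second step, the paper does not cite the Lebesgue-time inhomogeneous estimates of \cite{Fo,Ka,Ko,Vi} and then interpolate; rather it runs a self-contained dyadic argument following \cite{NO}: decompose $K=\sum_j K_j$ with $K_jf=\int U(t-\tau)I_j(t-\tau)f(\tau)\,d\tau$, interpolate the pointwise bound \eqref{eq:imprvStrLS1} with the $L^2$ estimate \eqref{eq:imprvStrLS2} to reach \eqref{eq:imprvStrLS3}, and then sum using the discrete $J$-method together with the mapping property \eqref{eq:imprvStrLS4} of $f\mapsto I_j(t-\tau)f(\tau)$. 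This is precisely the ``alternative'' you sketch at the end.

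What each approach buys: your interpolation route is shorter and modular, but hinges on verifying that the present acceptability notion falls inside the range of \cite{Fo,Vi} for the retarded integral; here this is fine because $q_i<2^*$ forces $\delta(q_i)<1$, hence $\tfrac{1}{\rho_1}+\tfrac{1}{\rho_2}=\tfrac{\delta(q_1)+\delta(q_2)}{2}<1$, so Christ--Kiselev applies and the non-endpoint hypotheses of those references are met. The paper's direct argument is self-contained, avoids that literature check, and produces the Lorentz-$2$ index without a separate interpolation step; it also makes the role of the dispersive inequality transparent, which is reused verbatim for the modified-Besov analogue in Proposition~\ref{prop:imprvStr}.
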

\begin{remark}
The condition of Proposition \ref{prop:imprvStrLS} reads as follows:
In figure 1,
one of $(1/q_1,1/\rho_1)$, $(1/q_2,1/\rho_2)$ lies in the line segment $GH$ for some $0<k<1$,
and the other in $EF$ for the same $k$.
\end{remark}
\begin{proof}
The estimate is known
if either $1/{q_1}+1/q_2=1$ or $2/\rho_i-\delta(q_i)=0$.

Let us first consider the case $q_1\le q_2$.
Set $Kf=\int^t_0 U(t-s) f(s) ds$.
We decompose the integral as $K=K_j f$ with
$K_j f = \int U(t-\tau)I_j(t-\tau)f(\tau)d\tau$,
where $I_j$ denotes the characteristic function of $[2^j,2^{j+1})$.
We begin with the well-known estimate
\[
	\Lebn{U(s)\varphi}q \le C|s|^{-\delta(q)} \Lebn{\varphi}{q'}
\]
for $2\le q \le \I$. By definition of $\dot{X}^s_{q}$, we have
\begin{align*}
	\Xtn{U(\tau)\varphi}sqt &{}= \norm{U(\tau)U(t-\tau)|x|^s U(-(t-\tau))\varphi}_{L^q}\\
	&{} \le C|\tau|^{-\delta(q)}\norm{U(t-\tau)|x|^s U(-(t-\tau))\varphi}_{L^{q^\prime}}\\
	&{}=C|\tau|^{-\delta(q)} \Xtn{\varphi}s{q^\prime}{t-\tau}.
\end{align*}
Apply this estimate to obtain
\begin{align*}
	\Xtn{K_j f(t)}sqt 
	&{}\le	C \norm{I_j(t-\tau)\Xtn{U(t-\tau)f(\tau)}sqt }_{L^1_\tau}\\
	&{}\le C \norm{|t-\tau|^{-\delta(q)}I_j(t-\tau)\Xtn{f(\tau)}s{q'}{\tau} }_{L^1_\tau}\\
	&{}\le C 2^{-j\delta(q)} \norm{I_j(t-\tau)f(\tau)}_{L^1_\tau (\Xt{s}{q'}\tau)}
\end{align*}
for any fixed $t\in \R$.
Therefore, we have
\begin{equation}\label{eq:imprvStrLS1}
	\Xtn{K_j f(t)}s{q_2}t \le C 2^{-j\delta(q_2)} \norm{I_j(t-\tau)f(\tau)}_{L^1_\tau (\Xt{s}{q_2^\prime}\tau)}.
\end{equation}

On the other hand, if $2/r=\delta(q) \in (0,\min(1,N/2))$,
\begin{align*}
	&\Xtn{\int U(t-\tau)g(\tau)d\tau}s2t^2
		=  \Lebn{ \int |x|^s U(-\tau)g(\tau)d\tau}2^2\\
	&{}=  \iint \Jbr{|x|^s U(-\tau_1)g(\tau_1),|x|^s U(-\tau_2)g(\tau_2)}_{L^2_x}d\tau_1d\tau_2 \\
	&{}= \int 
	\Jbr{ (U(\tau_2)|x|^s U(-\tau_2)) \int U(\tau_2-\tau_1)g(\tau_1)d\tau_1,U(\tau_2)|x|^s U(-\tau_2)g(\tau_2)}_{L^2_x}d\tau_2\\
	&{}\le C \LXtn{\int U(\tau_2-\tau) g(\tau)d\tau}rsq
	\norm{g}_{L^{r'}(\R,\dot{X}^s_{q'})}\le C \norm{g}_{L^{r'}(\R,\dot{X}^s_{q'})}^2
\end{align*}
where we have applied \eqref{eq:imprvStrLS} and
used the inclusion $L^{r,2} \subset L^r$ and $L^{r'} \subset L^{r',2}$.
From this inequality, we have
\begin{equation}\label{eq:imprvStrLS2}
	\Xtn{K_j f(t)}s2t \le C \norm{I_j(t-\tau)f(\tau)}_{L^{\frac2{2-\delta(q_2)}}_\tau (\Xt{s}{q_2^\prime}\tau)}.
\end{equation}
Interpolating \eqref{eq:imprvStrLS1} and \eqref{eq:imprvStrLS2},
one sees that
\begin{equation}\label{eq:imprvStrLS3}
	\Xtn{K_j f(t)}s{q_1}t \le C 2^{-j\delta(q_1)}\norm{I_j(t-\tau)f(\tau)}_{L^{\frac2{2+\delta(q_1)-\delta(q_2)}}_\tau (\Xt{s}{q_2^\prime}\tau)}
\end{equation}
(recall that $2<q_1 < q_2$).

As shown in \cite{NO}, it follows that the operator
\[
	\mathcal{X}: f \mapsto I_j(t-\tau) f(\tau)
\]
is bounded as a map
\begin{equation}\label{eq:imprvStrLS4}
	\mathcal{X}: L^{\frac1\al,2} \to \ell^{\al-\beta-\gamma,2}_j L_t^{\frac1\beta} L_\tau^{\frac1\gamma}
\end{equation}
as long as $0\le \beta<\al<\gamma$, where $\ell^{s,2}$ is a weighted $\ell^2$ space
defined by the norm $\norm{a_j}_{\ell^{s,2}}:=\norm{2^{js}a_j}_{\ell^2}$.

We are now in a position to finish the proof.
For  $\lambda \in (0,\I)\setminus\{1\}$,
by the discrete $J$-functional method, we have
\begin{align*}
	\norm{\varphi}_{(A,B)_{\theta,q}} \sim_\lambda \inf_{\varphi=\sum_j \varphi_j}	
	\norm{\lambda^{j\theta}\norm{\varphi_j}_A + \lambda^{-j(1-\theta)}\norm{\varphi_j}_B }_{\ell^2}.
\end{align*}
Now, we fix $\rho\in(1,\rho_1)$ and chose $\lambda = 2^{-1/\rho}$ and $\varphi_j=K_jf$
to obtain
\begin{align*}
	\LXtn{Kf}{\rho_1}s{q_1} \sim_\rho {}& \norm{Kf}_{(L^\I(\dot{X}^s_{q_1}),
	L^\rho(\dot{X}^s_{q_1}))_{\frac{\rho}{\rho_1},2}} \\
	\le {}& C \norm{2^{-\frac{j}\rho_1}\norm{K_j f}_{L^\I(\dot{X}^s_{q_1})}
	+ 2^{j(\frac1\rho-\frac{1}{\rho_1})}\norm{K_j f}_{L^\rho(\dot{X}^s_{q_1})}}_{\ell^2}
\end{align*}
By means of \eqref{eq:imprvStrLS3} and \eqref{eq:imprvStrLS4}, we conclude that
\begin{align*}
	\LXtn{Kf}{\rho_1}s{q_1} 
	\le {}&C \norm{\mathcal{X}f}_{\ell^{-\delta(q_1)-\frac1{\rho_1},2} L^\I_t
	L^{\frac2{2+\delta(q_1)-\delta(q_2)}}_\tau (\Xt{s}{q_2^\prime}\tau)}\\
	&{}+ C \norm{\mathcal{X}f}_{\ell^{-\delta(q_1)+\frac1\rho-\frac1{\rho_1},2} L^\rho_t
	L^{\frac2{2+\delta(q_1)-\delta(q_2)}}_\tau (\Xt{s}{q_2^\prime}\tau)}\\
	\le{}& C\LXtn{f}{\rho_2^\prime}s{q_2^\prime}
\end{align*}
as long as
\[
	0 < \frac1{\rho_2^\prime} < 1 + \frac{\delta(q_1)}2- \frac{\delta(q_2)}2
\]
is valid. This condition is equivalent to $\rho_1 \delta(q_1)>1$.
\end{proof}
Similarly, we have a inhomogeneous estimate non-admissible for the Lorentz-modified-Sobolev space.
\begin{proposition}\label{prop:imprvStr}
Let $(\rho_i,q_i)$ ($i=1,2$) be two acceptable pairs.
If $s\ge0$ and
\[
	\frac2{\rho_1} - \delta(q_1) + \frac2{\rho_2} - \delta(q_2) =0 
\]
then it holds that
\begin{equation}\label{eq:imprvStr}
	\LMn{\int_0^t U(t-s) f(s) ds}{\rho_1}s{q_1}
	\le C \LMn{f}{\rho_2^\prime}s{q_2^\prime}.
\end{equation}
\end{proposition}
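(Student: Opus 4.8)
The plan is to run the proof of Proposition~\ref{prop:imprvStrLS} essentially verbatim, replacing the modified Sobolev space $\dot X^s_q(t)$ by the modified Besov space $\dot M^s_{q,2}(t)$ throughout and keeping the internal $\ell^2_\mu$ Littlewood--Paley summation as an \emph{inner} index at every stage, so that it is never exchanged with a time-Lorentz norm; this is exactly what makes a direct deduction from Proposition~\ref{prop:imprvStrLS} via the embeddings of Lemma~\ref{lem:embedding1} fail and why a separate argument is needed. We may assume that neither pair is admissible, the admissible case being already covered for $L^{\rho,2}(\dot M^s_{q,2})$ by Proposition~\ref{prop:Strichartz}(2), and, arguing as in Proposition~\ref{prop:imprvStrLS}, that $2<q_1\le q_2$. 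With $Kf:=\int_0^tU(t-\tau)f(\tau)\,d\tau$, I would decompose $K=\sum_{j\in\Z}K_j$ in the time variable by the cut-offs $I_j$ of $\{2^j\le t-\tau<2^{j+1}\}$, exactly as there.

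The work is in verifying three modified-Besov analogues of the ingredients of that proof. First, the \emph{dispersive bound} $\|U(\tau)\psi\|_{\dot M^s_{q,2}(t)}\le C|\tau|^{-\delta(q)}\|\psi\|_{\dot M^s_{q',2}(t-\tau)}$: writing $P_\mu^{(t)}:=U(t)\varphi_\mu U(-t)$ one has $P_\mu^{(t)}U(\tau)=U(\tau)\,U(t-\tau)\varphi_\mu U(-(t-\tau))=U(\tau)P_\mu^{(t-\tau)}$, so that the scalar estimate $\|U(\tau)\cdot\|_{L^q}\le C|\tau|^{-\delta(q)}\|\cdot\|_{L^{q'}}$ applied to each $P_\mu^{(t-\tau)}\psi$, with the weight $2^{\mu s}$ and $\ell^2_\mu$-summation, gives the claim (the free evolution is ``diagonal'' in $\mu$). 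Second, since $\dot M^s_{2,2}(t)=\dot X^s_2(t)$ by Lemma~\ref{lem:embedding1}, the $L^2$-endpoint is unchanged, and the bilinear $TT^\ast$ step goes through once the plain Hölder inequality in $L^q\times L^{q'}$ used there is replaced by the Besov bilinear bound
\[
 \bigl\langle |x|^sU(-\tau)A,\ |x|^sU(-\tau)B\bigr\rangle_{L^2_x}
 \le C\,\|A\|_{\dot M^s_{q,2}(\tau)}\,\|B\|_{\dot M^s_{q',2}(\tau)}
\]
for Hölder-conjugate $q,q'$; this follows by expanding $U(-\tau)A=\sum_\mu\varphi_\mu(x)\,U(-\tau)A$ and likewise for $B$, using $|x|^s\sim2^{\mu s}$ on $\supp\varphi_\mu$, the almost-orthogonality $\varphi_\mu\varphi_\nu=0$ for $|\mu-\nu|\ge2$, Hölder in $x$ on each surviving term, the identity $\varphi_\mu U(-\tau)=U(-\tau)P_\mu^{(\tau)}$, and Cauchy--Schwarz in $\mu$. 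Third, by the equivalence~\eqref{eq:Mn_alt} the map $g\mapsto|t|^sM(-t)g$ identifies $\dot M^s_{q,2}(t)$ isometrically with $\dot B^s_{q,2}$, so complex interpolation of homogeneous Besov spaces yields $[\dot M^s_{q_2,2}(t),\dot M^s_{2,2}(t)]_\theta=\dot M^s_{q_1,2}(t)$ with $1/q_1=(1-\theta)/q_2+\theta/2$.

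With these in hand the rest is a transcription of the earlier argument. The dispersive bound gives $\|K_jf(t)\|_{\dot M^s_{q_2,2}(t)}\le C2^{-j\delta(q_2)}\|I_j(t-\tau)f(\tau)\|_{L^1_\tau(\dot M^s_{q_2',2}(\tau))}$; the $TT^\ast$ step together with the admissible estimate of Proposition~\ref{prop:Strichartz}(2) gives $\|K_jf(t)\|_{\dot M^s_{2,2}(t)}\le C\|I_j(t-\tau)f(\tau)\|_{L^{2/(2-\delta(q_2))}_\tau(\dot M^s_{q_2',2}(\tau))}$; complex interpolation between these produces the analogue of~\eqref{eq:imprvStrLS3}, with target $\dot M^s_{q_1,2}(t)$ and gain $2^{-j\delta(q_1)}$; the auxiliary bound~\eqref{eq:imprvStrLS4} for $f\mapsto I_j(t-\tau)f(\tau)$ is scalar and is used unchanged; and the final assembly by the discrete $J$-functional method, using $(L^\infty(\dot M^s_{q_1,2}),L^\rho(\dot M^s_{q_1,2}))_{\theta,2}=L^{\rho_1,2}(\dot M^s_{q_1,2})$, closes as before under the condition $\rho_1\delta(q_1)>1$, which is part of acceptability.

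The main obstacle, and the only genuinely new point, is the Besov bilinear bound in the $TT^\ast$ step: in the Sobolev case one only needs Hölder in $L^q\times L^{q'}$, whereas here one must reconstruct the full $\dot M^s_{q,2}\times\dot M^s_{q',2}$ pairing from the physical-space Littlewood--Paley pieces while tracking the weight $|x|^s$. Everything else is a notational substitution of $\dot X^s_q(t)$ by $\dot M^s_{q,2}(t)$. The hypothesis $s\ge0$ does not appear essential for the argument sketched here beyond making the quoted Besov-space facts apply cleanly, and it is in any case the only range of $s$ used in the sequel.
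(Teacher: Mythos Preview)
Your approach is exactly what the paper intends: its proof of this proposition reads in full ``The proof is done in the essentially same way. We omit details.'' Your plan to transcribe the argument of Proposition~\ref{prop:imprvStrLS} with $\dot M^s_{q,2}(t)$ in place of $\dot X^s_q(t)$, using the commutation $P_\mu^{(t)}U(\tau)=U(\tau)P_\mu^{(t-\tau)}$ for the dispersive step and complex interpolation of Besov spaces via~\eqref{eq:Mn_alt} for the intermediate step, is correct.

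One point in your sketch of the bilinear bound needs care: the listed order ``H\"older in $x$, then the identity $\varphi_\mu U(-\tau)=U(-\tau)P_\mu^{(\tau)}$'' does not work as written, since H\"older first would leave you with $\lVert U(-\tau)P_\mu^{(\tau)}A\rVert_{L^q}$, and $U(-\tau)$ is not bounded on $L^q$ for $q\neq2$. The fix is to use the identity \emph{inside} the $L^2$ pairing: after localising and replacing $|x|^s$ by $2^{\mu s}$ on $\supp\varphi_\mu$, each surviving term is
\[
2^{2\mu s}\bigl\langle \varphi_\mu U(-\tau)A,\ \tilde\varphi_\mu U(-\tau)B\bigr\rangle_{L^2}
=2^{2\mu s}\bigl\langle U(-\tau)P_\mu^{(\tau)}A,\ U(-\tau)\tilde P_\mu^{(\tau)}B\bigr\rangle_{L^2},
\]
and unitarity of $U(-\tau)$ on $L^2$ (which you did not list) strips the evolution to give $2^{2\mu s}\langle P_\mu^{(\tau)}A,\tilde P_\mu^{(\tau)}B\rangle_{L^2}$, to which H\"older and then Cauchy--Schwarz in $\mu$ apply cleanly. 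Alternatively, since $P_\mu^{(t)}K_jf(t)=K_j\bigl(P_\mu^{(\cdot)}f\bigr)(t)$, you can bypass the bilinear bound entirely: run the full $s=0$ argument of Proposition~\ref{prop:imprvStrLS} for each $P_\mu^{(\cdot)}f$ and then take the weighted $\ell^2_\mu$-sum; the Minkowski swaps needed (outer $\ell^2$ against inner $L^1_\tau$ or $L^{r'}_\tau$ with $r'=2/(2-\delta(q_2))<2$) go in the right direction.
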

The proof is done in the essentially same way.
We omit details.
\subsection{Specific choice of function spaces}
We introduce several explicit spaces which is used throughout this paper.
The choice depends on whether $p\ge2$ or not.
Recall that we are concerned with the case \eqref{cond:p}.
Hence, $p\ge 2$ occurs only when $N\le 3$.
Similarly, $p<2$ implies $N\ge3$.
We introduce pairs
\begin{equation*}
	(\rho(W_1),q(W_1))=
	\left\{
	\begin{aligned}
	&\(\frac{2(p+1)}{N(p-1)-2}, \frac{2N(p+1)}{2N+4-N(p-1)}\) &p\ge 2,\\
	&\(\frac8{(N-2)(p-1)},\frac{4N}{2N-(N-2)(p-1)}\) & p<2,
	\end{aligned}
	\right.
\end{equation*}
\begin{equation*}
	(\rho(W_2),q(W_2))=
	\left\{
	\begin{aligned}
	&(\rho(W_1),q(W_1)) &p\ge 2,\\
	&\(\frac{4(p-1)}{(N+2)(p-1)-4}, \frac{N(p-1)}{3-p}\)& p<2,
	\end{aligned}
	\right.
\end{equation*}
\begin{equation*}
	(\rho(L),q(L))=
	\left\{
	\begin{aligned}
	&\(\frac{p^2-1}{4-(N-1)(p-1)},\frac{N(p^2-1)}{2N(p-1)-4} \)&p\ge 2,\\
	&\(\frac2{s_c+1},2^*\)& p<2,
	\end{aligned}
	\right.
\end{equation*}
and
\begin{equation*}
	(\rho(F),q(F)) := (\rho(W_1)',q(W_1)')=
	\(\frac{\rho(W_1)}{\rho(W_1)-1}, \frac{q(W_1)}{q(W_1)-1}\).
\end{equation*}
And, for an interval $I \subset \R$, we let
\begin{equation}\label{def:spaces}
\begin{aligned}
	W_1(I):={}& L^{\rho(W_1),2}(I,\Ms{s_c}{q(W_1)}2),\\
	W_2(I):={}& L^{\rho(W_2),2}(I,\Ms{s_c}{q(W_2)}2),\\
	W  (I):={}& W_1(I) \cap W_2(I),\\
	F  (I):={}& L^{\rho(F),2}(I,\Ms{s_c}{q(F)}2),\\
	L  (I):={}& L^{\rho(L),\I}(I,L^{q(L)}),\\
	S(I):={}& L^{\rho(L)}(I,L^{q(L)}).
\end{aligned}
\end{equation}
We omit $(I)$ when it is clear from the context.
When $p\ge2$, $W_1(I)=W_2(I)=W(I)$.
\begin{remark}
The pairs $(\rho(W_1),q(W_1))$ and $(\rho(W_2),q(W_2))$
are admissible.
The pair $(\rho(L),q(L))$ is acceptable, and satisfies
$\frac{2}{\rho(L)}-\delta(q(L))=s_c$.
Due to Lemma \ref{lem:LMn_inclusion} and basic property of Lorentz space,
the embedding
\begin{equation}\label{eq:W2L}
W_2(I)\hookrightarrow S(I) \hookrightarrow L(I)
\end{equation}
holds. 
Further, we have an identity
\begin{equation}\label{eq:expWLF}
	\frac1{\rho(W_1)}+\frac{p-1}{\rho(L)} = \frac1{\rho(F)},\quad
	\frac1{q(W_1)}+\frac{p-1}{q(L)} = \frac1{q(F)}.
\end{equation}
\end{remark}
\begin{remark}\label{rmk:linest}
By means of Strichartz' estimate (Proposition \ref{prop:Strichartz}),
Proposition \ref{prop:LMn_bdd}, and Lemma \ref{lem:embedding1} (3),
we have $\norm{U(t)f}_{L(\R)} + \norm{U(t)f}_{W(\R)} \le C \norm{f}_{\FHsc}$.
\end{remark}

When $p<2$, we use two more function spaces.
Let $s_0=s_0(p,N)$ be a real number given by
\begin{equation}\label{def:s_0}
	s_0=
	\begin{cases}
	\frac3{2(p-1)}+\frac38p-\frac74 &N=3,\,1+\frac2{3+2}<p<2,\\
	\frac2{p-1}+\frac{N(N-2)}8p-\frac{N^2+6N-8}{8} &N\ge4,\,1+\frac4{N+2}<p<1+\frac4N.
	\end{cases}
\end{equation}
Notice that $\frac1{p-1}-\frac{N-2}4 p < s_0 < (p-1)s_c$
as long as \eqref{cond:p} with $N\ge3$ and $p<2$.
We now set
\begin{align*}
	(\rho(X), q(X)) ={}& \( \(\frac{1}{p-1}-\frac{N-2}4-s_0\)^{-1} , \frac{2N}{N-2+2s_0} \) \\
	(\rho(Y), q(Y)) ={}& \( \(\frac{p}{p-1}-\frac{N-2}4p-s_0\)^{-1} , \frac{2N}{(N-2)p+2s_0} \)
\end{align*}
and define two more spaces;
\begin{equation}\label{def:spaces2}
	X(I):= L^{\rho(X),2}(I,\dot{X}^{s_0}_{q(X)}), \quad
	Y(I):= L^{\rho(Y),2}(I,\dot{X}^{s_0}_{q(Y)}).
\end{equation}
Remark that the relation
\begin{equation}\label{eq:expXLY}
	\frac1{\rho(X)}+\frac{p-1}{\rho(L)} = \frac1{\rho(Y)},\quad
	\frac1{q(X)}+\frac{p-1}{q(L)} = \frac1{q(Y)}.
\end{equation}
and the embedding $W_2(I) \hookrightarrow X(I) \hookrightarrow L(I)$ hold. 

\subsection{Estimates on the nonlinearity}
In this subsection, we collect several 
estimate on the nonlinearity.
\begin{lemma}\label{lem:lpt_nonlinearest1}
Let $I\subset \R$ be an interval.
There exists a constant $C$ such that
\begin{equation}\label{eq:lpt_nlest1}
	\norm{|u|^{p-1}u}_{F(I)}
	\le C \norm{u}_{L(I)}^{p-1} \norm{u}_{W_1(I)}
\end{equation}
whenever the right hand side makes sense.
\end{lemma}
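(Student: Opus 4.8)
The estimate \eqref{eq:lpt_nlest1} is a Hölder-type product estimate in the scale of Lorentz-modified-Besov spaces, so the plan is to reduce it, pointwise in $t$, to a product estimate in homogeneous Besov spaces via the equivalent representation \eqref{eq:Mn_alt}. Concretely, writing $w(t):=M(-t)u(t)$, one has $\Mn{|u|^{p-1}u}{s_c}{q(F)}2t \sim |t|^{s_c}\hBn{M(-t)(|u|^{p-1}u)}{s_c}{q(F)}2$; since $M(t)$ is a unimodular multiplier, $M(-t)(|u|^{p-1}u) = |M(-t)u|^{p-1}(M(-t)u) = |w(t)|^{p-1}w(t)$ up to a phase, so the time-frozen task is the fractional Leibniz/Besov estimate
\[
	\hBn{|w|^{p-1}w}{s_c}{q(F)}2 \le C \norm{w}_{L^{q(L)}}^{p-1}\hBn{w}{s_c}{q(W_1)}2.
\]

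First I would establish this frozen-time estimate. For $p\ge 2$ the nonlinearity $z\mapsto |z|^{p-1}z$ is $C^1$ with Hölder derivative of order $p-1$, and the standard fractional chain/Leibniz rule in Besov spaces (as in \cite{BL-Book}) gives the bound with the Hölder exponents dictated by \eqref{eq:expWLF}, namely $1/q(F)=1/q(W_1)+(p-1)/q(L)$, which is exactly the first identity in \eqref{eq:expWLF}. For $p<2$ the map is only Hölder continuous of order $p$, so I would instead use a Besov-space commutator/paraproduct argument: decompose $|w|^{p-1}w$ via Bony's paraproduct, estimate the high-frequency piece by the difference bound $||z_1|^{p-1}z_1-|z_2|^{p-1}z_2|\le C(|z_1|^{p-2}+|z_2|^{p-2})|z_1-z_2|$ applied to dyadic blocks, and sum in $\ell^2$; here one uses $0<s_c<1$, which is guaranteed under \eqref{cond:p}, so that the fractional smoothness $s_c$ does not exceed the Hölder regularity threshold of the nonlinearity.

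Second, with the frozen-time inequality in hand, I would multiply through by $|t|^{s_c}$ and distribute the weight: $|t|^{s_c} = |t|^{s_c}$ on the left matches $|t|^{s_c}$ on the factor $\hBn{M(-t)u}{s_c}{q(W_1)}2 \sim \Mn{u}{s_c}{q(W_1)}2t$, while the factor $\norm{M(-t)u}_{L^{q(L)}}^{p-1} = \norm{u(t)}_{L^{q(L)}}^{p-1}$ carries no weight, so
\[
	\Mn{|u|^{p-1}u}{s_c}{q(F)}2t \le C \norm{u(t)}_{L^{q(L)}}^{p-1}\Mn{u}{s_c}{q(W_1)}2t .
\]
Finally I would take the $L^{\rho(F),2}_t(I)$ Lorentz norm of both sides and apply the generalized Hölder inequality (Proposition \ref{prop:gH}) with exponents $1/\rho(F)=1/\rho(W_1)+(p-1)/\rho(L)$ — the first identity of \eqref{eq:expWLF} again — noting that the second Lorentz index works out because the second factor is raised to a power in the Lorentz space $L^{\rho(L),\I}$ (which is the $L$-space index) while the first sits in $L^{\rho(W_1),2}$, matching the target index $2$. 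This yields the claimed bound, with the right-hand side finite precisely when $u\in L(I)\cap W_1(I)$.

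The main obstacle is the fractional Besov product estimate in the regime $p<2$: there the nonlinearity is not differentiable, so the naive chain rule fails and one must carefully run a paraproduct decomposition, checking that the spatial exponents $q(W_1), q(L), q(F)$ and the smoothness index $s_c$ lie in the range where the Hölder-difference estimate closes in $\ell^2_j$. The bookkeeping of which Lorentz second-index ($2$ versus $\I$) attaches to which factor when applying Proposition \ref{prop:gH} is a secondary but genuine point that must be handled with care.
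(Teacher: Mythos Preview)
Your overall framework is correct and matches the paper: reduce via \eqref{eq:Mn_alt} and $M(-t)(|u|^{p-1}u)=|M(-t)u|^{p-1}(M(-t)u)$ to a frozen-time Besov product estimate, then apply Proposition~\ref{prop:gH} in time with the exponent relations \eqref{eq:expWLF}. The difference is in how the frozen-time inequality is proved. The paper does \emph{not} split cases or invoke paraproducts: since $0<s_c<1$, it uses the first-order difference characterization (Lemma~\ref{lem:B_equivalence} with $M=1$) together with the integral representation
\[
\delta_a(|v|^{p-1}v)=(\delta_a v)\!\int_0^1\!g_1(\theta v(x+a)+(1-\theta)v(x))\,d\theta+\overline{\delta_a v}\!\int_0^1\!g_2(\cdots)\,d\theta,
\]
which immediately gives the pointwise bound $|\delta_a(|v|^{p-1}v)|\le C|\delta_a v|\,[v]^{p-1}$, valid for \emph{all} $p>1$. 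Taking $L^{q(F)}$, $\sup_{|a|\le 2^{-j}}$, $2^{s_cj}$, and $\ell^2_j$ in that order yields the Besov estimate in one stroke. This is shorter than your route and avoids the fractional Leibniz/paraproduct machinery entirely; your case split is unnecessary for this lemma (it becomes relevant only in the \emph{difference} estimate, Lemma~\ref{lem:lpt_nonlinearest2}).

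One genuine slip: the pointwise bound you write for $p<2$, namely $||z_1|^{p-1}z_1-|z_2|^{p-1}z_2|\le C(|z_1|^{p-2}+|z_2|^{p-2})|z_1-z_2|$, is false in that regime because $|z|^{p-2}$ is singular at the origin. The correct bound (and the one the paper uses implicitly via the $g_j$'s) is $|F(z_1)-F(z_2)|\le C(|z_1|+|z_2|)^{p-1}|z_1-z_2|$, which holds for every $p>1$ and is exactly what makes the difference-characterization argument go through uniformly.
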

\begin{lemma}\label{lem:lpt_nonlinearest2}
Let $I\subset \R$ be an interval.
There exists a constant $C$ such that
\begin{equation}\label{eq:lpt_nlest2}
\begin{aligned}
	\norm{|u_1|^{p-1}u_1-|u_2|^{p-1}u_2}_{F(I)}
	\le{}& C \norm{u_1-u_2}_{W_1(I)}\norm{u_1}_{L(I)}^{p-1} \\&{} +
	C \norm{u_1-u_2}_{L(I)}^{p-1}\norm{u_2}_{W_1(I)}
\end{aligned}
\end{equation}
if $p<2$ and
\begin{multline}\label{eq:lpt_nlest22}
	\norm{|u_1|^{p-1}u_1-|u_2|^{p-1}u_2}_{F(I)}
	\le C \norm{u_1-u_2}_{W_1(I)}\norm{u_1}_{L(I)}^{p-1} \\ +
	C \norm{u_1-u_2}_{L(I)}(\norm{u_1}_{L(I)}+\norm{u_2}_{L(I)})^{p-2}\norm{u_2}_{W_1(I)}
\end{multline}
if $p\ge2$,
whenever the right hand side makes sense.
\end{lemma}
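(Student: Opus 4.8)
The plan is to establish the fractional-power inequalities by reducing them to the scalar pointwise bounds for the map $z\mapsto |z|^{p-1}z$ and then applying H\"older's inequality in the Lorentz-modified-Sobolev scale. Recall the elementary pointwise estimates: for $z_1,z_2\in\C$ one has $\bigl||z_1|^{p-1}z_1-|z_2|^{p-1}z_2\bigr|\le C|z_1-z_2|(|z_1|+|z_2|)^{p-1}$ when $1<p$, and the sharper $\bigl||z_1|^{p-1}z_1-|z_2|^{p-1}z_2\bigr|\le C|z_1-z_2|(|z_1|^{p-1}+|z_2|^{p-1})$ precisely when $1<p\le2$; for $p\ge2$ one also has control of the gradient of the nonlinearity, giving a Lipschitz-type bound with weight $(|z_1|+|z_2|)^{p-2}$ on $|z_1-z_2|$ but now $|z_1-z_2|$ enters only linearly (its $(p-1)$-power has been ``split off'' onto one of the two arguments). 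These are the source of the asymmetric right-hand sides in the statement.

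First I would treat \eqref{eq:lpt_nlest1}, which is the simplest case and also a prototype: since $F(I)=L^{\rho(F),2}(I,\dot M^{s_c}_{q(F),2})$ and by \eqref{eq:Mn_alt} the modified Besov norm transfers to an honest homogeneous Besov norm after conjugating by $M(-t)$, the spatial estimate $\norm{|u|^{p-1}u}_{\dot B^{s_c}_{q(F),2}}\le C\norm{u}_{L^{q(L)}}^{p-1}\norm{u}_{\dot B^{s_c}_{q(W_1),2}}$ is the fractional Leibniz (Kato--Ponce) inequality for Besov spaces, valid because $0<s_c<1$ and the H\"older relations $\tfrac1{q(F)}=\tfrac1{q(W_1)}+\tfrac{p-1}{q(L)}$ hold by \eqref{eq:expWLF}. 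Note this conjugation is harmless: $M(-t)$ is a unimodular multiplier and commutes through the pointwise nonlinearity up to the scaling factor that is absorbed, i.e. $M(-t)(|u|^{p-1}u)=|M(-t)u|^{p-1}(M(-t)u)$ exactly. Then integrating in time with the generalized H\"older inequality in Lorentz spaces (Proposition \ref{prop:gH}) against the time-exponent identity $\tfrac1{\rho(F)}=\tfrac1{\rho(W_1)}+\tfrac{p-1}{\rho(L)}$ from \eqref{eq:expWLF}, with the Lorentz second indices combining as $\tfrac12=\tfrac12+\tfrac{p-1}{\infty}$ (the $L(I)$ factors carry second index $\infty$), yields \eqref{eq:lpt_nlest1}.

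Next I would prove \eqref{eq:lpt_nlest2}. For $p<2$ the argument is the same template applied to the difference: the scalar bound $\bigl||u_1|^{p-1}u_1-|u_2|^{p-1}u_2\bigr|\le C|u_1-u_2|(|u_1|^{p-1}+|u_2|^{p-1})$ together with a fractional-Leibniz/commutator estimate for the $\dot B^{s_c}_{q(F),2}$ norm of a product, handling the low regularity $s_c<1<1/(p-1)$ by using that $z\mapsto|z|^{p-1}z$ is $C^1$ with H\"older-continuous derivative of order $p-1$; one distributes the $s_c$ derivatives either onto $u_1-u_2$ (producing $\norm{u_1-u_2}_{W_1}\norm{u_1}_L^{p-1}$) or onto the other factor (producing $\norm{u_1-u_2}_L^{p-1}\norm{u_2}_{W_1}$ after noting $s_c<(p-1)\cdot\text{something}$ — more precisely one needs $s_c$ to be admissible for the $\dot B^{s_c}$-boundedness of $u\mapsto|u|^{p-1}u$, which holds since $0<s_c<\min(1,1/(p-1))$ when $p<2$). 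Then Lorentz-H\"older in time closes it exactly as before. For $p\ge2$ (hence $N\le3$ and $s_c<1$), I would instead use $\bigl||u_1|^{p-1}u_1-|u_2|^{p-1}u_2\bigr|=\bigl|\int_0^1 \tfrac{d}{d\theta}|u_\theta|^{p-1}u_\theta\,d\theta\bigr|$ with $u_\theta=u_2+\theta(u_1-u_2)$, so the integrand is $G'(u_\theta)\cdot(u_1-u_2)$ with $G(z)=|z|^{p-1}z$ and $G'$ now genuinely $C^1$ (indeed $C^{1,p-2}$ if $2\le p<3$, smoother if $p\ge3$); a fractional Leibniz rule for $\dot B^{s_c}_{q(F),2}$ of the product $G'(u_\theta)(u_1-u_2)$ then either puts $s_c$ derivatives on $u_1-u_2$, giving $\norm{u_1-u_2}_{W_1}\norm{u_\theta}_L^{p-1}$, or puts them on $G'(u_\theta)$, using the chain-rule/composition estimate $\norm{G'(u_\theta)}_{\dot B^{s_c}_{*,*}}\lesssim \norm{u_\theta}_{L}^{p-2}\norm{u_\theta}_{\dot B^{s_c}_{*,*}}$ (valid since $G'\in C^{1,\cdot}$ and $s_c<1$), yielding the $(|u_1|+|u_2|)^{p-2}$ factor in \eqref{eq:lpt_nlest22}; bounding $\norm{u_\theta}_L\le\norm{u_1}_L+\norm{u_2}_L$ and integrating $\theta\in[0,1]$ and then $t\in I$ with Lorentz-H\"older finishes the proof.

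The main obstacle I expect is the \emph{low-regularity composition/fractional-Leibniz step for a nonsmooth nonlinearity in the Besov scale}: since $s_c$ can be small and $p$ can be close to $1$, the map $z\mapsto|z|^{p-1}z$ is only H\"older-$C^{1}$ (for $p<2$ it is merely $C^{0,p-1}$ at the origin as a function of modulus), so one cannot quote the classical Kato--Ponce inequality directly but must use a version for functions with limited smoothness — typically via a Littlewood--Paley paraproduct decomposition where the ``bad'' piece is estimated by the modulus-of-continuity bound $\norm{G(f)-G(g)}_{L^q}\lesssim \norm{|f-g|(|f|+|g|)^{p-1}}_{L^q}$ localized to each dyadic block. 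The bookkeeping of which Lebesgue exponent each factor lives in, and checking that all the intermediate H\"older relations are consistent with \eqref{eq:expWLF}, is routine but must be done carefully; the Lorentz second-index arithmetic ($\tfrac12=\tfrac12+\tfrac1\infty+\dots+\tfrac1\infty$) is straightforward since only one factor carries index $2$. I will state the product/composition estimates in Besov spaces as standard and cite \cite{BL-Book} (and the fractional chain rule), and the conjugation by $M(-t)$ via \eqref{eq:Mn_alt} makes everything reduce to fixed-time Besov estimates plus Lorentz-H\"older in $t$.
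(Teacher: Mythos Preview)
Your overall architecture is right and matches the paper: conjugate by $M(-t)$ via \eqref{eq:Mn_alt}, reduce to a fixed-time homogeneous Besov estimate, then close with the generalized H\"older inequality in $t$ using \eqref{eq:expWLF}. The difference is in how the spatial Besov estimate is obtained.

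The paper does \emph{not} invoke a fractional Leibniz/Kato--Ponce product rule or a fractional chain rule here. Instead it uses the finite-difference characterization of the Besov norm (Lemma~\ref{lem:B_equivalence}). Writing $v_j=M(-t)u_j$ and expanding $\delta_a(|v_1|^{p-1}v_1)-\delta_a(|v_2|^{p-1}v_2)$ via the identity \eqref{eq:lpt_nonlinearest1} gives
\[
(\delta_a v_1-\delta_a v_2)\!\int_0^1\! g_j(tv_1(\cdot+a)+(1-t)v_1)\,dt
+\delta_a v_2\!\int_0^1\!\bigl[g_j(\cdots v_1\cdots)-g_j(\cdots v_2\cdots)\bigr]\,dt.
\]
For $p<2$ one bounds the bracket by $C[v_1-v_2]^{p-1}$ using H\"older continuity of $g_j$, yielding the pointwise estimate $|\delta_a(\cdot)|\le C|\delta_a(v_1-v_2)|[v_1]^{p-1}+C|\delta_a v_2|[v_1-v_2]^{p-1}$; for $p\ge2$ one uses $|g_j(z_1)-g_j(z_2)|\le C|z_1-z_2|(|z_1|+|z_2|)^{p-2}$ instead. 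Taking $L^{q(F)}$, then $\sup_{|a|\le2^{-j}}$, then $\ell^2_j$ with weight $2^{js_c}$ gives the spatial Besov bound directly.

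Your product-rule route does work for $p\ge2$ (the chain rule $\norm{G'(u)}_{\dot B^{s_c}}\lesssim\norm{u}_L^{p-2}\norm{u}_{\dot B^{s_c}}$ is legitimate there), though it produces the slightly weaker symmetric term $(\norm{u_1}_{W_1}+\norm{u_2}_{W_1})$ rather than $\norm{u_2}_{W_1}$. But for $p<2$ there is a real gap: writing the difference as $(u_1-u_2)\int_0^1 G'(u_\theta)\,d\theta$ and applying a product rule gives a term $\norm{u_1-u_2}_{L}\cdot\norm{G'(u_\theta)}_{\dot B^{s_c}_*}$, with $u_1-u_2$ entering \emph{linearly}, not to the power $p-1$ as in \eqref{eq:lpt_nlest2}. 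To then estimate $\norm{G'(u_\theta)}_{\dot B^{s_c}_*}$ by a fractional chain rule (e.g.\ Lemma~\ref{lem:lpt_fractional}) you would need $s_c<p-1$, which fails in part of the range \eqref{cond:p} when $p<2$. The specific structure $\norm{u_1-u_2}_L^{p-1}\norm{u_2}_{W_1}$ arises precisely because the finite difference $\delta_a$ lands on $v_2$ while the H\"older bound on $g_j$ absorbs $v_1-v_2$ to the power $p-1$; a paraproduct decomposition does not see this. So for $p<2$ you should switch to the difference characterization, as the paper does.
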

By \eqref{eq:W2L}, the $L(I)$ norms can be replaced by $S(I)$ norm
or $W_2(I)$ norm in the above lemmas.
To prove these lemmas, we introduce difference representation
of Besov norm (see \cite{BL-Book}).
\begin{lemma}\label{lem:B_equivalence}
Let $s >0$ and $q,r \in [1,\I]$.
Let $M$ be an integer larger than $s$.
Let $\delta=\delta_a$ be a difference operator, i.e.\ $\delta f (x) = f(x+a)-f(x)$.
Then, we have the following equivalent expression of the Besov norm:
\[
	\norm{u}_{\dot{B}^s_{q,r}} 
	\sim \norm{2^{js} \sup_{|a| \le 2^{-j}} \norm{\delta^M u}_{L^q} }_{\ell^r(\Z)}.
\]
\end{lemma}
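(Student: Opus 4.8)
The plan is to prove the two inequalities separately by comparing $\norm{\delta_a^Mu}_{L^q}$ with the Littlewood--Paley pieces $u_j:=\F^{-1}[\varphi_j\F u]$, for which $\hBn{u}{s}{q}{r}\sim\norm{2^{js}\norm{u_j}_{L^q}}_{\ell^r(\Z)}$. Two elementary pointwise facts are used repeatedly. First, iterating $\delta_af(x)=\int_0^1 a\cdot\nabla f(x+ta)\,dt$ and applying the Bernstein inequality on $u_j$ gives $\norm{\delta_a^Mu_j}_{L^q}\le C\,|a|^M\norm{\nabla^Mu_j}_{L^q}\le C(2^j|a|)^M\norm{u_j}_{L^q}$. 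Second, trivially $\norm{\delta_a^Mu_j}_{L^q}\le 2^M\norm{u_j}_{L^q}$ for every $a$. Since $\delta_a^M$ annihilates polynomials of degree $<M$ and $0<s<M$, all the expressions involved are well defined on $\dot B^s_{q,r}$.

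For the inequality $\hBn{u}{s}{q}{r}\le C\norm{2^{js}\sup_{|a|\le 2^{-j}}\norm{\delta_a^Mu}_{L^q}}_{\ell^r(\Z)}$, the task is to reconstruct each $u_j$ from a single $M$-th difference at scale $2^{-j}$. The obstruction is that the Fourier symbol $(e^{ia\cdot\xi}-1)^M$ of $\delta_a^M$ vanishes when $a\perp\xi$, so one cannot simply divide by it on the whole dyadic shell $\{|\xi|\sim 2^j\}$. To get around this I would fix, once and for all, a finite covering of $\R^N\setminus\{0\}$ by open cones $\Gamma_1,\dots,\Gamma_K$ together with unit vectors $e_\nu$ such that $|e_\nu\cdot\xi|\ge c|\xi|$ on $\Gamma_\nu$, and a subordinate splitting $\varphi_j=\sum_\nu\varphi_j^{(\nu)}$ with $\varphi_j^{(\nu)}(\xi)=\varphi_0^{(\nu)}(2^{-j}\xi)$ and $\supp\varphi_0^{(\nu)}\subset\Gamma_\nu\cap\{2^{-1}<|\xi|<2\}$. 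Then for $a_\nu:=2^{-j}e_\nu$ one has $|a_\nu\cdot\xi|\in[c/2,2]$ on $\supp\varphi_j^{(\nu)}$, so $|e^{ia_\nu\cdot\xi}-1|$ is bounded below there uniformly in $j$, and $m_j^{(\nu)}(\xi):=\varphi_j^{(\nu)}(\xi)(e^{ia_\nu\cdot\xi}-1)^{-M}$ becomes, after the scaling $\xi\mapsto 2^{-j}\xi$, a fixed smooth bump; hence $\norm{\F^{-1}m_j^{(\nu)}}_{L^1}\le C$ independently of $j$. Since $u_j=\sum_\nu m_j^{(\nu)}(D)\,\delta_{a_\nu}^Mu$, Young's convolution inequality gives $\norm{u_j}_{L^q}\le C\sum_\nu\norm{\delta_{a_\nu}^Mu}_{L^q}\le C\sup_{|a|\le 2^{-j}}\norm{\delta_a^Mu}_{L^q}$, and multiplying by $2^{js}$ and taking $\ell^r(\Z)$ finishes this half. (Only $M\in\N$ and the shape of $\varphi_0$ enter here; no condition on $s$ is needed.)

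For the reverse inequality, write $\delta_a^Mu=\sum_{j\in\Z}\delta_a^Mu_j$ and, for $|a|\le 2^{-k}$, combine the two elementary facts above into $\norm{\delta_a^Mu_j}_{L^q}\le C\min(1,2^{(j-k)M})\norm{u_j}_{L^q}$, valid for all $j$. Setting $c_j:=2^{js}\norm{u_j}_{L^q}$, so that $\{c_j\}\in\ell^r(\Z)$ with norm $\sim\hBn{u}{s}{q}{r}$, this yields
\[
	2^{ks}\sup_{|a|\le 2^{-k}}\norm{\delta_a^Mu}_{L^q}\le C\sum_{j\in\Z}\beta_{k-j}c_j,\qquad
	\beta_m:=\min\big(2^{ms},2^{m(s-M)}\big).
\]
Because $0<s<M$ (this is precisely where the hypotheses $s>0$ and $M>s$ are used), $\beta_m$ decays geometrically as $m\to+\I$ and as $m\to-\I$, so $\{\beta_m\}\in\ell^1(\Z)$; discrete Young's inequality then gives $\norm{2^{ks}\sup_{|a|\le 2^{-k}}\norm{\delta_a^Mu}_{L^q}}_{\ell^r(\Z)}\le C\norm{\beta}_{\ell^1}\norm{c}_{\ell^r}\le C\hBn{u}{s}{q}{r}$.

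The main obstacle is the first inequality, specifically the degeneracy of $(e^{ia\cdot\xi}-1)^M$ near $a\perp\xi$ that forces the cone decomposition; the second inequality is a routine geometric-series summation via Young's inequality. An alternative, less hands-on route would be to pass through the continuous modulus-of-continuity characterisation $\hBn{u}{s}{q}{r}\sim\big(\int_0^\I(t^{-s}\sup_{|a|\le t}\norm{\delta_a^Mu}_{L^q})^r\,\frac{dt}{t}\big)^{1/r}$ as in \cite{BL-Book} and then discretise $t=2^{-j}$; that route still requires $0<s<M$.
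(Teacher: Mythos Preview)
The paper does not supply its own proof of this lemma: it simply records the difference characterisation of the homogeneous Besov norm and refers to \cite{BL-Book}. Your argument is correct and is essentially the standard textbook proof. The cone decomposition you introduce to invert the symbol $(e^{ia\cdot\xi}-1)^M$ on each Littlewood--Paley shell is exactly the device needed to handle the degeneracy $a\perp\xi$, and your scaling check that $\F^{-1}m_j^{(\nu)}$ has $L^1$-norm independent of $j$ is the right way to close that direction. The reverse inequality via the kernel $\beta_m=\min(2^{ms},2^{m(s-M)})\in\ell^1(\Z)$ and discrete Young is also correct, and you correctly isolate where the hypotheses $s>0$ and $M>s$ enter. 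The alternative you mention at the end (pass through the continuous modulus-of-continuity form in \cite{BL-Book} and then discretise) is in fact closer to how the reference presents it, but your direct dyadic argument is equally valid and arguably more transparent for the purposes of this paper, where only the discrete form is ever used.
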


\begin{proof}[Proof of Lemma \ref{lem:lpt_nonlinearest1}]
Put
$g_1(z) := \frac{\d}{\d z}(|z|^{p-1}z)$
and 
$g_2(z) := \frac{\d}{\d \overline{z}}(|z|^{p-1}z)$.
It is obvious that $|g_j(z)|\le C |z|^{p-1}$. 
For $a\in \R^N$, we use the notation $[f](x)=|f(x)|+|f(x+a)|$.

Set $v=M(-t)u$. Notice that $M(-t)|u|^{p-1}u=|v|^{p-1}v$.
By an elementary computation,
\begin{multline}\label{eq:lpt_nonlinearest1}
		\delta_a (|v|^{p-1} v) = (\delta_a v) \int_0^1 g_1(tv(x+a) +(1-t)v(x)) dt\\
	+ \overline{\delta_a v} \int_0^1 g_2(tv(x+a) +(1-t)v(x)) dt.
\end{multline}
Hence, 
\[
	|\delta_a(M(-t)|u|^{p-1}u)| = |\delta_a (|v|^{p-1} v)|\le C |\delta_a v| [v]^{p-1}.
\]
We deduce from the second relation of \eqref{eq:expWLF} that
\[
	\Lebn{\delta_a (M(-t)|u|^{p-1} u)}{q(F)}\le C \Lebn{u}{q(L)}^{p-1}
	\Lebn{\delta_a (M(-t)u)}{q(W_1)}.
\]
Then, taking $\sup_{|a|\le 2^{-j}}$, multiplying by $2^{s_c j}$,
and then taking $\ell^2$-norm in $j$, we see from
Lemma \ref{lem:B_equivalence} that
\[
	\norm{M(-t)|u|^{p-1}u}_{\dot{B}^{s_c}_{q(F),2}}
	\le C \Lebn{u}{q(L)}^{p-1}
	\norm{M(-t)u}_{\dot{B}^{s_c}_{q(W_1),2}}
\]
The lemma now follows from \eqref{eq:Mn_alt} and the
generalized H\"older inequality
in time.
\end{proof}
\begin{proof}[Proof of Lemma \ref{lem:lpt_nonlinearest2}]
We keep the notation $g_1$ and $g_2$.
Let us first consider the case $p<2$.
It is clear that $g_1$ and $g_2$ 
are H\"older continuous of order $p-1$.
Set $v_j=M(-t)u_j$.
We deduce from \eqref{eq:lpt_nonlinearest1} that
\begin{align*}
	&\delta (|v_1|^{p-1} v_1 - |v_2|^{p-1} v_2 ) \\
	={} & \delta (|v_1|^{p-1} v_1) - \delta (|v_2|^{p-1} v_2) \\
	={} & (\delta v_1 - \delta v_2) \int_0^1 g_1(tv_1(x+a) +(1-t)v_1(x)) dt\\
	&{} + \overline{\delta v_1 -\delta v_2 } \int_0^1 g_2(tv_1(x+a) +(1-t)v_1(x)) dt \\
	&{} + \delta v_2 \int_0^1 g_1(tv_1(x+a) +(1-t)v_1(x)) - g_1(tv_2(x+a) +(1-t)v_2(x)) dt \\
	&{} + \overline{\delta v_2 } \int_0^1 g_2(tv_1(x+a) +(1-t)v_1(x))- g_2(tv_2(x+a) +(1-t)v_2(x)) dt.
\end{align*}
Since $g_j$ is H\"older continuous of order $p-1$, we obtain
\[
	|\delta (|v_1|^{p-1} v_1 - |v_2|^{p-1} v_2 )|
	\le C |\delta (v_1-v_2)| [v_1]^{p-1} + C|\delta v_2| [v_1-v_2]^{p-1}.
\]
Arguing as in the previous lemma, we obtain \eqref{eq:lpt_nlest2}.

On the other hand, when $p\ge2$ there exists a constant $C$ such that
\[
	|g_1(z_1)-g_1(z_2)| + |g_2(z_1)-g_2(z_2)|
	\le C |z_1-z_2|(|z_1|+|z_2|)^{p-2}
\]
for any $z_1,z_2\in \C$. Hence, we obtain
\[
	|\delta (|v_1|^{p-1} v_1 - |v_2|^{p-1} v_2 )|
	\le C |\delta (v_1-v_2)| [v_1]^{p-1} + C|\delta v_2| 
	|v_1-v_2|(|v_1|+|v_2|)^{p-2},
\]
from which \eqref{eq:lpt_nlest22} follows.
\end{proof}

The following is the key estimates of the analysis for $p<2$.
\begin{lemma}\label{lem:lpt_nonlinearest3}
Let $I\subset \R$ be an interval.
There exists a constant $C$ such that
\begin{equation}\label{eq:lpt_nlest3}
	\norm{|u_1|^{p-1}u_1-|u_2|^{p-1}u_2}_{Y(I)}
	\le C \norm{u_1-u_2}_{X(I)}\sum_{i=1,2}\norm{u_i}_{W_2(I)}^{p-1}
\end{equation}
whenever the right hand side makes sense.
\end{lemma}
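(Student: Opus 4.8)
The plan is to reduce the $Y$-estimate to a difference estimate on the Besov (equivalently, modified-Besov) norm at regularity $s_0$, in exactly the same spirit as the proofs of Lemmas~\ref{lem:lpt_nonlinearest1} and \ref{lem:lpt_nonlinearest2}, but now using the space $X$ rather than $W_1$ as the ``derivative-carrying'' factor. First I would set $v_i = M(-t)u_i$ and recall $M(-t)(|u_i|^{p-1}u_i) = |v_i|^{p-1}v_i$, so that by the equivalence \eqref{eq:Mn_alt}, and by Lemma~\ref{lem:embedding1}(4) identifying $\dot{X}^{s_0}_q(0)$-type norms, it suffices to work at fixed $t$ with the homogeneous Besov norm $\|\cdot\|_{\dot{B}^{s_0}_{q(Y),2}}$ on the left and $\|\cdot\|_{\dot{B}^{s_0}_{q(X),2}}$ on the right. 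Here $s_0$ (defined in \eqref{def:s_0}) satisfies $0 < s_0 < (p-1)s_c < p-1 \le 1$ when $p<2$ and $N\ge 3$, so $s_0 \in (0,1)$ and I can apply Lemma~\ref{lem:B_equivalence} with $M=1$, i.e.\ use the first-order difference operator $\delta = \delta_a$.

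Next, using the pointwise bound derived in the proof of Lemma~\ref{lem:lpt_nonlinearest2} for the case $p<2$,
\[
	|\delta_a(|v_1|^{p-1}v_1 - |v_2|^{p-1}v_2)| \le C|\delta_a(v_1-v_2)|[v_1]^{p-1} + C|\delta_a v_2|[v_1-v_2]^{p-1},
\]
where $[f](x) = |f(x)| + |f(x+a)|$, I would estimate the $L^{q(Y)}_x$ norm of each term by H\"older's inequality in $x$. The exponent bookkeeping is dictated by \eqref{eq:expXLY}: $\frac{1}{q(X)} + \frac{p-1}{q(L)} = \frac{1}{q(Y)}$, so the first term splits as (difference of $v_1-v_2$ in $L^{q(X)}$) times ($[v_1]^{p-1}$ in $L^{q(L)/(p-1)}$), and similarly the second term splits as ($\delta_a v_2$ in $L^{q(X)}$) times ($[v_1-v_2]^{p-1}$ in $L^{q(L)/(p-1)}$). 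Taking $\sup_{|a|\le 2^{-j}}$, multiplying by $2^{s_0 j}$, and taking $\ell^2$ in $j$ — invoking Lemma~\ref{lem:B_equivalence} on the factors carrying the difference and the plain $L^{q(L)}$ bound on the others (the Besov space with zero smoothness index being harmlessly controlled, or rather simply using that $\|\,|f|^{p-1}\,\|_{L^{q(L)/(p-1)}} = \|f\|_{L^{q(L)}}^{p-1}$ pointwise in $t$) — yields
\[
	\|\,|v_1|^{p-1}v_1 - |v_2|^{p-1}v_2\,\|_{\dot{B}^{s_0}_{q(Y),2}} \le C\|v_1-v_2\|_{\dot{B}^{s_0}_{q(X),2}} \sum_{i=1,2}\|v_i\|_{\dot{B}^{s_0}_{q(X),2}}^{\,?}
\]
— but here I must be careful: the factor $[v_i]^{p-1}$ is measured in $L^{q(L)}$, not in a Besov norm, so on the right I want $\|u_i\|_{L(I)}^{p-1}$ or, after embedding $W_2(I)\hookrightarrow X(I)\hookrightarrow L(I)$, I can upgrade to $\|u_i\|_{W_2(I)}^{p-1}$ as stated.

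Finally I would assemble the space-time estimate: apply the generalized H\"older inequality in time (Proposition~\ref{prop:gH}) with the Lorentz-exponent identity $\frac{1}{\rho(X)} + \frac{p-1}{\rho(L)} = \frac{1}{\rho(Y)}$ from \eqref{eq:expXLY}, distributing the $L^{\rho(Y),2}_t$ norm as $L^{\rho(X),2}_t$ on the difference factor and $L^{\rho(L),\infty}_t$ (the $L(I)$ norm) raised to the $(p-1)$ on the remaining factors, noting $\frac{1}{\rho(L)/(p-1)} = \frac{p-1}{\rho(L)}$ and that a product of $p-1$ copies of an $L^{\rho(L),\infty}$ function lies in $L^{\rho(L)/(p-1),\infty}$ by the same generalized H\"older inequality. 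This gives \eqref{eq:lpt_nlest3} after using $W_2\hookrightarrow L$. The main obstacle I anticipate is not any single inequality but the verification that all the exponents are admissible: one must check that $(\rho(X),q(X))$ and $(\rho(Y),q(Y))$ are acceptable pairs (so that the modified-Besov/Sobolev machinery and the Strichartz framework apply), that $q(L)/(p-1) \in (1,\infty)$ so H\"older in $x$ is legitimate, and above all that $s_0$ lies strictly in $(0, \min(1, N/2))$ and in $(\tfrac{1}{p-1} - \tfrac{N-2}{4}p, (p-1)s_c)$ — this last double inequality (asserted just after \eqref{def:s_0}) is precisely where the lower restriction $p > 1 + \tfrac{4}{N+2}$ enters, and getting the H\"older-continuity exponent $p-1$ of $g_1,g_2$ to match the difference-norm regularity $s_0$ is the delicate quantitative point.
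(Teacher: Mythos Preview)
Your approach mimics the proofs of Lemmas~\ref{lem:lpt_nonlinearest1}--\ref{lem:lpt_nonlinearest2}, but there are two genuine gaps that prevent it from reaching \eqref{eq:lpt_nlest3}.

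First, the spaces $X(I)$ and $Y(I)$ are built on the modified \emph{Sobolev} norms $\dot X^{s_0}_{q}$, not modified Besov norms: by \eqref{eq:Xn_alt} one has $\Xtn{f}{s_0}{q}{t}\sim|t|^{s_0}\|M(-t)f\|_{\dot H^{s_0}_{q}}$. The difference characterisation of Lemma~\ref{lem:B_equivalence} yields only Besov norms $\dot B^{s_0}_{q,2}$. In the present exponent range one has $q(Y)<2<q(X)$ (for instance $N=4$, $p=1.8$ gives $s_0=0.3$, $q(Y)\approx1.9$, $q(X)\approx3.1$), so by Lemma~\ref{lem:embedding1}(3) the norm $\dot B^{s_0}_{q(Y),2}$ is strictly \emph{weaker} than $\dot H^{s_0}_{q(Y)}$, while $\dot B^{s_0}_{q(X),2}$ is strictly \emph{stronger} than $\dot H^{s_0}_{q(X)}$. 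Hence your Besov inequality neither controls the $Y$-norm on the left nor can be bounded by the $X$-norm on the right.

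Second, and more structurally, the pointwise bound you invoke has a second term $|\delta_a v_2|\,[v_1-v_2]^{p-1}$ in which the difference $v_1-v_2$ appears to the sublinear power $p-1$ while the increment sits on $v_2$. After H\"older this produces a contribution of the type $\|v_2\|_{\text{(regularity }s_0)}\,\|v_1-v_2\|_{L^{q(L)}}^{\,p-1}$, which is \emph{not} of the form $\|u_1-u_2\|_{X}\cdot\|u_i\|_{W_2}^{p-1}$ demanded by \eqref{eq:lpt_nlest3}; no embedding repairs the swapped exponents $1$ and $p-1$. The paper avoids both issues by a different route: it writes
\[
|u_1|^{p-1}u_1-|u_2|^{p-1}u_2=(u_1-u_2)\!\int_0^1\! g_1(tu_1+(1-t)u_2)\,dt+\overline{(u_1-u_2)}\!\int_0^1\! g_2(tu_1+(1-t)u_2)\,dt,
\]
so that $u_1-u_2$ stays \emph{linear}, applies a fractional Leibniz rule (Lemma~\ref{lem:lpt_product}) directly in $\dot H^{s_0}_{q(Y)}$, and then estimates $\|g_j(v)\|_{\dot H^{s_0}_{q_0}}$ by Visan's fractional chain rule (Lemma~\ref{lem:lpt_fractional}). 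That step, namely $\|g_j(v)\|_{\dot H^{s_0}_{q_0}}\le C\|v\|_{L^{q(L)}}^{p-1-s_0/s_c}\|v\|_{\dot H^{s_c}_{q(W_2)}}^{s_0/s_c}$, is where the $s_c$-regularity of $W_2$ genuinely enters and is the source of the constraint $s_0<(p-1)s_c$ underlying $p>1+4/(N+2)$ (cf.\ Remark~\ref{rmk:lbd_of_p}); it has no counterpart in your difference-operator scheme.
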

To prove this lemma, we need following two lemmas.
\begin{lemma}\label{lem:lpt_product}
Suppose $1<p,p_1,p_2,p_3,p_4<\I$ satisfy $\frac1p=\frac1p_1+\frac1p_2=\frac1p_3+\frac1p_4$.
Let $s\in (0,1]$. Then,
\[
	\norm{uv}_{\dot{H}^s_p} \le C (\norm{u}_{L^{p_1}}\norm{v}_{\dot{H}^s_{p_2}}
	+\norm{u}_{\dot{H}^s_{p_3}}\norm{v}_{L^{p_4}})
\]
whenever the right hand makes sense.
\end{lemma}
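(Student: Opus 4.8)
The plan is to prove the fractional Leibniz rule of Lemma~\ref{lem:lpt_product} by combining the difference representation of the homogeneous Sobolev norm (the analogue of Lemma~\ref{lem:B_equivalence}, since for $0<s<1$ one has $\dot H^s_p = \dot B^s_{p,\cdot}$-type control via differences, or more directly the Christ--Weinstein characterization) with H\"older's inequality in the spatial variables. Concretely, for $0<s\le 1$ and $M=1$ (the first-order difference suffices since $s\le1$), write the norm $\norm{w}_{\dot H^s_p}$ in terms of $\norm{\,\delta_a w\,}_{L^p}$ integrated against $|a|^{-s}$ in an $L^2(da/|a|^N)$-type average, i.e.\ use the Strichartz/Stein identity
\[
	\norm{w}_{\dot H^s_p} \sim \norm{ \left( \int_{\R^N} \frac{|\delta_a w(x)|^2}{|a|^{N+2s}}\, da \right)^{1/2} }_{L^p_x}.
\]
First I would apply the Leibniz rule for the difference operator, $\delta_a(uv)(x) = u(x+a)\,\delta_a v(x) + (\delta_a u)(x)\, v(x)$, which gives the pointwise bound
\[
	|\delta_a(uv)(x)| \le |u(x+a)|\,|\delta_a v(x)| + |\delta_a u(x)|\,|v(x)|.
\]

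Next I would insert this into the square-function expression and split into two terms. For the first term, after taking the $L^2(da/|a|^{N+2s})$ average inside, the factor $|u(x+a)|$ must be pulled out; the standard device is to dominate $\sup_{a}|u(x+a)|$ or, better, to keep it inside and use that the resulting quantity is controlled by $(Mu)(x)\cdot (\text{square function of } v)(x)$ where $M$ is the Hardy--Littlewood maximal function — but to avoid the maximal function one instead does a Littlewood--Paley or annular decomposition of the $a$-integral and uses that on each dyadic shell $|a|\sim 2^{-j}$ the translate $u(x+a)$ can be absorbed after an $L^{p_1}$--$L^{p_2}$ H\"older split, summing the resulting geometric-type series in $j$ because $s>0$. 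Then H\"older's inequality in $x$ with exponents $1/p = 1/p_1 + 1/p_2$ yields $\norm{u}_{L^{p_1}}\norm{v}_{\dot H^s_{p_2}}$. The second term is handled symmetrically with the split $1/p = 1/p_3 + 1/p_4$, producing $\norm{u}_{\dot H^s_{p_3}}\norm{v}_{L^{p_4}}$. Alternatively, and perhaps cleanest for the write-up, one can simply cite the Kato--Ponce / Christ--Weinstein fractional Leibniz rule directly, since the stated inequality is a classical one valid for $s\in(0,1]$ and $1<p,p_i<\infty$; I would present the difference-operator proof as the self-contained option and remark that it also follows from known results.

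The main obstacle is the rigorous treatment of the translation factor $u(x+a)$ inside the $L^p_x$ norm of the square function: one cannot naively take it out of the integral, and a careless bound loses the scaling. The clean resolution is the dyadic decomposition in $|a|$ combined with the observation that for $|a|\le 2^{-j}$ one has $\norm{\delta_a v}_{L^{p_2}} \lesssim 2^{-js}\,(\text{Besov-type piece})$ and $\norm{u(\cdot+a)}_{L^{p_1}} = \norm{u}_{L^{p_1}}$ exactly by translation invariance, so no maximal function is needed at all once the square function is replaced by its equivalent $\ell^2$-over-dyadic-shells form; the sum over shells converges precisely because $s>0$. A secondary technical point is that the problem is really stated on the operator $M(-t)$-conjugated side (this lemma is the ingredient for Lemma~\ref{lem:lpt_nonlinearest3}, where it will be applied to $v_i = M(-t)u_i$), but $M(-t)$ is just multiplication by a unimodular function, so it does not affect any $L^p$ norm and only the homogeneous Sobolev norms get transferred via \eqref{eq:Xn_alt}; hence the lemma as stated is purely about $\dot H^s_p$ and these subtleties are deferred to the application. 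I do not expect the H\"older bookkeeping or the final interpolation to present difficulties.
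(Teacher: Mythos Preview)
The paper does not actually prove Lemma~\ref{lem:lpt_product}; it is stated without proof as a standard fractional Leibniz (Kato--Ponce / Christ--Weinstein) inequality and is simply invoked in the proof of Lemma~\ref{lem:lpt_nonlinearest3}. So your proposal goes beyond what the paper supplies, and the cleanest option you mention --- citing the known result --- is exactly what the paper does implicitly.

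Your self-contained sketch is broadly on the right track, but the step where you try to avoid the Hardy--Littlewood maximal function is not quite right. In the square-function characterization the $L^p_x$ norm sits \emph{outside} the $L^2(da/|a|^{N+2s})$ integral; if you pass to a dyadic-shell $\ell^2_j$ form and then apply H\"older in $x$ shell by shell, you have effectively moved to the Besov norm $\dot B^s_{p,2}$ (outer $\ell^2$, inner $L^p$), not $\dot H^s_p=\dot F^s_{p,2}$. Translation invariance of $\norm{u(\cdot+a)}_{L^{p_1}}$ therefore does not by itself close the estimate in the Triebel--Lizorkin scale. The device you mentioned first --- bounding the term $u(x+a)\,\delta_a v(x)$ by a pointwise product of $Mu(x)$ and the square function of $v$, then using H\"older with $1/p=1/p_1+1/p_2$ together with the $L^{p_1}$-boundedness of $M$ --- is the standard and correct way to finish; there is no need to avoid it. With that adjustment your argument is sound for $0<s<1$, and the case $s=1$ is the ordinary Leibniz rule.
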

\begin{lemma}[\cite{V}]\label{lem:lpt_fractional}
Let $1<p<2$.
Suppose $g$ is H\"older continuous of order $p-1$.
Let $0<\sigma<p-1$, $1<q<\I$, and $\frac\sigma{p-1}<s<1$.
Then, we have
\[
	\norm{g(u)}_{\dot{H}^\sigma_q} \le C \norm{|u|^{p-1-\frac\sigma{s}}}_{L^{q_1}}
	\norm{u}_{\dot{H}^s_{\frac\sigma{s}q_2}}^{\frac\sigma{s}}
\]
for $\frac1q=\frac1q_1+\frac1q_2$ with $(p-1-\frac\sigma{s})q_1>p-1$
whenever the right hand side makes sense.
\end{lemma}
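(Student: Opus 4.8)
The plan is to establish Lemma~\ref{lem:lpt_fractional} as a fractional chain rule for the H\"older function $g$, via the square-function characterization of homogeneous fractional Sobolev norms. Write $\al:=p-1\in(0,1)$ and $\be:=\sigma/s$; the hypotheses $0<\sigma<p-1$ and $\sigma/(p-1)<s<1$ give $0<\be<\al<1$, and the remaining hypothesis becomes $(\al-\be)q_1>\al$. I would use the classical fact that for $1<r<\I$ and $0<\tau<1$,
\[
	\norm{w}_{\dot H^\tau_r}\sim\norm{D_\tau w}_{L^r},\qquad (D_\tau w)(x):=\Big(\int_{\R^N}\frac{\abs{w(x)-w(x-z)}^2}{\abs z^{N+2\tau}}\,dz\Big)^{1/2}
\]
(Strichartz' characterization). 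Since $g$ is H\"older continuous of order $\al$, one has $\abs{g(u(x))-g(u(x-z))}\le C\abs{u(x)-u(x-z)}^{\al}$, so the estimate for $\norm{g(u)}_{\dot H^\sigma_q}\sim\norm{D_\sigma(g(u))}_{L^q}$ is reduced to bounding $\big\|\big(\int\abs{u(\cdot)-u(\cdot-z)}^{2\al}\abs z^{-N-2\sigma}\,dz\big)^{1/2}\big\|_{L^q}$.

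The core step is a dyadic annular decomposition of the $z$-integral, $\{\abs z\sim2^l\}=:A_l$ for $l\in\Z$, combined with the exponent splitting $2\al=2\be+2(\al-\be)$. On each $A_l$, H\"older's inequality in $z$ over the finite-measure set $A_l$ gives
\[
	\int_{A_l}\abs{u(x)-u(x-z)}^{2\al}dz\le\Big(\int_{A_l}\abs{u(x)-u(x-z)}^{2}dz\Big)^{\be}\Big(\int_{A_l}\abs{u(x)-u(x-z)}^{\theta}dz\Big)^{1-\be},
\]
with $\theta:=2(\al-\be)/(1-\be)$. Using $\abs z\sim2^l$ and $\be s=\sigma$, the first factor is bounded by $2^{l(N+2s)\be}\big((D_su)^{(l)}(x)\big)^{2\be}$, where $\big((D_su)^{(l)}(x)\big)^2:=\int_{A_l}\abs{u(x)-u(x-z)}^2\abs z^{-N-2s}dz$ is the $A_l$-piece, so that $\sum_l\big((D_su)^{(l)}(x)\big)^2=(D_su)(x)^2$; the second factor, via $\abs{u(x)-u(x-z)}\le\abs{u(x)}+\abs{u(x-z)}$ and the Hardy--Littlewood maximal estimate, is bounded by $C\,2^{lN(1-\be)}\,\mathcal M(\abs u^{\theta})(x)^{1-\be}$, with $\mathcal M$ the maximal operator. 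The exponents of $2^l$ add up to exactly $N+2\sigma$, cancelling the weight $\abs z^{-N-2\sigma}\sim2^{-l(N+2\sigma)}$, and summing the annuli gives, schematically,
\[
	(D_\sigma(g(u)))(x)^2\le C\,\mathcal M(\abs u^{\theta})(x)^{1-\be}\sum_{l\in\Z}\big((D_su)^{(l)}(x)\big)^{2\be}.
\]

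The delicate point, which I expect to be the main obstacle, is that since $\be<1$ the sum $\sum_l\big((D_su)^{(l)}\big)^{2\be}$ is \emph{not} controlled pointwise by $(D_su)^{2\be}$: the $2\be$-th power forced by the H\"older-$\al$ regularity of $g$ is in genuine tension with the $\ell^2$-summation built into the square-function characterization. The resolution is to not perform the annular summation pointwise but to interleave it with the spatial integration: take the $L^q$-norm in $x$, split by H\"older with $1/q=1/q_1+1/q_2$ into a maximal-function factor $\mathcal M(\abs u^{\theta})^{(1-\be)/2}$ and a square-function factor, and then sum the annuli at the level of $L^q$-norms using the Fefferman--Stein vector-valued maximal inequality and the embedding of weighted $\ell^2$-spaces. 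It is precisely here that the hypothesis $(\al-\be)q_1>\al$, i.e.\ $(p-1-\sigma/s)q_1>p-1$, is consumed: it keeps the exponent on which $\mathcal M$ acts strictly above $1$, so that $\norm{\mathcal M(\abs u^{\theta})^{(1-\be)/2}}_{L^{q_1}}\le C\norm{\abs u^{\al-\be}}_{L^{q_1}}$ (using $\theta(1-\be)/2=\al-\be$). Combining this with the square-function bound, which produces $\norm{D_su}_{L^{\be q_2}}^{\be}\sim\norm{u}_{\dot H^s_{\be q_2}}^{\be}$ by Strichartz' characterization again, yields
\[
	\norm{g(u)}_{\dot H^\sigma_q}\le C\,\norm{\abs u^{\al-\be}}_{L^{q_1}}\norm{u}_{\dot H^s_{\be q_2}}^{\be},
\]
which is the assertion, since $\al-\be=p-1-\sigma/s$, $\be q_2=(\sigma/s)q_2$, and $\be=\sigma/s$. (Finiteness of the $z$-integrals near $z=0$ is where $\sigma<p-1$, i.e.\ $\be<\al$, enters, while $s<1$ is what makes $D_s$ the correct square function.)
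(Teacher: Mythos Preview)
The paper does not prove this lemma at all: it is quoted from \cite{V} (Visan's Duke paper) and used as a black box in the proof of Lemma~\ref{lem:lpt_nonlinearest3}. So there is no ``paper's own proof'' to compare against; what you have written is an attempt to reconstruct Visan's argument.

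Your overall strategy---the Strichartz square-function characterization, the H\"older bound $|g(u(x))-g(u(y))|\le C|u(x)-u(y)|^{\al}$, the dyadic annular decomposition, and the exponent split $2\al=2\be+2(\al-\be)$---is sound and is in the spirit of how such fractional chain rules are proved. You also correctly isolate the real difficulty: after H\"older on each annulus one is left with $\sum_l \big((D_s u)^{(l)}\big)^{2\be}$, and since $\be<1$ this is \emph{not} dominated pointwise by $(D_s u)^{2\be}$.

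However, your resolution of this point does not close. You claim the hypothesis $(\al-\be)q_1>\al$ is exactly what is needed to make the maximal-function step
\[
\big\|\mathcal M(|u|^{\theta})^{(1-\be)/2}\big\|_{L^{q_1}}\le C\,\big\||u|^{\al-\be}\big\|_{L^{q_1}}
\]
legitimate. But unpacking this, $\|\mathcal M(|u|^{\theta})^{(1-\be)/2}\|_{L^{q_1}}=\|\mathcal M(|u|^{\theta})\|_{L^{q_1(1-\be)/2}}^{(1-\be)/2}$, and Hardy--Littlewood boundedness requires $q_1(1-\be)/2>1$, i.e.\ $q_1>2/(1-\be)$. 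This is \emph{not} the same as $q_1>\al/(\al-\be)$; the two thresholds are in general incomparable (try $\al=0.5,\be=0.4$ versus $\al=0.9,\be=0.8$). So either your argument proves a statement with a different hypothesis on $q_1$, or the hypothesis is being consumed somewhere you have not identified. Moreover, the phrase ``sum the annuli at the level of $L^q$-norms using the Fefferman--Stein vector-valued maximal inequality and the embedding of weighted $\ell^2$-spaces'' is too vague to be a proof: Fefferman--Stein controls $\|(\sum_l(\mathcal M f_l)^r)^{1/r}\|_{L^p}$ by $\|(\sum_l|f_l|^r)^{1/r}\|_{L^p}$, but your problematic sum $\sum_l\big((D_s u)^{(l)}\big)^{2\be}$ has no maximal function inside it, and the issue is precisely the mismatch of the exponent $2\be$ with the $\ell^2$-structure of $D_s u$, which Fefferman--Stein does not address.

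In Visan's actual proof the argument is organized through the Littlewood--Paley decomposition rather than the difference square function; the condition $(p-1-\sigma/s)q_1>p-1$ enters at a specific frequency-interaction step, not via Hardy--Littlewood boundedness in the way you suggest. If you want to push your difference-quotient route through, you will need either a sharper pointwise inequality on the annular pieces (e.g.\ optimizing a cutoff scale $L$ depending on $x$) or to switch to the Littlewood--Paley side before summing.
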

\begin{proof}[Proof of Lemma \ref{lem:lpt_nonlinearest3}]
As in the proof of \eqref{eq:lpt_nonlinearest1}, it holds that
\begin{multline*}
	|u_1|^{p-1} u_1 - |u_2|^{p-1} u_2 = (u_1-u_2) \int_0^1 g_1(tu_1+(1-t)u_2) dt\\
	+ \overline{(u_1-u_2)} \int_0^1 g_2(tu_1+(1-t)u_2) dt
\end{multline*}
Indeed, one can see this by replacing $u(x+a)$ and $u(x)$ by $u_1$ and $u_2$, respectively.
Hence, by Lemma \ref{lem:lpt_product},
\begin{align*}
	&\norm{|u_1|^{p-1} u_1 - |u_2|^{p-1} u_2}_{\dot{H}^{s_0}_{q(Y)}}\\
	\le {}&C \norm{u_1-u_2}_{\dot{H}^{s_0}_{q(X)}} \sum_{j=1,2}\int_0^1
	\norm{g_j(tu_1+(1-t)u_2)}_{L^{\frac{q(L)}{p-1}}} dt \\
	&{} + C \norm{u_1-u_2}_{L^{q(L)}} \sum_{j=1,2}\int_0^1
	\norm{g_j(tu_1+(1-t)u_2)}_{\dot{H}^{s_0}_{q_0}} dt,
\end{align*}
where $1/q_0=1/q(Y)-1/q(L)$. Lemma \ref{lem:lpt_fractional} then yields
\begin{equation}\label{eq:lpt_nlest3_1}
	\norm{g_j(v)}_{\dot{H}^{s_0}_{q_0}}
	\le C \norm{v}_{L^{q(L)}}^{p-1-\frac{s_0}{s_c}}
	\norm{v}_{\dot{H}^{s_c}_{q(W_2)}}^{\frac{s_0}{s_c}}.
\end{equation}
Applying this inequality and the embedding $\dot{B}^{s_c}_{q(W_2),2} \hookrightarrow
\dot{B}^{s_0}_{q(X),2}\hookrightarrow\dot{H}^{s_0}_{q(X)}\hookrightarrow L^{q(L)}$,
we conclude that
\[
	\norm{|u_1|^{p-1} u_1 - |u_2|^{p-1} u_2}_{\dot{H}^{s_0}_{q(Y)}}
	\le C\norm{u_1-u_2}_{\dot{H}^{s_0}_{q(X)}}
	\sum_{i=1,2} \norm{u_i}_{\dot{B}^{s_c}_{q(W_2),2}}^{p-1}. 
\]
We replace $u_i$ with $M(-t)u_i(t)$ to obtain
\begin{multline*}
	\norm{(|u_1|^{p-1} u_1 - |u_2|^{p-1} u_2)(t)}_{\dot{X}^{s_0}_{q(Y)}(t)}\\
	\le C\norm{(u_1-u_2)(t)}_{\dot{X}^{s_0}_{q(X)}(t)}
	\sum_{i=1,2} \norm{u_i(t)}_{\M{s_c}{q(W_2)}2t}^{p-1}
\end{multline*}
for $t\neq 0$. Thus, the desired estimate follows from the 
generalized H\"older inequality.
\end{proof}

\begin{remark}\label{rmk:lbd_of_p}
The lower bound $p>1+\frac{4}{N+2}$ of our main theorem comes from this lemma.
Indeed, it is necessary that the exponent $s_0$, which denotes the order of weight of $X(I)$ and $Y(I)$,
obeys
\[
	\frac2{p-1} -\frac{Np}2 + 1 < s_0 \le s_c
\]
to choose $\rho(W_1)$, $\rho(W_2)$, $\rho(F)$, $\rho(L)$, $\rho(X)$, $\rho(Y)$,
$q(W_1)$, $q(W_2)$, $q(L)$, $q(F)$, $q(X)$, and $q(Y)$ so that
$(\rho(W_i),q(W_i))$ and $(\rho(F)',q(F)')$ are admissible pairs;
$(\rho(X),q(X))$ and $(\rho(Y)',q(Y)')$ are acceptable pairs 
which fulfill the assumption of Proposition \ref{prop:imprvStr};
and that the relations \eqref{eq:expWLF} and \eqref{eq:expXLY} and
the embedding $W_2(I) \hookrightarrow X(I) \hookrightarrow L(I)$ hold true.
On the other hand,
since $|z|^{p-1}z$ is H\"older continuous of order $p$,
to estimate the left hand side of \eqref{eq:lpt_nlest3},
the exponent $s_0$ must be smaller than $p-1$.
Further, in view of \eqref{eq:lpt_nlest3_1}, we need the relation $s_0< s_c(p-1)$.
These restrictions yield the bound $s_c<1$, which is nothing but $p>1+\frac{4}{N+2}$.
\end{remark}

\subsection{An estimate on Lorentz space}
In this subsection, we prove the following.
\begin{proposition}\label{prop:lpt-division}
Let $I$ be an interval.
Let $1<\rho<\I$ and $1<r<\I$. Suppose $f \in L^{\rho,r}(I)$ and $\norm{f}_{L^{\rho,r}(I)}=M>0$.
For any $\delta>0$, there exists a subdivision $\{I_j\}_{j=1}^k$ of $I$, i.e. 
$I_j=[t_{j-1},t_j]$ with $\inf I =t_0 < t_1 < \dots < t_k = \sup I$, such that
 $k \le 1+C(M/\delta)^{\max(\rho,r)}$ and
\[
	\norm{f}_{L^{\rho,r}(I_j)} \le \delta
\]
holds for all $j\in[1,k]$.
\end{proposition}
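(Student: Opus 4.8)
The plan is to build the subdivision greedily, stopping each subinterval just before its $L^{\rho,r}$-norm would exceed $\delta$; the whole difficulty is then packed into a single quasi-superadditivity estimate for the Lorentz norm, from which the count on $k$ is immediate. Write $\gamma:=\max(\rho,r)$. The claim I would isolate is that for every finite family of pairwise disjoint measurable sets $A_1,\dots,A_m\subset I$,
\begin{equation}\tag{$\ast$}
	\sum_{j=1}^m \norm{f\mathbf{1}_{A_j}}_{L^{\rho,r}(I)}^{\gamma} \le C\,\norm{f}_{L^{\rho,r}(I)}^{\gamma},
\end{equation}
with $C=C(\rho,r)$. To prove $(\ast)$ I would use the standard discretisation $\norm{g}_{L^{\rho,r}}^{r}\sim\sum_{n\in\Z}2^{nr}\mu_g(2^n)^{r/\rho}$, where $\mu_g(\lambda):=\abs{\{x\in I:|g(x)|>\lambda\}}$; disjointness gives $\mu_f(\lambda)\ge\sum_{j}\mu_{f\mathbf{1}_{A_j}}(\lambda)$ for all $\lambda>0$. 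If $r\ge\rho$ then $\gamma=r$ and $t\mapsto t^{r/\rho}$ is superadditive, so $\mu_f(2^n)^{r/\rho}\ge\sum_j\mu_{f\mathbf{1}_{A_j}}(2^n)^{r/\rho}$, and summing over $n\in\Z$ yields $(\ast)$ at once.

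When $r<\rho$ we have $\gamma=\rho$, and I set $a_{j,n}:=2^{n}\mu_{f\mathbf{1}_{A_j}}(2^n)^{1/\rho}\ge0$, so that $\norm{f\mathbf{1}_{A_j}}_{L^{\rho,r}}^{\rho}\sim\norm{(a_{j,n})_n}_{\ell^r_n}^{\rho}$. Since $r<\rho$, Minkowski's inequality for mixed sequence spaces gives
\begin{align*}
	\sum_{j}\norm{(a_{j,n})_n}_{\ell^r_n}^{\rho}
	&=\norm{a}_{\ell^\rho_j(\ell^r_n)}^{\rho}\le\norm{a}_{\ell^r_n(\ell^\rho_j)}^{\rho}
	=\Bigl(\sum_n 2^{nr}\Bigl({\textstyle\sum_j}\mu_{f\mathbf{1}_{A_j}}(2^n)\Bigr)^{r/\rho}\Bigr)^{\rho/r}\\
	&\le\Bigl(\sum_n 2^{nr}\mu_f(2^n)^{r/\rho}\Bigr)^{\rho/r}\sim\norm{f}_{L^{\rho,r}}^{\rho},
\end{align*}
which is $(\ast)$ in this case.

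Granting $(\ast)$, the construction is routine. We may assume $f\not\equiv0$ and $M>\delta$ (otherwise $k=1$ works), and, writing $I=[t_0,T]$ for definiteness (unbounded or half-open $I$ are handled identically), I first record that for every $s\in[t_0,T]$ the map $t\mapsto\psi_s(t):=\norm{f\mathbf{1}_{[s,t]}}_{L^{\rho,r}(I)}$ is non-decreasing and continuous on $[s,T]$ with $\psi_s(s)=0$: indeed, as $t'\to t$ one has $\abs{[t,t')}\to0$, hence $\norm{f\mathbf{1}_{[t,t')}}_{L^{\rho,r}}\to0$ by absolute continuity of the Lorentz norm (valid since $r<\I$: in $\sum_n 2^{nr}\mu_{f\mathbf{1}_{[t,t')}}(2^n)^{r/\rho}$ the general term is dominated by $2^{nr}\mu_f(2^n)^{r/\rho}$, whose sum is finite, and tends to $0$ for each $n$), so the quasi-triangle inequality gives $\abs{\psi_s(t')-\psi_s(t)}\le C\norm{f\mathbf{1}_{[t,t')}}_{L^{\rho,r}}\to0$. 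Now set $t_0:=\inf I$ and define $t_1,t_2,\dots$ inductively: given $t_{j-1}$, if $\psi_{t_{j-1}}(T)\le\delta$ put $t_j:=T$ and stop with $k:=j$; otherwise, by continuity and the intermediate value theorem there is $t_j\in(t_{j-1},T)$ with $\psi_{t_{j-1}}(t_j)=\delta$, and we continue. By construction each subinterval $I_j:=[t_{j-1},t_j]$ satisfies $\norm{f}_{L^{\rho,r}(I_j)}\le\delta$, with equality for $j<k$.

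Finally, the process terminates and $k$ is controlled. If it did not stop, we would obtain infinitely many (essentially) disjoint $I_j$ with $\norm{f}_{L^{\rho,r}(I_j)}=\delta$, so $(\ast)$ would force $m\,\delta^{\gamma}\le C M^{\gamma}$ for arbitrarily large $m$ — impossible. Hence $k<\I$, and applying $(\ast)$ to $I_1,\dots,I_k$ gives $(k-1)\delta^{\gamma}\le\sum_{j=1}^{k}\norm{f}_{L^{\rho,r}(I_j)}^{\gamma}\le CM^{\gamma}$, i.e.\ $k\le1+C(M/\delta)^{\gamma}$ with $\gamma=\max(\rho,r)$, as required. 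The only non-routine point is the estimate $(\ast)$ — in particular getting the exponent $\max(\rho,r)$ rather than $\rho$ in the regime $r<\rho$, which is exactly where the Minkowski step enters; without the sharp exponent the bound on $k$ blows up, so this is the heart of the matter.
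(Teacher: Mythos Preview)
Your proof is correct, and the overall architecture---greedy construction of the subdivision, reduced to a single quasi-superadditivity inequality for the Lorentz norm over disjoint sets---is exactly the paper's. The difference lies in how that key inequality is obtained. The paper packages it as Lemma~\ref{lem:lpt-division}, namely
\[
	\norm{f}_{L^{\rho,r}(I)} \ge C\,k^{\min(0,\frac1\rho-\frac1r)}\Bigl(\sum_{j=1}^k \norm{f}_{L^{\rho,r}(I_j)}^r\Bigr)^{1/r},
\]
and proves it by viewing $T:f\mapsto (f\mathbf{1}_{I_j})_j$ as a linear operator, checking the obvious bounds $T:L^{\rho_\pm}\to\ell^{r_\pm}L^{\rho_\pm}$ at Lebesgue endpoints (with norm $\max(1,k^{1/r-1/\rho})$), and then real-interpolating. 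You instead work directly with the dyadic distribution-function representation of the $L^{\rho,r}$ norm, invoking superadditivity of $t\mapsto t^{r/\rho}$ when $r\ge\rho$ and Minkowski's inequality $\ell^\rho_j(\ell^r_n)\hookrightarrow\ell^r_n(\ell^\rho_j)$ when $r<\rho$, arriving at the equivalent form $(\ast)$ with a $k$-independent constant. Your route is more elementary and self-contained (no interpolation machinery), and it also makes explicit the continuity of $t\mapsto\norm{f\mathbf{1}_{[s,t]}}_{L^{\rho,r}}$ via absolute continuity when $r<\infty$, a point the paper passes over silently; the paper's interpolation argument is terser once one is willing to quote the real method. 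Both lead to the identical bound $k\le 1+C(M/\delta)^{\max(\rho,r)}$.
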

For the proof of this proposition, we need the following.
\begin{lemma}\label{lem:lpt-division}
Let $I$ be an interval.
Let $1<\rho<\I$ and $1<r<\I$. Let $f\in L^{\rho,r}(I)$.
Let $\{I_j\}_{j=1}^k$ be a subdivision of $I$.
Then, there exists a constant $C>0$ independent of $k$ and $f$ such that
\[
	\norm{f}_{L^{\rho,r}(I)} \ge C k^{\min(0,\frac{1}{\rho}-\frac{1}{r})}
	\(\sum_{j=1}^k \norm{f}_{L^{\rho,r}(I_j)}^r \)^{\frac1r}
\]
\end{lemma}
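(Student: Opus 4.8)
The plan is to reduce the Lorentz-norm inequality over a subdivision to an elementary reverse inequality for the $\ell^r$ quasi-norm of a partitioned sum, combined with the monotonicity of the distribution function. First I would recall that for a measurable $f$ on $I$, writing $f_j = f\cdot\mathbf{1}_{I_j}$, the distribution functions satisfy $d_f(\lambda) = \sum_{j=1}^k d_{f_j}(\lambda)$ pointwise in $\lambda$, since the $I_j$ are disjoint. From the integral representation of the Lorentz quasi-norm,
\[
	\norm{f}_{L^{\rho,r}(I)}^r = r \int_0^\infty \lambda^{r-1} d_f(\lambda)^{r/\rho}\, d\lambda
	= r \int_0^\infty \lambda^{r-1} \Big(\sum_{j=1}^k d_{f_j}(\lambda)\Big)^{r/\rho}\, d\lambda,
\]
so the whole question becomes a comparison between $\big(\sum_j a_j\big)^{r/\rho}$ and $\sum_j a_j^{r/\rho}$ applied with $a_j = d_{f_j}(\lambda)$ and then integrated against $r\lambda^{r-1}\,d\lambda$.

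The key step is then the two-sided inequality for nonnegative reals: for any exponent $\theta = r/\rho > 0$ and any $a_1,\dots,a_k \ge 0$,
\[
	\Big(\sum_{j=1}^k a_j\Big)^{\theta} \ge k^{\min(0,\theta-1)}\sum_{j=1}^k a_j^{\theta}.
\]
When $\theta \ge 1$ (i.e. $\rho \le r$, so $\min(0,\tfrac1\rho-\tfrac1r)=0$) this is just superadditivity of $x\mapsto x^\theta$; when $0<\theta<1$ it follows from Hölder's inequality applied to $\sum a_j^\theta \cdot 1 \le (\sum a_j)^\theta k^{1-\theta}$. Plugging this into the integral with $a_j = d_{f_j}(\lambda)$ and pulling the power of $k$ out of the integral gives
\[
	\norm{f}_{L^{\rho,r}(I)}^r \ge k^{r\min(0,\frac1\rho-\frac1r)} \sum_{j=1}^k r\int_0^\infty \lambda^{r-1} d_{f_j}(\lambda)^{r/\rho}\,d\lambda
	= k^{r\min(0,\frac1\rho-\frac1r)} \sum_{j=1}^k \norm{f}_{L^{\rho,r}(I_j)}^r,
\]
and taking $r$-th roots yields the claim with $C=1$ (or with a universal $C$ if one uses a quasi-norm that is only equivalent to the functional above, which accounts for the unspecified constant in the statement).

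The only genuinely delicate point — the one I expect to be the main obstacle — is being careful about which convention for the Lorentz quasi-norm is in force, since the functional $\big(r\int_0^\infty \lambda^{r-1} d_f(\lambda)^{r/\rho}\,d\lambda\big)^{1/r}$ is not a genuine norm for $r>\rho$ and differs from the norm built on the decreasing rearrangement $f^{**}$ by a constant depending only on $\rho$ and $r$. Since the statement already allows an unspecified constant $C$ independent of $k$ and $f$, this is harmless: one proves the inequality for the elementary distributional functional and then absorbs the equivalence constants into $C$. A secondary point is to note the subdivision is finite and the $I_j$ are genuinely disjoint up to endpoints (a null set), so the additivity $d_f = \sum_j d_{f_j}$ holds; this is immediate. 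Everything else is the routine power-mean/Hölder bookkeeping indicated above.
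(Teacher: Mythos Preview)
Your proof is correct and takes a genuinely different route from the paper. The paper introduces the operator $T: f \mapsto (f\,\mathbf{1}_{I_j})_{j}$, shows that $T$ is bounded from $L^{\rho_\pm}(I)$ to $\ell^{r_\pm}L^{\rho_\pm}$ for two nearby pairs $(\rho_\pm,r_\pm)$ with norm $\max(1,k^{1/r-1/\rho})$ (this is easy because $\ell^q L^q = L^q$), and then invokes real interpolation to land on $L^{\rho,r} \to \ell^r L^{\rho,r}$. Your argument bypasses interpolation entirely: you use the distribution-function representation of the Lorentz quasi-norm, the additivity $d_f = \sum_j d_{f_j}$ for disjoint supports, and the elementary pointwise bound $(\sum a_j)^{r/\rho} \ge k^{\min(0,\,r/\rho - 1)} \sum a_j^{r/\rho}$, which integrates immediately to the claim. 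Your approach is shorter and more transparent, and even yields the sharp constant $C=1$ for the distributional quasi-norm (the paper's interpolation produces an unspecified constant from the real-interpolation functor). The paper's method, on the other hand, illustrates a general pattern---proving vector-valued bounds by interpolating scalar endpoint estimates---that recurs elsewhere in the Strichartz machinery, so it is pedagogically consistent with the rest of Section~2 even if less direct here. Your caveat about norm conventions is well placed and correctly handled.
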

\begin{proof}
Take $\eta=\frac12\min(\frac1\rho,\frac1\rho,\frac1r.1-\frac1r)>0$. Define
$\rho_{\pm} \in (1,\I)$ and $r_\pm \in (1,\I)$ by
\[
	\frac1{\rho_\pm} = \frac1\rho \pm \eta, \quad
	\frac1{r_\pm} = \frac1r \pm \eta.
\]
Introduce a linear operator $T$ by
\[
	T: f(x) \mapsto f(x) {\bf 1}_{I_j}(x),
\]	
where ${\bf 1}_{I_j}$ is a characteristic function of $I_j$.
We now claim that $T$ is a bounded mapping from $L^{\rho_+}$ to $\ell_j^{r_+} L^{\rho_+}$
with norm $K:=\max(1,k^{\frac1r-\frac1\rho}) $.
Indeed, by the H\^older inequality,
\[
	\norm{f {\bf 1}_{I_j}}_{\ell^{r_+}L^{\rho_+}}
	\le \norm{1}_{\ell^{(\frac1{r_+}-\frac1{\rho_+})^{-1}}}
	\norm{f {\bf 1}_{I_j}}_{\ell^{\rho_+}L^{\rho_+}}
\]
if $r_+<\rho_+$.
Since $\norm{1}_{\ell^{(\frac1{r_+}-\frac1{\rho_+})^{-1}}}=k^{\frac1r-\frac1\rho}$
and
\[
	\(\sum_{j=1}^k \norm{f{\bf 1}_{I_j}}_{L^{\rho_+}(I)}^{\rho_+} \)^{\frac1{\rho_+}}
	= \norm{f}_{L^{\rho_+}(I)},
\]
we have
$\norm{Tf}_{\ell^{r_+}L^{\rho_+}} \le k^{\frac1r-\frac1\rho} \norm{f}_{L^{\rho_+}(I)}$.
When $r_+\ge \rho_+$, the embedding $\ell^{\rho_+}\hookrightarrow \ell^{r_+}$ gives
$\norm{Tf}_{\ell^{r_+}L^{\rho_+}} \le \norm{f}_{L^{\rho_+}(I)}$. Thus, $T: L^{\rho_+} \to \ell^{r_+}L^{\rho_+}$.
The same argument shows 
$
	T: L^{\rho_-} \to \ell^{r_-}L^{\rho_-}
$
with the same norm $K=\max(1,k^{\frac1r-\frac1\rho})$.
By the real interpolation method, we conclude that
\[
	T: L^{\rho.r} \to (\ell^{r_+}L^{\rho_+},\ell^{r_-}L^{\rho_-})_{\frac12,r}=
	\ell^r L^{\rho,r}
\]
with norm $C\max(1,k^{\frac1r-\frac1\rho})$, 
which yields the desired estimate.
\end{proof}

\begin{proof}[Proof of Proposition \ref{prop:lpt-division}]
Suppose $M>\delta$, otherwise there is nothing to prove.
Take $t_1\in I$  so that $t_1>t_0$ and
$\norm{f}_{L^{\rho,r}(I_1)} = \delta$.
Similarly, as long as $\norm{f}_{L^{\rho,r}(t_{j-1},\sup I)} > \delta$, 
we define $t_j \in I$ so that $t_j>t_{j-1}$ and
$\norm{f}_{L^{\rho,r}(I_j)} = \delta$.
Let us now show that, under this procedure, 
the assumption fails in finite steps,
that is, there exists $k_0$ such that
\begin{equation}\label{eq:lpt-division1}
	\norm{f}_{L^{\rho,r}(t_{k_0-1},\sup I)} \le \delta.
\end{equation}
Indeed, take $k\ge 1$ and suppose that we are able to take $t_j$  so that
$\norm{f}_{L^{\rho,r}(I_j)} = \delta$ for $1\le j \le k$.
Then, one verifies from Lemma \ref{lem:lpt-division} that
\begin{align*}
	M=\norm{f}_{L^{\rho,r}(I)}\ge \norm{f}_{L^{\rho,r}(t_0,t_k)}
	&{}\ge C k^{\min(0,\frac1\rho-\frac1r)}\(\sum_{j=1}^k \norm{f}_{L^{\rho,r}(I_j)}^r \)^{\frac1r}\\
	&{} = C k^{\min(\frac1r,\frac1\rho)} \delta,
\end{align*}
which implies $k$ obeys the estimate
\[
	k \le C (M/\delta)^{\max(\rho,r)}.
\]
Thus, such $k$ is bounded from above and so
there exists $k_0$ satisfying \eqref{eq:lpt-division1}.
Put $t_{k_0}= \sup I$.
\end{proof}

\subsection{Some other estimates}
We finish this section with several useful estimates.
\begin{lemma}
Let $1 \le q<\I$ and $1\le r \le 2$ satisfy $r\le q$.
Let $s\in (0,N/q)$.
Then, for any function $\chi(x) \in \mathcal{S}(\R^N)$,
a multiplication operator $\chi \times$ is bounded on $\dot{B}^s_{q,r}$.
\end{lemma}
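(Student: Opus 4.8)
The plan is to exploit the hypothesis $0<s<N/q$ to trade the lost smoothness for extra integrability: first I would embed $\dot{B}^s_{q,r}$ into an ordinary Lebesgue space, and then apply a Leibniz-type product estimate in which one factor (namely $\chi$) is placed in that Lebesgue space. Concretely, set $\frac1{q_4}:=\frac1q-\frac sN$ and $\frac1{q_3}:=\frac sN$; since $0<s<N/q\le N$ we have $q\le q_4<\I$ and $1<q_3<\I$, with $\frac1q=\frac1{q_3}+\frac1{q_4}$. The first ingredient I would establish is the embedding
\[
	\dot{B}^s_{q,r}(\R^N)\hookrightarrow L^{q_4}(\R^N),
\]
obtained by composing the homogeneous Besov (Sobolev-type) embedding $\dot{B}^s_{q,r}\hookrightarrow\dot{B}^0_{q_4,r}$, valid because $q\le q_4$ and $s-\frac Nq=-\frac N{q_4}$, with $\dot{B}^0_{q_4,r}\hookrightarrow L^{q_4}$. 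The latter requires $r\le\min(2,q_4)$, which holds here precisely because $r\le2$ and $r\le q\le q_4$; this is the only place the two restrictions on $r$ in the statement are used.

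The second ingredient is the fractional Leibniz inequality
\[
	\norm{\chi f}_{\dot{B}^s_{q,r}}\le C\norm{\chi}_{L^\I}\norm{f}_{\dot{B}^s_{q,r}}+C\norm{\chi}_{\dot{B}^s_{q_3,r}}\norm{f}_{L^{q_4}},
\]
which is the Besov counterpart of Lemma \ref{lem:lpt_product} and is proved in the same manner, e.g.\ via Bony's decomposition $\chi f=T_\chi f+T_f\chi+R(\chi,f)$: the paraproduct $T_\chi f$ is bounded by $\norm{\chi}_{L^\I}\norm{f}_{\dot{B}^s_{q,r}}$ because the frequency-truncated pieces $S_{j-2}\chi$ are uniformly bounded in $L^\I$; the paraproduct $T_f\chi$ is bounded by $\norm{\chi}_{\dot{B}^s_{q_3,r}}\norm{f}_{L^{q_4}}$ after H\"older's inequality with $\frac1q=\frac1{q_3}+\frac1{q_4}$ and the uniform bound $\norm{S_{j-2}f}_{L^{q_4}}\lesssim\norm{f}_{L^{q_4}}$; and the remainder $R(\chi,f)$, whose Littlewood--Paley blocks are supported in balls rather than annuli, is summable only because $s>0$, and is likewise controlled by $\norm{\chi}_{\dot{B}^s_{q_3,\I}}\norm{f}_{\dot{B}^s_{q,r}}$ using Bernstein's inequality. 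It then remains to note that any $\chi\in\mathcal{S}$ belongs to $L^\I\cap\dot{B}^s_{q_3,r}$: the first inclusion is obvious, and for the second one estimates $\norm{\Delta_j\chi}_{L^{q_3}}\lesssim 2^{jN/q_3'}$ as $j\to-\I$ (Young's inequality with $\chi\in L^1$) and $\norm{\Delta_j\chi}_{L^{q_3}}\lesssim_K 2^{-jK}$ as $j\to+\I$ (repeated integration by parts, $\widehat{\chi}\in\mathcal{S}$), so that $\{2^{js}\norm{\Delta_j\chi}_{L^{q_3}}\}_j\in\ell^r$ since $s+N/q_3'>0$ and $s<K$ for $K$ large.

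Combining the three displayed facts gives $\norm{\chi f}_{\dot{B}^s_{q,r}}\le C(\chi)\norm{f}_{\dot{B}^s_{q,r}}$, which is the assertion. The one genuinely delicate point is the choice of exponents and the sharp endpoint embedding $\dot{B}^0_{q_4,r}\hookrightarrow L^{q_4}$ in the first step, whose hypothesis $r\le\min(2,q_4)$ is matched exactly by $r\le2$ and $r\le q$; everything else is routine bookkeeping. As an alternative, more self-contained route one can avoid the product inequality and argue directly with the difference characterization of Lemma \ref{lem:B_equivalence}: expanding $\delta_a^M(\chi f)$ by the discrete Leibniz rule, the single term in which no difference falls on $\chi$ contributes exactly $\norm{\chi}_{L^\I}\norm{f}_{\dot{B}^s_{q,r}}$, while every remaining term carries a factor $\delta_a^k\chi$ with $k\ge1$, which is rapidly decaying in $x$ and of size $O(|a|^k)$; the residual factor has the form $\norm{\langle x\rangle^{-L}f}_{L^q}$, and this is dominated by $\norm{f}_{\dot{B}^s_{q,r}}$ upon splitting $f$ into its low- and high-frequency parts---Bernstein's inequality together with $s<N/q$ placing the former in $L^\I$, and $s>0$ placing the latter in $L^q$---after which one sums in $j$, having distributed the available regularity between the two factors at the frequency scale $2^{-j}\sim|a|$. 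This second approach makes transparent why \emph{both} $s>0$ and $s<N/q$ are indispensable.
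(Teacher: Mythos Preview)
Your main approach via the fractional Leibniz rule is correct and differs from the paper's proof. The paper works directly with the difference characterization (your ``alternative route''), but executes it differently from your sketch: it expands $\delta_a^M(\chi f)=\sum_{k=0}^M\binom{M}{k}\delta_a^k\chi(x)\,\delta_a^{M-k}f(x+ka)$, then for each $k$ applies H\"older with $\delta_a^k\chi\in L^{p_k}$ where $p_k=\frac{NM}{sk}$, using only the size estimate $\norm{\delta_a^k\chi}_{L^{p_k}}\le C\min(|a|^k,1)$ rather than any $x$-decay. This places $\delta_a^{M-k}f$ in $L^{q_k}$ with $q_k^{-1}=q^{-1}-p_k^{-1}$, and after summation yields $\hBn{\chi f}{s}{q}{r}\lesssim\sum_{k=0}^{M-1}\hBn{f}{s_k}{q_k}{r}+\Lebn{f}{q_M}$ with $s_k=\frac{M-k}{M}s$; the chain of embeddings $\dot{B}^s_{q,r}\hookrightarrow\dot{B}^{s_1}_{q_1,r}\hookrightarrow\cdots\hookrightarrow\dot{B}^0_{q_M,r}\hookrightarrow L^{q_M}$ closes the argument.

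Your paraproduct approach is cleaner to state and makes the role of $s<N/q$ transparent through the single embedding $\dot{B}^s_{q,r}\hookrightarrow L^{q_4}$, at the cost of invoking the Bony machinery. The paper's approach is more self-contained and stays entirely within the difference characterization already introduced in Lemma~\ref{lem:B_equivalence}, but requires tracking a family of intermediate Besov norms. Both use the hypothesis $r\le\min(2,q)$ at exactly the same point, namely the endpoint embedding $\dot{B}^0_{q_M,r}\hookrightarrow L^{q_M}$ (your $q_4$ is the paper's $q_M$).
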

\begin{proof}
Take $f \in \dot{B}^s_{q,r}$.
Let $M$ be an integer such that $s<M \le s+1$.
By Lemma \ref{lem:B_equivalence}, 
\[
	\hBn{\chi f}sqr \sim \norm{ 2^{sj} \sup_{|a| \le 2^{-j} } \Lebn{\delta^M_a (\chi f)}q }_{\ell^r},
\]
where $\delta_a f(x)=f(x+a)-f(x)$. One verifies that
\[
	\delta_a^M (\chi f) (x) = 
	\sum_{k=0}^M \binom{M}{k} \delta_a^k \chi (x) \delta_a^{M-k}f(x+ka).
\]
Therefore,
\[
	\Lebn{\delta_a^M (\chi f)}q \le C_M \sum_{k=0}^M \Lebn{\delta_a^{M-k} f}{q_k}
	\Lebn{\delta_a^{k}\chi}{p_k},
\]
where $p_k=\frac{NM}{sk} \in (q,\I]$ and
$q_k^{-1} = q^{-1} - p_k^{-1} \in (0,q^{-1}]$. Since $\chi \in \mathcal{S}$, we have
\[
	\Lebn{\delta_a^{k}\chi}{p_k} \le C_{M,q} \min (|a|^{k} ,1)
\]
for $k\in [0,M]$. One then sees that
\begin{multline*}
	2^{sj}\sup_{|a| \le 2^{-j} } \Lebn{\delta^M_a (\chi f)}q \\
	\le C_{M,q} \sum_{k=0}^M \min (2^{-\frac{k(M-s)}{M}j},2^{\frac{ks}{M}j}) \( 2^{s_kj} \sup_{|a| \le 2^{-j} }\Lebn{\delta_a^{M-k} f}{q_k}\),
\end{multline*}
where $s_k=\frac{M-k}{M}s$.
Since $s>0$ and $M-s>0$ and since $s_k < M-k$, taking $\ell^r$ norm with respect to $j$, we obtain
\[
	\hBn{\chi f}sqr \le C_{s,q} \sum_{k=0}^{M-1} \hBn{f}{ s_k }{q_k}{r}
	+ C_{s,q,r} \Lebn{f}{q_M}.
\]
The embedding 
$$\dot{B}^s_{q,r} \hookrightarrow \dot{B}^{s_1}_{q_1,r} \hookrightarrow \dot{B}^{s_2}_{q_2,r}
\hookrightarrow \cdots \hookrightarrow \dot{B}^{s_{M-1}}_{q_{M-1},r} \hookrightarrow \dot{B}^{0}_{q_{M},r}\hookrightarrow L^{q_M}$$
follows by assumption $r\le 2$ and by definitions of $s_k$ and $q_k$. Thus,
\[
	\hBn{\chi f}sqr \le C \hBn{f}sqr,
\]
which completes the proof.
\end{proof}
\begin{corollary}\label{cor:W_cutoff}
For any $\chi \in \mathcal{S}$, a multiplication operator
$\chi \times$ is bounded on $W_1(I)$.
\end{corollary}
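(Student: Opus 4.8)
The plan is to deduce the boundedness of $\chi\times$ on $W_1(I)=L^{\rho(W_1),2}(I,\Ms{s_c}{q(W_1)}{2})$ directly from the preceding lemma, which asserts that $\chi\times$ is bounded on $\dot{B}^s_{q,r}$ whenever $1\le q<\I$, $1\le r\le 2$ with $r\le q$, $s\in(0,N/q)$, and $\chi\in\mathcal{S}$. First I would record that the relevant parameters here are $s=s_c$, $q=q(W_1)$, $r=2$, and verify that the hypotheses of the lemma are met: $\rho(W_1),q(W_1)$ form an admissible pair, so in particular $2<q(W_1)<2^*\le\I$, hence $q(W_1)<\I$ and $2\le q(W_1)$, giving $r=2\le q$; and $0<s_c<\min(1,N/2)$ as noted right after \eqref{cond:p}, with the condition $s_c<N/q(W_1)$ being equivalent to $\delta(q(W_1))<N/2$, which holds since $(\rho(W_1),q(W_1))$ is admissible (so $q(W_1)<2^*$). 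Thus $\chi\times$ is bounded on $\dot{B}^{s_c}_{q(W_1),2}$.

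Next I would transfer this to the modified Besov space. By the equivalent representation \eqref{eq:Mn_alt}, for $t\ne0$ one has $\Mn{g}{s_c}{q(W_1)}{2}{t}\sim|t|^{s_c}\hBn{M(-t)g}{s_c}{q(W_1)}{2}$. The key observation is that the multiplier $M(-t)=e^{-i|x|^2/(4t)}$ commutes with multiplication by $\chi$, i.e.\ $M(-t)(\chi g)=\chi\,M(-t)g$ pointwise. Therefore
\begin{align*}
	\Mn{\chi g}{s_c}{q(W_1)}{2}{t}
	&\sim |t|^{s_c}\hBn{\chi\,M(-t)g}{s_c}{q(W_1)}{2}\\
	&\le C|t|^{s_c}\hBn{M(-t)g}{s_c}{q(W_1)}{2}
	\sim C\,\Mn{g}{s_c}{q(W_1)}{2}{t},
\end{align*}
with $C$ independent of $t$ since the constant in the preceding lemma depends only on $\chi$, $s_c$, and $q(W_1)$. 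This gives a pointwise-in-$t$ bound $\Mn{\chi g(t)}{s_c}{q(W_1)}{2}{t}\le C\Mn{g(t)}{s_c}{q(W_1)}{2}{t}$ for a.e.\ $t\in I$.

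Finally I would take the $L^{\rho(W_1),2}(I)$ norm in the time variable. Since the pointwise bound holds with a uniform constant, monotonicity of the Lorentz (quasi-)norm in the underlying function yields
\[
	\norm{\chi g}_{W_1(I)}=\LMnI{\chi g}{\rho(W_1)}{s_c}{q(W_1)}{I}
	\le C\,\LMnI{g}{\rho(W_1)}{s_c}{q(W_1)}{I}=C\norm{g}_{W_1(I)},
\]
which is the claim. I do not anticipate a genuine obstacle here; the only point requiring a little care is the verification that the parameter $s_c$ lies in the admissible range $(0,N/q(W_1))$ for the cited lemma, which as explained above follows from admissibility of $(\rho(W_1),q(W_1))$ together with $0<s_c<\min(1,N/2)$. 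The commutation $M(-t)(\chi\,\cdot)=\chi\,M(-t)(\cdot)$ and the $t$-uniformity of the constant are the structural facts that make the argument go through.
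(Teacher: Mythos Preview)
Your proof is correct and follows exactly the paper's argument: apply the preceding lemma to get boundedness of $\chi\times$ on $\dot{B}^{s_c}_{q(W_1),2}$, transfer to $\M{s_c}{q(W_1)}2t$ via \eqref{eq:Mn_alt} using that $M(-t)$ commutes with multiplication by $\chi$, and then take the $L^{\rho(W_1),2}$ norm in time. One small slip in your hypothesis check: the condition $s_c<N/q(W_1)$ is equivalent to $\delta(q(W_1))<N/2-s_c$, not to $\delta(q(W_1))<N/2$ as you write; the needed inequality does hold under \eqref{cond:p}, but verifying it requires a short computation with the explicit value of $q(W_1)$ rather than merely the fact that $q(W_1)<2^*$.
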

\begin{proof}
By the preceding lemma, for any $t\neq0$,
\begin{align*}
\Mn{\chi f}{s_c}{q(W_1)}2t {}&\sim |t|^{s_c} \hBn{M(-t)\chi f}{s_c}{q(W_1)}2\\
{}&\le C|t|^{s_c} \hBn{M(-t)f}{s_c}{q(W_1)}2 \sim \Mn{f}{s_c}{q(W_1)}2t.
\end{align*}
Take $L^{\rho(W_1),2}$-norm to obtain the desired result.
\end{proof}
The following is an adaptation of \cite[Lemma 3.7]{Ker} (see also \cite[Lemma 2.5]{KV}).
\begin{lemma}\label{lem:cpt_supp_small}
Let  $\mathcal{B}$ be a bounded set of $\R^{1+N}$.
Let $f \in \FHsc$.
Let $1\le \rho,q\le\I$.
Them, for any $\eps>0$, there exists a constant $C_\eps$ such that
\[
	\norm{U(t)|x|^{s_c}f}_{L^2(\mathcal{B})}\le
	\eps \norm{f}_{\FHsc} + C_\eps 
	\norm{U(t) f}_{L^{\rho,\I}(\R_+,L^{p}) }.
\]
\end{lemma}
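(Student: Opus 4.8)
The plan is to reduce the estimate to a compactness property of the free propagator restricted to the bounded spacetime set $\mathcal{B}$, and then to conclude by a standard normal-family/contradiction argument. Write $g:=|x|^{s_c}f$; then $g\in L^2(\R^N)$ with $\norm{g}_{L^2}=\norm{f}_{\FHsc}$, and since $U(t)$ is unitary on $L^2$ we have $\norm{U(t)|x|^{s_c}f}_{L^2_x}=\norm{g}_{L^2}$ for every $t$. Fix $R,T>0$ with $\mathcal{B}\subset[0,T]\times\{|x|<R\}$; we may and do assume $\mathcal{B}\subset(0,\I)\times\R^N$, which is the only case needed and the one matching the half-line on the right-hand side. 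In particular $\norm{U(t)|x|^{s_c}f}_{L^2(\mathcal{B})}\le T^{1/2}\norm{f}_{\FHsc}$ for free, so all that is missing is smallness.

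The main step is that the linear map $S:L^2(\R^N)\to L^2(\mathcal{B})$, $Sh:=U(\cdot)h|_{\mathcal{B}}$, is \emph{compact}. Indeed, by the classical local smoothing estimate $\norm{U(t)h}_{L^2((0,T);H^{1/2}(\{|x|<R\}))}\le C_{T,R}\norm{h}_{L^2}$, so $\{U(\cdot)h:\norm{h}_{L^2}\le1\}$ is bounded in $L^2((0,T);H^{1/2}(\{|x|<R\}))$. Since moreover $\partial_t(U(t)h)=i\Delta U(t)h$ is bounded in $L^1((0,T);H^{-2}(\{|x|<R\}))$ and $H^{1/2}\hookrightarrow\hookrightarrow L^2\hookrightarrow H^{-2}$ on the ball, an Aubin--Lions--Simon compactness theorem shows that this family is precompact in $L^2((0,T);L^2(\{|x|<R\}))$, hence its restriction to $\mathcal{B}$ is precompact in $L^2(\mathcal{B})$; that is, $S$ is compact.

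Now suppose the lemma fails. By homogeneity we get $\eps_0>0$ and, for each $n$, some $f_n$ with $\norm{f_n}_{\FHsc}=1$ and $\norm{U(t)|x|^{s_c}f_n}_{L^2(\mathcal{B})}>\eps_0+n\norm{U(t)f_n}_{L^{\rho,\I}(\R_+,L^q)}$. The free bound $T^{1/2}$ on the left forces $\norm{U(t)f_n}_{L^{\rho,\I}(\R_+,L^q)}\to0$. Along a subsequence $f_n\rightharpoonup f$ in $\FHsc$; for $\phi\in C_c^\I((0,\I)\times\R^N)$ one has $|\langle U(t)f_n,\phi\rangle|\le C_\phi\norm{U(t)f_n}_{L^{\rho,\I}(\R_+,L^q)}\to0$ by H\"older in the Lorentz space, while $\langle U(t)f_n,\phi\rangle\to\langle U(t)f,\phi\rangle$ by a routine computation using the weak convergence $|x|^{s_c}f_n\rightharpoonup|x|^{s_c}f$ in $L^2$ (here $2s_c<N$ guarantees $|x|^{-s_c}U(-t)\phi(t,\cdot)\in L^2_x$); hence $U(t)f\equiv0$ on $(0,\I)\times\R^N$ and so $f=0$. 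Thus $f_n\rightharpoonup0$ in $\FHsc$, i.e.\ $g_n:=|x|^{s_c}f_n\rightharpoonup0$ in $L^2$, and since $S$ is compact, $U(t)|x|^{s_c}f_n|_{\mathcal{B}}=Sg_n\to0$ strongly in $L^2(\mathcal{B})$ --- contradicting $\norm{Sg_n}_{L^2(\mathcal{B})}>\eps_0$.

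The heart of the matter, and the main obstacle, is the compactness of $S$: the unitarity of $U(t)$ only yields $U(\cdot)h\in L^\I_tL^2_x$, which is not compact in $L^2(\mathcal{B})$, so one genuinely needs the half-derivative local gain provided by local smoothing (any strictly positive local gain would do) to land in a space compactly embedded in $L^2(\mathcal{B})$; compactness in the time variable is then routine via the equation. The remaining input --- that vanishing of the weak Strichartz norm of $U(t)f_n$ forces $f_n\rightharpoonup0$ in $\FHsc$ --- is soft. (Alternatively one could argue directly by a high/low frequency split of $g$: local smoothing controls the high-frequency part by $\eps\norm{f}_{\FHsc}$, while the low-frequency part of $U(t)|x|^{s_c}f$ is spacetime band-limited and can be compared on the bounded box to $\norm{U(t)f}_{L^{\rho,\I}(\R_+,L^q)}$ through H\"older/Bernstein and the factorization \eqref{eq:prelim1}; the compactness route is simply cleaner and avoids bookkeeping of Strichartz exponents.)
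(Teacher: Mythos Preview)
Your proof is correct and follows essentially the same strategy as the paper: argue by contradiction, show that vanishing of the Strichartz-type norm forces $|x|^{s_c}f_n\rightharpoonup 0$ in $L^2$, and then invoke compactness of $h\mapsto U(\cdot)h|_{\mathcal B}$ from $L^2(\R^N)$ to $L^2(\mathcal B)$ (via local smoothing) to derive the contradiction. The only cosmetic differences are your normalization ($\norm{f_n}_{\FHsc}=1$ rather than $\norm{U(t)|x|^{s_c}f_n}_{L^2(\mathcal B)}=1$), your explicit Aubin--Lions argument for the compactness of $S$ where the paper simply cites the local-smoothing literature, and a slightly more roundabout route to the weak limit (first $f_n\rightharpoonup f$, then $f=0$) where the paper tests directly against the dense class $\{\varphi\in\mathcal S:\varphi=0\text{ near }0\}$.
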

\begin{proof}
Suppose not. 
Then, there exists a number $\eps_0>0$ and a sequence $\{f_n\}_n \subset \FHsc$
such that
\[
	\norm{U(t)|x|^{s_c}f_n}_{L^2(\mathcal{B})}>
	\eps_0 \norm{f_n}_{\FHsc} + n
	\norm{U(t) f_n}_{L^{\rho,\I}(\R_+,L^{p}) }
\]
Let $g_n:=f_n /\norm{U(t)|x|^{s_c}f_n}_{L^2(\mathcal{B})}$.
Then, $\norm{U(t)|x|^{s_c}g_n}_{L^2(\mathcal{B})}=1$ and
\[
	1>
	\eps_0 \norm{g_n}_{\FHsc} + n
	\norm{g_n}_{L^{\rho,\I}(\R_+,L^{p}) }.
\]
In particular, we obtain
\begin{equation}\label{eq:cptsupp1}
	\norm{g_n}_{\FHsc} \le \eps_0^{-1}
\end{equation}
and
\begin{equation}\label{eq:cptsupp2}
	\norm{g_n}_{L^{\rho,\I}(\R_+,L^{p}) } \to 0
\end{equation}
as $n\to\I$. Now, claim that
\begin{equation}\label{eq:cptsupp3}
	|x|^{s_c}g_n \rightharpoonup 0 \wIN L^2
\end{equation}
as $n\to\I$. Let $\varphi \in \mathcal{S}$ such that $\varphi=0$ on 
$\{|x|\le \delta\}$ for some $\delta>0$. Notice that such functions forms a dense
subset of $L^2$. 
Then, since $\Jbr{|x|^{s_c} g_n, \varphi}_{L^2(\R^N)} = \Jbr{ U(t)g_n, U(t)|x|^{s_c}\varphi}_{L^2(\R^N)}$
for any $t\in \R$,
\[
	\Jbr{|x|^{s_c} g_n, \varphi}_{L^2(\R^N)} = \int_0^1 \Jbr{ U(t)g_n, U(t)|x|^{s_c}\varphi}_{L^2(\R^N)} dt.
\]
Define $\phi(t,x)={\bf 1}_{[0,1]}(t) U(t)|x|^{s_c}\varphi$.
Since $|x|^{s_c} \varphi \in \mathcal{S}$, one sees that
$\phi \in L^{\rho',1}(\R_+,L^{p'})$. Therefore, \eqref{eq:cptsupp2} yields
\begin{align*}
	\abs{\Jbr{|x|^{s_c} g_n, \varphi}_{L^2(\R^N)}}
	&{}= \abs{\Jbr{ U(t)g_n, \phi}_{L^2(\R^{1+N})}} \\
	&{}\le \norm{U(t)g_n}_{L^{\rho,\I}(\R_+,L^q)}
	\norm{\phi}_{L^{\rho',1}(\R_+,L^{p'}} \\
	&{}\le C_\varphi \norm{U(t)g_n}_{L^{\rho,\I}(\R_+,L^q)} \to 0
\end{align*}
as $n\to\I$.
Thus, we obtain \eqref{eq:cptsupp3}.
It is known that $ f \mapsto U(t)f$ is compact from $L^2(\R^N)$
to $L^2(\mathcal{B})$ (see \cite{Sj,Cs,Ve}). Hence, \eqref{eq:cptsupp3} implies
$\norm{U(t)|x|^{s_c}g_n}_{L^2(\mathcal{B})}=0$,
which contradicts with $\norm{U(t)|x|^{s_c}g_n}_{L^2(\mathcal{B})}=1$.
\end{proof}

\section{Local well-posedness and Perturbation arguments}\label{sec:3}
The aim of this section is to establish local well-posedness of \eqref{eq:NLS}-\eqref{eq:IC} in $\FHsc$
and some related results.
Let us first make the notion of a solution clear.
\begin{definition}\label{def:sol}
Let $I \subset \R$ be a nonempty time interval and let $t_0 \in I$.
We say a function $u$: $I\times \R^N \to \C$ is an $\FHsc$-solution to \eqref{eq:NLS}
with data $u_{t_0} \in \dot{X}^{s_c}_2(t_0)$ at $t=t_0$
if $u(t)$ and $U(-t)u(t)$ lie in the class $W(K)$ and
$C_t^0 (K, \FHsc)$, respectively, for all compact $K \subset I$,
and $u$ obeys the Duhamel formula
\begin{equation}\label{eq:INLS}
	U(-t_1)u(t_1) = U(-t_0)u_{t_0} + i \int_{t_0}^{t_1} U(-s) (|u|^{p-1} u)(s) ds
\end{equation}
in the $\FHsc$ sense for all $t_1 \in I$.
We refer to the interval $I$ as the \emph{lifespan} of $u$.
We say that $u$ is a global solution if $I=\R$.
We say a solution $u$ of \eqref{eq:NLS} satisfies \eqref{eq:IC} if $I$ contains $0$ and $u(0)=u_0$.
\end{definition}
\begin{remark}
The second term of the right hand of \eqref{eq:INLS} makes sense.
Indeed, if $u\in W(K)$ then $|u|^{p-1}u \in F(K)$ by Lemma \ref{lem:lpt_nonlinearest1} and embedding \eqref{eq:W2L}.
Then, Strichartz' estimate implies that the term belongs to $C_t^0 (K,\FHsc)$.
The equation \eqref{eq:INLS} equivalent to
\begin{equation}\tag{\ref{eq:INLS}${}'$}
	u(t) = U(t-t_0)u_{t_0} + i \int_{t_0}^{t} U(t-s) (|u|^{p-1} u)(s) ds
\end{equation}
holds in the $\dot{X}^{s_c}_2(t)$ sense or in the $\M{s_c}22{t}$ sense for $t\in I$.
\end{remark}
\subsection{Local existence and uniqueness}
In this subsection, we show a local existence theorem under an additional condition $u_0\in L^2$.
This assumption will be removed after we establish a short-time perturbation.
\begin{theorem}[\cite{NO}]\label{thm:lpt_existence}
Let $t_0\in I \subset \R$. Suppose $u_0 \in  \M{s_c}22{t_0}$ satisfies
\[
	\norm{U(t-t_0)u_0}_{W(I)} \le \eta
\]
for some $0<\eta\le\eta_0$, where $\eta_0$ is a positive constant.
Further, assume that $u_0 \in L^2$.
Then, there exists a unique solution $u(t)\in L^\I(I,\Ms{s_c}22)\cap W(I)$ of \eqref{eq:NLS}
with $u(t_0)=u_{0}$ and
\begin{align}
	\norm{u}_{W(I)} &{}\le C \eta,\label{eq:lpt_existence1}\\
	\norm{u}_{L^\I(I,\Ms{s_c}22)} &{}\le C (\norm{u_{0}}_{\M{s_c}22{t_0}}
	+\eta^p),\label{eq:lpt_existence2}\\
	\norm{|u|^{p-1}u}_{F(I)} &{}\le C \eta^{p},\label{eq:lpt_existence3}\\
	\norm{u}_{L^\I(I,L^2)\cap L^{\rho,2}(I,L^q)} &{}\le C_q \norm{u_0}_{L^2},\label{eq:lpt_existence4}
\end{align}
where $(\rho,q)$ is an admissible pair.
\end{theorem}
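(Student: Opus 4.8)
The plan is a contraction--mapping argument for the Duhamel operator
\[
	\Phi(u)(t) := U(t-t_0)u_0 + i\int_{t_0}^{t} U(t-s)(|u|^{p-1}u)(s)\,ds,
\]
run on a ball
\[
	B := \Bigl\{ u \ :\ \norm{u}_{W(I)} \le A\eta,\ \norm{u}_{L^\I(I,L^2)\cap L^{\rho_*,2}(I,L^{q_*})} \le A\norm{u_0}_{L^2}\Bigr\},
\]
where $A$ is a large constant to be fixed, $\eta_0$ a small constant, and $(\rho_*,q_*)$ a fixed admissible pair chosen so that $\frac1{q_*'}=\frac{p-1}{q(L)}+\frac1{\tilde q}$, $\frac1{\rho_*'}=\frac{p-1}{\rho(L)}+\frac1{\tilde\rho}$ for some admissible $(\tilde\rho,\tilde q)$. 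On $B$ I would use the metric $d(u,v)=\norm{u-v}_{X(I)}$ when $p<2$ and $d(u,v)=\norm{u-v}_{W_1(I)}$ when $p\ge2$; in both cases $(B,d)$ is a complete metric space (for $p<2$ this uses the embedding $W_2(I)\hookrightarrow X(I)$ from \eqref{eq:expXLY}'s line together with reflexivity of $W(I)$, so that the $W(I)$-ball is $d$-closed). The point of the extra hypothesis $u_0\in L^2$ is exactly to make the second constraint defining $B$ nonvacuous: by the $s=0$ case of Strichartz' estimate (Proposition~\ref{prop:Strichartz}, using $\dot M^0_{2,2}(t)=L^2$) one has $\norm{U(t-t_0)u_0}_{L^\I(I,L^2)\cap L^{\rho_*,2}(I,L^{q_*})}\le C_{q_*}\norm{u_0}_{L^2}$.

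First I would check $\Phi(B)\subset B$. The weighted homogeneous term is controlled by the hypothesis $\norm{U(t-t_0)u_0}_{W(I)}\le\eta$; for the Duhamel term, the inhomogeneous Strichartz' estimate (Proposition~\ref{prop:Strichartz}(2), with input pair $(\rho(F),q(F))=(\rho(W_1)',q(W_1)')$) combined with Lemma~\ref{lem:lpt_nonlinearest1} and the embedding $W_2(I)\hookrightarrow L(I)$ gives
\begin{multline*}
	\Bigl\| \int_{t_0}^t U(t-s)(|u|^{p-1}u)(s)\,ds \Bigr\|_{L^\I(I,\Ms{s_c}22)\cap W(I)} \\
	\le C\norm{|u|^{p-1}u}_{F(I)} \le C\norm{u}_{L(I)}^{p-1}\norm{u}_{W_1(I)} \le C(A\eta)^p,
\end{multline*}
which is \eqref{eq:lpt_existence3} and, choosing $A\ge 2C$ and $\eta_0$ with $C(A\eta_0)^{p-1}\le\tfrac12$, closes \eqref{eq:lpt_existence1} and yields \eqref{eq:lpt_existence2}. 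For the $L^2$-level bound \eqref{eq:lpt_existence4}, one estimates $\norm{|u|^{p-1}u}_{L^{\rho_*',2}(I,L^{q_*'})}\le C\norm{u}_{L(I)}^{p-1}\norm{u}_{L^{\rho_*,2}(I,L^{q_*})}$ via the generalized H\"older inequality (Proposition~\ref{prop:gH}) and the choice of $(\rho_*,q_*)$, applies Strichartz once more (now allowing an arbitrary admissible pair on the left), and absorbs the small factor $C(A\eta)^{p-1}$; this simultaneously secures the second constraint defining $B$ and \eqref{eq:lpt_existence4}.

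It remains to make $\Phi$ a contraction on $(B,d)$. For $p\ge2$ this is immediate from \eqref{eq:lpt_nlest22}, the inhomogeneous Strichartz' estimate, and $W_2(I)\hookrightarrow L(I)$, giving $\norm{\Phi(u_1)-\Phi(u_2)}_{W_1(I)}\le C(A\eta)^{p-1}\norm{u_1-u_2}_{W_1(I)}$. For $p<2$ the main obstacle is that $|z|^{p-1}z$ is merely H\"older continuous, so the difference estimate \eqref{eq:lpt_nlest2} carries the non-Lipschitz term $\norm{u_1-u_2}_{L(I)}^{p-1}$ and cannot close a contraction in any of $W$, $L$ or $F$; this is precisely why the auxiliary spaces $X(I),Y(I)$ and Lemma~\ref{lem:lpt_nonlinearest3} are needed. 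Combining the non-admissible inhomogeneous Strichartz' estimate (Proposition~\ref{prop:imprvStr}, applicable to the acceptable pairs $(\rho(X),q(X))$ and $(\rho(Y)',q(Y)')$ thanks to \eqref{eq:expXLY}) with \eqref{eq:lpt_nlest3} and $W_2(I)\hookrightarrow X(I)$ yields the genuinely Lipschitz bound $\norm{\Phi(u_1)-\Phi(u_2)}_{X(I)}\le C(A\eta)^{p-1}\norm{u_1-u_2}_{X(I)}$, a contraction for $\eta_0$ small. The Banach fixed point theorem then delivers the unique fixed point $u\in B$, which is the desired solution satisfying \eqref{eq:lpt_existence1}--\eqref{eq:lpt_existence4}. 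Uniqueness in the full class $L^\I(I,\Ms{s_c}22)\cap W(I)$ follows from the same difference estimates applied on a subdivision of $I$ into subintervals where the relevant norms are small (Proposition~\ref{prop:lpt-division}), using the continuity of $t\mapsto U(-t)u(t)$ to propagate equality across the subdivision.
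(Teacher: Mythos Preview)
Your proof is correct but takes a different route from the paper. The paper exploits the extra hypothesis $u_0\in L^2$ in a more essential way: it runs the Cauchy-sequence argument for the Picard iterates in the \emph{unweighted} space $D^0(I)=L^\infty(I,L^2)\cap L^{\rho(W_2),2}(I,L^{q(W_2)})$ with error space $F^0(I)=L^{\rho(F),2}(I,L^{q(F)})$. Since no weight is present, the pointwise bound $\big||u|^{p-1}u-|v|^{p-1}v\big|\le C(|u|+|v|)^{p-1}|u-v|$ plus H\"older gives a Lipschitz estimate directly, with no fractional-chain-rule issues and no need for the auxiliary spaces $X(I),Y(I)$ or Lemma~\ref{lem:lpt_nonlinearest3}; the weighted bounds \eqref{eq:lpt_existence1}--\eqref{eq:lpt_existence3} then come a posteriori from the uniform a priori estimates on the iterates. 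By contrast, you import the $X(I)$ machinery already here to obtain a genuine contraction in a weighted norm, and the $L^2$ hypothesis is used only to secure \eqref{eq:lpt_existence4}, not for the contraction itself. Your approach is heavier at this stage but has the advantage that, dropping the $L^2$ constraint from $B$, it would yield existence in $\FHsc$ directly---essentially short-circuiting the paper's two-step route through Proposition~\ref{prop:spt} and Theorem~\ref{thm:lwp}. The paper's approach keeps Theorem~\ref{thm:lpt_existence} elementary and postpones the $X(I)$ analysis to the perturbation theory, where it is unavoidable.
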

\begin{remark}
For any $u_0 \in \M{s_c}22{t_0}$, there exists an interval $I\ni t_0$ in which 
a corresponding solution $u(t)$ exists.
Indeed,
thanks to Strichartz' estimate, we have
$\norm{U(t-t_0)u_0}_{W(\R)} <+\I$ for any $u_0 \in \M{s_c}22{t_0}$.
Hence, we are always able to choose an interval $I \ni t_0$ so that 
$\norm{U(t-t_0)u_0}_{W(I)} \le \eta_0$.
Notice that the interval $I$ depends on the profile of the data, not simply on the size of the data.
\end{remark}
\begin{proof}
We shall put $u^{(0)}(t):=0$, $u^{(1)}(t):=U(t-t_0)u_0$, and
\[
	u^{(m+1)}(t):=U(t-t_0)u_0 +i\int^t_{t_0} U(t-s)(|u^{(m)}|^{p-1}u^{(m)})(s) ds
\]
for $m\ge1$.
Let us first establish a uniform a priori bound on $u^{(m)}$.
Strichartz'  estimates (Proposition \ref{prop:Strichartz}),
Lemma \ref{lem:lpt_nonlinearest1}, and \eqref{eq:W2L}
give us
\begin{align*}
	\norm{u^{(m+1)}}_{W(I)} \le{}&  \norm{U(t-t_0)u_0}_{W(I)}
	+ C\norm{|u|^{p-1}u}_{F(I)} \\
	\le{}&  \eta + C\norm{u^{(m)}}^p_{W(I)}.
\end{align*}
Hence, if $\eta_0$ is sufficiently small, then we can show by induction that
\begin{equation}\label{eq:exist1}
	\norm{u^{(m)}}_{W(I)} \le 2\eta,\quad
	\norm{|u^{(m)}|^{p-1}u^{(m)}}_{F(I)} \le C \eta^p.
\end{equation}
Again by Strichartz' estimate,
\begin{equation}\label{eq:exist2}
	\norm{u^{(m)}}_{L^\I(I,\Ms{s_c}22)} \le C\norm{u_0}_{\M{s_c}22{t_0}} + C \eta^p.
\end{equation}

We next estimate the difference $u^{(m+1)}-u^{(m)}$. We set
\[
	D^0(I) := L^{\I}(I,L^{2}) \cap L^{\rho(W_2),2}(I,L^{q(W_2)}), \quad
	F^0(I) := L^{\rho(F),2}(I,L^{q(F)}).
\]
By Strichartz' estimate and \eqref{eq:exist1},
\begin{equation}\label{eq:exist3}
\begin{aligned}
	\norm{u^{(m+1)}-u^{(m)}}_{D^0(I)}
	&{}\le C \norm{|u^{(m+1)}|^{p-1}u^{(m+1)}-|u^{(m)}|^{p-1}u^{(m)}
	}_{F^0(I)}\\
	&{}\le C\norm{u^{(m)}-u^{(m-1)}}_{D^0(I)}\sum_{i=0,1} \norm{u^{(m-i)}}_{L(I)}^{p-1}\\
	&{}\le C \eta^{p-1} \norm{u^{(m)}-u^{(m-1)}}_{D^0(I)}.
\end{aligned}
\end{equation}
Further, by the assumption $u_0 \in L^2$, we have
$\norm{u^{(1)}}_{D^0(I)} \le C \Lebn{u_0}2$.
One sees from \eqref{eq:exist3} that if $\eta_0$ is small then $\{u^{(m)}\}_m$ is a Cauchy sequence in $D^0(I)$.
Thus, there exists a function $u \in D^0(I)$ such that
\[
	u^{(m)} \to u \IN D^0(I)
\]
as $m\to\I$. The estimate \eqref{eq:exist3} also imply
\[
	|u^{(m)}|^{p-1} u^{(m)} \to |u|^{p-1}u \IN F^0(I)
\]
as $m\to\I$. It therefore holds that
\begin{multline*}
	\norm{ u(t)-U(t-t_0)u_0 - i \int_{t_0}^t U(t-s) (|u|^{p-1}u)(s)ds }_{D^0(I)}\\
	\le \norm{u-u^{(m)}}_{D^0(I)} 
	+ C \norm{|u|^{p-1}u-|u^{(m-1)}|^{p-1}u^{(m-1)}}_{F^0(I)}\to0
\end{multline*}
as $m\to0$, which shows that $u(t)$ is a solution of \eqref{eq:NLS}.
By a priori estimates \eqref{eq:exist1} and \eqref{eq:exist2}, one has desired bounds on $u(t)$.
Uniqueness follows from a standard argument.
\end{proof}

\subsection{Short-time perturbation}
We next establish  a stability estimate what is called a short-time perturbation. 
This is one of the key estimate of our argument.
The local well-posedness, shown in the forthcoming subsection, relies on this estimate.
The result depends on whether $p\ge2$ or not.
When $p\ge2$ we are able to obtain Lipschitz continuity.
However, it becomes merely H\"older if $p<2$.
We follow an argument by \cite{TV}, in which the energy critical equation is considered.
In \cite{TV}, in order to establish a stability estimate for $p<2$,
they introduce an "intermediate space" and
an exotic Strichartz estimate on it.
In our case, $X(I)$ is the intermediate space.
Heart of the matter is that we take not a Lorentz-modified-Besov type space
but a Lorentz-modified-Sobolev space.
Then, the non-admissible Strichartz estimate (Proposition \ref{prop:imprvStr})
plays the role of the exotic Strichartz estimate.
This stability estimate will be upgraded in Theorem \ref{thm:lpt}.
However, the main point lies in this proposition.
\begin{proposition}\label{prop:spt}
Let $I$ be a compact interval contains $t_0$.
Let $\widetilde{u}(t)$ is an approximate solution of \eqref{eq:NLS} in
such a sense that
\[
	i\d_t \widetilde{u} + \Delta\widetilde{u} = -|\widetilde{u}|^{p-1}\widetilde{u}
	+e
\]
holds (in the sense of corresponding integral equation) with some function $e \in F(I)$.
Suppose that 
\[
	\norm{\widetilde{u}}_{L^\I (I,\Ms{s_c}22)} \le A
\]
for some $A>0$. Let $u(t_0)\in X^{s_c}_2(t_0)$ satisfy
\[
	\norm{u(t_0)-\widetilde{u}(t_0)}_{\M{s_c}22{t_0}} \le A'
\]
for some $A'>0$. Then, there exists a positive constant
$\eps_0>0$ such that if 
\[
	\norm{\widetilde{u}}_{W(I)}  \le \eps_0
\]
and if 
\begin{align*}
	\norm{U(t-t_0)(u(t_0)-\widetilde{u}(t_0))}_{W(I)} &{}\le \eps, &
	\norm{e}_{F(I)} &{} \le \eps
\end{align*}
for some $0<\eps \le \eps_0$ then a solution $u(t)$ of \eqref{eq:NLS}
with initial data $u(t_0)$ at $t=t_0$ obeys 
\begin{align*}
	\norm{u-\widetilde{u}}_{W(I)} &{}\le C \eps^{\min(1,p-1)} \\
	\norm{u-\widetilde{u}}_{L^\I(I,\Ms{s_c}22)} &{}\le A'+ C\eps^{\min(1,p-1)} \\
	\norm{u}_{L^\I(I,\Ms{s_c}22)} &{}\le A+ A' + C\eps^{\min(1,p-1)} \\
	\norm{(i \d_t +\Delta)(u-\widetilde{u}) + e}_{F(I)} &{}\le C\eps^{\min(1,p-1)}.
\end{align*}
\end{proposition}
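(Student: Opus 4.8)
The plan is to set $w := u-\widetilde u$ and run a stability/continuity argument based on the Strichartz estimates of Section~\ref{sec:2} and the nonlinear difference estimates of Lemmas~\ref{lem:lpt_nonlinearest2} and~\ref{lem:lpt_nonlinearest3}. Subtracting the Duhamel formulas for $u$ and $\widetilde u$ gives
\[
	w(t) = U(t-t_0)w(t_0) + i\int_{t_0}^{t} U(t-s)\bigl[(|u|^{p-1}u-|\widetilde u|^{p-1}\widetilde u)(s) + e(s)\bigr]\,ds .
\]
Since $(i\d_t+\Delta)(u-\widetilde u)+e = -(|u|^{p-1}u-|\widetilde u|^{p-1}\widetilde u)$, the fourth conclusion is just a bound on $\mathcal N := \||u|^{p-1}u-|\widetilde u|^{p-1}\widetilde u\|_{F(I)}$, and the third follows from $u=\widetilde u+w$ together with the hypothesis $\|\widetilde u\|_{L^\I(I,\Ms{s_c}22)}\le A$ once the second is known. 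Applying Proposition~\ref{prop:Strichartz} to the Duhamel formula (using that $(\rho(W_1),q(W_1))$ and $(\rho(W_2),q(W_2))$ are admissible and $(\rho(F),q(F))=(\rho(W_1)',q(W_1)')$; for the $W_2(I)$ component one uses the inhomogeneous Strichartz estimate with the two admissible pairs $(\rho(W_2),q(W_2))$ and $(\rho(W_1),q(W_1))$ so that the source term stays in $F(I)$) and invoking $\|U(t-t_0)w(t_0)\|_{W(I)}\le\eps$, $\|e\|_{F(I)}\le\eps$ yields
\[
	\|w\|_{W(I)} \le C\eps + C\mathcal N, \qquad \|w\|_{L^\I(I,\Ms{s_c}22)} \le A' + C\eps + C\mathcal N .
\]
So everything reduces to proving $\mathcal N\le C\eps^{\min(1,p-1)}$.

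For $p\ge 2$ (hence $N\le 3$ and $W_1(I)=W_2(I)=W(I)$) this is routine. Estimate~\eqref{eq:lpt_nlest22}, the embedding~\eqref{eq:W2L}, $\|\widetilde u\|_{W(I)}\le\eps_0$ and $u=\widetilde u+w$ give
\[
	\mathcal N \le C\|w\|_{W(I)}(\eps_0+\|w\|_{W(I)})^{p-1} + C\|w\|_{W(I)}(\eps_0+\|w\|_{W(I)})^{p-2}\eps_0,
\]
in which $w$ appears only to the first power on the right. Combined with $\|w\|_{W(I)}\le C\eps+C\mathcal N$ and a bootstrap in subintervals $J\ni t_0$ on which $\|w\|_{W(J)}$ is small (legitimate because $u$ is an $\FHsc$-solution on $I$, so $u\in W(I)$, whence $w\in W(I)$), taking $\eps_0$ small forces $\|w\|_{W(I)}\le C\eps$ and $\mathcal N\le C\eps=C\eps^{\min(1,p-1)}$.

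The new case is $p<2$ ($N\ge3$), where $|z|^{p-1}z$ is only H\"older of order $p-1$ and~\eqref{eq:lpt_nlest2} produces the sub-linear term $\|w\|_{L(I)}^{p-1}$; bounding $\|w\|_{L(I)}$ by $\|w\|_{W(I)}$ would give $\mathcal N\gtrsim\|w\|_{W(I)}^{p-1}$, which no continuity argument can absorb since $p-1<1$. This is the main obstacle, and it is resolved exactly as in \cite{TV} by carrying along the intermediate Lorentz-modified-Sobolev space $X(I)$. Using $X(I)\hookrightarrow L(I)$, $W_2(I)\hookrightarrow X(I)$, $\|\widetilde u\|_{W_1(I)}\le\eps_0$, and $\|u\|_{L(I)}\le\|\widetilde u\|_{L(I)}+\|w\|_{L(I)}\le C\eps_0+C\|w\|_{X(I)}$, estimate~\eqref{eq:lpt_nlest2} gives
\[
	\mathcal N \le C\|w\|_{W_1(I)}(\eps_0+\|w\|_{X(I)})^{p-1} + C\eps_0\|w\|_{X(I)}^{p-1},
\]
so the sub-linear power now sits on $\|w\|_{X(I)}$. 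To control it, apply the non-admissible Strichartz estimate for the Lorentz-modified-Sobolev space (Proposition~\ref{prop:imprvStrLS}, with the acceptable pairs $(\rho(X),q(X))$ and $(\rho(Y)',q(Y)')$) to the Duhamel formula: the homogeneous term is bounded via $W(I)\hookrightarrow X(I)$ by $C\eps$, the $e$-term lies in $W(I)\hookrightarrow X(I)$ with norm $\le C\eps$ by ordinary Strichartz, and the source term is estimated by Lemma~\ref{lem:lpt_nonlinearest3}, giving
\[
	\|w\|_{X(I)} \le C\eps + C\|w\|_{X(I)}\bigl((\eps_0+\|w\|_{W_2(I)})^{p-1}+\eps_0^{p-1}\bigr).
\]
Now bootstrap the quantity $N(J):=\|w\|_{W(J)}+\|w\|_{X(J)}$ over subintervals $J\ni t_0$: on any $J$ with $N(J)$ below a small threshold, the last display (with $\eps_0$ small) forces $\|w\|_{X(J)}\le C\eps$, then the $\mathcal N$-bound and $\|w\|_{W(J)}\le C\eps+C\mathcal N(J)$ give $\|w\|_{W(J)}\le C\eps^{p-1}$, hence $N(J)\le C\eps^{p-1}$, which is below half the threshold once $\eps\le\eps_0$ is small enough; since $N$ is continuous, nondecreasing in $J$ and tends to $0$ as $J\to\{t_0\}$, this closes on all of $I$, and feeding it back yields $\mathcal N\le C\eps^{p-1}=C\eps^{\min(1,p-1)}$.

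Finally, the $L^\I(I,\Ms{s_c}22)$ bounds follow by inserting $\mathcal N\le C\eps^{\min(1,p-1)}$ into the displays of the first paragraph (absorbing $C\eps$ into $C\eps^{\min(1,p-1)}$ since $\eps\le\eps_0\le 1$), and the bound on $\|u\|_{L^\I(I,\Ms{s_c}22)}$ from $u=\widetilde u+w$ and $\|\widetilde u\|_{L^\I(I,\Ms{s_c}22)}\le A$. The only point of care besides the $p<2$ mechanism above is the presupposition that $u$ is an $\FHsc$-solution on all of $I$; where this is not assumed, it is obtained by first proving the proposition under the extra hypothesis $u(t_0)\in L^2$ (so that Theorem~\ref{thm:lpt_existence} yields local existence, which the a priori bounds just established propagate across $I$ by a standard continuation argument), the $L^2$ hypothesis being removed afterwards.
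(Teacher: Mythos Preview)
Your proof is correct and follows essentially the same strategy as the paper: reduce to bounding $\mathcal N$, and in the delicate case $p<2$ pass through the intermediate Lorentz--modified--Sobolev space $X(I)$ via Proposition~\ref{prop:imprvStrLS} and Lemma~\ref{lem:lpt_nonlinearest3} so that the sub-linear term $\|w\|_{L(I)}^{p-1}$ can be controlled by $\|w\|_{X(I)}^{p-1}$ with $\|w\|_{X(I)}\le C\eps$.

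The only difference worth noting is how you close the $X(I)$ estimate. You bootstrap the joint quantity $\|w\|_{W(J)}+\|w\|_{X(J)}$ over subintervals, which works but presupposes that $u$ exists on $I$ (you patch this at the end via the $L^2$ hypothesis and Theorem~\ref{thm:lpt_existence}). The paper instead observes directly that
\[
\|U(t-t_0)u(t_0)\|_{W(I)}\le \|U(t-t_0)w(t_0)\|_{W(I)}+\|\widetilde u\|_{W(I)}+C\||\widetilde u|^{p-1}\widetilde u\|_{F(I)}+C\|e\|_{F(I)}\le C\eps_0,
\]
and then invokes Theorem~\ref{thm:lpt_existence} once to obtain \emph{both} existence of $u$ on $I$ and $\|u\|_{W(I)}\le C\eps_0$; plugging this into the $X(I)$ inequality gives $\|w\|_{X(I)}\le C\eps+C\eps_0^{p-1}\|w\|_{X(I)}$, which is immediately absorbable without any bootstrap. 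This route is slightly cleaner, but yours is equally valid. (A minor wording issue: in the $p\ge 2$ case your display for $\mathcal N$ does contain higher powers of $\|w\|_{W(I)}$, not ``only the first power''; what you mean---and what matters---is that no \emph{sub-linear} power appears.)
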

\begin{proof}
Put $w:=u-\widetilde{u}$. Then, $w$ satisfies
\begin{align*}
	w(t) ={}&U(t-t_0)w(t_0) + i \int_{t_0}^t U(t-s) (|u|^{p-1}u-|\widetilde{u}|^{p-1}\widetilde{u})
	(s)ds \\
	&{}- \int_{t_0}^t U(t-s)e(s)ds.
\end{align*}

We first consider the case $p\ge2$.
By the Strichartz estimate,
\eqref{eq:lpt_nlest22}, \eqref{eq:W2L}, and assumption,
one sees that
\begin{align*}
	\norm{w}_{W(I)} \le{}& C \norm{U(t-t_0)w(t_0)}_{W(I)} + C\norm{|u|^{p-1}u-|\widetilde{u}|^{p-1}\widetilde{u}}_{F(I)} + \norm{e}_{F(I)}\\
	\le{}& C\eps + C \norm{w}_{W(I)} (\norm{w+\widetilde{u}}_{W(I)} + \norm{\widetilde{u}}_{W(I)})^{p-1} \\
	\le{}& C\eps + C\eps_0^{p-1} \norm{w}_{W(I)} + C \norm{w}_{W(I)}^p 
\end{align*}
If $\eps_0$ is small, we obtain
\[
	\norm{w}_{W(I)} \le C \eps + C \norm{w}_{W(I)}^p,
\]
which implies $\norm{w}_{W(I)} \le C \eps$ for $\eps \le \eps_0$
if $\eps_0$ is small.
Then, another use of Strichartz' estimate yields
\[
	\norm{w}_{L^\I(I, \Ms{s_c}22)} \le A' + C\eps_0^{p-1} \norm{w}_{W(I)}+ C \norm{w}_{W(I)}^p
	\le A'+ C \eps.
\]
We finally note that
\begin{align*}
\norm{(i \d_t +\Delta)(u-\widetilde{u}) + e}_{F(I)} 
&{}=\norm{|u|^{p-1}u - |\widetilde{u}|^{p-1} \widetilde{u} }_{F(I)} \\
&{}\le  C \norm{w}_{W(I)} (\norm{u}_{W(I)} + \norm{\widetilde{u}}_{W(I)})^{p-1}\\
&{}\le C\eps,
\end{align*}
which completes the proof.

We next consider the case $p<2$.
By the embedding $W_2(I)\hookrightarrow X(I)\hookrightarrow L(I)$,
Strichartz' estimate (Proposition \ref{prop:Strichartz}),
the non-admissible Strichartz estimate (Proposition \ref{prop:imprvStrLS}),
Lemma \ref{lem:lpt_nonlinearest3}, and the assumption,
one sees that
\begin{align}
	\norm{w}_{X(I)} \le{}& C \norm{U(t-t_0)w(t_0)}_{W(I)} + C\norm{|u|^{p-1}u-|\widetilde{u}|^{p-1}\widetilde{u}}_{Y(I)} + \norm{e}_{F(I)}\nonumber\\
	\le{}& C\eps + C \norm{w}_{X(I)} (\norm{u}_{W(I)} + \norm{\widetilde{u}}_{W(I)})^{p-1}.
	\label{eq:spt2_1}
\end{align}

Let us now estimate $\norm{u}_{W(I)}$.
By the assumption, we have
\begin{align*}
	\norm{U(t-t_0)u(t_0)}_{W(I)}
	&{}\le \norm{U(t-t_0)w(t_0)}_{W(I)} + \norm{U(t-t_0)\widetilde{u}(t_0)}_{W(I)} \\
	&{}\le \eps + \norm{\widetilde{u}}_{W(I)}
	+ C\norm{|\widetilde{u}|^{p-1}\widetilde{u}}_{F(I)} + C\norm{e}_{F(I)}\\
	&{}\le C\eps_0 + C \eps_0^p.
\end{align*}
If $\eps_0$ is sufficiently smaller than $\eta_0$, which is a constant given in Theorem \ref{thm:lpt_existence}, then the life span of $u$ contains $I$ and we have
\begin{equation}\label{eq:spt2-2}
	\norm{u}_{W(I)} \le C\eps_0.
\end{equation} 
Substituting this estimate to \eqref{eq:spt2_1}, we obtain
$\norm{w}_{X(I)} \le C \eps + C\eps_0^{p-1} \norm{w}_{X(I)}$,
which implies 
\begin{equation}\label{eq:spt2-3}
	\norm{w}_{X(I)} \le C \eps
\end{equation} if $\eps_0$ is small.

Again by Strichartz' estimate,
\[
	\norm{w}_{W(I)} \le \norm{U(t-t_0)w(t_0)}_{W(I)}
	+ C\norm{|u|^{p-1}u-\widetilde{u}^{p-1}\widetilde{u}}_{F(I)} + C\norm{e}_{F(I)}.
\]
We now use \eqref{eq:lpt_nlest2} to the second term of the right hand side to yield
\begin{equation}\label{eq:spt2-4}
	\norm{u|^{p-1}u-\widetilde{u}^{p-1}\widetilde{u}}_{F(I)}
	\le C\norm{w}_{W(I)} \norm{u}_{L(I)}^{p-1}
	+ C\norm{w}_{L(I)}^{p-1} \norm{\widetilde{u}}_{W(I)}.
\end{equation}
Then, by $W_2(I)\hookrightarrow X(I)\hookrightarrow L(I)$ and by \eqref{eq:spt2-2}
and \eqref{eq:spt2-3},
\begin{align*}
	\norm{w}_{W(I)} &{}\le \eps + C\norm{w}_{W(I)}
	\norm{u}_{W(I)}^{p-1}
	+ C \norm{w}_{X(I)}^{p-1} \norm{\widetilde{u}}_{W(I)} \\
	&{} \le \eps  + C\eps_0^{p-1}\norm{w}_{W(I)} + C\eps_0\eps^{p-1}.
\end{align*}
Hence, if $\eps_0$ is small, then we have
\[
	\norm{w}_{W(I)} \le2\eps + 2C\eps_0 \eps^{p-1} \le C\eps^{p-1}\eps_0^{2-p}.
\]
The other estimates follow as in the $p\ge2$ case.
\end{proof}

\begin{remark}\label{rmk:lbd_of_p2}
The lower bound $p>1+\frac{4}{N+2}$ comes from this proposition.
The point of the above argument for $p<2$
is that we first establish 
 \eqref{eq:spt2-3}
to deal with $\norm{w}_{L(I)}^{p-1}$ in the right hand side of \eqref{eq:spt2-4}.
To obtain \eqref{eq:spt2-3}, it is essential to employ Lemma \ref{lem:lpt_nonlinearest3} which
requires the lower bound $p>1+\frac{4}{N+2}$ (see Remark \ref{rmk:lbd_of_p}).
\end{remark}

\subsection{Local well-posedness and blowup criterion}
We are now in a position to establish local well-posedness in $\FHsc$.
\begin{theorem}\label{thm:lwp}
The Cauchy problem \eqref{eq:NLS}-\eqref{eq:IC} is localy well-posed in $\FHsc$.
Further, all the statement of Theorem \ref{thm:lpt_existence},
except for \eqref{eq:lpt_existence4}, 
hold without the assumption $u_0 \in L^2$.
\end{theorem}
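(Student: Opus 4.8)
The plan is to deduce Theorem~\ref{thm:lwp} from the short-time perturbation estimate (Proposition~\ref{prop:spt}), which, crucially, does \emph{not} require the finite-mass assumption, together with a density argument. The point is that, once Proposition~\ref{prop:spt} is available, the contraction-in-$D^0(I)$ argument of Theorem~\ref{thm:lpt_existence} — the only place where the hypothesis $u_0\in L^2$ was genuinely used — becomes superfluous: existence for general data follows by approximation and continuous dependence can be read off directly from the perturbation estimate.

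\textbf{Existence.} First I would record that $\FHsc\cap L^2$ is dense in $\FHsc$; indeed, for $u_0\in\FHsc$ set $u_{0,n}:=\F^{-1}[(\F u_0)\,\mathbf{1}_{\{1/n\le|x|\le n\}}]$, so that $u_{0,n}\in L^2$ (on $\{|x|\ge 1/n\}$ one bounds $|\F u_0|\le n^{s_c}|x|^{s_c}|\F u_0|$) and $u_{0,n}\to u_0$ in $\FHsc$ by dominated convergence. By Strichartz' estimate (Remark~\ref{rmk:linest}) we have $\norm{U(t)u_0}_{W(\R)}<\I$, so we may fix a bounded interval $I\ni 0$ with $\norm{U(t)u_0}_{W(I)}\le\eps_0/2$, where $\eps_0\le\eta_0$ is the constant of Proposition~\ref{prop:spt}. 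For $n$ large, $\norm{U(t)u_{0,n}}_{W(I)}\le\eps_0$, so Theorem~\ref{thm:lpt_existence} yields solutions $u_n$ on $I$ with data $u_{0,n}$ and $\norm{u_n}_{W(I)}\le C\eps_0$. Applying Proposition~\ref{prop:spt} with $\widetilde u=u_n$, $u=u_m$, $e=0$, and $\eps=\eps_{n,m}:=\norm{U(t)(u_{0,n}-u_{0,m})}_{W(I)}\to 0$, one gets
\[
\norm{u_n-u_m}_{W(I)}+\norm{u_n-u_m}_{L^\I(I,\Ms{s_c}22)}
\le C\norm{u_{0,n}-u_{0,m}}_{\M{s_c}22{0}}+C\,\eps_{n,m}^{\min(1,p-1)}\to 0,
\]
so $\{u_n\}$ converges to some $u\in W(I)\cap L^\I(I,\Ms{s_c}22)$. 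Using Lemmas~\ref{lem:lpt_nonlinearest1}--\ref{lem:lpt_nonlinearest2} and \eqref{eq:W2L} to pass to the limit in $F(I)$ in the nonlinear term, one checks that $u$ satisfies the Duhamel equation \eqref{eq:INLS} with data $u_0$, and the bounds \eqref{eq:lpt_existence1}--\eqref{eq:lpt_existence3} survive the limit; Strichartz' estimate applied to \eqref{eq:INLS} then gives $U(-\cdot)u(\cdot)\in C_t^0(I,\FHsc)$, so $u$ is an $\FHsc$-solution.

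\textbf{Uniqueness and continuous dependence.} If $u,v$ are two $\FHsc$-solutions on a compact $I\ni t_0$ with the same data, then $u,v\in W(I)$ and Proposition~\ref{prop:lpt-division} allows me to split $I$ into finitely many subintervals on which $\norm{u}_{W}+\norm{v}_{W}\le\eps_0$; running Proposition~\ref{prop:spt} successively (bootstrapping outward from $t_0$, with $e=0$ and $\eps\to 0^+$, the data mismatch at each new endpoint being controlled by the previous step) forces $u=v$. The same subdivision scheme, now keeping $\eps>0$ small, shows the solution map $u_0\mapsto u$ is continuous — Lipschitz if $p\ge 2$, $(p-1)$-H\"older if $p<2$ — from $\FHsc$ into $W(I)\cap L^\I(I,\Ms{s_c}22)$ on a fixed interval; extending to the maximal interval of existence (the union of all intervals carrying a solution, well-defined by uniqueness) then gives the standard local well-posedness package. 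The estimate \eqref{eq:lpt_existence4} is the only assertion of Theorem~\ref{thm:lpt_existence} genuinely tied to $u_0\in L^2$ and is accordingly dropped.

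\textbf{Main obstacle.} The principal difficulty is that for $p<2$ Proposition~\ref{prop:spt} provides only $(p-1)$-H\"older, rather than Lipschitz, control of differences, so the Cauchy property must be extracted at the level of the data approximation (as above) instead of via a fixed-point contraction; one must also verify that the finitely many constants produced by the subdivision (Proposition~\ref{prop:lpt-division}) and by the iterated use of Proposition~\ref{prop:spt} do not degenerate along the bootstrap. The other point requiring care is the convergence $|u_n|^{p-1}u_n\to|u|^{p-1}u$ in $F(I)$, which is precisely where Lemmas~\ref{lem:lpt_nonlinearest1}--\ref{lem:lpt_nonlinearest2} (and, inside Proposition~\ref{prop:spt} for $p<2$, the intermediate-space estimate Lemma~\ref{lem:lpt_nonlinearest3}) are needed.
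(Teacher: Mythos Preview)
Your approach is essentially the same as the paper's: approximate the data by $\FHsc\cap L^2$ functions, solve for each approximant via Theorem~\ref{thm:lpt_existence}, use Proposition~\ref{prop:spt} to show the resulting solutions form a Cauchy sequence in $W(I)\cap L^\I(I,\Ms{s_c}22)$, and read off uniqueness and continuous dependence directly from Proposition~\ref{prop:spt}. One correction: your density construction should truncate in physical space, $u_{0,n}:=u_0\,\mathbf{1}_{\{1/n\le|x|\le n\}}$, since $\norm{f}_{\FHsc}=\Lebn{|x|^{s_c}f}{2}$ is a physical-side weight (the Fourier-side truncation you wrote would instead require control of $|\xi|^{s_c}\widehat{u_0}$, i.e.\ $u_0\in\dot{H}^{s_c}$, which is not assumed); with that fix your justification goes through verbatim.
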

\begin{proof}
Let us first extend the existence result to the case
$u_0 \in \M{s_c}22{t_0}\setminus L^2$.
Let $\eta_0$ be the constant given in 
Theorem \ref{thm:lpt_existence}, and assume
\[
	\norm{U(t-t_0)u_0}_{W(I)} \le \eta
\]
for some $\eta\le \frac12 \eta_0$.
Take a sequence of functions $\{u_{0,n}\}_{n=1}^\I \subset \M{s_c}22{t_0}\cap L^2$
such that $u_{0,n}\to u_0$ in $\M{s_c}22{t_0}$ as $n\to\I$.
Since
\[
	\norm{U(t-t_0)(u_0-u_{0,n})}_{W(I)} \le
	C \norm{u_0-u_{0,n}}_{\M{s_c}22{t_0}} \to 0
\]
as $n\to\I$,
there exists $N=N(\eta)>0$ such that
\[
	\norm{U(t-t_0)u_{0,n}}_{W(I)} \le \norm{U(t-t_0)(u_0-u_{0,n})}_{W(I)} + \norm{U(t-t_0)u_0}_{W(I)} \le 2\eta
\]
for all $n\ge N$. One then deduces from Theorem \ref{thm:lpt_existence} that
there exists a sequence of solutions $\{u_n\}_{n=N}^\I$ with 
the bounds
\begin{align*}
	\norm{u_n}_{W(I)} &{}\le C \eta,\\
	\norm{u_n}_{L^\I(I,\Ms{s_c}22)} &{}\le C (\norm{u_{0,n}}_{\M{s_c}22{t_0}}+\eta^p),\\
	\norm{|u_n|^{p-1}u_n}_{F(I)} &{}\le C \eta^{p}
\end{align*}
(uniformly in $n$).
Then, since $\{u_{0,n}\}_{n=N}^\I$ is a Cauchy sequence in $\M{s_c}22{t_0}$,
by means of Proposition \ref{prop:spt}, $\{u_n\}_{n=N}^\I$ is a Cauchy sequence
in such a sense that
\[
	\norm{u_n-u_m}_{W(I)} \to 0, \qquad
	\norm{|u_n|^{p-1}u_n-|u_m|^{p-1}u_m}_{F(I)} \to 0
\]
as $n,m\to\I$. Hence, there exists a unique limit $u \in W(I)$ such that
\eqref{eq:lpt_existence1}--\eqref{eq:lpt_existence3} and
\[
	\norm{u-u_n}_{W(I)} \to 0, \qquad
	\norm{|u|^{p-1}u-|u_n|^{p-1}u_n}_{F(I)} \to 0
\]
as $n\to\I$. Further, $u$ solves \eqref{eq:NLS}.
Uniqueness and
continuous dependence properties are also immediate consequences of 
Proposition \ref{prop:spt}.
\end{proof}
One also obtains a standard blowup criterion.
It is worth mentioning that that if $u_0 \in L^2$ then blowup never occurs due to
the mass conservation.
\begin{proposition}[Blowup criterion]\label{prop:lpt_criterion}
Let $u_0 \in \M{s_c}22{t_0}$ and let $u$ be a solution to \eqref{eq:NLS}
with initial data $u(t_0)=u_0$.
Let $I_{\mathrm{max}}=(T_-,T_+)$ be the maximal interval.
If $T_+<+\I$ then 
$\lim_{\tau\uparrow T_+} \norm{u}_{S([t_0,\tau])} =+\I$.
\end{proposition}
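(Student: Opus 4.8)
The plan is to argue by contradiction. Suppose $T_+<+\I$ but $\norm{u}_{S([t_0,T_+))}<+\I$; we will show that $u$ can be extended past $T_+$, contradicting maximality. First I would observe that $S([t_0,T_+))$ can be split into finitely many subintervals on which the $S$-norm is small: apply Proposition \ref{prop:lpt-division} (with $\rho=\rho(L)$, $r=\rho(L)$, so $L^{\rho(L),\rho(L)}=S$; note $\norm{\cdot}_{S}$ is genuinely a Lebesgue norm here) to partition $[t_0,T_+)$ into $I_1,\dots,I_k$ with $\norm{u}_{S(I_j)}\le\delta$ for a small $\delta$ to be fixed. By the embeddings \eqref{eq:W2L} and $W_2\hookrightarrow X\hookrightarrow L$, together with $S\hookrightarrow L$, this controls the $L(I_j)$ norm as well.

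Next I would upgrade this to control of the $W(I_j)$ norm and of $\norm{u}_{L^\I(I_j,\Ms{s_c}22)}$. On each $I_j=[t_{j-1},t_j]$, write the Duhamel formula $u(t)=U(t-t_{j-1})u(t_{j-1})+i\int_{t_{j-1}}^t U(t-s)(|u|^{p-1}u)(s)\,ds$ and apply Strichartz' estimate (Proposition \ref{prop:Strichartz}) together with the nonlinear estimate Lemma \ref{lem:lpt_nonlinearest1}: $\norm{u}_{W(I_j)}\le C\norm{u(t_{j-1})}_{\M{s_c}22{t_{j-1}}}+C\norm{u}_{L(I_j)}^{p-1}\norm{u}_{W_1(I_j)}$. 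Since $\norm{u}_{L(I_j)}\le C\delta$, choosing $\delta$ small enough that $C(C\delta)^{p-1}\le\frac12$ absorbs the last term, giving $\norm{u}_{W(I_j)}\le 2C\norm{u(t_{j-1})}_{\M{s_c}22{t_{j-1}}}$. Feeding this back into Strichartz' estimate for the $L^\I(I_j,\Ms{s_c}22)$ component yields $\norm{u(t_j)}_{\M{s_c}22{t_j}}\le C\norm{u(t_{j-1})}_{\M{s_c}22{t_{j-1}}}+C\delta^{p-1}\norm{u}_{W_1(I_j)}\le C'\norm{u(t_{j-1})}_{\M{s_c}22{t_{j-1}}}$. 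Iterating over the finitely many intervals $j=1,\dots,k$ gives a finite bound $\norm{u(t)}_{\M{s_c}22{t}}\le (C')^k\norm{u(t_0)}_{\M{s_c}22{t_0}}=:A$ uniformly for $t\in[t_0,T_+)$, and also $\norm{u}_{W([t_0,T_+))}<\I$.

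Finally I would use the uniform bound $\norm{u(t)}_{\M{s_c}22{t}}\le A$ near $T_+$ to extend the solution. The cleanest route is to show $U(-t)u(t)$ is Cauchy in $\FHsc$ as $t\uparrow T_+$: for $t<t'<T_+$, \eqref{eq:INLS} gives $\norm{U(-t')u(t')-U(-t)u(t)}_{\FHsc}\le C\norm{|u|^{p-1}u}_{F([t,t'])}\le C\norm{u}_{L([t,t'])}^{p-1}\norm{u}_{W_1([t,t'])}\to0$ as $t,t'\uparrow T_+$, since $\norm{u}_{W([t_0,T_+))}<\I$ forces the tail norms to vanish. Hence $u(T_+):=U(T_+)\lim_{t\uparrow T_+}U(-t)u(t)\in\Ms{s_c}22{T_+}$ is well defined, and by the local existence part of Theorem \ref{thm:lwp} applied at $t_0=T_+$, the solution extends to $[t_0,T_++\tau)$ for some $\tau>0$, contradicting the maximality of $I_{\max}$. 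The main obstacle is the first reduction step: one must verify that Proposition \ref{prop:lpt-division} is applicable to the $S$-norm (it is, since $S=L^{\rho(L),\rho(L)}$ is an ordinary $L^p$-type norm, hence a Lorentz norm with equal indices) and that the number of pieces $k$ stays finite — which is exactly what that proposition guarantees — so that the iterated constant $(C')^k$ does not blow up. Everything downstream is a routine bookstrapping via Strichartz and the nonlinear estimates already established.
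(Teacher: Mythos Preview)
Your argument is correct and follows the same overall contradiction strategy as the paper, but it is more elaborate than necessary in the first half. The paper does not invoke Proposition~\ref{prop:lpt-division} at all: since $S([t_0,T_+))=L^{\rho(L)}([t_0,T_+),L^{q(L)})$ is a genuine Lebesgue norm in $t$, the finiteness $\norm{u}_{S([t_0,T_+))}<\infty$ immediately gives $\norm{u}_{S([t_1,T_+))}\to 0$ as $t_1\uparrow T_+$, so one simply picks a single $t_1$ with $\norm{u}_{S([t_1,T_+))}\le\delta$ and runs your bootstrap once on $[t_1,T_+)$, obtaining $\norm{u}_{W([t_0,T_+))}<\infty$ directly without any partition or iteration of constants. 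Your use of the division lemma works too, but it is overkill here and introduces the iterated constant $(C')^k$ that the paper's argument avoids entirely.

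For the extension step the two proofs are equivalent variants: you show $U(-t)u(t)$ is Cauchy in $\FHsc$ and restart at $T_+$, while the paper instead shows $\norm{U(t-t_2)u(t_2)}_{W([t_2,T_+))}\le\eta_0/2$ for $t_2$ near $T_+$ and then enlarges the interval slightly past $T_+$ so that the local existence criterion of Theorem~\ref{thm:lwp} applies on $[t_2,t_3]$ with $t_3>T_+$. Both routes are standard and valid.
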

\begin{proof}
Assume $T_+<\I$ and $\norm{u}_{S([t_0,T_+))} <+\I$ for contradiction.
Then, for any $\delta>0$ there exists $t_1 \in (t_0,T_+)$ such that
$\norm{u}_{S([t_1,T_+))}\le \delta$. 
Then, for $\tau \in [t_1,T_+)$,
\[
	\norm{u}_{W_2([t_1,\tau])} \le C\norm{u(t_1)}_{X^{s_c}_2(t_1)}
	+C\norm{u}_{S([t_1,T_+))}^{p-1} \norm{u}_{W_2([t_1,\tau])}.
\]
We now take $\delta$ (and $t_1$) so that $C\delta^{p-1}\le 1/2$.
Then, we obtain $\norm{u}_{W_2([t_1,\tau])} \le 2C\norm{u(t_1)}_{X^{s_c}_2(t_1)}<+\I$
for any $\tau \in [t_1,T_+)$, showing $\norm{u}_{W_2([t_1,T_+))}<\I$.
Strichartz' estimate then gives us 
$\norm{u}_{L^\I([t_0,T_+),\Ms{s_c}22)}+\norm{u}_{W([t_0,T_+))}<+\I$.
Another use of Strichartz' estimate yields
\[
	\norm{U(t-t_2)u(t_2)}_{W([t_2,T_+))}
	\le \norm{u}_{W([t_2,T_+))} + C \norm{u}_{W([t_2,T_+))}^p
\]
for $t_2<T_+$.
Since $\norm{u}_{W([t_0,T_+))}<\I$, we can take $t_2<T_+$ so that
\[
	\norm{U(t-t_2)u(t_2)}_{W([t_2,T_+))} \le \frac{\eta_0}2,
\]
where $\eta_0$ is given in Theorem \ref{thm:lpt_existence}.
One can take $t_3>T_+$ so that $$\norm{U(t-t_2)u(t_2)}_{W([t_2,t_3])} \le \eta_0.$$
By means of Theorem \ref{thm:lpt_existence},
this implies $u$ is extended beyond $t=T_+$, which is a contradiction.
\end{proof}
\subsection{Scattering criterion}
In this subsection, we give a necessary and sufficient condition for scattering.
It is well-known that a space-time bound of a solution is a sufficient condition for scattering. 
Here, we will prove that this space-time bound is also a necessary condition.
We restrict ourselves to nonnegative time only, but
it is obvious that a similar result holds true for negative time.
\begin{proposition}\label{prop:nscond}
Let $u_0 \in \FHsc$ and let $u(t)$ be a corresponding unique solution to
\eqref{eq:NLS} given in Theorem \ref{thm:lwp}.
Let $(T_{-},T_{+})\ni 0$ be  the maximal interval of $u(t)$.
Then, the following three conditions are equivalent.
\begin{enumerate}
\item $u_0 \in S_+$;
\item $T_{+} =+\I$ and $\norm{u}_{W(\R_+)}<\I$;
\item $T_{+} =+\I$ and $\norm{u}_{S(\R_+)}<\I$.
\end{enumerate}
\end{proposition}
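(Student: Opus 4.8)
The plan is to prove the equivalences through the two chains $(2)\Leftrightarrow(3)$ and $(1)\Leftrightarrow(2)$, using throughout the Strichartz estimates (Proposition \ref{prop:Strichartz} and Remark \ref{rmk:linest}), the nonlinear bound Lemma \ref{lem:lpt_nonlinearest1}, the embedding \eqref{eq:W2L}, and the local theory of Theorem \ref{thm:lwp}. The implication $(2)\Rightarrow(3)$ is immediate from $W_2(I)\hookrightarrow S(I)$ in \eqref{eq:W2L}. For $(3)\Rightarrow(2)$, since $S$ is a Lebesgue space in time with $1<\rho(L)<\I$, Proposition \ref{prop:lpt-division} provides a finite subdivision $\R_+=\bigcup_{j=1}^{k}I_j$, $I_j=[t_{j-1},t_j]$ with $t_0=0$, on each of which $\norm{u}_{S(I_j)}\le\delta$ for a small fixed $\delta$. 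On $I_j$, Strichartz' estimate together with Lemma \ref{lem:lpt_nonlinearest1} and \eqref{eq:W2L} give
\[
  \norm{u}_{W(I_j)}\le C\norm{u(t_{j-1})}_{\M{s_c}22{t_{j-1}}}+C\delta^{p-1}\norm{u}_{W(I_j)},
\]
so if $C\delta^{p-1}\le 1/2$ then $\norm{u}_{W(I_j)}\le 2C\norm{u(t_{j-1})}_{\M{s_c}22{t_{j-1}}}$; the same computation bounds $\norm{u}_{L^\I(I_j,\Ms{s_c}22)}$, hence $\norm{u(t_j)}_{\M{s_c}22{t_j}}$, by a uniform multiple of $\norm{u(t_{j-1})}_{\M{s_c}22{t_{j-1}}}$, so iterating yields $\norm{u(t_j)}_{\M{s_c}22{t_j}}\le C^{j}\norm{u_0}_{\FHsc}$. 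As $k$ is finite, all these are finite and summing the bounds on $\norm{u}_{W(I_j)}$ (the $W$-norm being subadditive over the finite partition up to a universal constant, since each Lorentz-in-time norm is normable for $1<\rho<\I$) gives $\norm{u}_{W(\R_+)}<\I$; recall $T_+=+\I$ is part of hypothesis (3).

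For $(2)\Rightarrow(1)$, assume $T_+=+\I$ and $\norm{u}_{W(\R_+)}<\I$; then also $\norm{u}_{S(\R_+)}<\I$ by \eqref{eq:W2L}. Since $t\mapsto U(-t)u(t)$ is continuous into $\FHsc$, it suffices to show it is Cauchy as $t\to+\I$, which by the Duhamel formula \eqref{eq:INLS} means $\norm{\int_t^{t'}U(-s)(|u|^{p-1}u)(s)\,ds}_{\FHsc}\to0$ as $t,t'\to\I$. The inhomogeneous Strichartz estimate and Lemma \ref{lem:lpt_nonlinearest1} (with $\norm{u}_{L(I)}$ replaced by $\norm{u}_{S(I)}$ via \eqref{eq:W2L}) bound this expression by $C\norm{u}_{S((t,t'))}^{p-1}\norm{u}_{W_1((t,t'))}$, and since $\rho(L),\rho(W_1)<\I$ both factors tend to $0$ as $t\to\I$. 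The resulting limit $u_+\in\FHsc$ then witnesses $u_0\in S_+$.

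Finally $(1)\Rightarrow(2)$, which I expect to be the main point. Let $u$ scatter as $t\to+\I$ with linear data $u_+\in\FHsc$; in particular $T_+=+\I$. By Remark \ref{rmk:linest}, $\norm{U(t)u_+}_{W(\R)}<\I$, and since $\rho(W_i)<\I$ the tails vanish: $\norm{U(t)u_+}_{W((T,\I))}\to0$ as $T\to\I$. With $\eta_0$ the constant of Theorem \ref{thm:lpt_existence}, choose $T$ so large that $\norm{U(t)u_+}_{W((T,\I))}\le\eta_0/2$ and, using Remark \ref{rmk:linest} again together with $\norm{U(-T)u(T)-u_+}_{\FHsc}\to0$, that $\norm{U(t-T)u(T)-U(t)u_+}_{W((T,\I))}\le\eta_0/2$; hence $\norm{U(t-T)u(T)}_{W((T,\I))}\le\eta_0$. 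Theorem \ref{thm:lwp} then produces a solution on $(T,\I)$ with data $u(T)$ at $t=T$ and $W((T,\I))$-norm $\le C\eta_0$; by uniqueness this solution is $u$, so $\norm{u}_{W((T,\I))}<\I$, while $\norm{u}_{W([0,T])}<\I$ since $[0,T]$ is a compact subinterval of the lifespan. Therefore $\norm{u}_{W(\R_+)}<\I$, which is (2). The delicate points are the vanishing of the free-Strichartz tails (using $\rho(W_i)<\I$) and matching $u(T)$ with $U(t)u_+$ through the $\FHsc$-definition of scattering; once these are in place the small-data theory closes the argument.
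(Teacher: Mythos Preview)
Your argument is correct and matches the paper's: same embedding for $(2)\Rightarrow(3)$, same Duhamel--Strichartz Cauchy estimate for $(2)\Rightarrow(1)$, and the same small-data plus uniqueness trick for $(1)\Rightarrow(2)$. The only variation is in $(3)\Rightarrow(2)$: the paper simply picks a single $T_0$ with $\norm{u}_{S([T_0,\infty))}$ small and runs the absorption on $[T_0,T]$ before letting $T\to\infty$, whereas you invoke Proposition~\ref{prop:lpt-division} to partition all of $\R_+$ and iterate; your route works too, but on the last (unbounded) subinterval $I_k=[t_{k-1},\infty)$ the absorption step requires $\norm{u}_{W(I_k)}<\infty$ a priori, so you should likewise truncate to $[t_{k-1},T]$ first (where finiteness is guaranteed by the definition of solution) and then send $T\to\infty$.
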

\begin{proof}
By embedding $W_2(\R_+)\hookrightarrow S(\R_+)$,
``(2)$\Rightarrow$(3)'' is obvious. 
The relation ``(2)$\Rightarrow$(1)'' is standard:
Using Strichartz' estimate, one sees that $\norm{U(-t_1)u(t_2)-U(-t_2)u(t_2)}_{\FHsc}$
tends to zero as $t_1,t_2\to\I$ (see \cite{CazBook} for details). 

We shall prove ``(1)$\Rightarrow$(2)''.
By definition of $S_+$, we have $T_{+} =+\I$.
Then, it is easy to see that $\norm{u}_{W((0,\tau))}<+\I$ for any $\tau>0$.
Set $u_+ := \lim_{t\to\I} U(-t)u(t)$.
Then, $u_+\in \FHsc$.
Since $\norm{U(t)u_+}_{W(\R)}<\I$ by Strichartz' estimate,
there exists $T>0$ such that
\[
	\norm{U(t)u_+}_{W([T,\I))} \le \frac{\eta_0}2,
\]
where $\eta_0$ is the number given in Theorem \ref{thm:lwp} (or Theorem \ref{thm:lpt_existence}).
Further, by Strichartz' estimate and the convergence $U(-t)u(t)\to u_+$ as $t\to\I$,
 we have
\[
	\norm{U(t)u_+ - U(t-t_0)u(t_0)}_{W(\R)}
	\le C \norm{u_+ - U(-t_0)u(t_0)}_{\FHsc} 
	\le \frac{\eta_0}2
\]
for sufficiently large $t_0 \ge T$. Fix such $T$ and $t_0$.
Then, one sees that
\begin{multline*}
	\norm{U(t-t_0)u(t_0)}_{W([T,\I))} \\ \le \norm{U(t)u_+}_{W([T,\I))} + \norm{U(t)u_+-U(t-t_0)u(t_0)}_{W([T,\I))}
	\le \eta_0.
\end{multline*}
By Theorem \ref{thm:lwp}, there exists a solution $\widetilde{u}(t)$ of \eqref{eq:NLS} such that
$\widetilde{u}(t_0)=u(t_0)$ and $\norm{\widetilde{u}}_{W([T,\I))}\le C\eta_0$.
By uniqueness, $\widetilde{u}=u$. Therefore, $\norm{u}_{W([T,\I))} \le C\eta_0$,
which yields $\norm{u}_{W(\R_+)}<\I$.

Let us show ``(3)$\Rightarrow$(2)''.
Since $\norm{u}_{S(\R_+)} <\I$,
for any $\eps>0$, there exists $T_0$ such that $\norm{u}_{S([T_0,\I))} \le\eps$.
Now, $u(t)$ satisfies
\[
	u(t) =U(t-T_0) u(T_0) +i \int_{T_0}^t U(t-s)(|u|^{p-1}u(s)) ds.
\]
One sees from Strichartz' estimate that
\begin{equation}\label{eq:nscond}
	\norm{u}_{W([T_0,T])}
	\le C_1 \Mn{u(T_0)}{s_c}{2}2{T_0} + C_2 \norm{u}_{S([T_0,T])}^{p-1}
	\norm{u}_{W([T_0,T])}
\end{equation}
for any $T>T_0$. 
Choose $\eps$ so small that $C_2 \eps^{p-1}\le 1/2$ to yield
$\norm{u}_{W([T_0,T])} \le 2C_1 \Mn{u(T_0)}{s_c}{2}2{T_0}$
uniformly in $T>T_0$. Letting $T\to\I$, we obtain $\norm{u}_{W(\R_+)}<\I$.
\end{proof}
\begin{remark}\label{rmk:nscondFH1}
The above criterion is valid also for $\F H^1$-solutions.
More precisely, if $u_0 \in \F H^1$ then
a corresponding $\F H^1$-solution $u(t)$ of \eqref{eq:NLS}-\eqref{eq:IC},
scatters (in $\F H^1$) for positive time if and only if $\norm{u}_{S(\R_+)} <\I$
(or the other properties in the proposition).
The outline of proof is as follows.
If a solution scatters in $\F H^1$ then it does so also in the $\FHsc$ sense.
Hence the first property of Proposition \ref{prop:nscond} is satisfied.
On the other hand, if the third property of Proposition \ref{prop:nscond} are satisfied,
then a persistence-of-regularity type argument shows that it scatters in $\F H^1$ sense.
The argument is very similar to that in the above proof. 
\end{remark}

A consequence of Proposition \ref{prop:nscond} is the
following small data scattering.
\begin{lemma}[Small data scattering]\label{lem:sds}
Let $u_0 \in \FHsc$ and let $u(t)$ be a corresponding unique solution given
in Theorem \ref{thm:lwp}. Then, we have the following.
\begin{enumerate}
\item There exists $\eta_1>0$ such that if
$\norm{U(t)u_0}_{W(\R)} \le \eta_1$ then $u_0 \in S$.
\item There exists $\eta_2>0$ such that if
$\norm{u_0}_{\FHsc} \le \eta_2$ then $u_0 \in S$.
\end{enumerate}
\end{lemma}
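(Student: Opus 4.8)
The plan is to obtain both statements directly from the global existence part of Theorem~\ref{thm:lwp} (the version of Theorem~\ref{thm:lpt_existence} without the assumption $u_0\in L^2$), the linear estimate recorded in Remark~\ref{rmk:linest}, and the scattering criterion of Proposition~\ref{prop:nscond}.

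First I would prove (1). Take $\eta_1:=\eta_0$, the smallness constant of Theorem~\ref{thm:lpt_existence}, which by Theorem~\ref{thm:lwp} remains admissible without any finite-mass hypothesis. Assume $\norm{U(t)u_0}_{W(\R)}\le\eta_1$. Applying Theorem~\ref{thm:lwp} with $t_0=0$ and $I=\R$ produces a global solution $u$ with $\norm{u}_{W(\R)}\le C\eta_1<\I$; by uniqueness this is the maximal solution, so $(T_-,T_+)=\R$ and, in particular, $\norm{u}_{W(\R_+)}<\I$ and $\norm{u}_{W(\R_-)}<\I$. The implication ``(2)$\Rightarrow$(1)'' of Proposition~\ref{prop:nscond} then gives $u_0\in S_+$, and the analogous statement for negative time (mentioned just before Proposition~\ref{prop:nscond}) gives $u_0\in S_-$; hence $u_0\in S_+\cap S_-=S$.

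Statement (2) then follows from (1) via the bound $\norm{U(t)u_0}_{W(\R)}\le C_0\norm{u_0}_{\FHsc}$ from Remark~\ref{rmk:linest}: setting $\eta_2:=\eta_1/C_0$, the hypothesis $\norm{u_0}_{\FHsc}\le\eta_2$ forces $\norm{U(t)u_0}_{W(\R)}\le\eta_1$, so part~(1) applies and $u_0\in S$. There is no genuine obstacle here, since the lemma merely repackages results already established; the only point deserving a line of justification is the use of the negative-time analogue of Proposition~\ref{prop:nscond}, which holds because \eqref{eq:NLS} and all the function spaces involved are invariant under the time reversal $u(t)\mapsto\overline{u(-t)}$, so the positive-time proof transfers verbatim.
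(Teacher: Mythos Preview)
Your proof is correct and follows essentially the same approach as the paper: apply Theorem~\ref{thm:lwp} on $I=\R$ to obtain $\norm{u}_{W(\R)}<\I$, invoke Proposition~\ref{prop:nscond} for scattering, and deduce (2) from (1) via the Strichartz bound. The paper's proof is terser but identical in substance; your explicit remarks on uniqueness of the maximal solution and on the time-reversal symmetry for the negative-time direction are reasonable clarifications of steps the paper leaves implicit.
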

This lemma is proven in \cite{NO} with an additional assumption $u_0 \in L^2$.
Another version is given later (Proposition \ref{prop:sds2})
by using a long-time perturbation, Theorem \ref{thm:lpt}.
\begin{proof}
If $\eta_1$ is sufficiently small, it follows from \eqref{eq:lpt_existence1} that
$\norm{u}_{W(\R)} \le C\eta_1$. Hence, we obtain the former part by Proposition \ref{prop:nscond}.
The latter statement is an immediate consequence of the former and Strichartz' estimate.
\end{proof}
\begin{remark}\label{rmk:lc_low}
The second property of Lemma \ref{lem:sds} implies that
the infimum $\ell_c$ given in \eqref{def:lc} is greater than or equal to $\eta_2>0$.
\end{remark}
\subsection{Long time perturbation}

The following are our second stability estimate. 

\begin{theorem}\label{thm:lpt}
Let $I$ be an interval contains $t_0$.
Let $\widetilde{u}(t)$ is an approximate solution of \eqref{eq:NLS} on $I\times \R^N$ in
such a sense that
\[
	i\d_t \widetilde{u} + \Delta\widetilde{u} = -|\widetilde{u}|^{p-1}\widetilde{u}
	+e
\]
holds (in the sense of corresponding integral equation) with some function $e\in F(I)$.
Suppose that 
\[
	\norm{\widetilde{u}}_{L^\I (I,\Ms{s_c}22)} \le A, \qquad
	\norm{\widetilde{u}}_{W(I)} \le M,
\]
for some $A,M>0$. Let $u(t_0)\in \M{s_c}22{t_0}$ satisfy
\[
	\norm{u(t_0)-\widetilde{u}(t_0)}_{\M{s_c}22{t_0}} \le A'
\]
for some $A'>0$. Then, there exists a positive constant
$\eps_1=\eps_1(A',M)$ such that if 
\begin{align*}
	\norm{U(t-t_0)(u(t_0)-\widetilde{u}(t_0))}_{W(I)} &{}\le \eps, &
	\norm{e}_{F(I)} &{} \le \eps
\end{align*}
for some $0<\eps \le \eps_1$ then the solution $u(t)$ of \eqref{eq:NLS}
with initial data $u(t_0)$ at $t=t_0$ exists in $I$ and 
there exist $\beta = \beta(A',M)\in (0,1]$ and $C=C(A',M)>0$
such that  
\begin{align*}
	\norm{u-\widetilde{u}}_{W(I)} &{}\le C\eps^\beta, &
	\norm{u-\widetilde{u}}_{L^\I(I,\Ms{s_c}22)} &{}\le C
\end{align*}
hold. Further $\beta=1$ if $p\ge2$.
\end{theorem}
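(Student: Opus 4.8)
Here is the approach I would take.

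The plan is to bootstrap the short-time stability estimate of Proposition~\ref{prop:spt} to the whole of $I$. Fixing the constant $\eps_0$ of that proposition (and recording that both $\eps_0$ and the implied constant $C_0$ in its conclusion are independent of $A$ and $A'$), I first cut $I$ into finitely many subintervals on which $\widetilde u$ is small in $W$: since $\norm{\widetilde u}_{W(I)}\le M$ and $W=W_1\cap W_2$ is built from time--Lorentz norms $L^{\rho,2}$ with $1<\rho<\I$, Proposition~\ref{prop:lpt-division} applied to $t\mapsto\norm{\widetilde u(t)}_{\dot M^{s_c}_{q(W_m),2}(t)}$ for $m=1,2$, followed by taking a common refinement, produces a partition $\inf I=t_0<t_1<\dots<t_k=\sup I$ with $k\le k(M)$, where $k(M)$ depends only on $M$ (explicitly $k(M)\lesssim 1+(M/\eps_0)^{\max(\rho(W_1),\rho(W_2),2)}$), such that, writing $I_j=[t_{j-1},t_j]$, one has $\norm{\widetilde u}_{W(I_j)}\le\eps_0$ for every $j$. (If $t_0$ is interior to $I$ one does this on each side; the count only doubles.)

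Next comes the induction on $j$. Put $\mu:=\min(1,p-1)$, $\eps^{(0)}:=\eps$, $A'_0:=A'$, and suppose that at $t_{j-1}$ the solution $u$ exists on $[t_0,t_{j-1}]$ with $\norm{u(t_{j-1})-\widetilde u(t_{j-1})}_{\dot M^{s_c}_{2,2}(t_{j-1})}\le A'_{j-1}$ and $\norm{U(t-t_{j-1})(u(t_{j-1})-\widetilde u(t_{j-1}))}_{W([t_{j-1},\sup I))}\le\eps^{(j-1)}\le\eps_0$. Applying Proposition~\ref{prop:spt} on $I_j$ (admissible because $\norm{\widetilde u}_{W(I_j)}\le\eps_0$ and $\norm{e}_{F(I_j)}\le\eps\le\eps^{(j-1)}$) extends $u$ over $I_j$ and gives $\norm{u-\widetilde u}_{W(I_j)}\le C_0(\eps^{(j-1)})^{\mu}$, $\norm{u-\widetilde u}_{L^\I(I_j,\dot M^{s_c}_{2,2})}\le A'_{j-1}+C_0(\eps^{(j-1)})^{\mu}$, and $\norm{|u|^{p-1}u-|\widetilde u|^{p-1}\widetilde u}_{F(I_j)}\le C_0(\eps^{(j-1)})^{\mu}$. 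I then set $A'_j:=A'_{j-1}+C_0(\eps^{(j-1)})^{\mu}$ and estimate the mismatch at $t_j$ from the Duhamel identity
\[
U(t-t_j)(u(t_j)-\widetilde u(t_j))=U(t-t_0)(u(t_0)-\widetilde u(t_0))+i\int_{t_0}^{t_j}U(t-s)(|u|^{p-1}u-|\widetilde u|^{p-1}\widetilde u)(s)\,ds-\int_{t_0}^{t_j}U(t-s)e(s)\,ds,
\]
so that Strichartz' estimate, \eqref{eq:W2L}, the collected $F(I_i)$-bounds and the finite quasi-subadditivity of the Lorentz-in-time norms give $\norm{U(t-t_j)(u(t_j)-\widetilde u(t_j))}_{W([t_j,\sup I))}\le\eps^{(j)}$ with $\eps^{(j)}:=C_1(\eps+\sum_{i\le j}(\eps^{(i-1)})^{\mu})$, hence in particular the recursion $\eps^{(j)}-\eps^{(j-1)}=C_1(\eps^{(j-1)})^{\mu}$.

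Summing the recursion for $A'_j$ gives $A'_j\le A'+C_0\,k\,\eps_0\le C(A',M)$, so the $\dot M^{s_c}_{2,2}$ mismatch stays bounded; together with $\norm{\widetilde u}_{L^\I\dot M^{s_c}_{2,2}}\le A$ this yields the second asserted bound, and global existence of $u$ on all of $I$ is produced along the way since each step extends $u$ across the next $I_j$ (a posteriori it also follows from the uniform $W$-bound below via the blowup criterion, Proposition~\ref{prop:lpt_criterion}). For the $W$-bound I solve the scalar recursion, and this is where the dichotomy in $\beta$ appears. If $p\ge2$ then $\mu=1$, the recursion is linear, $\eps^{(j)}\le(1+C_1)^{k}C_1\eps$, and choosing $\eps_1=\eps_1(M)$ with $(1+C_1)^{k}C_1\eps_1\le\eps_0$ keeps $\eps^{(j)}\le\eps_0$ throughout and yields $\norm{u-\widetilde u}_{W(I_j)}\le C(M)\eps$; summing over the $k$ pieces (the $W(I)$-norm being comparable, up to a $k$-dependent factor, to the $\ell^2$-combination of the $W(I_j)$-norms) gives $\norm{u-\widetilde u}_{W(I)}\le C(M)\eps$, i.e.\ $\beta=1$. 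If $p<2$ then $\mu<1$, so $\eps^{(j-1)}\le1$ forces $\eps^{(j)}\le2C_1(\eps^{(j-1)})^{\mu}$, hence $\eps^{(j)}\le(2C_1)^{1/(1-\mu)}\eps^{\mu^{j}}$; choosing $\eps_1=\eps_1(M)$ so small that $(2C_1)^{1/(1-\mu)}\eps_1^{\mu^{k}}\le\eps_0$ again preserves the induction, and the same summation gives $\norm{u-\widetilde u}_{W(I)}\le C(A',M)\eps^{\beta}$ with $\beta:=\mu^{k}=(p-1)^{k}\in(0,1)$.

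The step I expect to be the main obstacle is precisely this super-geometric loss in the case $p<2$: each short-time application of Proposition~\ref{prop:spt} costs a factor $\mu<1$ in the exponent, so after $k$ steps the best attainable is $\beta=(p-1)^{k}$, and one has to make sure that $k$ is controlled by $M$ alone (through Proposition~\ref{prop:lpt-division}) and that a single threshold $\eps_1=\eps_1(A',M)$ simultaneously satisfies all $k$ of the constraints $\eps^{(j)}\le\eps_0$ — which closes only because $\eps_0$, $C_0$ and $C_1$ do not depend on $A$ or $A'$, the whole dependence on the profile of $\widetilde u$ entering solely through the number of subintervals.
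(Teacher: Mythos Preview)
Your proposal is correct and follows essentially the same approach as the paper: partition $I$ via Proposition~\ref{prop:lpt-division} into $N_0=N_0(M)$ pieces on which $\norm{\widetilde u}_{W(I_j)}\le\eps_0$, apply Proposition~\ref{prop:spt} on each, and control the propagated smallness parameter by a scalar recursion that is linear when $p\ge2$ and of the form $\eps^{(j)}\lesssim(\eps^{(j-1)})^{p-1}$ when $p<2$, yielding $\beta=(p-1)^{N_0}$. The paper differs only cosmetically, pre-designing the sequence $\eta_j$ (via an explicit formula involving $\exp\bigl((p-1)^{-(N_0+1-j)}\log(\alpha^{-N_0-1}\eps')\bigr)$ in the $p<2$ case) rather than solving the recursion a posteriori, but the content and the resulting exponent are identical.
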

\begin{proof}
By the time symmetry, we may assume $t_0=\inf I$ without loss of generality.
Take  the constant $\eps_0$ given in Proposition \ref{prop:spt}.
Replacing $\eps_0$ with a smaller one, if necessary,
we may suppose that $\eps_0\le A'$.
By Proposition \ref{prop:lpt-division},
there exists an integer $N_0=N_0(M,\eps_0)=N_0(A',M)>0$ such that
$I=\bigcup_{j=1}^{N_0} I_j $, $I_j=[t_{j-1},t_j]$ with
\[
	\norm{\widetilde{u}}_{W(I_j)} \le \eps_0.
\] 
We keep the notation $w=u-\widetilde{u}$.
Put 
\[
	\kappa_j:= \norm{|u|^{p-1}u-|\widetilde{u}|^{p-1}\widetilde{u}}_{F(I_j)}.
\]

We see from Proposition \ref{prop:spt} that there exists a positive constant $C_0$
such that if positive constant $\eta_j$ satisfies the
relation $\eta_j \le \eps_0$ and if
\begin{equation}
	\norm{U(t-t_{j-1})w(t_{j-1})}_{W(I_j)} \le \eta_j, \label{eq:lpt2_1}
\end{equation}
holds then we have
\begin{equation}\label{eq:lpt2_2}
	\norm{w}_{W(I_j)}+ \kappa_j \le C_0 \eta_j^{\min(1,p-1)}.
\end{equation}
and
\begin{equation}\label{eq:lpt2_3}
	\norm{w}_{L^\I(I_j,\Ms{s_c}22)} \le \norm{w(t_{j-1})}_{\M{s_c}22{t_{j-1}}} + C_0 \eta_j^{\min(1,p-1)}.
\end{equation}
On the other hand, using Strichartz' estimate twice, one sees that
\begin{align*}
	&\norm{U(t-t_{j-1})w(t_{j-1})}_{W(I_j)}\\
	&{}\le \norm{U(t-t_0)w(t_0)}_{W(I_j)} +
	C \norm{\int_{t_0}^{t_{j-1}} U(t_{j-1}-s)e(s)ds }_{\M{s_c}22{t_{j-1}}} \\
	&{}+ C \norm{\int_{t_0}^{t_{j-1}} U(t_{j-1}-s)(|u|^{p-1}u-|\widetilde{u}|^{p-1}\widetilde{u})ds }_{\M{s_c}22{t_{j-1}}} \\
	&{}\le \eps +C^2 \norm{e }_{F([t_0,t_{j-1}])} + C^2 \norm{|u|^{p-1}u-|\widetilde{u}|^{p-1}\widetilde{u} }_{F([t_0,t_{j-1}])},
\end{align*}
which reads
\begin{equation}\label{eq:lpt2_4}
	\norm{U(t-t_{j-1})w(t_{j-1})}_{W(I_j)} \le C_1 \eps + C_1 \sum_{l=1}^{j-1} \kappa_l
\end{equation}
for some constant $C_1>1$.
Similarly, taking $C_1$ large, if necessary, we obtain
\begin{equation}\label{eq:lpt2_5}
	\norm{w(t_{j-1})}_{\M{s_c}22{t_{j-1}}}
	\le A' + C_1 \eps + C_1 \sum_{l=1}^{j-1} \kappa_l.
\end{equation}

We first suppose $p\ge2$.
Let us take a constant $\al\ge \max(2,C_0C_1)$.
For  $\eps'\le \eps_0$, we set
\begin{equation}\label{eq:lpt1_6}
	\eta_j=\eta_j(\eps'):= \al^{j-N_0-1} \eps'
\end{equation}
for $j\in [1,N_0+1]$. Then, we have
\begin{equation*}
	\eta_1 < \eta_2 <\dots<\eta_{N_0} < \eta_{N_0+1} = \eps'\le \eps_0.
\end{equation*}
We also remark that $\eta_j$ is increasing in $\eps'$.
We now show by induction that
\eqref{eq:lpt2_2} holds for $j\in [1,N_0]$ as long as $\eps'\le \eps_0$ and
$\eps\le\frac1{C_1} \eta_1(\eps')$.
To do so, it suffices to show that \eqref{eq:lpt2_1} 
is satisfied for $j\in[1,N_0]$ under this condition.
When $j=1$, \eqref{eq:lpt2_1} is fulfilled by assumption.
Assume for induction that \eqref{eq:lpt2_1} 
is true for $1\le j \le k$, where $k\in [1,N_0-1]$.
Since \eqref{eq:lpt2_2} is also true for $j\in[1,k]$,
we deduce from \eqref{eq:lpt2_4} that
\begin{align*}
	\norm{U(t-t_{k})w(t_{k})}_{W(I_{k+1})} \le C_1 \eps + C_1 \sum_{l=1}^{k} \kappa_l.
\end{align*}
By the assumptions on $\eps$ and $\al$, \eqref{eq:lpt1_6}, and
\eqref{eq:lpt2_2}, we have 
\[
	C_1 \eps \le  \eta_1 = \al^{-k}\eta_{k+1} 
\]
and
\begin{align*}
	C_1 \kappa_l \le C_1C_0 \eta_l \le \al \eta_{l} = \al^{l-k-1}\eta_{k+1}
\end{align*}
for $l \in [1, k]$.
Combining these estimates, we have
\begin{equation}\label{eq:lpt1_7}
	C_1 \eps + C_1 \sum_{l=1}^{k} \kappa_l
	\le \eta_{k+1} \al^{-k}\(1+\sum_{l=1}^k \al^{l-1}\) \le \eta_{k+1},
\end{equation}
where we have used the assumption $\al\ge2$ to 
yield $1+\sum_{l=1}^k \al^{l-1} \le \al^k$ in the last inequality.
Hence, \eqref{eq:lpt2_1} for $j=k+1$ follows from 
\eqref{eq:lpt2_4} and \eqref{eq:lpt1_7},
and so we see that \eqref{eq:lpt2_2} and \eqref{eq:lpt2_3} hold for $j\in[1,N_0]$.
Then, \eqref{eq:lpt1_7} is also true for $k=[1,N_0]$. 
Set $\eps_1=\eps_1(A',M):= \frac1{C_1} \eta_1(\eps_0)$ and
assume $\eps\le \eps_1$.
We define $\eps'$ by the relation $\eps=\frac1{C_1} \eta_1(\eps')$.
Notice that $\eps'\le \eps_0$ and $\eps'= C_1\al^{N_0+1} \eps$.
By \eqref{eq:lpt2_2},
\begin{align*}
	\norm{w}_{W(I)}
	\le\sum_{j=1}^{N_0} C_0 \eta_j 
	= \frac1{C_1} \sum_{j=1}^{N_0} C_1C_0 \eta_j\le \frac1{C_1} \eps'  = \al^{N_0+1} \eps.
\end{align*}
Further, by means of \eqref{eq:lpt2_3}, \eqref{eq:lpt2_5}, and \eqref{eq:lpt1_7},
we conclude that
\[
	\norm{w}_{L^\I(I_j,\Ms{s_c}22)} \le A' + (C_0+1) \eta_j \le A'+ C_1(C_0+1)\al^{N_0+1} \eps
\]
for all $j\in [1,N_0]$, which completes the proof for $p\ge2$.

Suppose $p<2$.
Let us take a constant $\al\ge2$ so large that
\[
	\al\ge C_1, \quad C_1C_0 \al^{p-1} \le \al,\quad
	\al^{-N_0-1}\eps_0 \le \frac12.
\]
For  $\eps'\le \eps_0$, we set
\[
	\eta_j=\eta_j(\eps'):= \al^j \exp\(\(\frac1{p-1}\)^{N_0+1-j} \log(\al^{-N_0-1}\eps') \)
\]
for $j\in [1,N_0+1]$. Then, we have
\begin{equation}\label{eq:lpt2_6}
	\eta_1 < \eta_2 <\dots<\eta_{N_0} < \eta_{N_0+1} = \eps'\le \eps_0.
\end{equation}
Indeed,
\begin{equation}\label{eq:lpt2_7}
	\frac{\eta_{j+1}}{\eta_j} = \al \(\al^{-N_0-1}\eps'\)^{-(2-p)(\frac1{p-1})^{N_0+1-j}}>\al>1
\end{equation}
We also remark that $\eta_j$ is increasing in $\eps'$.
We shall now show by induction that
\eqref{eq:lpt2_1} holds for $j\in [1,N_0]$ as long as $\eps'\le \eps_0$ and
$\eps\le\frac1\al \eta_1(\eps')$.
When $j=1$, it is obvious by assumption.
Assume for induction that \eqref{eq:lpt2_1} is true for $1\le j \le k$, where $k\in [1,N_0-1]$.
Since \eqref{eq:lpt2_2} and \eqref{eq:lpt2_3} are also true for $j\in[1,k]$ under this assumption,
we deduce from \eqref{eq:lpt2_4} that
\begin{align*}
	\norm{U(t-t_{k})w(t_{k})}_{W(I_{k+1})} \le C_1 \eps + C_1 \sum_{l=1}^{k} \kappa_l
	\le C_1 \eps + C_1 \sum_{l=1}^{k} C_0\eta_l^{p-1}.
\end{align*}
By the assumptions on $\al$ and $\eps$ and by \eqref{eq:lpt2_7}, we have 
\[
	C_1 \eps< \al \eps \le  \eta_1 < \al^{-k}\eta_{k+1} 
\]
and, for $l \in [1, k]$,
\begin{align*}
	C_1C_0 \eta^{p-1}_l
	&{}=C_1C_0 \al^{l(p-1)} \exp \( \(\frac1{p-1}\)^{N_0-l}\log(\al^{-N_0-1}\eps') \)\\
	&{}\le \al^{l} \exp \( \(\frac1{p-1}\)^{N_0-k}\log(\al^{-N_0-1}\eps') \) = \al^{l-k-1}\eta_{k+1},
\end{align*}
where we have used the inequalities $C_1C_0 \al^{p-1}\le \al$ and $\al^{(l-1)(p-1)}
\le \al^{l-1}$.
Combining these estimates, we have
\begin{equation}\label{eq:lpt2_8}
	C_1 \eps + C_1 \sum_{l=1}^{k} \kappa_l
	\le \eta_{k+1} \al^{-k}\(1+\sum_{l=1}^k \al^{l-1}\) \le \eta_{k+1}
\end{equation}
since $\al\ge2$. 
Thus, \eqref{eq:lpt2_1} for $j=k+1$ follows from \eqref{eq:lpt2_4} and \eqref{eq:lpt2_8}.
By induction, we also obtain \eqref{eq:lpt2_2} and \eqref{eq:lpt2_3} 
for $j\in [1,N_0]$.
Set $\eps_1=\eps_1(A',M):= \frac1\al\eta_1(\eps_0)$ and
assume $\eps\le \eps_1$.
We define $\eps'$ by the relation $\eps=\frac1\al\eta_1(\eps')$.
Notice that $\eps'\le \eps_0$ and
\[
	\eps'= \al^{N_0+1} \eps^{(p-1)^{N_0}}.
\]
By \eqref{eq:lpt2_2},
\begin{align*}
	\norm{w}_{W(I)} 
	&{}\le\sum_{j=1}^{N_0} C_0 \eta_j^{p-1} 
	< \sum_{j=1}^{N_0} C_1C_0 \eta_j^{p-1}
	\\&{}\le \sum_{j=1}^{N_0} \al^{j-N_0-1} \eta_{N_0+1} = \frac{1-\al^{-N_0}}{\al-1} \eps' 
	\le\eps' = \al^{N_0+1} \eps^{(p-1)^{N_0}}.
\end{align*}
We apply \eqref{eq:lpt2_8} to \eqref{eq:lpt2_5} to get
$\norm{w(t_{j-1})}_{\M{s_c}22{t_{j-1}}} \le A' + \eta_{j}$.
Then, \eqref{eq:lpt2_3} yields
\[
	\norm{w}_{L^\I(I_j,\Ms{s_c}22)} \le A' + \eta_{j} + C_0 \eta_j^{p-1}
	\le A' + \eps' + C_0 (\eps')^{p-1}
\]
for all $j\in [1,N_0]$.
\end{proof}

\section{Concentration compactness}\label{sec:4}
\subsection{Smallness via quadratic oscillation}
In this subsection, we show that  rapid oscillation of initial data
gives smallness of a solution to linear Schr\"odinger equation.
This is one of the key estimate of profile decomposition.
It will be also a key tool for the proof of Theorem \ref{thm:main3}.
This kind of smallness is given for $\F H^1$ data in \cite{CW1,Ma}.
\begin{proposition}\label{prop:ods}
Let $u_0 \in \FHsc$ and $b\in \R$.
Let $(\rho,q)$ is an acceptable pair
such that $\frac2\rho -\delta(q) + s = s_c$ for some $0\le s<s_c$.
Then,
\begin{equation}\label{eq:ods}
	\lim_{|b|\to\I} \LMn{U(t)e^{ib|x|^2}u_0}\rho{s}q=0.
\end{equation}
\end{proposition}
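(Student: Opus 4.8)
\emph{Strategy.} The plan is to reduce, by density together with a uniform bound, to smooth compactly supported data, and then to exploit an exact change of variables (the lens transform) under which the quadratic oscillation $e^{ib|x|^2}$ turns into a rescaling together with a reparametrization of time, carrying a weight that forces the relevant integrals to zero as $|b|\to\I$. First I would note that multiplication by $e^{ib|x|^2}$ is an isometry of $\FHsc$ (as $|e^{ib|x|^2}|\equiv1$) and that the hypotheses on $(\rho,q)$ are exactly what is needed to check $0<\delta(q)-(s_c-s)<\min(1,N/2)$ (the lower bound is $\rho\delta(q)>1$ rewritten via $\tfrac2\rho-\delta(q)=s_c-s$, the upper bound follows from $q<2^{\ast}$); hence Proposition~\ref{prop:LMn_bdd} gives the uniform bound $\LMn{U(t)e^{ib|x|^2}u_0}\rho{s}q\le C\norm{u_0}_{\FHsc}$. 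By this bound it suffices to prove \eqref{eq:ods} for $u_0$ in the dense subspace $C_c^\I(\R^N)\subset\FHsc$.

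\emph{Lens transform.} For such $u_0$ I would use $e^{ib|x|^2}=M(1/(4b))$, the factorization $U(t)=M(t)D(t)\mathcal{F}M(t)$, and the elementary identities $M(t)M(1/(4b))=M(\tau)$ and $D(t)D(\tau)^{-1}g=\lambda^{-N/2}g(\cdot/\lambda)$, where $\tau=\tau(t):=t/(1+4bt)$ and $\lambda=\lambda(t):=1+4bt=(1-4b\tau)^{-1}$. These combine to the identity
\[
	M(-t)\,U(t)(e^{ib|x|^2}u_0)(x)=\lambda^{-N/2}\,[M(-\tau)U(\tau)u_0](x/\lambda).
\]
Taking $\dot{B}^s_{q,2}$-norms in $x$ and using \eqref{eq:Mn_alt} together with the dilation scaling of homogeneous Besov norms (the $|t|^s$ and $|\tau|^{-s}$ factors recombine into $|\lambda|^s$) yields
\[
	\Mn{U(t)(e^{ib|x|^2}u_0)}sq2t\;\sim\;|1-4b\tau(t)|^{\delta(q)}\,\Mn{U(\tau(t))u_0}sq2{\tau(t)}.
\]
Since $t\mapsto\tau$ is an orientation-preserving diffeomorphism of $\R\setminus\{-1/(4b)\}$ onto $\R\setminus\{1/(4b)\}$ with $dt=(1-4b\tau)^{-2}\,d\tau$, raising to a power $\rho_1$ and changing variables gives
\[
	\norm{U(\cdot)(e^{ib|x|^2}u_0)}_{L^{\rho_1}(\R,\dot{M}^s_{q,2})}^{\rho_1}\;\sim\;\int_\R|1-4b\tau|^{\rho_1\delta(q)-2}\,\Psi(\tau)^{\rho_1}\,d\tau,\qquad\Psi(\tau):=\Mn{U(\tau)u_0}sq2\tau.
\]

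\emph{Conclusion.} Acceptability of $(\rho,q)$ gives $1<\rho\delta(q)<2$ (the upper bound because $\tfrac2\rho-\delta(q)=s_c-s>0$), so I would fix exponents $\rho_1<\rho<\rho_2$ with $1<\rho_i\delta(q)<2$, whence $\rho_i\delta(q)-2\in(-1,0)$. For $u_0\in C_c^\I$, the difference characterization of Besov norms (Lemma~\ref{lem:B_equivalence}) shows that $\Psi$ is bounded near $\tau=0$ (up to a logarithmic factor if $s=0$) and $\Psi(\tau)\sim|\tau|^{-\delta(q)}$ as $|\tau|\to\I$; in particular $\Psi\in L^{\rho_i}(\R)$ and $\Psi^{\rho_i}\in L^p_{\mathrm{loc}}$ for every $p<\I$. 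It then remains to show $\int_\R|1-4b\tau|^{\rho_i\delta(q)-2}\Psi^{\rho_i}\,d\tau\to0$ as $|b|\to\I$: on $\{|\tau|>R\}$ the weight is $\le1$ once $|b|$ is large, so this piece is controlled by the $L^1$-tail $\norm{\Psi^{\rho_i}}_{L^1(|\tau|>R)}$, small for large $R$; on $\{|\tau|\le R\}$, H\"older with $p'<(2-\rho_i\delta(q))^{-1}$ bounds it by $\norm{\Psi^{\rho_i}}_{L^p(|\tau|\le R)}$ times $\bigl(\int_{|\tau|\le R}|1-4b\tau|^{-(2-\rho_i\delta(q))p'}\,d\tau\bigr)^{1/p'}=O\bigl(|b|^{-(2-\rho_i\delta(q))}\bigr)\to0$. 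Hence $\norm{U(\cdot)(e^{ib|x|^2}u_0)}_{L^{\rho_i}(\R,\dot{M}^s_{q,2})}\to0$ for $i=1,2$, and since $L^{\rho,2}(\R)=(L^{\rho_1}(\R),L^{\rho_2}(\R))_{\theta,2}$ for the $\theta$ with $\tfrac1\rho=\tfrac{1-\theta}{\rho_1}+\tfrac\theta{\rho_2}$, the interpolation inequality $\norm{f}_{(A_0,A_1)_{\theta,2}}\lesssim\norm{f}_{A_0}^{1-\theta}\norm{f}_{A_1}^\theta$ finishes the proof.

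\emph{Main obstacle.} The step I expect to be hardest is pushing the modified-Besov norm $\dot{M}^s_{q,2}(t)$ cleanly through the lens transform while correctly accounting for the weight $|1-4b\tau|^{\delta(q)}$ and the Jacobian, together with the closely related point that the target is a Lorentz norm $L^{\rho,2}$ in time, which does not transform simply under $t\mapsto\tau$ and therefore forces the detour through two ordinary $L^{\rho_i}$ bounds plus real interpolation rather than a single change of variables. The auxiliary estimates on $\Psi$ near $0$ and near $\I$, needed for the local integrability in the final step, are routine but also rest on Lemma~\ref{lem:B_equivalence}.
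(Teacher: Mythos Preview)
Your proof is correct and follows a genuinely different route from the paper's.

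Both arguments rest on the pseudo\textendash conformal (lens) transform, which converts the oscillation $e^{ib|x|^2}$ into a time reparametrization $t\mapsto\tau=t/(1+4bt)$ with weight $|1+4bt|^{-\delta(q)}$, and both exploit the key numerology $1<\rho\delta(q)<2$. The differences are in the reductions before and after. The paper first cuts the data to a fixed dyadic annulus via Littlewood\textendash Paley (using Strichartz to discard the tails), which collapses the $\dot{M}^s_{q,2}(t)$ norm to finitely many $L^q$ norms; it then works \emph{directly} in the Lorentz space $L^{\rho,2}$, splitting $\R$ into six explicit subintervals, using the rearrangement definition on some pieces and a free parameter $\beta$ optimized at the end. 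You instead reduce by density to $C_c^\I$ and keep the full $\dot{M}^s_{q,2}$ norm through the lens transform; to avoid the awkward behaviour of $L^{\rho,2}$ under $t\mapsto\tau$, you pass to two ordinary Lebesgue exponents $\rho_1<\rho<\rho_2$ with $\rho_i\delta(q)\in(1,2)$, where the change of variables is a one\textendash line computation and the limit follows from a simple tail/local H\"older split, and then recover $L^{\rho,2}=(L^{\rho_1},L^{\rho_2})_{\theta,2}$ by the $K$\textendash functional inequality $\norm{f}_{(A_0,A_1)_{\theta,r}}\lesssim\norm{f}_{A_0}^{1-\theta}\norm{f}_{A_1}^\theta$.

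Your route buys a much shorter endgame (two regions rather than six, no parameter optimization) at the price of the interpolation detour and of needing the local bounds on $\Psi(\tau)=\Mn{U(\tau)u_0}sq2\tau$ for $C_c^\I$ data; these are indeed routine (the dyadic sum in the definition of $\dot{M}^s_{q,2}(t)$ truncates above and is geometrically summable below, so in fact $\Psi$ is bounded near $\tau=0$ with no logarithm, and $\Psi(\tau)\sim|\tau|^{-\delta(q)}$ at infinity via the factorization $M(-\tau)U(\tau)=D(\tau)\mathcal{F}M(\tau)$). The paper's route buys simplicity in the spatial variable (only $L^q$ norms after the annular reduction) at the cost of a more delicate hands\textendash on Lorentz computation. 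Your verification that Proposition~\ref{prop:LMn_bdd} applies is correct: $\delta(q)-(s_c-s)=2\delta(q)-2/\rho$, which lies in $(0,\delta(q))\subset(0,\min(1,N/2))$ precisely because $1<\rho\delta(q)<2$.
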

\begin{proof}
By the symmetry $\overline{(U(t)e^{ib|x|^2}\psi)(x)}=(U(-t)e^{-ib|x|^2}\overline{\psi})(x)$,
it suffices to show the case $b\to\I$.
Further, if we show the limit for some specific acceptable pair
$(\rho,q)$ with $\frac2\rho -\delta(q) + s = s_c$ for
some $s \in [0,s_c)$ then
it holds for all other pairs,
thanks to Lemma \ref{lem:interpolation} and Proposition \ref{prop:LMn_bdd}.
We therefore fix such a pair $(\rho,q)$ and consider the limit $b\to\I$.
Since $(\rho,q)$ is acceptable, we have $0<\delta(q)<1$ and $1<\rho \delta(q)$.
Further, by assumption $s<s_c$, we have $\rho \delta(q)<2$.
Let $\varphi_j \in C_0^\I(\R^N)$ be a cut-off function in the
definition of $\dot{M}^s_{q,r}(t)$.
We deduce from Stichartz' estimate that, for any $\eps>0$,
there exists a number $J=J(u_0,\eps)>0$ such that
\begin{multline*}
	\LMn{U(t)\sum_{|j|> J} \varphi_j e^{ib|x|^2}u_0}\rho{s}q
	\le C \norm{e^{ib|x|^2}\sum_{|j|> J} \varphi_j u_0}_{\FHsc}
	\\= C \norm{\sum_{|j|> J} \varphi_j u_0}_{\FHsc}\le \eps.
\end{multline*}
Since $\sum_{j\in\Z} \varphi_j = 1$ on $\{x\neq0\}$, it suffices to show \eqref{eq:ods}
for $\sum_{|j| \le J} \varphi_j u_0$.
For this, we prove \eqref{eq:ods} for each $\varphi_j u_0$.
Hence, we may suppose by scaling that $\supp u_0 \subset \{ 1/2 \le |x| \le 2\}$.
It holds that
\begin{align*}
	\Mn{U(t) e^{ib|x|^2} u_0}sq2t
	= \(\sum_{j=-1}^1 2^{2js} \Lebn{U(t)\varphi_j u_0}q^2 \)^{1/2}
	\le C \sum_{j=-1}^1\Lebn{U(t) \varphi_j u_0}q.
\end{align*}
Let us restrict our attention to the case $j=0$ since the estimates
for $\varphi_1 u_0$ and $\varphi_{-1}u_0$ are essentially the same.
It is obvious by assumption that $\varphi_0u_0=u_0$.
The pseudo-conformal transform 
\[
	(U(t) e^{ib|x|^2} u_0)(x) =
	e^{i\frac{b}{1+4bt}|x|^2} (1+4bt)^{-N/2} \(U\(\frac{t}{1+4bt}\) u_0 \)\(\frac{x}{1+4bt}\)
\]
gives us
\[
	\Lebn{U(t) e^{ib|x|^2} u_0}q = |1+4bt|^{-\delta(q)} \Lebn{U\(\frac{t}{1+4bt}\) u_0}q=: G_b(t).
\]
As a result, proof of  \eqref{eq:ods} boils down to showing
\begin{equation}\label{eq:ods2}
	\norm{
	G_b
	}_{L^{\rho,2}_t(\R)}
	\to 0
\end{equation}
as $b\to\I$ for any $u_0 \in \FHsc$ with
$\supp u_0 \subset \{1/2\le |x| \le 2\}$.

Let us prove \eqref{eq:ods2}.
Take $a>0$. Let $\beta=\beta(a)>0$  to be chose later.
We divide $\R$ into the following six intervals
\begin{align*}
	I_1 ={}& \(-\I,-\frac1{4b}-\frac{1}{4\beta}\),
	&
	I_2 ={}& \( -\frac1{4b}-\frac{1}{4\beta}, -\frac{a}{4ba-1} \),
	\\
	I_3 ={}& \(-\frac{a}{4ba-1},-\frac1{4b}\),
	&
	I_4 ={}& \(-\frac1{4b}, 0\),
	\\
	I_5 ={}& \(0,\frac1a \),
	&
	I_6 ={}& \(\frac1a,\I\).
\end{align*}
Notice that $I_j \neq \emptyset$ for $j=1,2,\dots,6$
for sufficiently large $b$ as long as $\beta$ is chosen independently of $b$.
By the $L^p$-$L^q$ estimate, we have
\[
	G_b(t) = \Lebn{U(t) e^{ib|x|^2} u_0}q 
	\le C|t|^{-\delta(q)} \Lebn{e^{ib|x|^2} u_0 }{q'}
	= C|t|^{-\delta(q)}\Lebn{ u_0 }{q'}.
\]
It follows from the assumptions $\supp u_0 \subset \{ 1/2 \le |x| \le 2 \}$
and  $0<\delta(q)<1$ that $\Lebn{u_0}{q'}\le C\norm{u_0}_{\FHsc}$.
This yields $G_b(t) \le C|t|^{-\delta(q)}\norm{u_0}_{\FHsc}$. Then,
\begin{equation}\label{eq:ods3}
	\norm{G_b
	{\bf 1}_{I_1}}_{L^{\rho,2}_t(\R)}
	\le C\norm{|t|^{-\delta(q)}{\bf 1}_{I_1}}_{L^{\rho,2}},
\end{equation}
where ${\bf 1}_I={\bf 1}_I(t)$ is a characteristic function of $I$.
Now, recall an equivalent representation of $L^{{\rho},2}$ norm
\[
	\norm{f}_{L^{{\rho},2}} \sim \(
	\frac2{q} \int_0^\I (t^{\frac1{{q}}} f^*(t))^2 \frac{dt}t\)^{\frac12},
\]
where $f^*$ is a non-increasing rearrangement of $f$ (see \cite{BL-Book}).
It then holds that
\begin{align*}
	\norm{|t|^{-\delta(q)}{\bf 1}_{I_1}}_{L^{\rho,2}}^2
	\sim{}& \int_0^\I t^{\frac2\rho-1} \( t+ \frac1{4\beta}+\frac1{4b} \)^{-2\delta(q)}dt\\
	\le {}& \int_0^\I t^{\frac2\rho-1} \( t+ \frac1{4\beta} \)^{-2\delta(q)}dt
	= C \beta^{2\delta(q)-\frac2\rho}.
\end{align*}
Hence, plugging this to \eqref{eq:ods3}, we have
\begin{equation}\label{eq:ods4}
	\norm{G_b
	{\bf 1}_{I_1}}_{L^{\rho,2}_t(\R)}
	\le C \beta^{\delta(q)-\frac1\rho}.
\end{equation}
The same argument shows
\begin{equation}\label{eq:ods5}
	\norm{G_b
	{\bf 1}_{I_6}}_{L^{\rho,2}_t(\R)}
	\le C a^{\delta(q)-\frac1\rho}.
\end{equation}
On the other hand,
by the generalized H\"older inequality,
\[
	\norm{G_b
	{\bf 1}_{I_2}}_{L^{\rho,2}}
	\le \norm{{\bf 1}_{I_2}}_{L^{r, \sigma}}
	\norm{G_b
	{\bf 1}_{I_2}}_{L^{\frac2{\delta(q)},\frac2{\delta(q)}}},	
\]
where $r=(\frac1\rho-\frac{\delta(q)}2)^{-1} =\frac2{s_c-s}\in (1,\I)$ and $\sigma=2/(1-\delta(q))>2$.
Notice that
\[
	\norm{{\bf 1}_{I_2}}_{L^{r, \sigma}}
	\sim |I_2|^{\frac1r}
	= \(\frac1{4\beta}-\frac{1}{4b(4ab-1)}\)^{\frac1r}
	\le C\beta^{-\frac1r}
\]
for large $b$.
Since $L^{\frac2{\delta(q)},\frac2{\delta(q)}}=L^{\frac2{\delta(q)}}$ with equivalent norm,
a change of variable shows
\[
	\norm{G_b
	{\bf 1}_{I_2}}_{L^{\frac2{\delta(q)},\frac2{\delta(q)}}(\R)}
	\sim \norm{U(t)u_0}_{L^{\frac2{\delta(q)}}((\frac1{4b}+\frac{\beta}{4b^2},a),L^q)}
	\le \norm{U(t)u_0}_{L^{\frac2{\delta(q)}}((0,a),L^q)}.
\]
Hence, one obtains
\begin{equation}\label{eq:ods6}
	\norm{G_b
	{\bf 1}_{I_2}}_{L^{\rho,2}}
	\le C\beta^{-\frac1r}\norm{U(t)u_0}_{L^{\frac2{\delta(q)}}((0,a),L^q)}.
\end{equation}
The same argument yields
\[
	\norm{G_b
	{\bf 1}_{I_3}}_{L^{\rho,2}}
	\le C\( 4b(4ab-1)\)^{-\frac1r}\norm{U(t)u_0}_{L^{\frac2{\delta(q)}}((a,\I),L^q)},
\]
\[
	\norm{G_b
	{\bf 1}_{I_4}}_{L^{\rho,2}}
	\le C\( 4b\)^{-\frac1r}\norm{U(t)u_0}_{L^{\frac2{\delta(q)}}((-\I,0),L^q)},
\]
and
\[
	\norm{G_b{\bf 1}_{I_5}}_{L^{\rho,2}}
	\le Ca^{-\frac1r}\norm{U(t)u_0}_{L^{\frac2{\delta(q)}}((0,\frac1{a+4b}),L^q)}.
\]
It then follows from these three estimates that
\begin{equation}\label{eq:ods7}
	\norm{G_b{\bf 1}_{I_3\cup I_4 \cup I_5}}_{L^{\rho,2}}\to 0
\end{equation}
as $b\to\I$ for each $a>0$. We now define $\beta=\beta(a)>0$ by the identity
\[
	\beta^{\delta(q)-\frac1\rho} = \beta^{-\frac1r} \norm{U(t)u_0}_{L^{\frac2{\delta(q)}}((0,a),L^q)}.
\]
More specifically, choose
$\beta= \norm{U(t)u_0}_{L^{2/{\delta(q)}}((0,a),L^q)}^{2/\delta(q)}$.
Summarize \eqref{eq:ods4}, \eqref{eq:ods5}, \eqref{eq:ods6},
and \eqref{eq:ods7} and put this $\beta$ to conclude that
\[
	\limsup_{b\to\I} \norm{G_b}_{L^{\rho,2}(\R)}
	\le C \norm{U(t)u_0}_{L^{\frac2{\delta(q)}}((0,a),L^q)}^{2(1-\frac1{\rho\delta(q))})} + Ca^{\delta(q)-\frac1\rho}
\]
Since $a$ is arbitrary, we let $a\to0$ and finally obtain
\[
	\limsup_{b\to\I} \norm{G_b}_{L^{\rho,2}(\R)}=0
\]
which completes the proof of \eqref{eq:ods2}.
\end{proof}
\subsection{Concentration compactness}
In this subsection,
we show that a bounded sequence of functions in $\FHsc$ is decomposed into a sum of several profiles.
Compared with a sequence bounded in $\ell_{\F H^1}(\cdot)$, which is an $\F H^1$-bounded sequence
up to a scaling normalization, the feature of $\FHsc$-bounded sequence 
is that it admits a sum of functions with extremely distinct scales.
Indeed, take $0\not\equiv \psi \in \mathcal{S}$ with $\supp \psi \subset \{1\le |x| \le 2\}$
and put
\[
	\psi_n(x) = \psi(x) + \psi_{\{n\}}(x) \in \mathcal{S},
\]
where we use the notation \eqref{eq:scaling2}.
Then, $\norm{\psi_n}_{\FHsc} = 2\norm{\psi}_{\FHsc} <\I$ but
$\ell_{\F H^1} (\psi_n) \to\I$ as $n\to\I$.
Thus, in order to extract some specific profiles from an $\FHsc$ bounded sequence,
we have to take this kind of scale decomposition into account.
To do so, we employ results in \cite{BG}, as in \cite{Ker}.
\begin{remark}\label{rmk:FHscFH1}
Let $\{\psi_n\}\subset \mathcal{S}$ be as above. Then, for sufficiently large $n$, we have
$\norm{2\psi_1}_{\FHsc}>\norm{\psi_n}_{\FHsc}$ and $\ell_c(2\psi_1)<\ell_c(\psi_n)$.
\end{remark}

The result is the following.
\begin{proposition}\label{prop:pd}
Let $\{\phi_n\}$ be a bounded sequence in $\FHsc$.
Then, there exist a subsequence of $\{\phi_n\}$,
which is denoted again by $\{\phi_n\}$, and sequences
$\psi^j \in \FHsc$, $h_n^j>0$, $\xi_n^j$,
and $W^l_n$ such that
\begin{equation}\label{eq:pd1}
	\phi_n = \sum_{j=1}^l e^{ix\xi_n^j} \psi^j_{\{ h_n^j \}}
	+ W^l_n
\end{equation}
for all $l\ge1$ and $j\ge1$.
If $i\neq j$ then
\begin{equation}\label{eq:pd2}
	\frac{h_n^i}{h_n^j} + \frac{h_n^j}{h_n^i} + \frac{|\xi^i_n-\xi^j_n|}{h_n^j} \to \I
\end{equation}
as $n\to\I$. Moreover, it holds that
\begin{equation}\label{eq:pd3}
	\norm{\phi_n}_{\FHsc}^2 =
	\sum_{j=1}^l \norm{\psi^j}_{\FHsc}^2  + \norm{W_n^l}_{\FHsc}^2 + o(1)
\end{equation}
as $n\to\I$. Furthermore, for any $1<\rho,q<\I$ with
$\delta(q)-s_c\in (0,\min(1,N/2))$ and $2/\rho-\delta(q)=s_c$, it holds that
\begin{equation}\label{eq:pd4}
	\liminf_{l\to\I} \limsup_{n\to\I} \norm{U(t)W^l_n}_{L^{\rho,\I}(\R,L^q)} =0.
\end{equation}
\end{proposition}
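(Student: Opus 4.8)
The plan is to follow the standard profile-decomposition scheme of G\'erard--Bahouri \cite{BG}, adapted to $\FHsc$ as in Keraani \cite{Ker}, with the linear dispersive smallness coming from Proposition~\ref{prop:ods}. First I would set up the iterative extraction. Define $W_n^0 = \phi_n$ and, assuming $W_n^{l}$ has been constructed with $\sup_n \norm{W_n^l}_{\FHsc} < \infty$, look at the quantity $\mu_l := \limsup_{n\to\infty} \norm{U(t)W_n^l}_{L^{\rho,\infty}(\R,L^q)}$ for a fixed reference pair $(\rho,q)$ as in the statement. If $\mu_l = 0$ we stop and set $\psi^{l+1}=0$; otherwise, passing to a subsequence, there exist $t_n^{l}\in\R$, $h_n^{l+1}>0$, $\xi_n^{l+1}$, and $x_n^{l+1}$ such that the rescaled and modulated translates $h_n^{(N/2-s_c)(\cdots)}$ — more precisely, using the scaling \eqref{eq:scaling2} that preserves $\norm{\cdot}_{\FHsc}$ together with a Galilean modulation $e^{ix\xi}$ and a space translation — of $U(t_n^l)W_n^l$ converge weakly in $\FHsc$ to a nonzero limit $\psi^{l+1}$ with $\norm{\psi^{l+1}}_{\FHsc}\gtrsim \mu_l$. (Absorbing the time parameter $t_n^l$ back into $\psi^{l+1}$ by replacing it with $U(-t_n^l)\psi^{l+1}$ is the usual device; here one can in fact show $t_n^l$ may be taken to be $0$, or carry it along, whichever is cleaner.) One then sets $W_n^{l+1} = W_n^l - e^{ix\xi_n^{l+1}}\psi^{l+1}_{\{h_n^{l+1}\}}$ (with the translation folded in), which gives \eqref{eq:pd1} by construction.

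The Pythagorean expansion \eqref{eq:pd3} is proved by induction on $l$. The cross terms $\Jbr{e^{ix\xi_n^i}\psi^i_{\{h_n^i\}}, e^{ix\xi_n^j}\psi^j_{\{h_n^j\}}}_{\FHsc}$ (for $i\ne j$) and $\Jbr{e^{ix\xi_n^j}\psi^j_{\{h_n^j\}}, W_n^l}_{\FHsc}$ must tend to $0$. The first follows from the orthogonality \eqref{eq:pd2} of the frame parameters by an explicit change of variables: when $h_n^i/h_n^j + h_n^j/h_n^i \to\infty$ the supports separate in Fourier (scale) variable and the inner product of the $|x|^{s_c}$-weighted functions vanishes; when the scales are comparable but $|\xi_n^i-\xi_n^j|/h_n^j\to\infty$ the rapid relative oscillation kills the integral by a stationary-phase / Riemann--Lebesgue argument. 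The second follows from the weak convergence defining $\psi^j$: after undoing the rescaling on $W_n^{j-1}$ (and noting $W_n^l$ for $l\ge j$ differs from $W_n^{j-1}$ by profiles that are asymptotically orthogonal to the $j$-th frame, again by \eqref{eq:pd2}), the inner product converges to $\norm{\psi^j}_{\FHsc}^2 - \norm{\psi^j}_{\FHsc}^2 = 0$. This is essentially verbatim from \cite{BG,Ker}, the only new point being that the relevant Hilbert space is $\FHsc$ with inner product $\Jbr{|x|^{s_c}f, |x|^{s_c}g}_{L^2}$, under which the modulation $e^{ix\xi}$ is unitary and the scaling \eqref{eq:scaling2} is an isometry.

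It remains to prove the dispersive decay \eqref{eq:pd4}. From \eqref{eq:pd3} we get $\sum_{j\ge1}\norm{\psi^j}_{\FHsc}^2 \le \sup_n\norm{\phi_n}_{\FHsc}^2 < \infty$, so $\norm{\psi^j}_{\FHsc}\to 0$, hence the weak-limit lower bound $\norm{\psi^{l+1}}_{\FHsc}\gtrsim\mu_l$ forces $\mu_l\to 0$ along the extraction. Since $\mu_l = \limsup_n \norm{U(t)W_n^l}_{L^{\rho,\infty}(\R,L^q)}$ for the reference pair, this already gives \eqref{eq:pd4} for that pair; for a general admissible-range pair one interpolates, using Proposition~\ref{prop:LMn_bdd} (or Proposition~\ref{prop:Strichartz}) to bound $\norm{U(t)W_n^l}$ in a fixed strong space uniformly in $n,l$, and Lemma~\ref{lem:interpolation} to pass between pairs. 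The one subtlety — and the step I expect to be the main obstacle — is that the frame here involves \emph{both} scaling and Galilean translations, so the stopping criterion must actually be an $L^{\rho,\infty}(\R,L^q)$ bound rather than a single fixed-time $L^q$ bound, and one must check that a nonvanishing such norm genuinely produces a nonzero weak limit after the combined rescaling--modulation. This is exactly where Proposition~\ref{prop:ods} enters in reverse: it guarantees that the only obstruction to smallness of $\norm{U(t)(\cdot)}_{L^{\rho,\infty}(\R,L^q)}$ is a genuine profile (a bounded, non-oscillating, single-scale bump), not an artifact of oscillation; so if no such profile can be extracted the norm must be small, closing the induction. Assembling these pieces yields the proposition.
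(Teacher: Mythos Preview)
Your outline has the right global shape but there are two concrete gaps, and the approach differs substantially from the paper's.

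First, your extraction frame is wrong for this space. You include a spatial translation $x_n^{l+1}$, but translation $f\mapsto f(\cdot-x_0)$ is \emph{not} an isometry of $\FHsc$ (the weight $|x|^{s_c}$ breaks it), so weak compactness after translating makes no sense here and you cannot fold a translation into the profile without changing its norm. The correct frame, and the one in the paper's statement, is scaling \eqref{eq:scaling2} together with the modulation $e^{ix\xi}$ only. Relatedly, your treatment of the time parameter is incomplete: you write that ``$t_n^l$ may be taken to be $0$, or carry it along,'' but if $|t_n^l|\to\infty$ neither option works --- $U(-t_n^l)\psi^{l+1}$ does not converge, and keeping $t_n^l$ in the profile contradicts the form \eqref{eq:pd1}. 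The paper handles this by working instead with the quadratic phase $e^{is_n|x|^2}$ (equivalent to a time shift via $U(t)=M D\F M$): when $s_n$ stays bounded the limit is absorbed into $\psi$, and when $|s_n|\to\infty$ the would-be profile is thrown into the remainder, its $L^{\rho,\infty}L^q$ contribution vanishing precisely by Proposition~\ref{prop:ods}. That is how Proposition~\ref{prop:ods} actually enters --- forwards, not ``in reverse.''

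Second, your inverse-Strichartz step (``a nonvanishing $L^{\rho,\infty}L^q$ norm produces a nonzero weak limit after rescaling/modulation'') is asserted but not proved, and it is the whole technical content. The paper does not attempt a direct one-step extraction; it runs a \emph{two-stage} decomposition. Stage one is the Bahouri--G\'erard scale decomposition (Proposition~\ref{prop:pd1}), whose remainder is small in $L^{\rho,\infty}L^q$ via the refined Sobolev inequality of Lemma~\ref{lem:pd1} (through Lemma~\ref{lem:pd2}). Stage two, applied within each fixed scale (Proposition~\ref{prop:pd2}), extracts weak limits after $e^{-is_n|x|^2}e^{-iy_n x}$; smallness of the stage-two remainder in $L^{\rho,\infty}L^q$ is obtained in Lemma~\ref{lem:pd3} from $\eta(\{w_n^l\})\to 0$ together with the uniform ${\bf 1}$-scaled localization. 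Your proposal would need some substitute for both Lemma~\ref{lem:pd2} and Lemma~\ref{lem:pd3}; without them the implication $\mu_l>0\Rightarrow\psi^{l+1}\neq 0$ is unjustified.
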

\begin{remark}
Following an argument in \cite{Ker}, one can obtain 
a refined version of the error estimate
\begin{equation}\tag{\ref{eq:pd4}${}'$}
	\lim_{l\to\I} \limsup_{n\to\I} \norm{U(t)W^l_n}_{L^{\rho,\I}(\R,L^q)} =0.
\end{equation}
However, just in order to simplify our argument, we work with \eqref{eq:pd4} since it is sufficient for later use.
\end{remark}
\subsection*{Scale decomposition}
We first show that an $\FHsc$-bounded sequence is decomposed
into several portions which have distinct scales.
\begin{definition}
Let $\{f_n \}$ be a bounded sequence in $L^2$.
Let ${\bf h} = \{ h_n \} $ be a sequence of positive numbers.
\begin{itemize}
\item We say $\{f_n\}$ is ${\bf h}$-scaled if
\[
	\limsup_{n\to\I} \( \int_{h_n|x|\le 1/R} | {f_n}(x)|^2 dx + 
	\int_{h_n|x| \ge R} |f_n(x)|^2 dx\) \to 0
\]
as $R \to \I$.
\item We say $\{f_n\}$ is ${\bf h}$-singular if, for any 
$0<R_1<R_2 <\I$, we have
\[
	\lim_{n\to\I} \int_{R_1<h_n|x|<R_2} | f_n(x)|^2 dx =0.
\]
\end{itemize}
\end{definition}
For a given sequence $\{f_n\}$ bounded in $\FHsc$,
$\{\F (|x|^{s_c} f_n)\}$ is a bounded sequence of $L^2$.
Applying decomposition of $L^2$-bounded sequence in \cite{BG} 
to this sequence, 
we obtain the following.
\begin{proposition}[\cite{BG}]\label{prop:pd1}
Let $\{f_n\}_n$ be a bounded sequence in $\FHsc$.
Then, there exist a subsequence of $\{f_n\}_n$, which we denote again by $\{f_n\}_n$,
a sequence of positive number $\{ h_n^j \}_{n,j}$, and a sequence
$\{g^j_n\}_{n,j} \subset \FHsc$ such that 
\begin{enumerate}
\item $\{h^{j}_n\}_n$ are pairwise orthogonal, i.e.
\[
	\frac{h^j_n}{h^{j'}_n} + \frac{h^{j'}_n}{h^j_n} \to \I
\]
as $n\to\I$, for any $j\neq j'$.
\item For each $j\ge1$, $\{|x|^{s_c} g^j_n\}_n$ is $\{h_n^j\}_n$-scaled.
\item For each $l\ge1$, define a sequence $\{R^l_n\}_n \subset \FHsc$ by 
\[
	f_n = \sum_{j=1}^l g^{j}_n + R^l_n.
\]
Then, $\{|x|^{s_c} R_n^l\}_n$ is $\{h_n^j\}_n$-singular for all $j \in [1,l]$.
Further,
\begin{equation}\label{eq:pds1_1}
	\limsup_{n\to\I} \norm{\F R_n^l}_{\dot{B}^{s_c}_{2,\I}} \to 0
\end{equation}
as $l\to \I$.
\item For any $l\ge1$, we have
\[
	\norm{f_n}_{\FHsc}^2 = \sum_{j=1}^l \norm{g_n^j}_{\FHsc}^2 + \norm{R_n^l}_{\FHsc}^2 + o(1)
\]
as $n\to\I$.
\end{enumerate}
\end{proposition}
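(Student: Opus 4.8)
The plan is to deduce Proposition~\ref{prop:pd1} from the $L^2$ scale decomposition of \cite{BG} (cf.\ \cite{Ker}) by transporting it through the Fourier--weight isometry, exactly as the text preceding the statement indicates. Set $W_n:=\F(|x|^{s_c}f_n)$. By Plancherel, $\Lebn{W_n}2=\Lebn{|x|^{s_c}f_n}2=\norm{f_n}_{\FHsc}$, so $\{W_n\}$ is bounded in $L^2(\R^N)$; I would apply the scale decomposition of \cite{BG} to $\{W_n\}$ and pull the conclusion back along the isometry $f\mapsto\F(|x|^{s_c}f)$ of $\FHsc$ onto $L^2$.

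The input I would invoke is the following Littlewood--Paley-type statement: for any $L^2$-bounded sequence $\{W_n\}$ there is, after passing to a subsequence and a diagonal extraction over the profile index, a family of pairwise orthogonal scales $h_n^j>0$ (that is, $h_n^i/h_n^j+h_n^j/h_n^i\to\I$ for $i\neq j$) and $L^2$-functions $W_n^j$, each being the portion of $W_n$ frequency-localized near the scale $h_n^j$, such that, with $\Sigma_n^l:=W_n-\sum_{j=1}^lW_n^j$, the sequence $\{\F^{-1}W_n^j\}_n$ is $\{h_n^j\}$-scaled for every $j$, the sequence $\{\F^{-1}\Sigma_n^l\}_n$ is $\{h_n^j\}$-singular for $1\le j\le l$, one has $\Lebn{W_n}2^2=\sum_{j=1}^l\Lebn{W_n^j}2^2+\Lebn{\Sigma_n^l}2^2+o(1)$ as $n\to\I$, and $\limsup_{n\to\I}\norm{\Sigma_n^l}_{\dot{B}^0_{2,\I}}\to0$ as $l\to\I$. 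The geometric content is simply that $\F^{-1}$ turns a dyadic frequency block of $W_n$ at scale $h_n^j$ into an annular piece of $|x|^{s_c}f_n$ living where $|x|\sim 1/h_n^j$, so that the scaled/singular properties make sense.

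Granting this, the proof is a translation. Put $g_n^j:=|x|^{-s_c}\F^{-1}W_n^j$ and $R_n^l:=|x|^{-s_c}\F^{-1}\Sigma_n^l$; since $W_n^j,\Sigma_n^l\in L^2$ these lie in $\FHsc$, one has $\F(|x|^{s_c}g_n^j)=W_n^j$, $\F(|x|^{s_c}R_n^l)=\Sigma_n^l$, and applying $\F^{-1}$ to $W_n=\sum_{j=1}^lW_n^j+\Sigma_n^l$ and dividing by $|x|^{s_c}$ gives $f_n=\sum_{j=1}^lg_n^j+R_n^l$. Assertion~(1) is the orthogonality of the $h_n^j$; assertion~(2) and the first half of~(3) are the scaled/singular properties above, since $|x|^{s_c}g_n^j=\F^{-1}W_n^j$ and $|x|^{s_c}R_n^l=\F^{-1}\Sigma_n^l$; assertion~(4) is the $L^2$ orthogonality identity together with $\norm{g_n^j}_{\FHsc}=\Lebn{W_n^j}2$ and the analogous identities for $f_n$ and $R_n^l$. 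For the remaining estimate~\eqref{eq:pds1_1} I would use
\[
	\norm{\F R_n^l}_{\dot{B}^{s_c}_{2,\I}}\sim\norm{\F(|x|^{s_c}R_n^l)}_{\dot{B}^0_{2,\I}}=\norm{\Sigma_n^l}_{\dot{B}^0_{2,\I}},
\]
which follows from the homogeneous Littlewood--Paley characterization: with $\{\varphi_j\}$ the decomposition of Section~\ref{sec:2} (used here as radial cut-offs in space), Plancherel gives $\norm{\F R_n^l}_{\dot{B}^{s_c}_{2,\I}}\sim\sup_j2^{js_c}\Lebn{\varphi_jR_n^l}2$ and $\norm{\F(|x|^{s_c}R_n^l)}_{\dot{B}^0_{2,\I}}\sim\sup_j\Lebn{\varphi_j|x|^{s_c}R_n^l}2$, and $|x|^{s_c}\sim2^{js_c}$ on $\supp\varphi_j$. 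Taking $\limsup_n$ and then $l\to\I$ yields~\eqref{eq:pds1_1} from the remainder bound above.

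The part that needs genuine care — and where I expect the main obstacle to lie — is not any estimate but the bookkeeping: matching the Littlewood--Paley formulation of \cite{BG} with the physical-space language of ``${\bf h}$-scaled'' and ``${\bf h}$-singular'' sequences used here (this is precisely why one applies \cite{BG} to $\F(|x|^{s_c}f_n)$ rather than to $|x|^{s_c}f_n$, so that frequency-localization of a profile $W_n^j$ becomes spatial concentration of $\F^{-1}W_n^j$ at the scale $1/h_n^j$), together with producing a single subsequence valid for all $(j,l)$ by diagonalization and merging any scales that stay comparable along it. If one prefers not to quote \cite{BG}, the $L^2$ statement can be proved directly by iterating ``extract a dyadic frequency block of $W_n$ of nearly maximal $L^2$-norm, subtract, repeat''; convergence of the iteration, hence~\eqref{eq:pds1_1}, is forced by $\sum_j\limsup_n\Lebn{W_n^j}2^2\le\limsup_n\norm{f_n}_{\FHsc}^2<\I$.
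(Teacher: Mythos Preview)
Your proposal is correct and follows exactly the route the paper takes: the paper does not prove Proposition~\ref{prop:pd1} at all but simply remarks, in the sentence preceding the statement, that one obtains it by applying the $L^2$ scale decomposition of \cite{BG} to the sequence $\F(|x|^{s_c}f_n)$, and you have spelled out that translation in detail. Your verification of the equivalence $\norm{\F R_n^l}_{\dot B^{s_c}_{2,\infty}}\sim\norm{\F(|x|^{s_c}R_n^l)}_{\dot B^{0}_{2,\infty}}$ via $|x|^{s_c}\sim 2^{js_c}$ on $\supp\varphi_j$ is the one point the paper leaves entirely implicit, and it is correct.
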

The error estimate \eqref{eq:pds1_1} also yields
smallness of linear evolution of the error term.
To see this, we recall a refinement of the Sobolev embedding.
\begin{lemma}[\cite{BG,GMO}]\label{lem:pd1}
Let $1<p<q<\I$ and $s>0$ satisfy
\[
	\frac{N}{q} = \frac{N}p - s.
\]
Then, there exists a constant $C=C(N,p,q,s)$ such that
\[
	\norm{f}_{L^q} \le C \norm{f}_{\dot{H}^s_p}^{\frac{p}q}
	\norm{f}_{\dot{B}^s_{p,\I}}^{1-\frac{p}q}. 
\]
\end{lemma}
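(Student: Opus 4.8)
The plan is to prove the estimate pointwise in $x$, by splitting the Littlewood--Paley decomposition of $f$ at a frequency that is optimized at each point. Let $\{\Delta_j\}_{j\in\Z}$ be the homogeneous Littlewood--Paley projections with Fourier multipliers $\{\varphi_j\}$ fixed in Section \ref{sec:2}, so $f=\sum_{j\in\Z}\Delta_j f$, and set
\[
	g(x):=\Big(\sum_{j\in\Z}2^{2js}\abs{\Delta_j f(x)}^2\Big)^{1/2},
\]
which by the Littlewood--Paley characterization of $\dot{H}^s_p$ (valid for $1<p<\I$) satisfies $\norm{g}_{L^p}\le C\norm{f}_{\dot{H}^s_p}$; this is seen by writing $2^{js}\Delta_j f=m_j(\nabla)(\abs{\nabla}^s f)$ with $m_j(\xi)=2^{js}\abs{\xi}^{-s}\varphi_j(\xi)$ a family of multipliers supported in fixed annuli with uniform Mikhlin bounds, and applying the vector-valued Littlewood--Paley inequality to $\abs{\nabla}^s f$.

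For each integer $J$ I would split $f=f_{<J}+f_{\ge J}$ with $f_{<J}=\sum_{j<J}\Delta_j f$. The low frequencies I bound in $L^\I$ by Bernstein's inequality:
\[
	\norm{\Delta_j f}_{L^\I}\le C2^{jN/p}\norm{\Delta_j f}_{L^p}
	=C2^{j(N/p-s)}\big(2^{js}\norm{\Delta_j f}_{L^p}\big)\le C2^{jN/q}\norm{f}_{\dot{B}^s_{p,\I}},
\]
where I used $N/p-s=N/q$; since $N/q>0$, summing gives $\norm{f_{<J}}_{L^\I}\le C2^{JN/q}\norm{f}_{\dot{B}^s_{p,\I}}$. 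For the high frequencies the trivial bound $\abs{\Delta_j f(x)}\le 2^{-js}g(x)$ and $s>0$ give $\abs{f_{\ge J}(x)}\le C2^{-Js}g(x)$. Hence, for every $J\in\Z$,
\[
	\abs{f(x)}\le C_1\,2^{JN/q}\norm{f}_{\dot{B}^s_{p,\I}}+C_2\,2^{-Js}g(x),
\]
and choosing $J$ equal to the nearest integer to the balancing value $2^{J}\sim\big(g(x)/\norm{f}_{\dot{B}^s_{p,\I}}\big)^{p/N}$ --- the two powers of $2^J$ meet there precisely because $N/q+s=N/p$ --- yields the pointwise inequality
\[
	\abs{f(x)}\le C\,g(x)^{p/q}\,\norm{f}_{\dot{B}^s_{p,\I}}^{1-p/q}.
\]

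It then only remains to take the $L^q$ norm: since $q\cdot(p/q)=p$,
\[
	\norm{f}_{L^q}\le C\,\norm{f}_{\dot{B}^s_{p,\I}}^{1-p/q}\,\norm{g^{p/q}}_{L^q}
	=C\,\norm{f}_{\dot{B}^s_{p,\I}}^{1-p/q}\,\norm{g}_{L^p}^{p/q}\le C\,\norm{f}_{\dot{B}^s_{p,\I}}^{1-p/q}\,\norm{f}_{\dot{H}^s_p}^{p/q},
\]
which is the claim.

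The ingredients --- Bernstein's inequality, $L^p$-boundedness of the $\Delta_j$, the Littlewood--Paley square-function inequality --- are all classical for $1<p<\I$, so I expect no serious obstacle beyond the routine care with homogeneous Littlewood--Paley decompositions (convergence of $\sum_j\Delta_j f$, unproblematic since $0<s<N/p$). The one point to watch is that both geometric summations converge, which is precisely what the hypotheses $s>0$ (high-frequency tail) and $q<\I$, i.e. $N/q>0$ (low-frequency tail), provide; and it is the pointwise optimization in $J$, rather than a distribution-function argument (which would only give the weak bound $f\in L^{q,\I}$), that simultaneously delivers the strong $L^q$ estimate and produces the interpolation exponents $p/q$ and $1-p/q$.
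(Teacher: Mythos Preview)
The paper does not supply its own proof of this lemma; it is quoted as a known result from \cite{BG,GMO}. Your argument is correct and is essentially the standard proof (pointwise Littlewood--Paley splitting with Bernstein on the low frequencies, the square function on the high frequencies, and optimization in the cutoff level), which is the approach of the cited references.
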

This lemma yields the following.
\begin{lemma}\label{lem:pd2}
Let $\{R_n^l\}$ be a bounded sequence in $\FHsc$ such that
\[
	\limsup_{n\to\I} \norm{\F R_n^l}_{\dot{B}^{s_c}_{2,\I}} \to 0
\]
as $l\to \I$. Then, for any $1<\rho,q<\I$ with
$\delta(q)-s_c\in [0,\min(1,N/2))$ and $2/\rho-\delta(q)=s_c$, it holds that
\begin{equation}
	\lim_{l\to\I} \limsup_{n\to\I} \norm{U(t)R^l_n}_{L^{\rho,\I}(\R,L^q)} \to 0
\end{equation}
as $l\to 0$.
\end{lemma}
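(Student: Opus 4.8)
The plan is to combine the refined Sobolev inequality (Lemma~\ref{lem:pd1}) with the factorization $U(t)=M(t)D(t)\F M(t)$ at one distinguished exponent $q_0$, and then propagate the conclusion to every admissible pair $(\rho,q)$ by interpolating against a Strichartz bound. Throughout put $A_0:=\sup_{n,l}\|R_n^l\|_{\FHsc}<\I$.

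First I would treat the endpoint pair $(\rho_0,q_0)$ defined by $\delta(q_0)=s_c$, i.e.\ $1/q_0=1/2-s_c/N$, for which the relation $2/\rho_0-\delta(q_0)=s_c$ forces $\rho_0=1/s_c$. From $U(t)=M(t)D(t)\F M(t)$ and the dilation scaling $\|D(t)h\|_{L^{q_0}}=C|t|^{-\delta(q_0)}\|h\|_{L^{q_0}}$ one gets $\|U(t)R_n^l\|_{L^{q_0}}=C|t|^{-s_c}\|\F M(t)R_n^l\|_{L^{q_0}}$. Applying Lemma~\ref{lem:pd1} with $p=2$, $s=s_c$ (legitimate since $2<q_0<\I$ and $0<s_c<N/2$, and $N/q_0=N/2-s_c$),
\[
	\|\F M(t)R_n^l\|_{L^{q_0}}\le C\|\F M(t)R_n^l\|_{\dot{H}^{s_c}_2}^{2/q_0}\|\F M(t)R_n^l\|_{\dot{B}^{s_c}_{2,\I}}^{1-2/q_0}.
\]
Since $|M(t)|\equiv1$, Plancherel gives $\|\F M(t)R_n^l\|_{\dot{H}^{s_c}_2}=C\| |x|^{s_c}R_n^l\|_{L^2}=C\|R_n^l\|_{\FHsc}$, and — the point that needs a little care — the Littlewood--Paley pieces of $\F M(t)R_n^l$ are physical-space dyadic localizations of $M(t)R_n^l$, whose $L^2$ norms do not see the unit-modulus phase, so $\|\F M(t)R_n^l\|_{\dot{B}^{s_c}_{2,\I}}=\|\F R_n^l\|_{\dot{B}^{s_c}_{2,\I}}$ for every $t$. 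Hence $\|U(t)R_n^l\|_{L^{q_0}}\le C|t|^{-s_c}\|R_n^l\|_{\FHsc}^{2/q_0}\|\F R_n^l\|_{\dot{B}^{s_c}_{2,\I}}^{1-2/q_0}$; taking the $L^{\rho_0,\I}(\R)$ norm in $t$ and using $|t|^{-s_c}\in L^{1/s_c,\I}(\R)$ (as in the proof of Lemma~\ref{lem:interpolation}) yields
\[
	\|U(t)R_n^l\|_{L^{\rho_0,\I}(\R,L^{q_0})}\le C\,A_0^{2/q_0}\,\|\F R_n^l\|_{\dot{B}^{s_c}_{2,\I}}^{1-2/q_0}.
\]
As $q_0>2$, the exponent $1-2/q_0$ is positive, so the hypothesis closes the endpoint case after letting $n\to\I$ and then $l\to\I$.

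For a general pair with $\delta(q)-s_c\in(0,\min(1,N/2))$ and $1<\rho<\I$, the latter condition forces $\delta(q)<2-s_c$. I would pick a third pair $(\rho_1,q_1)$ with $\delta(q_1)$ strictly between $\delta(q)$ and $\min\bigl(N/2,\ s_c+\min(1,N/2),\ 2-s_c\bigr)$ and $2/\rho_1-\delta(q_1)=s_c$; then $q_0<q<q_1<\I$ and $\rho_0,\rho_1,\rho\in(1,\I)$, and since $2/\rho_\bullet=s_c+N/2-N/q_\bullet$ is affine in $1/q_\bullet$, writing $1/q=(1-\theta)/q_0+\theta/q_1$ with $\theta\in(0,1)$ automatically gives $1/\rho=(1-\theta)/\rho_0+\theta/\rho_1$. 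The pair $(\rho_1,q_1)$ fulfils the hypotheses of Proposition~\ref{prop:LMn_bdd} with $s=0$, so together with $\dot{M}^0_{q_1,2}(t)\hookrightarrow L^{q_1}$ (Lemma~\ref{lem:embedding1}(3), valid as $q_1>2$) and $L^{\rho_1,2}\hookrightarrow L^{\rho_1,\I}$ one obtains $\|U(t)R_n^l\|_{L^{\rho_1,\I}(\R,L^{q_1})}\le CA_0$. H\"older in $x$ gives $\|U(t)R_n^l\|_{L^q}\le\|U(t)R_n^l\|_{L^{q_0}}^{1-\theta}\|U(t)R_n^l\|_{L^{q_1}}^{\theta}$ pointwise in $t$, and the generalized H\"older inequality (Proposition~\ref{prop:gH}) combined with the elementary power rule for Lorentz norms upgrades this to
\[
	\|U(t)R_n^l\|_{L^{\rho,\I}(\R,L^q)}\le C\,\|U(t)R_n^l\|_{L^{\rho_0,\I}(\R,L^{q_0})}^{1-\theta}\,\|U(t)R_n^l\|_{L^{\rho_1,\I}(\R,L^{q_1})}^{\theta}.
\]
Inserting the two displayed bounds gives $\|U(t)R_n^l\|_{L^{\rho,\I}(\R,L^q)}\le C\,A_0^{\beta}\,\|\F R_n^l\|_{\dot{B}^{s_c}_{2,\I}}^{(1-\theta)(1-2/q_0)}$ for some $\beta>0$, and since the exponent $(1-\theta)(1-2/q_0)$ is positive the conclusion follows by first taking $\limsup_{n\to\I}$ and then $l\to\I$.

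The routine parts are the scaling identities for $D(t)$, the Plancherel computation, the inclusion $|t|^{-s_c}\in L^{1/s_c,\I}$, and the arithmetic bookkeeping that keeps all auxiliary exponents in the admissible ranges. The one genuinely delicate point — the main obstacle — is the observation that the Besov factor produced by Lemma~\ref{lem:pd1} is exactly $t$-independent, which is what lets the dispersive prefactor $|t|^{-s_c}$, sitting precisely in $L^{\rho_0,\I}_t$, be pulled out; a secondary point to check carefully is that the hypothesis $\rho>1$ (equivalently $\delta(q)<2-s_c$) always leaves enough room to choose the interpolation pair $(\rho_1,q_1)$, so that $\theta\in(0,1)$.
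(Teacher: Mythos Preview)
Your proof is correct and follows essentially the same strategy as the paper: both arguments establish the endpoint case $\delta(q_0)=s_c$, $\rho_0=1/s_c$ by applying the refined Sobolev inequality (Lemma~\ref{lem:pd1}) and exploiting that the resulting $\dot B^{s_c}_{2,\infty}$ factor is $t$-independent (you via the explicit factorization $U(t)=M(t)D(t)\F M(t)$ and the unit-modulus phase, the paper via the equivalent $\dot M^{s_c}_{2,r}(t)$ framework and boundedness of $U(t)$ between those spaces), and then pass to general $(\rho,q)$ by interpolating against a Strichartz bound. Your presentation is slightly more hands-on, but the underlying computation is the same.
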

\begin{proof}
First, we consider the special case $\delta(q)-s_c=0$, that is,
\[
	q=q_0:= \frac{N(p-1)}{N(p-1)-2} \in (2,2^*).
\]
In this case, $\rho=\rho_0:=s_c^{-1}$.
By means of Lemma \ref{lem:pd1}, it holds for $t\neq0$ that
\begin{align*}
	\Lebn{U(t)R_n^l}{q_0} ={}& \Lebn{M(-t)U(t)R_n^l}{q_0} \\
	\le {}& C \norm{M(-t)U(t)R_n^l}_{\dot{H}^{s_c}}^{\frac2{q_0}}
	\norm{M(-t)U(t)R_n^l}_{\dot{B}^{s_c}}^{1-\frac2{q_0}}\\
	\le{}& C|t|^{-\frac1{\rho_0}}\norm{U(t)R_n^l}_{\M{s_c}22t}^{\frac2{q_0}} 
	\norm{U(t)R_n^l}_{\M{s_c}2{\I}t}^{1-\frac2{q_0}}.
\end{align*}
Since $U(t)$ is bounded map $\M{s_c}220 \to \M{s_c}22t$ and
$\M{s_c}2{\I}0 \to \M{s_c}2{\I}t$, we have
\[
	\norm{U(t)R_n^l}_{L^{\rho_0,\I}(\R,L^{q_0})}
	\le C \norm{R_n^l}_{\FHsc}^{\frac2{q_0}} 
	\norm{\F R_n^l}_{\dot{B}^0_{2,\I}}^{1-\frac2{q_0}}.
\]
Taking limit supremum in $n$ and then letting $l\to\I$,
we obtain the result.

The general case follows by interpolation lemma (Lemma \ref{lem:interpolation})
since $U(t)R_n^l$ is bounded in $L^{\rho,\I}(\R,L^q)$
for any $(\rho,q)$ satisfying the assumption
in light of Proposition \ref{prop:LMn_bdd}.
\end{proof}

\subsection*{Decomposition of each scale}
Proposition \ref{prop:pd1} gives us a procedure to decompose
an $\FHsc$-bounded sequence $\{f_n\}_n$
into some pieces $\{g_n^j\}_{n,j}$ of which scales are pairwise orthogonal.
Let us next decompose each pieces $\{g_n^j\}_n$ into sums of
functions of the form $e^{is_n^j|x|^2} e^{i y_n^j \cdot x} \psi^j$.
For this, we consider a ${\bf 1}$-scaled sequence,
where we denote by ${\bf 1}$ a sequence of positive numbers
of which all component is equal to one. 
For a sequence $\{\phi_n\}_n\subset \FHsc$, we set
\[
	\nu(\{\phi_n\}) := \left\{
	\phi \in \FHsc \Bigm| 
	\begin{aligned} 
	&\exists s_n \in \R, \exists y_n \in \R^N 
	\text{ s.t. } e^{-is_n|x|^2} e^{-iy_nx} \phi_n \rightharpoonup \phi \\
	&\text{ in }\FHsc\text{ as }n\to\I,\text{ up to subsequence}.
	\end{aligned}\right\}
\]
and
\[
	\eta (\{\phi_n\}) := \sup_{\phi \in \nu (\{\phi_n\})} \norm{\phi}_{\FHsc} .
\]
It is obvious by definition that
\[
	\eta (\{\phi_n\}) \le \limsup_{n\to\I} \norm{\phi_n}_{\FHsc}.
\]
\begin{proposition}\label{prop:pd2}
Let $\{\phi_n\}_n \subset \FHsc$ be a bounded sequence such that 
$\{|x|^{s_c}\phi_n\}$ is ${\bf 1}$-scaled.
Then, there exist a subsequence of $\{\phi_n\}_n$, which is denoted again by $\{\phi_n\}_n$,
and sequences $\{\psi^j\}_j \subset \nu (\{\phi_n\})$, $\{w_n^j\}_{j,n} \subset \FHsc$
$\{s_n^j \}_{j,n} \subset \R$, and $\{y_n^j \}_{j,n} \subset \R^N$ such that
if $j\neq k$ then
\begin{equation}\label{eq:pdtmp21}
	|s_n^j-s_n^k| + |y^j_n-y^k_n| \to \I
\end{equation}
as $n\to\I$ and that, for all $l\ge1$,
\[
	\phi_n = \sum_{j=1}^l e^{is_n^j|x|^2} e^{iy_n x} \psi^j + w_n^l
\]
for all $n\ge 1$ and
\[
	\norm{\phi_n}_{\FHsc}^2 = \sum_{j=1}^l \norm{\psi^j}_{\FHsc}^2 + \norm{w^l_n}_{\FHsc}^2
	+o(1)
\]
as $n\to\I$.
Moreover,
\begin{equation}\label{eq:pds2_1}
	\eta(\{w^l_n\}_n)\to 0
\end{equation}
as $l\to \I$ and $\{|x|^{s_c}w^l_n\}_n$ is ${\bf 1}$-scaled uniformly in $l$, i.e.
there exists a function $\zeta:\R_+\to\R_+$ such that $\zeta(R)\to0$ as $R\to\I$
and
\[
	\limsup_{n\to \I} \int_{ \{ |x|\le \frac1R\} \cup \{ |x|\ge R \} }
	|x|^{2s_c} |w^l_n(x)|^2 dx \le \zeta(R)
\]
for all $l\ge1$.
\end{proposition}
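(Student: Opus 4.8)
The plan is to reduce the statement to a profile decomposition in $L^2$ relative to the abelian group of unitaries $g_{\sigma,\upsilon}\colon f\mapsto e^{i\sigma|x|^2}e^{i\upsilon\cdot x}f$, $(\sigma,\upsilon)\in\R\times\R^N$. Setting $f_n:=|x|^{s_c}\phi_n$, the map $f\mapsto|x|^{s_c}f$ is an isometric isomorphism of $\FHsc$ onto $L^2$ which commutes with every $g_{\sigma,\upsilon}$; under it the hypothesis becomes ``$\{f_n\}$ is a bounded $\mathbf{1}$-scaled sequence in $L^2$'', the set $\nu(\{\phi_n\})$ corresponds to $\mathbf{1}$-weak limits of $g_{-s_n,-y_n}f_n$, and the whole conclusion translates verbatim into a statement about $\{f_n\}$. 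So after this reduction the argument is the classical iterative profile extraction relative to the dislocation group $\{g_{\sigma,\upsilon}\}$, and the only structural input it needs is the weak-convergence lemma of the next paragraph.

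First I would prove that key lemma: if $(\sigma_n,\upsilon_n)$ satisfies $|\sigma_n|+|\upsilon_n|\to\I$, then $g_{\sigma_n,\upsilon_n}g\rightharpoonup 0$ in $L^2$ for every $g\in L^2$. By density one may take $g$ and the test function $\psi$ in $\mathcal{S}$; completing the square, $\sigma_n|x|^2+\upsilon_n\cdot x=\sigma_n\bigl|x+\tfrac{\upsilon_n}{2\sigma_n}\bigr|^2-\tfrac{|\upsilon_n|^2}{4\sigma_n}$ when $\sigma_n\neq0$, so that $\langle g_{\sigma_n,\upsilon_n}g,\psi\rangle$ is, up to a unimodular constant, $\int e^{i\sigma_n|y|^2}G_n(y)\,dy$ with $G_n$ a translate of the fixed Schwartz function $g\overline{\psi}$. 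If $|\sigma_n|\to\I$ this integral is $O(|\sigma_n|^{-N/2})$ uniformly in the translation, using Plancherel together with $\widehat{e^{i\sigma|\cdot|^2}}(\xi)=c\,\sigma^{-N/2}e^{-i|\xi|^2/(4\sigma)}$; if $\sigma_n$ stays bounded then $|\upsilon_n|\to\I$, and after passing to a subsequence along which $\sigma_n\to\sigma_\ast$, so that $e^{i\sigma_n|x|^2}g\overline{\psi}\to e^{i\sigma_\ast|x|^2}g\overline{\psi}$ in $L^1$, the Riemann--Lebesgue lemma finishes. I expect this lemma to be the main technical point, precisely because the quadratic and the linear oscillations have to be handled simultaneously; the completing-the-square reduction is what makes it routine.

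Granting the lemma, I would run the extraction. If $\eta(\{f_n\})=0$ take $l=0$; otherwise choose $\psi^1\in\nu(\{\phi_n\})$ with $\norm{\psi^1}_{\FHsc}\ge\tfrac12\eta(\{f_n\})$, realized by $g_{-s_n^1,-y_n^1}f_n\rightharpoonup|x|^{s_c}\psi^1$ along a subsequence, set $w_n^1:=\phi_n-e^{is_n^1|x|^2}e^{iy_n^1\cdot x}\psi^1$, and iterate with $\{|x|^{s_c}w_n^1\}$, producing $\psi^2,s_n^2,y_n^2,w_n^2,\dots$. Weak convergence gives, for each fixed $l$, the Pythagorean identity $\norm{\phi_n}_{\FHsc}^2=\sum_{j=1}^l\norm{\psi^j}_{\FHsc}^2+\norm{w_n^l}_{\FHsc}^2+o(1)$: the cross term of $w_n^l$ with a fixed profile vanishes since $g_{-s_n^j,-y_n^j}(|x|^{s_c}w_n^l)\rightharpoonup0$, and the cross term of two distinct profiles vanishes by the key lemma once the parameter separation \eqref{eq:pdtmp21} is in place. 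This identity forces $\sum_j\norm{\psi^j}_{\FHsc}^2\le\limsup_n\norm{\phi_n}_{\FHsc}^2<\I$, hence $\norm{\psi^{l+1}}_{\FHsc}\to0$ and $\eta(\{w_n^l\}_n)\le 2\norm{\psi^{l+1}}_{\FHsc}\to0$, which is \eqref{eq:pds2_1}. For \eqref{eq:pdtmp21} I would argue by induction on $l$: assuming $(s_n^j,y_n^j)_{j\le l}$ pairwise orthogonal, if $(s_n^{l+1}-s_n^k,\,y_n^{l+1}-y_n^k)$ stayed bounded for some $k\le l$ along a subsequence, pass to a further subsequence where it converges, factor $g_{-s_n^{l+1},-y_n^{l+1}}=g_{-(s_n^{l+1}-s_n^k),-(y_n^{l+1}-y_n^k)}\circ g_{-s_n^k,-y_n^k}$, note $g_{-s_n^k,-y_n^k}(|x|^{s_c}w_n^l)\rightharpoonup0$ (expand $w_n^l$ through $w_n^{k-1}$ and apply the key lemma to the already-separated profiles $k+1,\dots,l$), and combine strong-operator convergence of the first factor with this weak convergence to obtain $|x|^{s_c}\psi^{l+1}=0$, contradicting $\norm{\psi^{l+1}}_{\FHsc}>0$.

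Finally I would verify that $\{|x|^{s_c}w_n^l\}$ is $\mathbf{1}$-scaled uniformly in $l$. Writing ${\bf 1}_R$ for the indicator of $\{|x|\le 1/R\}\cup\{|x|\ge R\}$ and using that $g_{\sigma,\upsilon}$ commutes with ${\bf 1}_R$, one has ${\bf 1}_R|x|^{s_c}w_n^l={\bf 1}_R f_n-\sum_{j=1}^l g_{s_n^j,y_n^j}({\bf 1}_R|x|^{s_c}\psi^j)$; expanding the square of the second term, the diagonal contributions sum to at most $\sum_{j\ge1}\norm{{\bf 1}_R|x|^{s_c}\psi^j}_{L^2}^2=:\varepsilon(R)$, which tends to $0$ as $R\to\I$ by dominated convergence in $j$ (each summand is bounded by $\norm{\psi^j}_{\FHsc}^2$, which is summable, and tends to $0$), while for fixed $l$ the finitely many off-diagonal terms tend to $0$ as $n\to\I$ by the key lemma in $L^2$ applied to ${\bf 1}_R|x|^{s_c}\psi^j$; since also $\limsup_n\norm{{\bf 1}_R f_n}_{L^2}^2\to 0$ as $R\to\I$ by the $\mathbf{1}$-scaledness of $\{f_n\}$, one gets $\limsup_n\norm{{\bf 1}_R|x|^{s_c}w_n^l}_{L^2}^2\le\zeta(R)$ with $\zeta$ independent of $l$. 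Transporting everything back through the isometry $f\mapsto|x|^{s_c}f$ gives the proposition.
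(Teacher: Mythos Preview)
Your proof is correct and follows the same iterative profile-extraction scheme as the paper, including the explicit ``dislocation lemma'' (which the paper uses only implicitly in its one-line orthogonality argument). The one place where the paper is slightly more efficient is the uniform $\mathbf{1}$-scaledness of $\{|x|^{s_c}w_n^l\}$: instead of your expansion-and-summability argument, the paper observes that the single-step Pythagorean identity also holds after multiplication by any bounded $\chi$, giving directly $\limsup_n\norm{\chi w_n^l}_{\FHsc}^2\le\limsup_n\norm{\chi\phi_n}_{\FHsc}^2$ and hence $\zeta(R)=\limsup_n\norm{\chi_R\phi_n}_{\FHsc}^2$.
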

\begin{proof}
If $\eta(\{\phi_n\})=0$ then the result follows by choosing $\psi^j\equiv 0$
and $w^l_n = \phi_n$. 
Otherwise, there exists $\psi^1 \in \nu(\{\phi_n\})$ satisfying
\[
	0< \frac{\eta(\{\phi_n\})}2 \le \norm{\psi^1}_{\FHsc}.
\]
By the definition of $\nu$, there also exist sequences $\{s^1_n\} \subset \R$ 
and $\{y^1_n\}\subset \R^N$ such that
\[
	e^{-is_n^1|x|^2} e^{-i y^1_n x} \phi_n \rightharpoonup \psi^1 \IN \FHsc.
\]
Define a sequence $\{w^1_n\}$ by $w^1_n=\phi_n- e^{is_n^1|x|^2} e^{i y^1_n x} \psi^1$.
It then follows that
\begin{equation}\label{eq:pdlem1}
	e^{-is_n^1|x|^2} e^{-i y^1_n x} w_n^1 \rightharpoonup 0 \IN \FHsc.
\end{equation}
Further, we obtain
\[
	\norm{\phi_n}_{\FHsc}^2 = \norm{\psi^1}_{\FHsc}^2 + \norm{w_n^1}_{\FHsc}^2 +o(1).
\]
Similarly, for any bounded function $\chi$,
\[
	\norm{\chi\phi_n}_{\FHsc}^2 = \norm{\chi\psi^1}_{\FHsc}^2 + \norm{\chi w_n^1}_{\FHsc}^2 +o(1).
\]

We repeat the above argument by replacing $\{\phi_n\}$ by $\{w^1_n\}$.
If $\eta(\{w_n^1\})>0$ then we can chose $\psi^2\neq0$, $\{s^2_n\}$, and $\{y^2_n\}$ such that
\begin{equation}\label{eq:pdlem2}
	w^2_n := w^1_n - e^{is_n^2|x|^2} e^{i y^2_n x} \psi^2 \rightharpoonup 0 \IN \FHsc
\end{equation}
and
\[
	\norm{w_n^1}_{\FHsc}^2 = \norm{\psi^2}_{\FHsc}^2 + \norm{w_n^2}_{\FHsc}^2 +o(1).
\]
as $n\to\I$ (up to subsequence). We claim that
$|s^1_n-s^2_n|+|y^1_n-y^2_n|\to \I$ as $n\to\I$. 
Indeed, otherwise we see from \eqref{eq:pdlem1} and \eqref{eq:pdlem2} that
$\psi^2=0$, a contradiction.

By this procedure,
we inductively define $\psi^j$, $\{s_n^j\}$, $\{y_n^j\}$, and $\{w^l_n\}$.
Then, for any $l\ge 1$,
\[
	\sum_{j=1}^l \norm{\psi^j}_{\FHsc}^2 \le \limsup_{n\to\I} \norm{\phi_n}_{\FHsc}^2<\I
\]
holds.
This implies $\norm{\psi^j}_{\FHsc}\to 0$ as $j\to\I$, and so
$$\eta(\{w^l_n\}) \le 2\norm{\psi^{l+1}}_{\FHsc}\to 0$$ as $l\to\I$.
A similar argument shows 
\[
	\limsup_{n\to\I} \norm{\chi w_n^l}_{\FHsc}^2 \le \limsup_{n\to\I} \norm{\chi \phi_n}_{\FHsc}^2
\]
for any bounded function $\chi$ and $l\ge1$.
Take $\chi=\chi_{\{|x|\le 1/R\}\cup\{|x|\ge R\}}$ and set
\[
	\zeta(R) = \limsup_{n\to\I} \norm{\chi \phi_n}_{\FHsc}^2.
\]
Then, since $\{|x|^{s_c} \phi_n\}$ is ${\bf 1}$-scaled, $\zeta(R)\to0$ as $R\to\I$.
\end{proof}
It is possible to upgrade the smallness property \eqref{eq:pds2_1} as follows.
\begin{lemma}\label{lem:pd3}
Let $\{ w^l_n \} \subset \FHsc$ be a bounded sequence with
$\eta(\{w^l_n\}_n)\to 0$ as $l\to \I$.
Further, suppose that $\{ |x|^{s_c}w^l_n \} \subset \FHsc$ is
 ${\bf 1}$-scaled uniformly in $l$. Then, for any 
 $1<\rho,q<\I $ with $\delta(q)-s_c \in (0,\min(1,N/2))$ and $\frac2\rho-\delta(q) =s_c$,
it holds that
\[
	\limsup_{n\to\I} \norm{U(t) w^l_n }_{L^{\rho,2}(\R,\Ms{0}q2)} \to 0
\]
as $l\to\I$.
\end{lemma}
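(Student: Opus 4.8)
The plan is to reduce to a single convenient pair, convert the zero‑weight modified Besov norm into a genuine $L^{q'}$‑norm of a spatially truncated piece of the data via Hausdorff–Young, and then show that this $L^{q'}$‑norm vanishes because $\eta(\{w^l_n\}_n)\to0$ and the family is tight. First I would reduce: by the interpolation inequality of Lemma \ref{lem:interpolation} together with the uniform bound $\sup_l\limsup_n\norm{U(t)w^l_n}_{L^{\rho_1,2}(\R,\dot M^{0}_{q_1,2})}<\I$ supplied by Proposition \ref{prop:LMn_bdd} (whose hypothesis with weight $0$ is precisely $\delta(q)-s_c\in(0,\min(1,N/2))$, $2/\rho-\delta(q)=s_c$), it suffices to prove the statement for one such pair, which I would choose with $q$ close to $q_0:=\tfrac{N(p-1)}{N(p-1)-2}$ so that $q'=\tfrac{q}{q-1}$ lies slightly below $q_0'=\tfrac{N(p-1)}{2}<2$; this last point matters in the final step.

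The key computational input is that, using the equivalent representation \eqref{eq:Mn_alt} and the factorization $M(-t)U(t)=D(t)\F M(t)$, one has, for any $g$ supported in a fixed annulus $\{1/(2R)\le|x|\le2R\}$,
\[
\norm{U(t)g}_{\dot M^{0}_{q,2}(t)}\sim|t|^{-\delta(q)}\,\bigl\|e^{i|x|^2/(4t)}g\bigr\|_{\F\dot B^{0}_{q,2}}\le C_R\,|t|^{-\delta(q)}\,\norm{g}_{L^{q'}},
\]
since the annulus support leaves only $O(\log R)$ Littlewood–Paley pieces, to each of which Hausdorff–Young ($\F\colon L^{q'}\to L^{q}$, using $q>2$) applies and the unimodular phase is harmless. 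Now I would split $w^l_n=\chi_Rw^l_n+(1-\chi_R)w^l_n$ with $\chi_R\in C_0^\infty$ equal to $1$ on an annulus; the uniform ${\bf 1}$‑scaling gives $\norm{(1-\chi_R)w^l_n}_{\FHsc}^2\le\zeta(R)$ uniformly, so by Proposition \ref{prop:LMn_bdd} the evolution of the second term is $O(\zeta(R)^{1/2})$ in the target norm, uniformly. For $\chi_Rw^l_n$ (compactly supported, bounded in $L^2$ because $|x|^{s_c}\sim_R1$ on its support) I would split in time: on $|t|\ge M$ the displayed identity and $\rho\delta(q)>1$ (which holds since $\delta(q)>s_c$) give a bound $\lesssim_R M^{-c}\norm{\chi_Rw^l_n}_{L^{q'}}\lesssim_{R}M^{-c}$; on $|t|\le1/M$ the factor $|t|^{-\delta(q)}$ blows up, but $\chi_Rw^l_n$ vanishes near the origin, so $\bigl\|e^{ib|x|^2}\chi_Rw^l_n\bigr\|_{\F\dot B^0_{q,2}}$ decays in $|b|=1/(4|t|)$ fast enough to beat $|t|^{-\delta(q)}$ — this is exactly the mechanism in the proof of Proposition \ref{prop:ods}, here in a form uniform over families with support in a fixed annulus away from $0$ — giving again $O_R(M^{-c})$. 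On the remaining bounded range $1/M\le|t|\le M$ the identity yields $\norm{U(t)\chi_Rw^l_n}_{L^{\rho,2}([1/M,M],\dot M^{0}_{q,2})}\lesssim_{R,M}\norm{\chi_Rw^l_n}_{L^{q'}}$.

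Thus everything reduces to the claim $\limsup_n\norm{\chi_Rw^l_n}_{L^{q'}}\to0$ as $l\to\infty$. To see this, suppose not: along a subsequence $\norm{\chi_Rw^l_{n_j}}_{L^{q'}}\ge c$, so $\{\chi_Rw^l_{n_j}\}_j$ is bounded in $L^2$ and supported in a fixed compact set. Applying Proposition \ref{prop:pd2} to $\{w^l_{n_j}\}_j$ (whose $\eta$ along a subsequence is $\le\eta(\{w^l_n\}_n)=:\delta_l$) decomposes it into profiles $e^{is^m_j|x|^2}e^{iy^m_jx}\psi^m$ with $\norm{\psi^m}_{\FHsc}\le\delta_l$ plus a remainder with vanishing $\eta$; since $q'<q_0'$, a Hölder argument with $|x|^{-s_c}\in L^r_{\mathrm{loc}}$ (the borderline $s_cr<N$ being gained precisely by this choice of $q$) gives $\norm{\chi_R\psi^m}_{L^{q'}}\lesssim_R\norm{\psi^m}_{\FHsc}\le\delta_l$, while a Brezis–Lieb–type/$L^2$ profile‑decomposition argument on the compactly supported, ${\bf 1}$‑scaled remainder shows that every surviving $L^2$‑profile is — because the physical scale is pinned to $\sim1$ — a boost of a fixed profile and hence a nonzero element of $\nu$ (contradicting smallness of $\eta$), whereas the scale‑$\to0$ concentrations have $L^{q'}$‑norm $\to0$ (as $q'<2$, an $L^2$‑normalised bump at scale $\lambda\to0$ has $L^{q'}$‑norm $\sim\lambda^{\delta(q)}\to0$). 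Altogether $\limsup_n\norm{\chi_Rw^l_n}_{L^{q'}}\lesssim_R\delta_l\to0$. Letting $n\to\infty$, then $l\to\infty$, then $M\to\infty$, then $R\to\infty$ finishes the proof.

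The main obstacle is the bounded‑time range and, behind it, the fact that the target norm has weight $0$: it is strictly stronger than the $L^q$‑based norm of Lemma \ref{lem:pd2}/Lemma \ref{lem:cpt_supp_small} and cannot be recovered from the latter by interpolating against a bounded norm, which is why one must genuinely exploit the uniform spatial tightness — to truncate to compact support away from the origin and then use Hausdorff–Young to trade the $\dot M^0_{q,2}$ norm for an $L^{q'}$ norm of the data. The second delicate point is that sub‑scale concentrations are invisible to $\nu$ (so $\eta$ can vanish while $\norm{\F w^l_n}_{\dot B^{s_c}_{2,\infty}}$ does not), so the last step cannot go through a Besov‑smallness of the data; it must instead use the scaling‑criticality of the norms, i.e. that such concentrations are automatically negligible in every $L^{q'}$ with $q'<2$.
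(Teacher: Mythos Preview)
There is a genuine gap. Your reduction on the bounded-time range to the claim $\limsup_n\norm{\chi_R w^l_n}_{L^{q'}}\to0$ is too lossy and in fact false under the hypotheses. Take $w^l_n=w_n:=e^{in^3x_1^3}\psi(x)$ with $\psi\in C_0^\infty$ supported in $\{1\le|x|\le2\}$ (independent of $l$). Then $\{w_n\}$ is bounded in $\FHsc$, $\{|x|^{s_c}w_n\}$ is uniformly ${\bf1}$-scaled, and $\eta(\{w_n\})=0$ because a cubic phase cannot be cancelled by any quadratic-plus-linear phase, so every weak limit of $e^{-is_n|x|^2}e^{-iy_n\cdot x}w_n$ is zero. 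Nevertheless $\norm{\chi_R w_n}_{L^{q'}}=\norm{\chi_R\psi}_{L^{q'}}$ is a fixed positive constant. Your Hausdorff--Young step $\norm{U(t)g}_{\dot M^0_{q,2}(t)}\lesssim_R|t|^{-\delta(q)}\norm{g}_{L^{q'}}$ is a correct upper bound, but it discards all phase information in $g$; since both $\eta$ and the target norm are phase-sensitive while $\norm{g}_{L^{q'}}$ sees only $|g|$, this route cannot succeed. Your subsequent $L^2$ profile-decomposition heuristic does not repair this: in the example there are no concentrations and no scale-$1$ profiles (the sequence is purely oscillatory with $w_n\rightharpoonup0$), so the entire sequence sits in the ``remainder'' with undiminished $L^{q'}$ norm.

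The paper's argument avoids this pitfall by never passing through a modulus-only norm of the data. After the same spatial truncation to the annulus (which leaves $O(\log R)$ Littlewood--Paley pieces $\chi_k\sigma_R w^l_n$), it interpolates the $\dot M^0_{q,2}(t)$ norm between $L^\infty$ and $\dot M^0_{q\theta,2}(t)$ with some $\theta<1$, obtaining a factor
\[
\bigl\||t|^{N/2}U(t)(\chi_k\sigma_R w^l_n)\bigr\|_{L^\infty_{t,x}}^{\,1-\theta}.
\]
By the explicit kernel of $U(t)$ this equals a supremum of pairings $C\bigl|\int e^{-is|y|^2}e^{-iy'\cdot y}w^l_n(y)\,\chi_k\sigma_R(y)\,dy\bigr|$ over $(s,y')\in\R\times\R^N$; choosing near-maximizing $(s_n,y'_n)$ and taking $\limsup_n$ bounds it by $\sup_{V\in\nu(\{w^l_n\})}|\langle V,\chi_k\sigma_R\rangle|\le C_k\,\eta(\{w^l_n\})$. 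The complementary factor is handled by Strichartz-type bounds uniform in $n,l$. Thus the smallness is drawn directly from the phase structure encoded in $\eta$, which is exactly what your $L^{q'}$ reduction throws away.
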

\begin{proof}
Set $\sigma_R(x)=\chi_{\{|x|\le 1/R\}\cup\{|x|\ge R\}}(x)$.
Take $1<\rho,q<\I $ so that $\delta(q) \in (0,\min(1,N/2))$ and $\frac2\rho-\delta(q) =s_c$.
Then,
\begin{equation}\label{eq:pdlem21}
\begin{aligned}
	\norm{U(t) w^l_n }_{L^{\rho,2}(\R,\Ms{0}q2)}
	\le{}& \norm{U(t) \sigma_R w^l_n }_{L^{\rho,2}(\R,\Ms{0}q2)} \\
	&{}+ \norm{U(t) (1-\sigma_R)w^l_n }_{L^{\rho,2}(\R,\Ms{0}q2)}.
\end{aligned}
\end{equation}
Since $\{ |x|^{s_c}w^l_n \}$ is uniformly ${\bf 1}$-scaled,
we see from Lemma \ref{lem:LMn_inclusion} and Strichartz' estimate
that there exists $\zeta(R):\R_+\to\R_+$ with $\zeta(R)\to0$ as $R\to\I$ such that
\begin{equation}\label{eq:pdlem22}
	\norm{U(t) (1-\sigma_R)w^l_n }_{L^{\rho,2}(\R,\Ms{0}q2)}
	\le C \norm{ (1-\sigma_R)w^l_n }_{\FHsc} \le C\zeta(R).
\end{equation}
On the other hand, for $R\gg 1$,
\[
	\norm{U(t)\sigma_R w^l_n}_{\M0q2t}^2 = \sum_{k=-C\log R}^{C\log R}
	\norm{U(t)\chi_k \sigma_R w^l_n}_{L^q}^2,
\]
where $C$ is a positive constant and $\chi_j\in C_0^\I$ is the function
defined in the definition of $\M{s}qrt$.
Let $\theta \in (1/2,1)$ to be chosen later.
Using H\"older's inequality twice, we see that
\begin{align*}
	\sum_{k=-C\log R}^{C\log R}
	\norm{U(t)\chi_k \sigma_R w^l_n}_{L^q}^2
	\le {}&\( \sum_{k=-C\log R}^{C\log R}
	\norm{U(t)\chi_k \sigma_R w^l_n}_{L^\I}^2\)^{1-\theta} \\
	&{} \times \norm{U(t)\sigma_R w^l_n}_{\M0{q\theta}2t}^{2\theta} \\
	\le {}& \( \sum_{k=-C\log R}^{C\log R}
	 \norm{|t|^{\frac{N}2} U(t)\chi_k \sigma_R w^l_n}_{L^\I_{t,x}}  \)^{2(1-\theta)}\\
	&{} \times |t|^{-{N}(1-\theta)}\norm{U(t)\sigma_R w^l_n}_{\M0{q\theta}2t}^{2\theta}
\end{align*}
for $t\neq0$. Generalized H\"odler's inequality gives us
\begin{equation}\label{eq:pdlem3}
\begin{aligned}
	\LMn{U(t)\sigma_R w^l_n}{\rho}0q \le {}&\( \sum_{k=-C\log R}^{C\log R}
	 \norm{|t|^{\frac{N}2} U(t)\chi_k \sigma_R w^l_n}_{L^\I_{t,x}}  \)^{1-\theta}\\
	&{} \times \norm{|t|^{-{N}(1-\theta)}\norm{U(t)\sigma_R w^l_n}_{\M0{q\theta}2t}^{\theta}}_{L^{\rho,2}}.
\end{aligned}
\end{equation}
Here, we have
\begin{equation}\label{eq:pdlem35}
	\norm{|t|^{\frac{N}2} U(t)\chi_k \sigma_R w^l_n}_{L^\I_{t,x}}
	=\sup
	\(\limsup_{n\to\I} |t_n|^{\frac{N}2} \abs{(U(t_n)(\chi_k \sigma_R w^l_n))(x_n)}\),
\end{equation}
where the supremum is taken over all sequences $\{t_n\}\subset \R\setminus\{0\}$
and $\{x_n\}\subset \R^N$.
Further, it follows from the well-known integral representation of $U(t)$ that
\[
	|t_n|^{\frac{N}2} \abs{U(t_n)(\chi_k \sigma_R w^l_n)(x_n)}=
	C\abs{\int_{\R^N} \(e^{-is_n|x|^2}e^{-iy_n x} w^l_n(x)\) (\chi_k\sigma_R)(x)dx},
\]
where $s_n=-1/4t_n$ and $y_n=x_n/2t_n$.
The limit supremum in the right hand side of \eqref{eq:pdlem35}
is hence bounded by
\begin{align*}
	\sup_{V\in \nu(\{w^l_n\}_n)} C\abs{\int_{\R^N} V(x) (\chi_k\sigma_R)(x)dx}
	&{}\le C \eta(\{w^l_n\}) \norm{|x|^{-s_c}\chi_k}_{L^2} \\
	&{}\le C2^{(\frac{N}2-s_c) k} \eta(\{w^l_n\}).
\end{align*}
Thus,
\begin{equation}\label{eq:pdlem4}
\begin{aligned}
	\limsup_{n\to\I}\sum_{k=-C\log R}^{C\log R}
	 \norm{|t|^{\frac{N}2} U(t)\chi_k \sigma_R w^l_n}_{L^\I_{t,x}} 
	 &{}\le C \eta(\{w^l_n\}) \( \sum_{k=-C\log R}^{C\log R} 2^{(\frac{N}2-s_c) k} \)\\
	 &{}\le C(R) \eta(\{w^l_n\}).
\end{aligned}
\end{equation}
On the other hand, one deduces from generalized H\"odler's inequality that
\begin{equation}\label{eq:pdlem5}
\begin{aligned}
	&\norm{|t|^{-{N}(1-\theta)}\norm{U(t)\sigma_R w^l_n}_{\M0{q\theta}2t}^{\theta}}_{L^{\rho,2}} \\
	&{}\le \norm{ |t|^{-{N}(1-\theta)} }_{L^{\frac2{N(1-\theta)},\I}}
	\norm{\norm{U(t)\sigma_R w^l_n}_{\M0{q\theta}2t}^{\theta}}_{L^{\widetilde{\rho},2}} \\
	&{}\le C \norm{U(t)\sigma_R w^l_n}_{L^{\widetilde{\rho}\theta,2\theta}(\Ms0{q\theta}2)}^{\theta},
\end{aligned}
\end{equation}
where $\widetilde{\rho}$ is defined by $(\widetilde{\rho})^{-1}=\rho^{-1}-\frac{N(1-\theta)}{2}$.
Notice that $\widetilde{\rho}$ is well-defined if $\theta$ is sufficiently close to one.
Define $\rho_\pm$ by
\[
	\frac1{\rho_\pm} = \frac1{\widetilde{\rho}\theta} \pm \eps
\]
with $\eps>0$ to be chosen later. $\rho_\pm$ is well-defined if $\eps$ is sufficiently small.
Then, by Lemma \ref{lem:LMn_inclusion} and Strichartz' estimate, we have
\begin{align*}
	&\norm{U(t)\sigma_R w^l_n}_{L^{\widetilde{\rho}\theta,2\theta}(\Ms0{q\theta}2)}\\
	&{}\le
	C\(\norm{U(t)\sigma_R w^l_n}_{L^{\rho_+,2}(\Ms0{q\theta}2)}+
	\norm{U(t)\sigma_R w^l_n}_{L^{\rho_-,2}(\Ms0{q\theta}2)}\)\\
	&{}\le C\(
	\norm{\sigma_R w^l_n}_{\F\dot{H}^{s_+}}+
	\norm{\sigma_R w^l_n}_{\F\dot{H}^{s_-}}\)
\end{align*}
as long as $\delta(q\theta)-s_\pm \in [0,\min(1,\frac{N}2))$, where
\[
	s_\pm:= \frac2{\rho_\pm} - \delta(q\theta)  = s_c + \delta(q)-\delta(q\theta) \pm \eps.
\]
Notice that if $\theta$ is sufficiently close to one
and $\eps$ is sufficiently small then 
\[
	|s_\pm-s_c| + |\delta(q)-\delta(q\theta)| \ll 1.
\]
Since $\delta(q)-s_c \in (0,\min(1,\frac{N}2))$, we can chose $\theta$ and 
$\eps$ so that $\delta(q\theta)-s_\pm \in [0,\min(1,\frac{N}2))$.
For such $\theta$ and $\eps$, we have
\begin{equation}\label{eq:pdlem6}
	\norm{U(t)\sigma_R w^l_n}_{L^{\widetilde{\rho}\theta,2\theta}(\Ms0{q\theta}2)}
	\le CR^{s_+-s_c} \norm{ w^l_n}_{\FHsc}.
\end{equation}
Combining \eqref{eq:pdlem4}, \eqref{eq:pdlem5}, and \eqref{eq:pdlem6} to \eqref{eq:pdlem3},
we obtain
\[
	\limsup_{n\to\I} 
	\norm{U(t)\sigma_R w^l_n}_{L^{\rho,2}(\Ms0{q}2)}
	\le C(R) \eta(\{w_n^l\})^\theta \(\limsup_{n\to\I} \norm{ w^l_n}_{\FHsc} \)^{1-\theta}. 
\]
By this inequality, \eqref{eq:pdlem21}, and \eqref{eq:pdlem22}, we have
\[
	\limsup_{n\to\I} 
	\norm{U(t)w^l_n}_{L^{\rho,2}(\Ms0{q}2)}
	\le C(R) \eta(\{w_n^l\})^\theta \(\limsup_{n\to\I} \norm{ w^l_n}_{\FHsc} \)^{1-\theta}
	+ C \zeta(R).
\]
Take limit supremum in $l$ and then let $R\to\I$ to obtain the desired result.
\end{proof}
\subsection*{Completion of the proof}
We are now ready to prove Proposition \ref{prop:pd}. 
\begin{proof}[Proof of Proposition \ref{prop:pd}]
In what follows, we denote various subsequences of
$\{\phi_n\}$ by $\{\phi_n\}$.
By Proposition \ref{prop:pd1}, there exist $\{R^j_n\} \subset \FHsc$, 
$\{h_n^j\}\subset \R_+$, and $\{q_n^j\}$ such that
\[
	\phi_n = \sum_{j=1}^J g_n^j + R_n^J,
\]
where $|x|^{s_c}g_n^j$ is $\{h_n^j\}$-scaled and $R_n^J$ is $\{h_n^j\}$-singular for all $j\in[1,J]$.
Set a sequence $\{P_n^j\}$ by
$g_n^j:=(P_{n}^j)_{\{h_n^j\}}$ or, equivalently, by $P_n^j:=(g_{n}^j)_{\{1/h_n^j\}}$.
Then, one sees that $|x|^{s_c} P_n^j$ is ${\bf 1}$-scaled.
Moreover,
\begin{align*}
	\norm{\phi_n}_{\FHsc} 
	&{}= \sum_{j=1}^J \norm{g_n^j}_{\FHsc}^2 + \norm{R_n^j}_{\FHsc}^2 +o(1) \\
	&{}= \sum_{j=1}^J \norm{P_n^j}_{\FHsc}^2 + \norm{R_n^j}_{\FHsc}^2 +o(1),
\end{align*}
which in particular implies a uniform bound on $\{P_n^j\}$;
\[
	\sup_{j} \(\limsup_{n\to\I} \norm{P_n^j}_{\FHsc} \) \le 
	\limsup_{n\to\I} \norm{\phi_n}_{\FHsc}.
\]
Now, apply Proposition \ref{prop:pd2} to $\{P_n^j\}_n$ to infer that, for each $j,K\ge1$,
\[
	P_n^j = \sum_{k=1}^K \varphi^{(j,k)} e^{is_n^{(j,k)}|x|^2} e^{iy_n^{(j,k)}x}
	+ w_n^{(j,K)}
\]
for some $\{\varphi^{(j,k)}\} \subset \FHsc$, $\{w_n^{(j,k)}\}\subset \FHsc$,
$\{s_n^{(j,k)}\} \subset \R$, and $\{y_n^{(j,k)}\}\subset \R^N$
with properties stated in Proposition \ref{prop:pd2}. By extracting a subsequence if necessary,
we assume that $s_n^{(j,k)} \to \overline{s}^{(j,k)} \in [-\I,\I]$ as $n\to \I$.
Combining these expansions, we obtain
\[
	\phi_n = \sum_{j=1}^{J} \sum_{k=1}^{K_j}
	\(\varphi^{(j,k)} e^{is_n^{(j,k)}|x|^2} e^{iy_n^{(j,k)}x}\)_{\{h_n^j\}}
	+ \sum_{j=1}^J {w_n^{(j,K_j)}}_{\{h_n^j\}} + R_n^J.
\]

Take an integer $l\ge1$. 
Using Lemma \ref{lem:pd2},
one can choose an integer $J_0(l)\ge1$ such that
\begin{equation}\label{eq:pdproof1}
	\limsup_{n\to\I} \norm{U(t) R_n^J}_{L^{\rho,\I}(\R,L^q)} \le 2^{-l}
\end{equation}
holds as long as $J\ge J_0(l)$. We may assume $J_0(l+1)\ge J_0(l)$ without 
loss of generality.
Further, for each $j\in [1,J_0(l)]$,
take a number $K_{0,j}=K_{0,j}(l)$ so that
\begin{equation}\label{eq:pdproof2}
	\limsup_{n\to\I} \norm{w_n^{(j,K)}}_{L^{\rho,2}(\R, \Ms{0}q2)}
	\le 2^{-l} J_0(l)^{-1}
\end{equation}
as long as $K\ge K_{0,j}(l)$. 
This is possible because of Lemma \ref{lem:pd3}.
Replacing with larger one if necessary, we let $K_{0,j}(l+1)\ge K_{0,j}(l)$.
We also set $K_{0,j}(l)=0$ if $j> J_0(l)$.
We define a set $A_l \subset \Z_+^2$ by 
\[
	A_l := \bigcup_{j=1}^{J_0(l)} \bigcup_{k=K_{0,j}(l-1)+1}^{K_{0,j}(l)} \{(j,k)\}.
\]
It is obvious that $A_{l_1}\cap A_{l_2}=\emptyset$ if $l_1\neq l_2$.
Introduce two subsets of $A_l$ by
\[
	B_l := \{(j,k)\in A_l \ |\ |\overline{s}^{(j,k)}|<\I \}, \qquad
	C_l := A_l \setminus B_l.
\] 

We enumerate indices $(j,k)$ belonging to $ \cup_{l\ge1} B_l$ in the following manner.
Let $Z=\{k \in \Z_+\ |\ k \le \#(\cup_{l\ge1} B_l)\}$ if $ \cup_{l\ge1} B_l$ is 
a finite set, otherwise $Z=\Z_+$.
Let $m_l = \# (\cup_{j=1}^l B_j)$.
Then, take a bijection ${\bf m}: Z \to \cup_{l\ge1} B_l$ so that
\[
	{\bf m}^{-1} (B_l) = \left\{m\in Z\ |\ 1+ m_{l-1} \le m
\le  m_l \right\}
\]
and ${\bf m}^{-1}(j,k_1) < {\bf m}^{-1}(j,k_2)$ for all 
$(j,k_i) \in \cup_{l\ge1} B_l$ with $k_1<k_2$.
We identify $(j,k)\in \cup_{l\ge1} B_l$ with $m\in Z$
by means of ${\bf m}$, in what follows.

Let us go back to the expansion.
Set $h_n^m=h_n^{j({\bf m}(m))}$, where $j({\bf m}(m))$ is the first component of ${\bf m}(m)$.
Then, we have
\begin{align*}
	\phi_n &{}= \sum_{j=1}^{J_0(l)} \sum_{k=1}^{K_{0,j}(l)}
	\(\varphi^{(j,k)} e^{is_n^{(j,k)}|x|^2} e^{iy_n^{(j,k)}x}\)_{\{h_n^j\}}
	+ \sum_{j=1}^{J_0(l)} {w_n^{(j,K_{0,j}(l))}}_{\{h_n^j\}} + R_n^{J_0(l)} \\
	&{}= \sum_{(j,k)\in \cup_{j=1}^l A_j}
	\(\varphi^{(j,k)} e^{is_n^{(j,k)}|x|^2} e^{iy_n^{(j,k)}x}\)_{\{h_n^j\}}
	+ \sum_{j=1}^{J_0(l)} {w_n^{(j,K_{0,j}(l))}}_{\{h_n^j\}} + R_n^{J_0(l)} \\
	&{} = \sum_{m=1}^{m_l} e^{i \xi_n^m x} \psi^m_{\{h^m_n\}}
	+ W^{m_l}_n,
\end{align*}
where $\psi^m= \varphi^m e^{i\overline{s}^{m}|x|^2}$, $\xi^m_n = h_n^m y^m_n$, and 
\begin{equation}\label{eq:pdproof}
\begin{aligned}
	W_n^{m_l} ={}& 
	\sum_{m=1}^{m_l} (\varphi^m (e^{i{s}_n^{m}|x|^2}- e^{i\overline{s}^{m}|x|^2})e^{iy^m_n x} )_{\{h^m_n\}} \\
	&{} + \sum_{(j,k)\in \cup_{j=1}^l C_j} 
	(\varphi^{(j,k)} e^{i{s}_n^{(j,k)}|x|^2}e^{iy^{(j,k)}_n x} )_{\{h^j_n\}}\\
	&{} + \sum_{j=1}^{J_0(l)} {w_n^{(j,K_{0,j}(l))}}_{\{h_n^j\}} + R_n^{J_0(l)}=:I_1+I_2
	+I_3 + I_4.
\end{aligned}
\end{equation}
For $m'\in (m_l,m_{l+1})$, we set
\[
	W_n^{m'} = W_n^{m_{l+1}} - \sum_{m=m'+1}^{m_{l+1}}
	e^{i \xi_n^m x} \psi^m_{\{ h^m_n\}}.
\]
Then, \eqref{eq:pd1} holds.

We shall prove the sequences $\{\psi^m\}$, $\{\xi^m_n\}$, and $\{W_n^l\}$
possess the desired properties \eqref{eq:pd2}--\eqref{eq:pd4}.

Let $m_1,m_2 \in Z$, $m_1\neq m_2$.
If $j({\bf m}(m_1))\neq j({\bf m}(m_2))$ then the scales $\{h_n^{m_1}\}_n$ and
$\{h_n^{m_2}\}_n$ are orthogonal;
\[
	\frac{h_n^{m_1}}{h_n^{m_2}} + \frac{h_n^{m_2}}{h_n^{m_1}} \to \I
\]
as $n\to \I$. Hence, \eqref{eq:pd2} holds. 
If $j({\bf m}(m_1))= j({\bf m}(m_2))$ then 
$|s_n^{m_1} - s_n^{m_2}| + |y_n^{m_1} - y_n^{m_2}|$ tends to infinity as $n\to\I$
by means of \eqref{eq:pdtmp21}.
Recall that $s_n^{m_i}$ converges to a number $\overline{s}^{m_i} \in \R$ by the definition of $B_l$.
Therefore, we have
\[
	\frac{|\xi^{m_1}_n-\xi^{m_2}_n|}{h_n^{m_1}} = |y_n^{m_1} - y_n^{m_2}| \to \I
\]
as $n\to\I$. The limit \eqref{eq:pd2} is true also in this case.

The equality \eqref{eq:pd3} is rather trivial by
Propositions \ref{prop:pd1} and \ref{prop:pd2}.
We only note that
\begin{equation}\label{eq:pdproof3}
\begin{aligned}
	\norm{(\varphi^m (e^{i{s}_n^{m}|x|^2}- e^{i\overline{s}^{m}|x|^2})e^{iy^m_n x} )_{\{h^m_n\}}}_{\FHsc} &{}= \norm{\varphi^m (e^{i({s}_n^{m}-\overline{s}^m)|x|^2}- 1)}_{\FHsc}\\
	&{}=o(1)
\end{aligned}
\end{equation}
 as $n\to\I$ because ${s}_n^{m}$ converges to $\overline{s}^m\in \R$.

Let us show \eqref{eq:pd4}. Let $m=m_l$.
One verifies from Proposition \ref{prop:LMn_bdd} and \eqref{eq:pdproof3} that
\begin{equation}\label{eq:pdproof4}
	\lim_{n\to \I} \LMn{U(t) I_1}{\rho}{0}q = 0.
\end{equation}
Next we consider $I_2$. By Galilean transform,
\[
	(U(t)(\varphi e^{is|x|^2}e^{iyx}))(x)
	= e^{-i|y|^2t} e^{iyx}(U(t)(\varphi e^{is|x|^2}))(x-2yt)
\]
and so
\[
	\norm{U(t)(\varphi e^{is|x|^2}e^{iyx})}_{L^q}
	=\norm{U(t)(\varphi e^{is|x|^2})}_{L^q}.
\]
Since $(j,k)\in \cup_{l\ge1}C_l$ implies $\overline{s}^{(j,k)}$ equals to $+\I$ or $-\I$,
it follows from Proposition \ref{prop:ods} that
\begin{equation}\label{eq:pdproof5}
	\lim_{n\to\I} \norm{U(t) I_2}_{L^{\rho,2}(\R,L^q)} =0.
\end{equation}
Therefore, plugging \eqref{eq:pdproof1}, \eqref{eq:pdproof2}, \eqref{eq:pdproof4},
and \eqref{eq:pdproof5} to \eqref{eq:pdproof}, we conclude that
\begin{align*}
	\limsup_{n\to\I} \norm{U(t) W_n^{m_l}}_{L^{\rho,\I}(\R,L^q)}
	\le{}& \sum_{j=1}^{J_0(l)} \limsup_{n\to\I} \norm{ U(t) w_n^{(j,K_{0,j}(l))}}_{L^{\rho,\I}(\R,L^q)}
	\\&{}+ \limsup_{n\to\I} \norm{ U(t) R_n^{J_0(l)}}_{L^{\rho,\I}(\R,L^q)}\\
	\le{}& 2^{-l+1},
\end{align*}
which proves \eqref{eq:pd4} since $l>0$ is arbitrary.
\end{proof}

\section{Proof of Theorem \ref{thm:main2}}\label{sec:6}
In what follows, we prove our main theorems.
Let us begin with the proof of Theorem \ref{thm:main2}.
\begin{proof}[Proof of Theorem \ref{thm:main2}]
Suppose $u_0 \in \dot{H}^1 \cap \FHsc$ satisfies $E[u_0]<0$.
We first note that Hardy's inequality implies $\FHsc \hookrightarrow \dot{H}^{-s_c}$.
Therefore, $u_0 \in \dot{H}^1 \cap \dot{H}^{-s_c} \subset H^1$.
It is known that \eqref{eq:NLS}-\eqref{eq:IC} is globally well-posed in $H^1$ and we see that
the solution $u$ belongs to $C(\R,H^1)$ and that the mass and the energy is conserved,
\[
	\Lebn{u(t)}2 = \Lebn{u_0}2, \qquad E(u(t))=E(u_0)
\]
for any $t\in \R$. By uniqueness, this solution coincides with the one given by Theorem \ref{thm:lwp}.
By assumption,
\begin{equation}\label{eq:pf2_1}
	\Lebn{u(t)}{p+1} \le -(p+1) E(u(t)) = -(p+1)E(u_0)>0.
\end{equation}
Since $q(L)>p+1>2$, by means of H\"older's inequality,
there exists $\theta \in (0,1)$ independent of $t$ such that
\[
	\Lebn{u(t)}{p+1} \le \Lebn{u(t)}2^{1-\theta} \Lebn{u(t)}{q(L)}^\theta.
\]
By the mass conservation and \eqref{eq:pf2_1}, one sees that
$\Lebn{u(t)}{q(L)}$ is bounded by some positive constant from below.
Hence, $\norm{u}_{S(\R_+)}=\I$. 
Thanks to Proposition \ref{prop:nscond},
this implies $u_0 \not\in S_+$.

Furthermore, if $0<d<1$ is sufficiently close to one then
\[
	E(du_0) = E(u_0) + \frac{1-d^2}{2}\Lebn{\nabla u_0}2^2 -\frac{1-d^{p+1}}{p+1}\Lebn{u_0}{p+1}^{p+1} 
	<0.
\]
Then, $du_0 \not\in S_+$ as shown above, and so
\[
	\norm{u_0}_{\FHsc}> \norm{du_0}_{\FHsc} \ge \ell_c,
\]
which completes the proof.
\end{proof}

\section{Proof of Theorem \ref{thm:main3}}\label{sec:7}
For the proof of Theorem \ref{thm:main3}, let us establish a version of small data scattering.
This follows from the long-time perturbation.
Theorem \ref{thm:main3} immediately follows from this proposition and Proposition \ref{prop:ods}.
\begin{proposition}\label{prop:sds2}
Let $u_0 \in \FHsc$ and let $u(t)$ be a corresponding unique solution given
in Theorem \ref{thm:lwp}. Then, for any $M>0$, there exists a constant $\eta=\eta(M)>0$
such that if $\norm{u_0}_{\FHsc} \le M$ and 
$\norm{U(t)u_0}_{L(\R)} \le \eta$ then $u_0 \in S$.
\end{proposition}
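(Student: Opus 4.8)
The plan is to run a long-time perturbation argument (Theorem \ref{thm:lpt}) with the free evolution $\widetilde{u}(t) = U(t)u_0$ as the approximate solution and $u(t)$ the genuine solution. First I would record the \emph{a priori} linear bounds: by the Strichartz estimate (Proposition \ref{prop:Strichartz}), Proposition \ref{prop:LMn_bdd}, and Lemma \ref{lem:embedding1}(3) (that is, Remark \ref{rmk:linest}), $\norm{\widetilde{u}}_{L^\I(\R,\Ms{s_c}22)} + \norm{U(t)u_0}_{W(\R)} \le C\norm{u_0}_{\FHsc} \le CM =: A$, so the hypothesis $\norm{\widetilde{u}}_{L^\I(\R,\Ms{s_c}22)} \le A$ of Theorem \ref{thm:lpt} is satisfied with a constant depending only on $M$. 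The trouble is that $W(\R)$-smallness of the free evolution is \emph{not} assumed — only $L(\R)$-smallness — so $M := \norm{\widetilde{u}}_{W(\R)}$ in the statement of Theorem \ref{thm:lpt} is a priori only bounded by $CM$, not small. This is fine: Theorem \ref{thm:lpt} does not require $\norm{\widetilde{u}}_{W(I)}$ to be small, only bounded; the smallness we shall use is encoded in the $L(\R)$ norm, which controls the error term.

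The key computation is the bound on the error $e := (i\partial_t + \Delta)\widetilde{u} + |\widetilde{u}|^{p-1}\widetilde{u} = |\widetilde{u}|^{p-1}\widetilde{u}$. By Lemma \ref{lem:lpt_nonlinearest1} and the embedding \eqref{eq:W2L},
\[
	\norm{e}_{F(\R)} = \norm{|\widetilde{u}|^{p-1}\widetilde{u}}_{F(\R)} \le C \norm{\widetilde{u}}_{L(\R)}^{p-1} \norm{\widetilde{u}}_{W_1(\R)} \le C M \norm{U(t)u_0}_{L(\R)}^{p-1} \le C M \eta^{p-1}.
\]
Since $u(0) = u_0 = \widetilde{u}(0)$, we may take $A' = 0$ (so $\norm{U(t)(u(0)-\widetilde{u}(0))}_{W(\R)} = 0 \le \eps$ trivially), and Theorem \ref{thm:lpt} provides a threshold $\eps_1 = \eps_1(A', M) = \eps_1(0, M)$ below which the conclusion holds. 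Choosing $\eta = \eta(M)$ so small that $CM\eta^{p-1} \le \eps_1(0,M)$ forces $\norm{e}_{F(\R)} \le \eps_1$; applying Theorem \ref{thm:lpt} on $I = \R$ with $\eps = CM\eta^{p-1}$ yields that the solution $u$ is global ($I_{\max} = \R$) and $\norm{u - \widetilde{u}}_{W(\R)} \le C\eps^{\beta} < \I$, whence $\norm{u}_{W(\R)} \le \norm{\widetilde{u}}_{W(\R)} + C\eps^\beta < \I$, i.e. $\norm{u}_{W(\R_+)} < \I$ and $\norm{u}_{W(\R_-)} < \I$.

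Finally, by the scattering criterion (Proposition \ref{prop:nscond}, and its time-reversed counterpart), $T_+ = +\I$ together with $\norm{u}_{W(\R_+)} < \I$ gives $u_0 \in S_+$, and similarly $u_0 \in S_-$; hence $u_0 \in S = S_+ \cap S_-$, as claimed. The only point requiring a little care is the bookkeeping of which constants depend on $M$ versus which are absolute: the constant $C$ in the Strichartz/nonlinear estimates is absolute, while $\eps_1$ and $\beta$ depend on $M$ through the parameter $M$ in Theorem \ref{thm:lpt} (here equal to $CM$); since this dependence is allowed, no difficulty arises, and the main ``obstacle'' is merely recognizing that one feeds $L(\R)$-smallness into the error term rather than $W(\R)$-smallness into the approximate solution. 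Theorem \ref{thm:main3} then follows by combining this proposition with Proposition \ref{prop:ods}: for fixed $\psi \in \FHsc$, set $M = C\norm{\psi}_{\FHsc}$ (a bound for $\norm{e^{ib|x|^2}\psi}_{\FHsc} = \norm{\psi}_{\FHsc}$ times the Strichartz constant), obtain $\eta = \eta(M)$, and use Proposition \ref{prop:ods} (with $s = 0$, $(\rho,q) = (\rho(L), q(L))$, which is acceptable with $\tfrac{2}{\rho(L)} - \delta(q(L)) = s_c$) to choose $b_0$ so that $\norm{U(t)e^{ib|x|^2}\psi}_{L(\R)} \le \eta$ whenever $|b| > b_0$.
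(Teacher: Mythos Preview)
Your proof is correct and follows essentially the same route as the paper: take $\widetilde{u}=U(t)u_0$ as an approximate solution, bound the error $e=|\widetilde{u}|^{p-1}\widetilde{u}$ in $F(\R)$ via Lemma~\ref{lem:lpt_nonlinearest1} using the $L(\R)$-smallness together with the Strichartz bound on $W_1(\R)$, then invoke Theorem~\ref{thm:lpt} and Proposition~\ref{prop:nscond}. The only cosmetic difference is that you take $A'=0$ whereas the paper's proof of Theorem~\ref{thm:lpt} tacitly assumes $A'>0$ (it arranges $\eps_0\le A'$); this is harmless since any $A'>0$ works when the initial discrepancy vanishes.
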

\begin{proof}
We apply Theorem \ref{thm:lpt} with 
$t_0=0$, $I=\R$, and $\widetilde{u}(t)=U(t)u_0$.
Then, $e(t)=|U(t)u_0|^{p-1} U(t)u_0$.
By Strichartz' estimate,
\[
	\norm{\widetilde{u}}_{L^\I(\R,\dot{M}^{s_c}_{2,2})} + \norm{\widetilde{u}}_{W(\R)}
	\le C \norm{u_0}_{\FHsc} \le CM.
\]
Since $\widetilde{u}(0)=u_0$, we have $\norm{u(0)-\widetilde{u}_0}_{\FHsc} = 0$
and $\norm{U(t)(u(0)-\widetilde{u}_0)}_{W(\R)}=0$.
One verifies from  Lemma \ref{lem:lpt_nonlinearest1}, Strichartz' estimate, and the assumption
that
\[
	\norm{e}_{F(\R)}
	\le C \norm{U(t)u_0}_{L(\R)}^{p-1} \norm{U(t)u_0}_{W_1(\R)}
	\le C \eta^{p-1} M.
\]
Hence, if we take $\eta$ small, we have $\norm{e}_{F(\R)} \le \eps_1$,
where $\eps_1=\eps_1(M)$ is the number given in Theorem \ref{thm:lpt}.
Then, Theorem \ref{thm:lpt} yields
\[
	\norm{u-\widetilde{u}}_{W(\R)} \le C \eps_1^\beta <\I,
\]
which implies $\norm{u}_{W(\R)}<\I$.
Thus, we conclude from Proposition \ref{prop:nscond} that $u_0\in S$  
\end{proof}
\begin{remark}\label{rmk:odsFH1}
Since scattering in $\FHsc$ and scattering in $\F H^1$ is equivalent for 
$\F H^1$-solutions (see Remark \ref{rmk:nscondFH1}),
if $u_0 \in \F H^1$ is added to the assumption of the above proposition
then the result holds true with replacing the meaning of scattering by $\F H^1$ sense.
This is the reason why Theorem \ref{thm:main3} holds for  $\F H^1$ solutions.
\end{remark}

\section{Proof of Theorem \ref{thm:main1}}\label{sec:5}
\begin{proof}[Proof of Theorem \ref{thm:main1}]
By the time symmetry, we note that
\[
	\ell_{c}
	= \inf\{ \norm{f}_{\FHsc} \ |\ f \in \FHsc \setminus S_+ \}.
\]
Let $\{u_{0,n}\}_n \subset \FHsc$ be a sequence satisfying
$u_{0,n} \not\in S_+$ and $\norm{u_{0,n}}_{\FHsc} \le \ell_c + \frac1n$.
We shall show that there exist a subsequence of $\{u_{0,n}\}_n$,
which is denoted again by $\{u_{0,n}\}_n$,
a function $\psi \in  \FHsc$ with $\norm{\psi}_{\FHsc}=\ell_c$,
and sequences $\{W_n\}_{n}\subset \FHsc$, $\{ h_n \} \subset \R_+$,
and $\{\xi_n\}_{n} \subset \R^N$ such that
\begin{equation}\label{eq:cc1}
	u_{0,n} = e^{i\xi_n\cdot x} \psi_{\{h_n\}} + W_n
\end{equation}
and
\begin{equation}\label{eq:cc2}
	\norm{W_n}_{\FHsc} \to 0
\end{equation}
as $n\to\I$.
We first apply the profile decomposition (Proposition \ref{prop:pd}) to $\{u_{0,n}\}$.
Then, there exist a subsequence of $\{u_{0,n}\}$, which is denoted again by $\{u_{0,n}\}$,
and sequences
$\psi^j \in \FHsc$, $h_n^j>0$, $\xi_n^j$, and $W^l_n$
 such that for every $l\ge1$
\begin{equation}\label{eq:critpf1}
	u_{0,n} = \sum_{j=1}^l e^{ix\xi_n^j} \psi^j_{\{ h_n^j \}}
	+ W^l_n
\end{equation}
in $\FHsc$ and
\begin{equation}\label{eq:critpf3}
	\norm{u_{0,n}}_{\FHsc}^2 -\sum_{j=1}^l \norm{\psi^j}_{\FHsc}^2
	- \norm{W^j_n}_{\FHsc}^2 \to 0
\end{equation}
as $n\to\I$.
Moreover, for all $l\ge1$ and $i\neq j$,
\begin{equation}\label{eq:critpf4}
	\lim_{n\to\I}\(
	\frac{h_n^i}{h_n^j} + \frac{h_n^j}{h_n^i} + \frac{|\xi^i_n-\xi^j_n|}{h_n^j}\)= \I.
\end{equation}
Furthermore, there exists a sequence $l_k$ with $l_k\to\I$ as $k\to\I$ and
\begin{equation}\label{eq:critpf5}
	\lim_{k\to\I} \limsup_{n\to\I} \norm{U(t)W^{l_k}_n}_{L^{\rho,\I}(\R,L^q)} =0
\end{equation}
for any $1<\rho,q<\I$ with
$\delta(q)-s_c\in (0,\min(1,N/2))$ and $2/\rho-\delta(q)=s_c$.
Property \eqref{eq:critpf3} implies that
\begin{equation}\label{eq:critpf35}
	\sum_{j=1}^\I \norm{\psi^j}_{\FHsc}^2 \le \ell_c,
\end{equation}
from which $\norm{\psi^j}_{\FHsc}\le \ell_{c}$ holds for all $j\ge1$. 
We now claim that there exists $j_0$ such that 
\begin{equation}\label{eq:pf_claim}
	\norm{\psi^{j_0}}_{\FHsc}=\ell_c.
\end{equation}
This claim completes the proof of \eqref{eq:cc1} and \eqref{eq:cc2}.
Indeed, if such $j_0$ exists then we deduce from
 \eqref{eq:critpf3} that $\psi^j\equiv0$ for all $j\neq j_0$.
We hence obtain \eqref{eq:cc1} with $\psi = \psi^{j_0}$, $\xi = \xi^{j_0}$, $h_n = h_n^{j_0}$ and $W_n=W_n^{j_0}$.
Further, the property \eqref{eq:cc2} immediately follows from \eqref{eq:critpf3}.

Let us show the claim \eqref{eq:pf_claim}.
Assume for contradiction that 
$\norm{\psi^j}_{\FHsc} <\ell_c$ for all $j$.
Let $\Psi_j$ be a unique solution of \eqref{eq:NLS} with $\Psi_j|_{t=0}=\psi^j$.
Since $\psi^j \in S_+$ by definition of $\ell_{c}$, the maximal interval of $\Psi_j$ 
contains $[0,\I)$ and $\norm{\Psi_j}_{W(\R_+)}<\I$ by Proposition \ref{prop:nscond}.
Further $\norm{\Psi_j}_{L^\I(\R_+,\Ms{s_c}22)}<\I$ easily follows from the convergence of $U(-t)\Psi_j(t)$ 
in $\FHsc$ as $t\to\I$.
Let
\[
	\widetilde{v}_n^l(t,x) := \sum_{j=1}^l (\Psi_j)_{[h_n^j,\xi_n^j]}(t, x) 
\]
and
\[
	\widetilde{u}_n^l(t,x) := \widetilde{v}_n^l(t,x) + U(t)W_n^l,
\]
where 
\[
	f_{[h,\xi]}(t,x) = h^{\frac2{p-1}} f(h^2t,h(x-2t\xi)) e^{i\xi\cdot x} e^{-it|\xi|^2}.
\]
Remark that  $\Psi_j$ are defined on $[0,\I)\times \R^N$ and so are
$\widetilde{v}_n^l$ and $\widetilde{u}_n^l$. 
It then follows that
\begin{align*}
	(i\d_t + \Delta) \widetilde{u}^{l}_n 
	={}& \sum_{j=1}^l (i\d_t + \Delta)(\Psi_j)_{[h_n^j,\xi_n^j]}  \\
	={}& -\sum_{j=1}^l |(\Psi_j)_{[h_n^j,\xi_n^j]}|^{p-1} (\Psi_j)_{[h_n^j,\xi_n^j]}  .
\end{align*}
We also let
\[
	\widetilde{e}^l_n := (i\d_t + \Delta) \widetilde{u}_n^l
	+ |\widetilde{u}_n^l|^{p-1} \widetilde{u}_n^l.
\]
We need the following two estimates;
\begin{lemma}\label{lem:pf_bdd_v}
For any $\eps>0$, there exists $l_0=l_0(\eps)$ such that
\[
	\limsup_{n\to\I} \norm{\widetilde{v}^l_n -\widetilde{v}^{l_0}_n}_{W(\R_+)\cap L^\I(\R_+,\Ms{s_c}22)} \le \eps
\]
for any $l> l_0$.
\end{lemma}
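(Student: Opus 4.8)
The plan is to write the difference as a tail of the nonlinear superposition,
\[
	\widetilde{v}^l_n-\widetilde{v}^{l_0}_n=\sum_{j=l_0+1}^{l}(\Psi_j)_{[h_n^j,\xi_n^j]},
\]
and to bound this tail by combining a summable smallness of the individual nonlinear profiles with an almost-orthogonality (decoupling) bound for the norm of the superposition. As already recorded just before the statement, the standing hypothesis $\norm{\psi^j}_{\FHsc}<\ell_c$ gives $\psi^j\in S_+$, so each $\Psi_j$ exists on $[0,\I)$ with $\norm{\Psi_j}_{W(\R_+)}+\norm{\Psi_j}_{L^\I(\R_+,\Ms{s_c}22)}<\I$. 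First I would upgrade this to a summable bound: by \eqref{eq:critpf35} one has $\sum_j\norm{\psi^j}_{\FHsc}^2\le\ell_c<\I$, hence $\norm{\psi^j}_{\FHsc}\le\eta_1$ for all $j$ past some $J$, where $\eta_1$ is the constant of Lemma \ref{lem:sds}; for such $j$ the small-data bounds \eqref{eq:lpt_existence1}--\eqref{eq:lpt_existence2} together with Strichartz' estimate give $\norm{\Psi_j}_{W(\R_+)}+\norm{\Psi_j}_{L^\I(\R_+,\Ms{s_c}22)}\le C\norm{\psi^j}_{\FHsc}$, and since both norms are invariant under $f\mapsto f_{[h,\xi]}$ this yields
\[
	\tau(l_0):=\sum_{j>l_0}\Bigl(\norm{\Psi_j}_{W(\R_+)}^2+\norm{\Psi_j}_{L^\I(\R_+,\Ms{s_c}22)}^2\Bigr)\longrightarrow0\qquad\text{as }l_0\to\I.
\]

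The decisive step will be the decoupling estimate: for every \emph{fixed} $l$,
\[
	\limsup_{n\to\I}\Bigl\lVert\sum_{j=l_0+1}^{l}(\Psi_j)_{[h_n^j,\xi_n^j]}\Bigr\rVert_{W(\R_+)\cap L^\I(\R_+,\Ms{s_c}22)}^2\le C\sum_{j=l_0+1}^{l}\Bigl(\norm{\Psi_j}_{W(\R_+)}^2+\norm{\Psi_j}_{L^\I(\R_+,\Ms{s_c}22)}^2\Bigr).
\]
Granting this the lemma is immediate: given $\eps>0$ choose $l_0$ with $C\tau(l_0)\le\eps^2$; then for every $l>l_0$ the right-hand side is at most $C\tau(l_0)\le\eps^2$. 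To prove the decoupling I would first approximate each $\Psi_j$, in the $W(\R_+)$ (hence also $L(\R_+)$ and $W_2(\R_+)$) norms, by a function supported in a compact subset of $\{t\neq0\}\times\R^N$ and localized in frequency — the space-time localization via Proposition \ref{prop:LM_approx}, the frequency truncation by hand. The pairwise divergence \eqref{eq:pd2} then forces, for $n$ large, the transformed approximants attached to distinct indices to be asymptotically disjointly supported: in the time variable when $h_n^i/h_n^j+h_n^j/h_n^i\to\I$, and in the $M(-t)$-modulated frequency variable when the scales stay comparable while $|\xi_n^i-\xi_n^j|/h_n^j\to\I$. Since the pairs $(\rho(W_i),q(W_i))$ are admissible, so that $\rho(W_i),q(W_i)\ge2$ and the Besov index of $\Ms{s_c}{q(W_i)}2$ equals $2$, superposing a family with such asymptotic phase-space disjointness costs only an $\ell^2$-bound; the finitely many cross terms vanish as $n\to\I$, which gives the estimate for the $W(\R_+)$ component. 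The $L^\I(\R_+,\Ms{s_c}22)$ component is handled the same way, now exploiting that the space $\M{s_c}22t$ is Hilbert for each $t$ and that the data already decouples in $\FHsc$ by \eqref{eq:pd3}. The unweighted analogue of this orthogonality computation is carried out in \cite{Ker}, and the adaptation to the Lorentz-modified-Besov setting follows the lines of \cite{Ma}.

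The hard part will be this decoupling estimate — concretely, the chirp-modulated frequency-separation analysis for the Lorentz-modified-Besov norms $W_1,W_2$: one has to verify that after $M(-t)$-conjugation a spatially compactly supported approximant occupies only a bounded band of frequencies, so that $|\xi_n^i-\xi_n^j|/h_n^j\to\I$ genuinely separates the Littlewood--Paley blocks of distinct profiles, and that the time-support separation is quantitatively compatible with the Lorentz exponents. I note in passing that for $p\ge2$ a lighter route is available through the Duhamel formula for $\widetilde{v}^l_n-\widetilde{v}^{l_0}_n$: Strichartz' estimate bounds its $W(\R_+)$ norm by $\norm{\sum_{j=l_0+1}^l e^{ix\xi_n^j}\psi^j_{\{h_n^j\}}}_{\FHsc}$, which decouples by \eqref{eq:pd3}, plus $\sum_{j>l_0}\norm{\,|(\Psi_j)_{[h_n^j,\xi_n^j]}|^{p-1}(\Psi_j)_{[h_n^j,\xi_n^j]}}_{F(\R_+)}$, which by Lemma \ref{lem:lpt_nonlinearest1} and scaling invariance is $\le C\sum_{j>l_0}\norm{\Psi_j}_{W(\R_+)}^{p}\to0$ since $\norm{\Psi_j}_{W(\R_+)}\le1$ for large $j$; this crude bound breaks down for $p<2$, which is precisely why the full decoupling appears to be unavoidable in the end.
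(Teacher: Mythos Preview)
Your strategy is plausible but diverges from the paper, and the ``lighter route'' you discard is in fact closer to the actual proof than your main plan. The paper does not prove any $\ell^2$-decoupling of the nonlinear profiles in the $W$-norm. Instead it observes that $z_n^{l,l_0}:=\widetilde{v}^l_n-\widetilde{v}^{l_0}_n$ solves the approximate NLS
\[
	(i\partial_t+\Delta)z_n^{l,l_0}+F(z_n^{l,l_0})=e_n^{l,l_0},\qquad e_n^{l,l_0}:=F\Bigl(\sum_{j=l_0+1}^{l}(\Psi_j)_{[h_n^j,\xi_n^j]}\Bigr)-\sum_{j=l_0+1}^{l} F\bigl((\Psi_j)_{[h_n^j,\xi_n^j]}\bigr),
\]
so Strichartz and Lemma~\ref{lem:lpt_nonlinearest1} give $\norm{z_n^{l,l_0}}_{W(\R_+)}\le C\norm{z_n^{l,l_0}(0)}_{\FHsc}+C\norm{z_n^{l,l_0}}_{W(\R_+)}^p+C\norm{e_n^{l,l_0}}_{F(\R_+)}$. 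Lemma~\ref{lem:pf_error1} kills the error as $n\to\I$, the $\FHsc$-decoupling \eqref{eq:pd3} makes the data term small for $l_0$ large, and a continuity argument closes the estimate for every $p$ in the range. The trick you missed is to regroup $\sum_j F\bigl((\Psi_j)_{[\cdot]}\bigr)$ as $F(z)-e$: the self-referencing term $\norm{z}_W^p$ is absorbable by bootstrap regardless of $p$, so the $p<2$ obstruction you flagged evaporates and no nonlinear-profile decoupling is ever needed. Your route would likely succeed too --- the Pythagorean estimate is of Keraani type --- but it is a separate lemma you only sketch, whereas the paper's argument simply recycles Lemma~\ref{lem:pf_error1}, which is required anyway for Lemma~\ref{lem:pf_error2}.
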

\begin{lemma}\label{lem:pf_error2}
Let $l_k$ be a sequence such that \eqref{eq:critpf5} holds.
It holds that
\[
	\limsup_{n\to\I} \norm{ \widetilde{e}_n^{l_k} }_{F(\R_+)} \to 0
\]
as $k\to\I$.
\end{lemma}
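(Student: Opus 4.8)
The plan is to read off $\widetilde{e}_n^{l}$ from the equations, split it into a ``profile $\times$ profile'' interaction part and a ``profile $\times$ $W_n^l$'' part, and show each tends to zero in $F(\R_+)$ after taking $\limsup_n$ and then $k\to\I$. Since $f\mapsto f_{[h,\xi]}$ sends solutions of \eqref{eq:NLS} to solutions (scaling together with the Galilean transform) and $(i\d_t+\Delta)U(t)W_n^l=0$, we get, with $V_j^n:=(\Psi_j)_{[h_n^j,\xi_n^j]}$ and $b:=U(t)W_n^{l}$,
\[
	\widetilde{e}_n^{l} = \Big(|\widetilde{u}_n^{l}|^{p-1}\widetilde{u}_n^{l} - |\widetilde{v}_n^{l}|^{p-1}\widetilde{v}_n^{l}\Big)
	+ \Big(|\widetilde{v}_n^{l}|^{p-1}\widetilde{v}_n^{l} - \sum_{j=1}^{l}|V_j^n|^{p-1}V_j^n\Big)
	=: \widetilde{e}_{n,1}^{l} + \widetilde{e}_{n,2}^{l}.
\]
First I would fix $\eps>0$ and, using Lemma \ref{lem:pf_bdd_v}, choose $l_0$ so that $\limsup_n\|\widetilde v_n^l-\widetilde v_n^{l_0}\|_{W(\R_+)\cap L^\I(\R_+,\Ms{s_c}22)}\le\eps$ for all $l>l_0$; by Lemmas \ref{lem:lpt_nonlinearest1}--\ref{lem:lpt_nonlinearest3} the contribution of the ``tail'' profiles $l_0<j\le l$ to $\widetilde e_n^{l}$ is then $O(\eps)$ in $F(\R_+)$ uniformly in $l$. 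Thus it suffices to work with the fixed finite sum $\widetilde v_n^{l_0}=\sum_{j\le l_0}V_j^n$ (which, by Lemma \ref{lem:pf_bdd_v}, is bounded in $W(\R_+)$ and in $L^\I(\R_+,\Ms{s_c}22)$) plus $b=U(t)W_n^{l_k}$, and finally let $\eps\to0$.

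For $\widetilde e_{n,2}$ restricted to $j\le l_0$, I would expand $|\sum_{j\le l_0}V_j^n|^{p-1}\sum_{j\le l_0}V_j^n-\sum_{j\le l_0}|V_j^n|^{p-1}V_j^n$ via the integral identity \eqref{eq:lpt_nonlinearest1} together with the pointwise H\"older bounds on $g_1,g_2$, reducing it to a finite sum of cross terms, each carrying two distinct profiles $V_{j_1}^n,V_{j_2}^n$ with $j_1\ne j_2$. Using the difference-operator characterisation of the Besov norm (Lemma \ref{lem:B_equivalence}) after the $M(-t)$-conjugation \eqref{eq:Mn_alt}, and approximating each $\Psi_j$ by a function compactly supported in spacetime (Proposition \ref{prop:LM_approx}) up to an error uniformly small in the relevant norms, each cross term becomes a product of functions whose supports are asymptotically disjoint as $n\to\I$: by \eqref{eq:critpf4} either $h_n^{j_1}/h_n^{j_2}+h_n^{j_2}/h_n^{j_1}\to\I$ (wildly separated scales) or $|\xi_n^{j_1}-\xi_n^{j_2}|/h_n^{j_1}\to\I$, which after conjugation turns the frequency gap into a spatial gap. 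Hence $\limsup_n\|\widetilde e_{n,2}^{l_k}\restriction_{j\le l_0}\|_{F(\R_+)}=0$ for each $k$.

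For $\widetilde e_{n,1}$, write it through \eqref{eq:lpt_nonlinearest1} with the increment $b=U(t)W_n^{l_k}$, so it is a sum of terms each carrying at least one factor $b$. The pure term $|b|^{p-1}b$ is controlled by Lemma \ref{lem:lpt_nonlinearest1}: $\||b|^{p-1}b\|_{F(\R_+)}\le C\|b\|_{L(\R_+)}^{p-1}\|b\|_{W_1(\R_+)}$, which tends to $0$ as $k\to\I$, since $(\rho(L),q(L))$ satisfies the hypothesis of \eqref{eq:critpf5} so $\|U(t)W_n^{l_k}\|_{L(\R_+)}\to0$, while $\|U(t)W_n^{l}\|_{W_1(\R_+)}\le C\|W_n^l\|_{\FHsc}$ stays bounded. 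The remaining mixed terms pair $b$ with the bounded $\widetilde v_n^{l_0}$; here the point is to carry the fractional-weight burden of $F$ on a profile factor (bounded in $W_1$) and measure every factor $b$ in a weak norm only: for $p\ge2$ this runs through the $F$-version of Lemma \ref{lem:lpt_nonlinearest2} (so a typical mixed term is $\lesssim\|b\|_{L(\R_+)}^{\min(1,p-1)}$ times bounded quantities and a finite sum $\sum_{j\le l_0}\|\Psi_j\|_{W_1(\R_+)}$), while for $p<2$, where the power $|v|^{p-1}$ cannot be put at regularity $s_c$, I would route through the intermediate space $X(\R_+)$: by the embedding $W_2(\R_+)\hookrightarrow X(\R_+)\hookrightarrow L(\R_+)$, $X(\R_+)$ is an interpolation space between $W_2(\R_+)$ and $L(\R_+)$, so from $\|b\|_{L(\R_+)}\to0$ and the uniform bound $\|b\|_{W_2(\R_+)}\le C\|W_n^l\|_{\FHsc}$ one obtains $\|b\|_{X(\R_+)}\to0$, and then Lemmas \ref{lem:lpt_nonlinearest3}, \ref{lem:lpt_product}, \ref{lem:lpt_fractional} (applied at the sub-critical exponent $s_0$) give the smallness of the mixed terms. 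Collecting everything, $\limsup_n\|\widetilde e_n^{l_k}\|_{F(\R_+)}\le C\eps+o(1)$ as $k\to\I$, and $\eps\to0$ completes the argument. The hard part, throughout, is exactly this bookkeeping in the Lorentz-modified-Besov norm: because $U(t)W_n^{l_k}$ is only small in the diagonal norms $L$, $S$ (and, after interpolation, $X$) and never in the admissible Strichartz norm $W_1$, every estimate must be arranged so that the difference/weight inherent in $F$ falls on a uniformly bounded profile and never on $U(t)W_n^{l_k}$ alone — which for $p<2$ forces the detour through $X$ and $s_0$ — together with making the profile orthogonality rigorous in the weighted Besov setting via Proposition \ref{prop:LM_approx} and the change of variables under $f\mapsto f_{[h,\xi]}$ and $M(-t)$.
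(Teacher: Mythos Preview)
Your overall decomposition into $\widetilde e_{n,1}=F(\widetilde v_n^{l}+b)-F(\widetilde v_n^{l})$ and $\widetilde e_{n,2}=F(\widetilde v_n^{l})-\sum_j F(V_j^n)$, the tail reduction to $l_0$ profiles via Lemma~\ref{lem:pf_bdd_v}, and the handling of $\widetilde e_{n,2}$ by orthogonality after the approximation of Proposition~\ref{prop:LM_approx} are all correct and coincide with the paper (the last is precisely Lemma~\ref{lem:pf_error1}).

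The gap is in $\widetilde e_{n,1}$. Your plan is to place the $s_c$-weight carried by the $F$-norm on a profile factor and measure $b=U(t)W_n^{l_k}$ only in $L$ (or in $X$ for $p<2$). But Lemma~\ref{lem:lpt_nonlinearest2} does not allow this: with $u_1-u_2=b$, the estimates \eqref{eq:lpt_nlest2}--\eqref{eq:lpt_nlest22} always contain a term of the form $\|b\|_{W_1}\,\|\cdot\|_L^{p-1}$, and $\|b\|_{W_1}=\|U(t)W_n^{l_k}\|_{W_1}$ is only uniformly bounded (by Strichartz and \eqref{eq:critpf3}), never small. This is unavoidable at the level of the difference characterisation: in $\delta\bigl(F(v+b)-F(v)\bigr)$ one summand is $(\delta b)\,[\cdots]^{p-1}$, which after H\"older forces $b$ into $W_1$. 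Routing through $X$ and Lemma~\ref{lem:lpt_nonlinearest3} does not help either, since that lemma controls the $Y$-norm, not $F$, whereas Theorem~\ref{thm:lpt} demands the error in $F$.

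The paper closes this gap by a mechanism you do not invoke. After reducing to compactly supported $\Psi_j$, it localises the remainder spatially with smooth cutoffs $\chi_n^j$ centred on the Galilean-translated support of each profile, so that the bad factor becomes $\chi_n^j\,b$ rather than $b$. A variant of the nonlinear estimate, \eqref{eq:pfe2_1}, then places the $s_c$-weight on $\chi_n^j b$ in the norm $L^{\frac{4}{4-N(p-1)}}([m',M'],\Ms{s_c}22)$, while the profile factor is measured in a weight-free $L^{\rho_0,\I}(L^{q_0})$. The crux is \eqref{eq:pfe2_2}: $\limsup_n\|\chi\,U(t)e^{-i\xi_n^j x}W_n^{l_k}\|_{L^2([m',M'],\Ms{s_c}22)}\to0$. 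After peeling off a subcritical piece by interpolation (Lemma~\ref{lem:interpolation}), this is reduced to smallness of $\|\chi\,U(t)|x|^{s_c}e^{-i\xi_n^j x}W_n^{l_k}\|_{L^2(\text{bounded set})}$, which follows from the local-smoothing compactness estimate Lemma~\ref{lem:cpt_supp_small}. In short, the small parameter for the ``weight on $b$'' term is not $\|b\|_L$ but a \emph{spatially localised weighted} norm of $b$, and its smallness is a genuinely new input beyond \eqref{eq:critpf5} and the nonlinear Lemmas~\ref{lem:lpt_nonlinearest1}--\ref{lem:lpt_nonlinearest3}.
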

The proofs of these two lemmas will be given later.
We shall show that they give the desired conclusion.
Using Lemma \ref{lem:pf_bdd_v} with $\eps=1$, we see that
there exists $l_0$ such that
\begin{equation}\label{eq:critpf9}
\begin{aligned}
	\norm{\widetilde{u}_n^l}_{L^\I(\R_+,\Ms{s_c}22)}
	\le{}&\norm{\widetilde{u}_n^{l_0}}_{L^\I(\R_+,\Ms{s_c}22)}
	+ \norm{\widetilde{v}_n^{l}-\widetilde{v}_n^{l_0}}_{L^\I(\R_+,\Ms{s_c}22)} \\
	\le{}&\sum_{j=1}^{l_0} \norm{\Psi_j}_{L^\I(\R_+,\Ms{s_c}22)}+
	\norm{U(t)W_n^l}_{L^\I(\R_+,\Ms{s_c}22)} + 1\\
	\le{}&\sum_{j=1}^{l_0} \norm{\Psi_j}_{L^\I(\R_+,\Ms{s_c}22)}+
	\ell_c + 1=:A
\end{aligned}
\end{equation}
for any $l>l_0$ and $n\ge1$. Remark that $A$ is independent of $n$ and $l$.
Similarly, by \ref{lem:pf_bdd_v} and Strichartz' estimate,
\begin{equation}\label{eq:critpf10}
	\norm{\widetilde{u}_n^l}_{W(\R_+)} \le \sum_{j=1}^{l_0} \norm{\Psi_j}_{W(\R_+)}+
	C\ell_c + 1=: M
\end{equation}
for any $l>l_0$ and $n\ge 1$.
Further, since $u_{0,n}=\widetilde{u}_n^l(0)$, 
\begin{equation}\label{eq:critpf11}
	\norm{u_{0,n}-\widetilde{u}_n^l}_{\Ms{s_c}22} =0 \le A'
\end{equation}
for any $A'\ge0$. In particular, we let $A'=1$.
Let $\eps_1=\eps_1(A',M)$ be a number given by the long-time perturbation theory
(Theorem \ref{thm:lpt}).
Then,
\begin{equation}\label{eq:critpf12}
	\norm{e^{it\Delta} (u_{0,n} - \widetilde{u}_n^l(0))}_{W(\R_+)}
	= 0 \le \eps_1.
\end{equation}
We deduce from Lemma \ref{lem:pf_error2} that there exists $k_0$ such that
\[
	\limsup_{n\to\I} \norm{\widetilde{e}^{l_k}_n}_{F(\R_+)} \le \frac{\eps_1}2.
\]
for $k\ge k_0$. Choose $k\ge k_0$ so that $l_{k}\ge l_0$.
There exists $n_0$ such that
\begin{equation}\label{eq:critpf13}
	\norm{\widetilde{e}^{l_k}_{n_0}}_{F(\R_+)} \le {\eps_1}.
\end{equation}
By \eqref{eq:critpf9}, \eqref{eq:critpf10}, \eqref{eq:critpf11}, \eqref{eq:critpf12},
\eqref{eq:critpf13}, we deduce from Theorem \ref{thm:lpt} that
a solution $u_{n_0}$ of \eqref{eq:NLS} with $u_{n_0}=u_{0,n_0}$ satisfies
\[
	\norm{u_{n_0}}_{W(\R_+)} < \I,
\]
which implies $u_{0,n} \in S_+$.
However, this contradicts with the definition of $u_{0,n}$. 
Hence, the claim follows and \eqref{eq:cc1} and \eqref{eq:cc2} are 
established.

We next show $\psi \not\in S_+$. Suppose $\psi\in S_+$ for contradiction.
It then follows that
$\norm{\Psi}_{L^\I(\R_+,\Ms{s_c}22)} + \norm{\Psi}_{W(\R_+)}<\I$, where $\Psi$ is a solution of 
\eqref{eq:NLS} with $\Psi(0)=\psi$.
Then, $\Psi_{[h_n,\xi_n]}$ is a solution of \eqref{eq:NLS}
with $\Psi_{[h_n,\xi_n]}(0)=e^{ix\xi}\psi_{\{ h_n \}}$.
Now, we shall apply Theorem \ref{thm:lpt} with $\widetilde{u}_n(t,x)=
 \Psi_{[h_n,\xi_n]} + U(t)W_{n}$.
Notice that 
\begin{equation}\label{eq:critpf14}
\begin{aligned}
	&\norm{\Psi_{[h_n,\xi_n]}}_{L^\I(\R_+,\Ms{s_c}22)} =
	\norm{\Psi}_{L^\I(\R_+,\Ms{s_c}22)} <\I,\\
	&\norm{\Psi_{[h_n,\xi_n]}}_{W(\R_+)}= \norm{\Psi}_{W(\R_+)}<\I.
\end{aligned}
\end{equation}
for any $n$. Since $\norm{U(t)W_n}_{L^\I(\R_+,\Ms{s_c}22)}+
\norm{U(t)W_n}_{W(\R_+)}\le C\norm{W_n}_{\FHsc}$ is bounded,
there exists $\overline{A}$ and $\overline{M}$ 
such that
\begin{equation}
	\norm{\widetilde{u}_n}_{L^\I(\R_+,\Ms{s_c}22)} \le \overline{A},\quad
	\norm{\widetilde{u}_n}_{W(\R_+)} \le \overline{M}.
\end{equation}
Set $e:=(i\d_t + \Delta)\widetilde{u}+|\widetilde{u}|^{p-1}\widetilde{u}e=|\widetilde{u}|^{p-1}\widetilde{u}
-|\Psi_{[h_n,\xi_n]}|^{p-1} \Psi_{[h_n,\xi_n]}$. 
Then, by \eqref{eq:lpt_nlest2}, Strichartz' estimate, \eqref{eq:cc2}, and \eqref{eq:critpf14},
\[
	\norm{e}_{F(\R_+)}
	\le C \norm{W_n}_{\FHsc}(\norm{W_n}_{\FHsc}+\norm{\Psi}_{W(\R_+)})^{p-1}\to0
\]
as $n\to\I$.
Further $u_{0,n}=\widetilde{u}_n(0)$.
Hence, applying Theorem \ref{thm:lpt},
we see $u_{0,n} \in S_+$ for large $n$,
which is a contradiction.
Thus, $\psi\in \FHsc \setminus S_+$.
We therefore see that
the function $\psi$ possesses the all desired properties.
\end{proof}
To prove Lemmas \ref{lem:pf_bdd_v} and \ref{lem:pf_error2},
we first establish the following.
\begin{lemma}\label{lem:pf_error1}
Let $\{h_n^j\}_{(j,n) \in [1,l]\times \Z_+} \subset \R_+$
and $\{\xi_n^j\}_{(j,n) \in [1,l]\times \Z_+} \subset \R^N$ satisfy \eqref{eq:pd2}.
Let 
 $\Psi_j(t,x) \in W(\R_+)$. 
Set a complex valued function $F(z)=|z|^{p-1}z$
and
\[
	e(t,x) := F\(\sum_{j=1}^l (\Psi_j)_{[h_n^j,\xi_n^j]}\)- \sum_{j=1}^l F\( (\Psi_j)_{[h_n^j,\xi_n^j]} \)
\]
Then,
\[
	\norm{e}_{F(\R_+)} \to 0
\]
as $n\to \I$.
\end{lemma}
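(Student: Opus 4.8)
The plan is to reduce the statement, by density, to profiles that are compactly supported in space--time and supported away from $t=0$, and then to exploit the orthogonality \eqref{eq:pd2}: applied to such profiles, the scaling--Galilean transforms $f\mapsto f_{[h_n^j,\xi_n^j]}$ have asymptotically disjoint supports, which forces the cross term $e$ to vanish identically for $n$ large. Throughout I write $F(z)=|z|^{p-1}z$ and $a_j:=(\Psi_j)_{[h_n^j,\xi_n^j]}$.

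First I would record that each of $W_1(\R_+)$, $W_2(\R_+)$, $L(\R_+)$ and $F(\R_+)$ is invariant under the symmetry $f\mapsto f_{[h,\xi]}$. Scaling invariance is built into the choice of exponents, and Galilean invariance reduces, via the equivalent norms \eqref{eq:Mn_alt} and \eqref{eq:Xn_alt}, to the elementary identity $M(-t)\big(f_{[1,\xi]}\big)(t)=\big(M(-t)f(t)\big)(\,\cdot-2t\xi)$ together with the translation invariance of $\dot B^{s}_{q,r}$ and $\dot H^{s}_q$. In particular, for every $j$ and $n$ the $W_1(\R_+)$-, $W_2(\R_+)$-, $L(\R_+)$- and $F(\R_+)$-norms of $a_j$ equal those of $\Psi_j$, all finite since $\Psi_j\in W(\R_+)=W_1(\R_+)\cap W_2(\R_+)\hookrightarrow L(\R_+)$ by \eqref{eq:W2L}.

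Next, fixing $\eps>0$, I would apply Proposition \ref{prop:LM_approx} to each $\Psi_j$ (the defining exponents of $W_1,W_2$ are finite and $>1$) to obtain $\widetilde\Psi_j$ with $\supp\widetilde\Psi_j\subset\{\delta\le|t|\le M,\ |x|\le R\}$ for some $0<\delta<M$ and $R>0$, common to $j=1,\dots,l$ after enlarging, and $\|\Psi_j-\widetilde\Psi_j\|_{W(\R_+)}\le\eps$. Setting $\widetilde a_j:=(\widetilde\Psi_j)_{[h_n^j,\xi_n^j]}$, $A:=\sum_j a_j$, $\widetilde A:=\sum_j\widetilde a_j$ and $\widetilde e:=F(\widetilde A)-\sum_j F(\widetilde a_j)$, one has $e-\widetilde e=\big(F(A)-F(\widetilde A)\big)-\sum_j\big(F(a_j)-F(\widetilde a_j)\big)$, so Lemma \ref{lem:lpt_nonlinearest2}, the embedding \eqref{eq:W2L} and the invariance just recorded give, uniformly in $n$,
\[
	\|e-\widetilde e\|_{F(\R_+)}\le C\,\eps^{\min(1,p-1)},
\]
with $C$ depending only on $l$ and $\sum_{j=1}^{l}\|\Psi_j\|_{W(\R_+)}$. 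Hence it suffices to prove $\widetilde e\equiv0$ for all large $n$. For this, note $\widetilde a_j(t,x)\ne0$ forces $(h_n^j)^2t\in[\delta,M]$ and $|x-2t\xi_n^j|\le R/h_n^j$, i.e.\ $\widetilde a_j$ lives in the time-slab $t\in[\delta/(h_n^j)^2,M/(h_n^j)^2]$ and, at each such time, in the ball of radius $R/h_n^j$ about $2t\xi_n^j$. Fixing $i\ne j$ and passing to a subsequence, \eqref{eq:pd2} leaves two cases: (a) $h_n^i/h_n^j\to0$ or $\infty$, whence the two time-slabs become disjoint once $(h_n^i/h_n^j)^2\notin[\delta/M,M/\delta]$; or (b) $h_n^i/h_n^j$ stays in a compact subset of $(0,\infty)$ while $|\xi_n^i-\xi_n^j|/h_n^j\to\infty$, and then on the (comparable) overlap of the slabs $t\gtrsim(h_n^i)^{-2}$, so the two balls are disjoint as soon as $t\,|\xi_n^i-\xi_n^j|>2R\max(1/h_n^i,1/h_n^j)$, which holds for $n$ large. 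Thus $\supp\widetilde a_i\cap\supp\widetilde a_j=\emptyset$ for all large $n$; there being finitely many pairs, the $\widetilde a_j$ have pairwise disjoint supports, so on the support of any one of them the others vanish and $F(\widetilde A)=\sum_j F(\widetilde a_j)$ pointwise, i.e.\ $\widetilde e\equiv0$. Letting $n\to\infty$ and then $\eps\to0$ yields $\|e\|_{F(\R_+)}\to0$.

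The only genuinely technical point I expect is the uniform-in-$n$ estimate for $\|e-\widetilde e\|_{F(\R_+)}$: this is precisely where the scaling and Galilean invariance of $W_1,W_2,L,F$ must be used, to transfer the smallness $\|\Psi_j-\widetilde\Psi_j\|_{W(\R_+)}\le\eps$ from the profiles to the $n$-dependent transports $a_j,\widetilde a_j$. The separation-of-supports step is then essentially combinatorial, once the profiles have been taken compactly supported away from $t=0$.
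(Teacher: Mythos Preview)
Your argument is correct and in fact cleaner than the paper's. Both proofs begin the same way: invoke Proposition~\ref{prop:LM_approx} together with Lemma~\ref{lem:lpt_nonlinearest2} (and the scaling/Galilean invariance of the relevant norms, which the paper uses implicitly, e.g.\ in \eqref{eq:critpf14}) to reduce to profiles $\Psi_j$ supported in $[m,M]\times B_R(0)$; both then dispose of the case of divergent scale ratios by observing that the time slabs $[(h_n^j)^{-2}m,(h_n^j)^{-2}M]$ become disjoint. The difference comes in the remaining case of comparable scales with $|\xi_n^i-\xi_n^j|/h_n^j\to\infty$. You observe directly that for $t\gtrsim (h_n^j)^{-2}$ the spatial balls $B_{R/h_n^j}(2t\xi_n^j)$ separate, so the transported profiles $\widetilde a_j$ have pairwise disjoint supports and hence $\widetilde e\equiv 0$ pointwise for large $n$. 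The paper instead passes to the difference representation of the Besov norm (Lemma~\ref{lem:B_equivalence}), bounds $|\delta_a(M(-t)e)|$ by cross terms as in \eqref{eq:lem_e_1}, splits the $\ell^2_\iota$-sum at a threshold $\iota_0$, shows the high-$\iota$ part vanishes by the same support-separation idea (now having to track the shift $a$), and controls the low-$\iota$ tail by dominated convergence via \eqref{eq:lem_e_7}. Your route avoids this machinery entirely: once $\widetilde e=0$ identically, there is nothing to estimate. The paper's approach would be needed if one only had asymptotic decoupling at the level of norms rather than genuine support separation, but with compactly supported approximants the latter is available and your shortcut is legitimate. One minor point: your phrase ``passing to a subsequence'' in the dichotomy should be replaced by a contradiction argument (if supports met along a subsequence, the scale ratios would be bounded along it, forcing $|\xi_n^i-\xi_n^j|/h_n^j\to\infty$ and hence spatial separation after all), to get the conclusion for all large $n$ rather than along a subsequence.
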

\begin{proof}
By Proposition \ref{prop:LM_approx} and \eqref{eq:lpt_nlest2},
we only have to consider the case where $\supp \Psi_j \subset [m,M] \times B_R(0)$
holds for some $m,M,R>0$ and for all $j\in [1,l]$, where $B_r(y)$ denotes 
a ball in $\R^N$ with radius $r>0$ and center $y\in \R^N$.
We use the difference form of the $F(I)$-norm, i.e.\ \eqref{eq:Mn_alt} and  Lemma \ref{lem:B_equivalence}.
Observe that
\begin{equation}\label{eq:lem_e_1}
	\abs{\delta (M(-t)e)} 
	\le \sum_{1\le j,k\le l,\,j\neq k}\abs{\delta (M(-t)(\Psi_j)_{[h_n^j,\xi_n^j]})}
	\left[ (\Psi_k)_{[h_n^k,\xi_n^k]} \right]^{p-1},
\end{equation}
where $\delta f(x)=f(x+a)-f(x)$ and $[f](x)=|f(x)|+|f(x+a)|$ for some $a\in \R^N$.
If $\frac{h_n^j}{h_n^k}+ \frac{h_n^k}{h_n^j} \ge \frac{m}M+\frac{M}m $ then
time supports of ${\delta (\Psi_j)_{[h_n^j,\xi_n^j]}}$ and 
$[ (\Psi_k)_{[h_n^k,\xi_n^k]} ]$ do not intersect and hence
\[
	\sup_{a\in \R^N} \norm{\abs{\delta (M(-t)(\Psi_j)_{[h_n^j,\xi_n^j]})}
	\left[ (\Psi_k)_{[h_n^k,\xi_n^k]} \right]^{p-1}}_{L^{q(F)}} =0
\]
holds for such $n$ and $(j,k)$.
Therefore, it suffices to prove under an additional
assumption $\sup_n (\frac{h_n^j}{h_n^k}+ \frac{h_n^k}{h_n^j}) \le \frac{m}M+\frac{M}m$
for all $1\le j,k\le l$
since the whole estimate can be decomposed into finite number of such estimates.
To do so, we may let, by changing notations if necessary,
$h_n^j\equiv h_n$ and $\supp \Psi_j \subset [m',M']\times B_R(0)$ for all $j\in[1,l]$,
where $m'=m^2/M$ and $M'=M^2/m$.

Let $\iota_0\in \N$ to be chosen later.
If $\iota\in \Z$ satisfies $-\iota+\log_2 h_n \le \iota_0$
and if $t(h_n)^2 \in [m',M']$,
then it follows from \eqref{eq:pd2} that
\begin{align*}
	&\abs{h_n(x-2t\xi_n^j) - h_n(x-2t\xi_n^k)} \ge 2m' \abs{\frac{\xi^j_n-\xi^k_n}{h_n}}
	\to \I,\\
	&\abs{h_n(x+a-2t\xi_n^j) - h_n(x-2t\xi_n^k)} \ge 2m' \abs{\frac{\xi^j_n-\xi^k_n}{h_n}}-C2^{\iota_0}
	\to \I
\end{align*}
as $n\to\I$ for any $|a|\le 2^{-\iota}$.
Since the spatial support of $\Psi_j$ is contained in $B_R(0)$,
there exists $n_0=n_0(\iota_0)$ that
if $-\iota+\log_2 h_n\le \iota_0$ and $n\ge n_0$ then
\[
	\sum_{1\le j,k\le l,\,j\neq k}\abs{\delta (M(-t)(\Psi_j)_{[h_n^j,\xi_n^j]})}
	\left[ (\Psi_k)_{[h_n^k,\xi_n^k]} \right]^{p-1} =0
\]
for any $(t,x) \in \R_+ \times \R^N$ and
for any $a \in \R^N$ with $|a|\le 2^{-\iota}$.
We conclude that
\begin{equation}\label{eq:lem_e_2}
	\norm{ 2^{\iota s_c} \sup_{|a| \le 2^{-\iota}} \norm{\delta (M(-t)e(t))}_{L^{q(F)}} }_{\ell^2_\iota(\{\iota-\log_2 h_n\ge -\iota_0\})} =0
\end{equation}
for $n\ge n_0$.
Consider the opposite case $-\iota +\log_2h_n\ge \iota_0$.
By scaling, 
$$\norm{(\Psi_j)_{[h_n,\xi_n^j]}(t)}_{L^{q(L)}}=
h_n^{\frac2{p-1}-\frac{N}{q(L)}} \norm{\Psi_j(h_n^2t)}_{L^{q(L)}}$$
and
\begin{multline*}
	\norm{\delta_a (M(-t)(\Psi_j)_{[h_n,\xi_n^j]}(t))}_{L^{q(W_1)}}\\
	=h_n^{\frac2{p-1}-\frac{N}{q(W_1)}} 
	\norm{\delta_{h_na} (M(-h_n^2 t)\Psi_j(h_n^2t))}_{L^{q(W_1)}}
\end{multline*}
hold. 
It therefore follows that
\begin{multline}\label{eq:lem_e_3}
	\norm{ 2^{\iota s_c} \sup_{|a| \le 2^{-\iota}} \norm{\delta (M(-t)e(t))}_{L^{q(F)}} }_{\ell^2_\iota(\{\iota-\log_2 h_n\le -\iota_0\})}\\
	\le Ch_n^{\frac{2p}{p-1}-\frac{N(p-1)}{q(L)}- \frac{N}{q(W_1)}} \(\sum_{j=1}^l \Lebn{\Psi_j(h_n^2t) }{q(L)} \)^{p-1}\\
	\times \sum_{j=1}^l \norm{ 2^{\iota s_c} \sup_{|b| \le h_n 2^{-\iota}}
	\norm{\delta_b (M(-h_n^2 t)\Psi_j(h_n^2t))}_{L^{q(W_1)}} }_{\ell^2_\iota(\{\iota-\log_2 h_n\le -\iota_0\})}.
\end{multline}
Introduce a new variable $\iota'=\iota-k$,
where $k$ is a unique integer such that $k\le \log_2 h_n <k+1$.
Then, $\iota-\log_2 h_n\le -\iota_0$ is equivalent to $\iota' \le -\iota_0$
and so
\begin{align*}
	&{}\norm{ 2^{\iota s_c} \sup_{|b| \le h_n 2^{-\iota}}
	\norm{\delta_b (M(-h_n^2 t)\Psi_1(h_n^2t))}_{L^{q(W_1)}} }_{\ell^2_\iota(\{\iota-\log_2 h_n\le -\iota_0\})}\\
	={}& \norm{ 2^{(\iota'+k) s_c} \sup_{|b| \le h_n 2^{-\iota'-k}}
	\norm{\delta_b (M(-h_n^2 t)\Psi_1(h_n^2t))}_{L^{q(W_1)}} }_{\ell^2_{\iota'}(\{\iota'\le-\iota_0\})}\\
	\le{}&
	h_n^{s_c}\norm{ 2^{\iota's_c} \sup_{|b| \le 2^{-\iota'+1}}
	\norm{\delta_b (M(-h_n^2 t)\Psi_1(h_n^2t))}_{L^{q(W_1)}} }_{\ell^2_{\iota'}(\{\iota'\le-\iota_0\})},
\end{align*}
where we have used inequalities $2^{ks_c}\le h_n^{s_c}$ and $h_n 2^{-k} \le 2$ to
deduce the last line. Denoting $\iota'-1$ again by $\iota$, we obtain
\begin{equation}\label{eq:lem_e_4}
\begin{aligned}
	&{}\norm{ 2^{\iota s_c} \sup_{|b| \le h_n 2^{-\iota}}
	\norm{\delta_b (M(-h_n^2 t)\Psi_1(h_n^2t))}_{L^{q(W_1)}} }_{\ell^2_\iota(\{\iota-\log_2 h_n\le -\iota_0\})}\\
	\le{}&
	Ch_n^{s_c}\norm{ 2^{\iota s_c} \sup_{|b| \le 2^{-\iota}}
	\norm{\delta_b (M(-h_n^2 t)\Psi_1(h_n^2t))}_{L^{q(W_1)}} }_{\ell^2_{\iota}(\{\iota\le-\iota_0+1\})}.
\end{aligned}
\end{equation}
The same estimate holds for $\Psi_j$ ($2\le j \le l$).
Combining \eqref{eq:lem_e_1}, \eqref{eq:lem_e_2}, \eqref{eq:lem_e_3}, and
\eqref{eq:lem_e_4}, and using the identity
\[
	\frac{2p}{p-1}-\frac{N(p-1)}{q(L)}- \frac{N}{q(W_1)}=\frac{2}{\rho(F)}+s_c,
\]
one deduces that
\[
	\Mn{e}{s_c}{q(F)}2t \le C h_n^{\frac{2}{\rho(F)}}|h_n^2t|^{s_c} \Gamma(h_n^2 t)
	\(\sum_{j=1}^l \Lebn{\Psi_j(h_n^2t) }{q(L)} \)^{p-1} 
\]
for $n\ge n_0$, where
\begin{equation}\label{eq:lem_e_6}
	\Gamma(t) := \sum_{j=1}^l
	\norm{ 2^{\iota s_c} \sup_{|a| \le 2^{-\iota}}
	\norm{\delta (M(- t)\Psi_j(t))}_{L^{q(W_1)}} }_{\ell^2_{\iota}(\{\iota\le-\iota_0+1\})}.
\end{equation}
Take $L^{\rho(F),2}(\R_+)$-norm of the both sides.
By scaling and by
the generalized H\"older inequality, we conclude that
\[
	\limsup_{n\to\I} \norm{e}_{F(\R_+)}
	\le C\norm{|t|^{s_c} \Gamma}_{L^{\rho(W_1),2}(\R_+)}
\(\sum_{j=1}^l \norm{\Psi_j}_{L(\R_+)} \)^{p-1}.
\]
Since $\norm{\Psi_j}_{L(\R_+)}<\I$ by embedding $W_2(\R_+) \hookrightarrow L(\R_+)$,
the proof is now reduced to showing that
\begin{equation}\label{eq:lem_e_7}
	\norm{|t|^{s_c} \Gamma}_{L^{\rho(W_1),2}(\R_+)} \to 0
\end{equation}
as $\iota_0\to\I$.

Let us prove \eqref{eq:lem_e_7}.
By assumption on $\Psi_j$, the support of $\Gamma(t)$ is included in $[m',M']$.
Moreover, thanks to \eqref{eq:Mn_alt}, and Lemma \ref{lem:B_equivalence}, for almost all $t\in[m',M']$,
\begin{align*}
	|t|^{s_c}\norm{ 2^{\iota s_c} \sup_{|a| \le 2^{-\iota}}
	\norm{\delta (M(-t)\Psi_1(t))}_{L^{q(W_1)}} }_{\ell^2(\Z)}
	\le C \Mn{\Psi_1(t)}{s_c}{q(W_1)}2t <\I
\end{align*}
since $\Psi_1 \in W(\R_+)$.
Hence, for almost all $t\in \R_+$, one deduces
\begin{align*}
	|t|^{s_c}\norm{ 2^{\iota s_c} \sup_{|a| \le 2^{-\iota}}
	\norm{\delta (M(-t)\Psi_1(t))}_{L^{q(W_1)}} }_{\ell^2(\{\iota\le -\iota_0+1\})}
	\to 0
\end{align*}
as $\iota_0\to\I$. The same holds for $\Psi_j$ ($2\le j \le l$).
We hence obtain
\[
		|t|^{s_c} \Gamma(t) \to 0
\]
as $\iota_0\to\I$ for almost all $t\in \R_+$.
Further, $\Mn{\Psi_j(t)}{s_c}{q(W_1)}2t \in L^{q(W_1)}(\R_+)$
follows from the fact that $\Psi_j \in W_1(\R_+)$ and $\supp \Psi_j \in [m',M']$.
Therefore,
\[
	|t|^{s_c} \Gamma(t) \le C \sum_{j=1}^l \Mn{\Psi_j(t)}{s_c}{q(W_1)}2t \in L^{q(W_1)}(\R_+).
\]
Lebesgue's convergence theorem then gives us the desired limit \eqref{eq:lem_e_7}.
\end{proof}

\begin{proof}[Proof of Lemma \ref{lem:pf_bdd_v}]
Set $z_n^{l,l_0}:=\widetilde{v}^l_n -\widetilde{v}^{l_0}_n=\sum_{j=l_0+1}^l (\Psi_j)_{[h_n^j,\xi_n^j]}$.
We first note that
\[
	z_n^{l,l_0}(0) = \sum_{j=l_0+1}^l e^{ix\xi_n^j} \psi^j_{\{ h_n^j \}}.
\]
By \eqref{eq:critpf4}, for any $l>l_0\ge1$,
\[
	\lim_{n\to\I} \norm{z_n^{l,l_0}(0)}
	= \( \sum_{j=l_0+1}^l \norm{\psi^j}_{\FHsc}^2 \)^{1/2}.
\]
Here, $( \sum_{j=1}^\I \norm{\psi^j}_{\FHsc}^2 )^{1/2}<\I$ by \eqref{eq:critpf3}. Hence, for any $\eps>0$ there exists $l_0$ such that
\begin{equation}\label{eq:pf_bdd_v1}
	\lim_{n\to\I} \norm{z_n^{l,l_0}(0)} \le \eps
\end{equation}
for any $l>l_0$.
Now,
\[
	(i\d_t + \Delta)z_n^{l,l_0} = -\sum_{j=l_0+1}^l 
	F((\Psi_j)_{[h_n^j,\xi_n^j]}).
\]
Set $e_n^{l,l_0}:=F(\sum_{j=l_0+1}^l (\Psi_j)_{[h_n^j,\xi_n^j]})- \sum_{j=l_0+1}^l F((\Psi_j)_{[h_n^j,\xi_n^j]})$. Then,
$z_n^{l,l_0}$ satisfies the following equation
\[
	(i\d_t + \Delta)z_n^{l,l_0}+ F(z_n^{l,l_0}) = e_n^{l,l_0}.
\]
Write in an integral form
\[
	z_n^{l,l_0}(t) = U(t) z_n^{l,l_0}(0)
	- i \int_0^t U(t-s) F(z_n^{l,l_0}(s)) ds
	+ i \int_0^t U(t-s) e_n^{l,l_0}(s) ds.
\]
Using Strichartz" estimate and nonlinear estimate \eqref{eq:lpt_nlest1},
\[
	\norm{z_n^{l,l_0}}_{W(\R_+)}
	\le C \norm{z_n^{l,l_0}(0)}_{\FHsc} + C \norm{z_n^{l,l_0}}_{W(\R_+)}^p
	+ C\norm{e_n^{l,l_0}}_{F(\R_+)}.
\]
We now use Lemma \ref{lem:pf_error1} to obtain
$\lim_{n\to\I}\norm{e_n^{l,l_0}}_{F(\R_+)}=0$ for any $l>l_0$.
Then, together with \eqref{eq:pf_bdd_v1},
\begin{equation}\label{eq:pf_bdd_v2}
	\norm{z_n^{l,l_0}}_{W(\R_+)}
	\le C\eps+ C \norm{z_n^{l,l_0}}_{W(\R_+)}^p
\end{equation}
for any $l>l_0$ and $n\ge n_0(l,l_0,\eps)$.
One easily sees that there exists $\eps_0>0$ such that 
if $0<\eps \le \eps_0$ then the inequality
\eqref{eq:pf_bdd_v2} implies
\[
	\norm{z_n^{l,l_0}}_{W(\R_+)}
	\le 2C\eps,
\]
which completes the proof.
\end{proof}
\begin{proof}[Proof of Lemma \ref{lem:pf_error2}]
Set $F(z)=|z|^{p-1}z$.
By definition of $\widetilde{e}_n^l$,
\begin{align*}
	\norm{ \widetilde{e}_n^l }_{F(\R_+)}
	={}& \norm{F(\widetilde{v}^{l}_n+ U(t) W_n^l)-  \sum_{j=1}^l F((\Psi_j)_{[h_n^j,\xi_n^j]}) }_{F(\R_+)} \\
	\le{}& \norm{F(\widetilde{v}^{l_0}_n+ U(t) W_n^l)-  F(\widetilde{v}^{l_0}_n) }_{F(\R_+)} \\ 
	&{}+\norm{F(\widetilde{v}^{l}_n+ U(t) W_n^l) - F(\widetilde{v}^{l_0}_n+ U(t) W_n^l) }_{F(\R_+)}\\ 
	&{}+\norm{ F(\widetilde{v}^l_n) - F(\widetilde{v}^{l_0}_n) }_{F(\R_+)}\\ 
	&{}+\norm{ F(\widetilde{v}^l_n) - \sum_{j=1}^l F((\Psi_j)_{[h_n^j,\xi_n^j]})}_{F(\R_+)}.
\end{align*}
By Lemma \ref{lem:pf_error1}, the last term of the right hand side
tends to zero as $n\to\I$ for all $l$.
Moreover, the second and the third terms become small if we take $l_0$ sufficiently large
in light of Lemma \ref{lem:pf_bdd_v} and \eqref{eq:lpt_nlest2}. 

Thus we shall estimate the first term.
Without loss of generality, we may assume that
$\supp \Psi_j \subset [m,M] \times B_0(R)$
holds for some $m,M,R>0$ and for all $j\in [1,l_0]$.
Set
\[
	I_n^j = [(h_n^j)^{-2}m,(h_n^j)^{-2}M].
\]
Note that if $t\not\in I_n^j$ then $(\Psi_j)_{[h_n^j,\xi_n^j]}(t,x)=0$.
Hence,
\[
	\norm{F(\widetilde{v}^{l_0}_n+ U(t) W_n^l)-  F(\widetilde{v}^{l_0}_n) }_{
	F(\R_+\setminus (\bigcup_{j=1}^{l_0}I_n^j) )}
	=\norm{F(U(t) W_n^l)}_{
	F(\R_+\setminus (\bigcup_{j=1}^{l_0}I_n^j) )}.
\]
By \eqref{eq:lpt_nlest1} and \eqref{eq:critpf5},
\[
	\limsup_{n\to\I}\norm{F(U(t) W_n^l)}_{
	F(\R_+\setminus (\bigcup_{j=1}^{l_0}I_n^j) )}
	\le
	\limsup_{n\to\I}\norm{F(U(t) W_n^l)}_{
	F(\R_+)}
	\to0
\]
as $l\to\I$.
We shall consider
\[
	\norm{F(\widetilde{v}^{l_0}_n+ U(t) W_n^l)-  F(\widetilde{v}^{l_0}_n) }_{
	F(\bigcup_{j=1}^{l_0}I_n^j )}.
\]
However, we only have treat the case where 
$\frac{h_n^i}{h_n^j} + \frac{h_n^j}{h_n^i}$ is bounded for any
$n$ and $1\le i,j \le l_0$.
This is because we can decompose
the whole estimate into finite number of such estimates
by using the fact that $\frac{h_n^i}{h_n^j} + \frac{h_n^j}{h_n^i} \ge \frac{m}M+\frac{M}m$ implies
$I^i_n\cap I_n^j=\emptyset$.
Changing scales (and notations if necessary), we may further let $h_n^i\equiv1$.
Then, orthogonality \eqref{eq:critpf4} is simply
 $|\xi_n^i-\xi_n^j|\to\I$ as $n\to\I$ for any $i\neq j$.
What we want to estimate is
\begin{equation*}
	\norm{F(\widetilde{v}^{l_0}_n+ U(t) W_n^l)-  F(\widetilde{v}^{l_0}_n) }_{
	F([m',M'])},
\end{equation*}
where $m'=m^2/M$ and $M'=M^2/m$.
 Let $\chi(x) \in C^\I_0(\R^N)$ be a nonnegative smooth radial cut-off function such that
$\chi\equiv 1$ on $B_R(0)$ and $\supp \chi \subset B_{2R}(0)$.
Let $\chi_n^j(t,x) = {\bf 1}_{[m',M']}(t) \chi(x-t\xi_n^j)$.
Recall that
\[
	{\Psi_j}_{[1,\xi_n^j]}(t,x) = \Psi_j(t,x-t\xi_n^jx) 
	e^{i\xi_n^j \cdot x} e^{-it|\xi_n^j|^2}.
\]
Hence,
\[
	\supp {\Psi_j}_{[1,\xi_n^j]}(t,x) \subset  \supp \chi_n^j (t,x) 
	\subset  \bigcup_{m'\le t \le M'} \(\{t\} \times B_{2R} (t\xi_n^j) \)=: \Sigma_n^j.
\]
By the orthogonality condition, $\Sigma_n^j$ ($1\le j \le l_0$) are mutually disjoint for large $n$.
For such $n$, we have
\[
	\chi_n^k{\Psi_j}_{[1,\xi_n^j]}=
	\begin{cases}
	{\Psi_j}_{[1,\xi_n^j]} & k=j\\
	0 & k\neq j.
	\end{cases}
\]
Set $\widetilde{\chi}_n^j=(\chi_n^j)^p$. 
Then, 
\begin{align*}
	\widetilde{\chi}_n^j F(\widetilde{v}^{l_0}_n+ U(t) W_n^l)
	&{}= F(\chi_n^j \widetilde{v}^{l_0}_n+ \chi_n^j U(t) W_n^l)\\
	&{}= F({\Psi_j}_{[1,\xi_n^j]} + \chi_n^j U(t) W_n^l)
\end{align*}
for any $j\in [1,l_0]$, provided $n$ is sufficiently large.
Similarly, $\widetilde{\chi}_n^j F(\widetilde{v}^{l_0}_n)
	= F({\Psi_j}_{[1,\xi_n^j]})$ for large $n$.
Further, one easily verifies that
$1-\sum_{j=1}^l \widetilde{\chi}_n^j \equiv 0$
on $\cup_{j=1}^{l_0} \supp {\Psi_j}_{[1,\xi_n^j]}(t,x)$.
Therefore,
\[
	\(1-\sum_{j=1}^l \widetilde{\chi}_n^j\)
	F(\widetilde{v}^{l_0}_n+  U(t) W_n^l)
	= \(1-\sum_{j=1}^l \widetilde{\chi}_n^j\) F( U(t) W_n^l )
\]
and $(1-\sum_{j=1}^l \widetilde{\chi}_n^j)F(\widetilde{v}^{l_0}_n)\equiv0$.
Thus, for large $n$, we have
\begin{equation*}
\begin{aligned}
	&{}\norm{F(\widetilde{v}^{l_0}_n+ U(t) W_n^l)-  F(\widetilde{v}^{l_0}_n) }_{
	F([m',M'])} \\
	\le{}& \sum_{j=1}^{l_0} \norm{
	F({\Psi_j}_{[1,\xi_n^j]} + \chi_n^j U(t) W_n^l) - F({\Psi_j}_{[1,\xi_n^j]})}_{
	F([m',M'])}  \\
	&{}+ \norm{F(U(t) W_n^l)}_{F([m',M'])}  +\sum_{j=1}^{l_0} \norm{F(\chi_n^j U(t) W_n^l)}_{F([m',M'])}\\
	=:{}& I+II+III.
\end{aligned}
\end{equation*}
We first estimate $II$. By \eqref{eq:lpt_nlest1}, we obtain
\[
	II \le C \norm{U(t) W_n^l}_{L^{\rho(L),\I}([m',M'],L^{q(L)})}^{p-1}\norm{U(t) W_n^l}_{W_1([m',M'])}
\]
Since $W_n^l$ is uniformly bounded in $\FHsc$,
we deduce from Strichartz' estimate and \eqref{eq:critpf5} that
\[
	\lim_{k\to\I} \limsup_{n\to\I} \norm{F(U(t) W_n^{l_k})}_{F([m',M'])} =0.
\]
The estimate of $III$ is done in essentially the same way. 
We only note that changing of variable $x-t\xi^j_n =y$ and
application of the Galilean transform give us
\[
	\norm{F(\chi_n^j U(t) W_n^l)}_{F([m',M'])} = 
	\norm{F(\chi U(t) e^{-i\xi_n^jx}W_n^l)}_{F([m',M'])},
\]
and that Corollary \ref{cor:W_cutoff} and Strichartz' estimate imply
$\norm{\chi U(t) W_n^l}_{W_1([m',M'])}$ is uniformly bounded.

Let us proceed to the estimate of $I$. 
We consider only $j=1$, i.e.\ we treat
\[
	\norm{
	F({\Psi_1}_{[1,\xi_n^1]} + \chi_n^1 U(t) W_n^l) - F({\Psi_1}_{[1,\xi_n^1]})}_{
	F([m',M'])} .
\]
Arguing as in the proof of \eqref{eq:lpt_nlest2}, we have
\begin{equation}\label{eq:pfe2_1}
\begin{aligned}
	&\norm{F({\Psi_1}_{[1,\xi_n^1]} + \chi_n^1 U(t) W_n^l) - F({\Psi_1}_{[1,\xi_n^1]})}_{F([m',M'])}\\
	\le{}& C \norm{\chi_n^1 U(t) W_n^l}_{L^{\frac{4-N(p-1)}{4}}([m',M'],\Ms{s_c}22)}\norm{{\Psi_1}_{[1,\xi_n^1]}}_{L^{\rho_0,\I}([m',M'],L^{q_0})}^{p-1} \\&{} +
	C \norm{\chi_n^1 U(t) W_n^l}_{L([m',M'])}^{p-1}\norm{{\Psi_1}_{[1,\xi_n^1]}+\chi_n^1 U(t) W_n^l}_{W_1([m',M'])},
\end{aligned}
\end{equation}
where $\frac1{\rho_0}= \frac1{\rho(F)} - \frac{4-N(p-1)}{4}$
and 
\[
	q_0= \left\{
	\begin{aligned}
	&\frac{N(p^2-1)}{N(p-1)-2} && p\ge2,\\
	&\frac{4N}{N-2} &&p<2.
	\end{aligned}
	\right.
\]
One sees that $\norm{{\Psi_1}_{[1,\xi_n^1]}}_{W_1([m',M'])}$ is bounded uniformly in $n$.
Changing the variable by $x-t\xi^1_n =y$ and
applying the Galilean transform, we see
$ \norm{\chi_n^1 U(t) W_n^l}_{W_1([m',M'])} =\norm{\chi U(t) e^{-i\xi^1_n x}W_n^l}_{W_1([m',M'])} $.
By Corollary \ref{cor:W_cutoff} and Strichartz' estimate,
\begin{align*}
	\norm{ \chi U(t) e^{-i\xi^1_n x} W_n^l}_{W_1([m',M'])} &{}\le C\norm{U(t) e^{-i\xi^1_n x}W_n^l}_{W_1([m',M'])} \\ &{}\le C \norm{e^{-i\xi^1_n x}W_n^l}_{\FHsc} \le C.
\end{align*}
Now, by \eqref{eq:critpf5},
\begin{multline*}
	\limsup_{n\to\I} \norm{\chi_n^1 U(t) W_n^{l_k}}_{L^{\rho(L),\I}([m',M'],L^{q(L)})}\\
	\le \limsup_{n\to\I} \norm{U(t) W_n^{l_k}}_{L^{\rho(L),\I}([m',M'],L^{q(L)})}
	\to 0
\end{multline*}
as $k\to \I$. 
Let us next bound $\norm{{\Psi_1}_{[1,\xi_n^1]}}_{L^{\rho_0,\I}([m',M'],L^{q_0})}$.
By the generalized H\"older inequality,
\begin{multline*}
	\norm{{\Psi_1}_{[1,\xi_n^1]}}_{L^{\rho_0,\I}([m',M'],L^{q_0})}\\
	\le C\norm{{\Psi_1}_{[1,\xi_n^1]}}_{L^{(p-1)\rho(F),\I}([m',M'],L^{q_0})}^{\frac12}
	\norm{{\Psi_1}_{[1,\xi_n^1]}}_{L^{\widetilde{\rho},\I}([m',M'],L^{q_0})}^{\frac12},
\end{multline*}
where
\[
	\widetilde{\rho}= \left\{
	\begin{aligned}
	&\frac{2(p^2-1)}{p(N(p-1)-2)} && p\ge2,\\
	&\frac{8(p-1)}{(3N+2)(p-1)-8} &&p<2.
	\end{aligned}
	\right.
\]
Now, by the embedding (Lemma \ref{lem:embedding1}), 
\[
	\Lebn{{\Psi_1}_{[1,\xi_n^1]}}{q_0} \le C|t|^{-s_c}  \Mn{{\Psi_1}_{[1,\xi_n^1]}}{s_c}{\widetilde{q}}2t
	\le C(m')^{-s_c} \Mn{{\Psi_1}_{[1,\xi_n^1]}}{s_c}{\widetilde{q}}2t
\]
for $t\ge m'$, where
\[
	\widetilde{q}= \left\{
	\begin{aligned}
	&\frac{2N(p^2-1)}{4p- N(p-1)^2} && p\ge2,\\
	&\frac{4N(p-1)}{8-(N+2)(p-1)} &&p<2.
	\end{aligned}
	\right.
\]
Hence,
\[
	\norm{{\Psi_1}_{[1,\xi_n^1]}}_{L^{\rho_0,\I}([m',M'],L^{q_0})}\\
	\le C(m') \norm{{\Psi_1}_{[1,\xi_n^1]}}_{L^{\widetilde{\rho},2}([m',M'],\Ms{s_c}{\widetilde{q}}2)}.
\]
Here, we have used the embedding 
$$L^{\widetilde{\rho},2}([m',M'],\Ms{s_c}{\widetilde{q}}2) \hookrightarrow L^{(p-1)\rho(F),2}([m',M'],L^{q_0}) $$
which follows from Lemma \ref{lem:LMn_inclusion}.
Since $(\widetilde{\rho},\widetilde{q})$ is an admissible pair, we obtain the desired uniformly bound
for $\norm{{\Psi_1}_{[1,\xi_n^1]}}_{L^{\rho_0,\I}([m',M'],L^{q_0})}$.
Therefore, to give desired estimate on $I$, it suffices to show that
\begin{equation}\label{eq:pfe2_2}
	\limsup_{n\to\I}\norm{\chi U(t) e^{-i\xi_n^1 x}W_n^{l_k}}_{L^{2}([m',M'],\Ms{s_c}22)}
	\to 0
\end{equation}
as $k\to \I$. Indeed, since multiplication by $\chi$ is a bounded operator on $L^\I([m',M'], \Ms{s_c}22)$
as in Corollary \ref{cor:W_cutoff},
if \eqref{eq:pfe2_2} is established then
\begin{align*}
	&\norm{\chi U(t) e^{-i\xi_n^1 x}W_n^{l_k}}_{L^{\frac{4-N(p-1)}{4}}([m',M'],\Ms{s_c}22)}\\
	&{}\le C \norm{\chi U(t) e^{-i\xi_n^1 x}W_n^{l_k}}_{L^2([m',M'],\Ms{s_c}22)}^{\frac{4-N(p-1)}{2}}
	 \norm{U(t) e^{-i\xi_n^1 x}W_n^{l_k}}_{L^\I([m',M'],\Ms{s_c}22)}^{\frac{N(p-1)-2}{2}} \\
	&{}\le C \norm{W_n^{l_k}}_{\FHsc}^{\frac{N(p-1)-2}{2}} \norm{\chi U(t) e^{-i\xi_n^1 x}W_n^{l_k}}_{L^2([m',M'],\Ms{s_c}22)}^{\frac{4-N(p-1)}{2}}\\
	&{}\to 0
\end{align*}
as $k\to\I$. Putting this estimate and above estimates to \eqref{eq:pfe2_1},
the estimate of $I$ is completed.

Let us show \eqref{eq:pfe2_2}.
It holds for $t\neq0$ that
\begin{align*}
	\Mn{\chi U(t) e^{-i\xi_n^1 x}W_n^{l_k}}{s_c}22t
	&{}\sim |t|^{s_c} \hBn{M(-t)\chi U(t) e^{-i\xi_n^1 x}W_n^{l_k}}{s_c}22\\
	&{}\sim  |t|^{s_c} \hSobn{M(-t)\chi U(t) e^{-i\xi_n^1 x}W_n^{l_k}}{s_c}.
\end{align*}
Since $\rho(W_2) >\max(2,\rho(L))$, for some $\delta\in( \max(0,\frac12-\frac1{\rho(L)}),\frac12-\frac1{\rho(W_2)})$,
there exists $\theta \in (0,1)$ such that
\[
	\frac12 - \delta = \frac{1-\theta}{\rho(L)} + \frac{\theta}{\rho(W_2)}.
\]
For this $\theta$, we set $q_1$ by the relation
\[
	\frac1{q_1} = \frac{1-\theta}{q(L)} + \frac{\theta}{q(W_2)}.
\]
Now, we use the following Lemma by \cite{KPV};
\begin{lemma}
For any $1<p,q,r<\I$ with $\frac1p=\frac1{q}+\frac{1}{r}$
and $0< \alpha,\alpha_1,\alpha_2<1$ with $\alpha=\alpha_1+\alpha_2$,
we have
\[
	\Lebn{|\nabla|^\alpha (fg) - f|\nabla|^\alpha g - g |\nabla|^\alpha f}p
	\le C \Lebn{|\nabla|^{\alpha_1}f}q \Lebn{|\nabla|^{\alpha_2} g}r  
\]
whenever the right hand side is bounded.
\end{lemma}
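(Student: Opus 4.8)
Since the statement is the fractional Leibniz rule of Kenig--Ponce--Vega, one may simply refer to \cite{KPV}; for completeness I sketch the argument. The plan is to exploit the pointwise singular--integral representation of $|\nabla|^\al$ for $0<\al<1$, namely $|\nabla|^\al h(x)=c_{N,\al}\int_{\R^N}\frac{h(x)-h(x-y)}{|y|^{N+\al}}\,dy$ (valid for Schwartz $h$, with $c_{N,\al}>0$ obtained by a Fourier computation). Inserting $h=fg$, and separately the products $f(x)|\nabla|^\al g(x)$ and $g(x)|\nabla|^\al f(x)$, into this formula and cancelling the common term $\frac{f(x)g(x)}{|y|^{N+\al}}$, the three integrals collapse to the purely algebraic identity
\[
	|\nabla|^\al(fg)-f|\nabla|^\al g-g|\nabla|^\al f = -c_{N,\al}\int_{\R^N}\frac{\bigl(f(x)-f(x-y)\bigr)\bigl(g(x)-g(x-y)\bigr)}{|y|^{N+\al}}\,dy,
\]
first for $f,g$ smooth and rapidly decaying, the general case following by density once the $L^p$ estimate is established.

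Next I would split the weight according to $\al=\al_1+\al_2$ by writing $|y|^{-(N+\al)}=|y|^{-(N+2\al_1)/2}|y|^{-(N+2\al_2)/2}$ and apply the Cauchy--Schwarz inequality in the $y$--integral, which yields the pointwise bound
\[
	\Bigl|\,|\nabla|^\al(fg)-f|\nabla|^\al g-g|\nabla|^\al f\,\Bigr|(x)\ \le\ C\,\mathcal{S}_{\al_1}f(x)\,\mathcal{S}_{\al_2}g(x),
\]
where $\mathcal{S}_\beta h(x):=\bigl(\int_{\R^N}|h(x)-h(x-y)|^2|y|^{-N-2\beta}\,dy\bigr)^{1/2}$ is the Stein square function. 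Then H\"older's inequality in $x$ with $\frac1p=\frac1q+\frac1r$ reduces matters to
\[
	\Lebn{|\nabla|^\al(fg)-f|\nabla|^\al g-g|\nabla|^\al f}{p}\le C\,\Lebn{\mathcal{S}_{\al_1}f}{q}\,\Lebn{\mathcal{S}_{\al_2}g}{r},
\]
and the proof is finished by invoking the classical bound $\Lebn{\mathcal{S}_\beta h}{s}\le C\Lebn{|\nabla|^\beta h}{s}$, valid for $1<s<\I$ and $0<\beta<1$, applied with $(\beta,s)=(\al_1,q)$ and $(\al_2,r)$.

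The hard part is exactly this last square--function estimate: it is Stein's characterization of Riesz--potential spaces and is not elementary --- it is where the restrictions $1<q,r<\I$ and $0<\al_i<1$ genuinely enter. One either quotes it or reproves it via a Littlewood--Paley decomposition of $h(x)-h(x-y)$ combined with the vector--valued Fefferman--Stein maximal inequality. An alternative that sidesteps this black box is a paraproduct decomposition $fg=\Pi_{\mathrm{hl}}(f,g)+\Pi_{\mathrm{lh}}(f,g)+\Pi_{\mathrm{hh}}(f,g)$: the commutator subtraction is arranged precisely so that the leading low--high and high--low contributions to $|\nabla|^\al(fg)$ match $f|\nabla|^\al g$ and $g|\nabla|^\al f$, leaving only paracommutator remainders and the high--high interaction, each of which is controlled directly by Bernstein's inequality and H\"older. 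Either route reduces to routine bookkeeping once the decomposition is fixed, so I would present the square--function argument, which is the shortest.
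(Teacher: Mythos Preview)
The paper does not actually prove this lemma: it is simply quoted as a known result with the attribution ``following Lemma by \cite{KPV}'', so there is no in-paper argument to compare against. Your sketch is a correct and standard route to the Kenig--Ponce--Vega commutator estimate; the algebraic identity you wrote is exactly right (the difference $(fg)(x)-(fg)(x-y)-f(x)\delta g-g(x)\delta f$ collapses to $-(\delta f)(\delta g)$), the Cauchy--Schwarz/H\"older reduction to the Stein square functions $\mathcal{S}_{\alpha_i}$ is valid, and the appeal to $\|\mathcal{S}_\beta h\|_{L^s}\sim\||\nabla|^\beta h\|_{L^s}$ for $1<s<\infty$, $0<\beta<1$ is the legitimate finishing step. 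Your remark that this square-function equivalence is the non-elementary input, and that a paraproduct argument is an alternative, is accurate; either path is acceptable here since the paper treats the lemma as a black box anyway.
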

By this lemma, 
\begin{align*}
	\hSobn{M(-t)\chi U(t) e^{-i\xi_n^1 x}W_n^{l_k}}{s_c}
	\le{}& \Lebn{\chi  |\nabla|^{s_c} M(-t) U(t) e^{-i\xi_n^1 x}W_n^{l_k}}2 \\
	&{}+ C_\chi \Lebn{ M(-t) U(t) e^{-i\xi_n^1 x}W_n^{l_k}}{q(L)}\\
	&{}+ C_\chi \Lebn{|\nabla|^{\theta s_c} M(-t) U(t) e^{-i\xi_n^1 x}W_n^{l_k}}{q_1}.
\end{align*}
It holds from the embedding $\dot{B}^{\theta s_c}_{q_1,2} \hookrightarrow
\dot{H}^{\theta s_c,q_1}$ that
\[
	|t|^{s_c}\Lebn{|\nabla|^{\theta{s_c}} M(-t) U(t) e^{-i\xi_n^1 x}W_n^{l_k}}{q_1}
	\le C{M'}^{(1-\theta)s_c} \Mn{U(t) e^{-i\xi_n^1 x}W_n^{l_k}}{\theta s_c}{q_1}2t
\]
for $t\le M'$.
Similarly, by embedding $\dot{B}^{\theta s_c}_{q_1,2} \hookrightarrow L^{q(L)}$,
\[
	|t|^{s_c}\Lebn{M(-t) U(t) e^{-i\xi_n^1 x}W_n^{l_k}}{q(L)}
	\le C{M'}^{(1-\theta)s_c} \Mn{U(t) e^{-i\xi_n^1 x}W_n^{l_k}}{\theta s_c}{q_1}2t
\]
for $t\le M'$.
Now, the generalized H\"odler inequality, 
Lemma \ref{lem:interpolation}, and Strichartz' estimate yield
\begin{align*}
	&\norm{ U(t) e^{-i\xi_n^1 x}W_n^{l_k} }_{L^2([m',M'], \Ms{\theta s_c}{q_1}2)}  \\
	&{} \le C \norm{1}_{L^{\delta^{-1},2}([m',M'])}
	\norm{ U(t) e^{-i\xi_n^1 x}W_n^{l_k} }_{L^{\frac2{1-2\delta},\I}([m',M'], \Ms{\theta s_c}{q_1}2)}\\
	&{} \le C{M'}^\delta \norm{ U(t) e^{-i\xi_n^1 x}W_n^{l_k}}_{L([m',M'])}^{1-\theta}
	\norm{ U(t) e^{-i\xi_n^1 x}W_n^{l_k}}_{W_2([m',M'])}^{\theta} \\
	&{} \le C{M'}^\delta \norm{ U(t) e^{-i\xi_n^1 x}W_n^{l_k}}_{L([m',M'])}^{1-\theta}
	 \norm{W_n^{l_k}}_{\FHsc}^{\theta}.
\end{align*}
We obtain desired smallness of this term by \eqref{eq:critpf5}.
On the other hand, it follows from $|2t\nabla|^{s_c}M(-t)U(t)=M(-t)U(t)|x|^{s_c}$ that
\begin{multline*}
	\norm{\chi |t|^{s_c} |\nabla|^{s_c} M(-t) U(t) e^{-i\xi_n^1 x}W_n^{l_k}}_{L^2([m',M']\times \R^N)} \\
	= C \norm{\chi U(t) |x|^{s_c}e^{-i\xi_n^1 x}W_n^{l_k}}_{L^2([m',M']\times \R^N)}.
\end{multline*}
By means of Lemma \ref{lem:cpt_supp_small},
for any $\eps$ there exists $C_\eps$ such that
\begin{multline*}
	\norm{\chi U(t) |x|^{s_c}e^{-i\xi_n^1 x}W_n^{l_k}}_{L^2([m',M']\times \R^N)} \\
	\le \eps \norm{W_n^{l_k}}_{\FHsc} + C_\eps 
	\norm{U(t) e^{-i\xi_n^1 x}W_n^{l_k}}_{L^{\rho(L),\I}(\R_+,L^{q(L)}) }.
\end{multline*}
Therefore we conclude from \eqref{eq:critpf5} that
\[
	\limsup_{n\to\I} \norm{\chi U(t) |x|^{s_c}e^{-i\xi_n^1 x}W_n^{l_k}}_{L^2([m',M']\times \R^N)} =0
\]
as $k\to\I$.
\end{proof}

\begin{remark}\label{rmk:FH1extension}
The minimizing problem
\[
	\ell_{c,\F H^1} := \inf \{  \ell_{\F H^1} (u_0)\ |\  u_0 \in \F H^1 \setminus S \} 
\]
can be treated in a similar way, where $\F H^1$ is given in \eqref{def:lfh1}.
Existence of the minimizer to this problem is shown in \cite{Ma} under $\pst<p<1+4/N$.
Here, we extend it as follows: Under the assumption \eqref{cond:p},
 there exists $\widetilde{u}_{0,c} \in \F H^1$ such that
$\widetilde{u}_{0,c} \not\in S_+$ and $\ell_{\F H^1}(\widetilde{u}_{0,c}) = \ell_{c,\F H^1}$.
Further, a solution $\widetilde{u}_c(t)$ to \eqref{eq:NLS} with data $\widetilde{u}_{0,c}$ is not a standing wave.

We give a sketch of proof. 
The strategy is the same as in the proof of Theorem \ref{thm:main1}
We first take a minimizing sequence for $\ell_{c,\F H^1}$ (we replace $S$ with $S_+$
without loss of generality).
By scaling, we may further assume that each function has a unit mass
and so that they are uniformly bounded in $\F H^1$.
Then, apply the profile decomposition (Proposition \ref{prop:pd}) to the sequence.
Uniform boundedness in $\F H^1$ enables us to establish
the Pythagoras decomposition \eqref{eq:pd3} with $\F H^1$ norm (see \cite{Ma}).
The rest of the argument is the same.
Recall that scattering in $\F H^1$ is equivalent to scattering in $\FHsc$, as noted in Remark \ref{rmk:nscondFH1}.
We prove that only one profile is involved in the decomposition 
and that the profile must not belong to $S_+$.
\end{remark}
\begin{remark}
A naive conjecture is that the above $\widetilde{u}_{0,c}$ is one of the function
satisfying the properties of Theorem \ref{thm:main1}.
However, it is not clear at least by the following two reasons.
First is that we do not know whether $u_{0,c}$ given in Theorem \ref{thm:main1} belongs to $\F H^1$ or not.
Even when $u_{0,c} \in \F H^1$ is true, minimality with respect to $\ell(\cdot)$ and 
that to $\ell_{\F H^1}(\cdot)$ are different, which is the second reason.
This fact is easily checked by the example given in Remark \ref{rmk:FHscFH1}.
Notice that $u_c(t)$ may blow up in finite time while $\widetilde{u}_c(t)$ is global in time.
\end{remark}
\subsection*{Acknowledgments}
This research is supported by Japan Society for the Promotion of Science(JSPS)
Grant-in-Aid for Young Scientists (B) 24740108.

\providecommand{\bysame}{\leavevmode\hbox to3em{\hrulefill}\thinspace}
\providecommand{\MR}{\relax\ifhmode\unskip\space\fi MR }
\providecommand{\MRhref}[2]{%
  \href{http://www.ams.org/mathscinet-getitem?mr=#1}{#2}
}
\providecommand{\href}[2]{#2}


\begin{thebibliography}{10}

\bibitem{AN}
T.~Akahori and H.~Nawa, \emph{Blowup and scattering problems for the nonlinear
  {S}chr\"odinger equations}, archived as {\tt arXiv1006:1485.}, 2010.

\bibitem{BG}
Hajer Bahouri and Patrick G{\'e}rard, \emph{High frequency approximation of
  solutions to critical nonlinear wave equations}, Amer. J. Math. \textbf{121}
  (1999), no.~1, 131--175. \MR{1705001 (2000i:35123)}

\bibitem{Ba}
Jacqueline~E. Barab, \emph{Nonexistence of asymptotically free solutions for a
  nonlinear {S}chr\"odinger equation}, J. Math. Phys. \textbf{25} (1984),
  no.~11, 3270--3273. \MR{761850 (86a:35121)}

\bibitem{BL-Book}
J{\"o}ran Bergh and J{\"o}rgen L{\"o}fstr{\"o}m, \emph{Interpolation spaces.
  {A}n introduction}, Springer-Verlag, Berlin, 1976, Grundlehren der
  Mathematischen Wissenschaften, No. 223. \MR{0482275 (58 \#2349)}

\bibitem{CazBook}
Thierry Cazenave, \emph{Semilinear {S}chr\"odinger equations}, Courant Lecture
  Notes in Mathematics, vol.~10, New York University Courant Institute of
  Mathematical Sciences, New York, 2003. \MR{2002047 (2004j:35266)}

\bibitem{CW1}
Thierry Cazenave and Fred~B. Weissler, \emph{Rapidly decaying solutions of the
  nonlinear {S}chr\"odinger equation}, Comm. Math. Phys. \textbf{147} (1992),
  no.~1, 75--100. \MR{1171761 (93d:35150)}

\bibitem{Cs}
P.~Constantin and J.-C. Saut, \emph{Local smoothing properties of dispersive
  equations}, J. Amer. Math. Soc. \textbf{1} (1988), no.~2, 413--439.
  \MR{928265 (89d:35150)}

\bibitem{Do}
B.~Dodson, \emph{Global well-posedness and scattering for the mass critical
  nonlinear {S}chr\"odinger equation with mass below the mass of the ground
  state}, archived as {\tt arXiv1104:1114.}, 2011.

\bibitem{DHR}
Thomas Duyckaerts, Justin Holmer, and Svetlana Roudenko, \emph{Scattering for
  the non-radial 3{D} cubic nonlinear {S}chr\"odinger equation}, Math. Res.
  Lett. \textbf{15} (2008), no.~6, 1233--1250. \MR{2470397 (2010e:35257)}

\bibitem{FXC}
DaoYuan Fang, Jian Xie, and Thierry Cazenave, \emph{Scattering for the focusing
  energy-subcritical nonlinear {S}chr\"odinger equation}, Sci. China Math.
  \textbf{54} (2011), no.~10, 2037--2062. \MR{2838120}

\bibitem{Fi}
G.~Fibich, \emph{Singular solutions of the subcritical nonlinear
  {S}chr\"odinger equation}, archived as {\tt arXiv:1004.1827.}, 2010.

\bibitem{Fo}
Damiano Foschi, \emph{Inhomogeneous {S}trichartz estimates}, J. Hyperbolic
  Differ. Equ. \textbf{2} (2005), no.~1, 1--24. \MR{2134950 (2006a:35043)}

\bibitem{FJ}
Michael Frazier and Bj{\"o}rn Jawerth, \emph{Decomposition of {B}esov spaces},
  Indiana Univ. Math. J. \textbf{34} (1985), no.~4, 777--799. \MR{808825
  (87h:46083)}

\bibitem{GMO}
Patrick Gerard, Yves Meyer, and Fr{\'e}d{\'e}rique Oru, \emph{In\'egalit\'es de
  {S}obolev pr\'ecis\'ees}, S\'eminaire sur les \'{E}quations aux
  {D}\'eriv\'ees {P}artielles, 1996--1997, \'Ecole Polytech., Palaiseau, 1997,
  pp.~Exp.\ No.\ IV, 11. \MR{1482810}

\bibitem{GOV}
J.~Ginibre, T.~Ozawa, and G.~Velo, \emph{On the existence of the wave operators
  for a class of nonlinear {S}chr\"odinger equations}, Ann. Inst. H. Poincar\'e
  Phys. Th\'eor. \textbf{60} (1994), no.~2, 211--239. \MR{1270296 (95c:35232)}

\bibitem{Hi}
Kunio Hidano, \emph{Nonlinear {S}chr\"odinger equations with radially symmetric
  data of critical regularity}, Funkcial. Ekvac. \textbf{51} (2008), no.~1,
  135--147. \MR{2428826 (2010b:35433)}

\bibitem{HR}
Justin Holmer and Svetlana Roudenko, \emph{A sharp condition for scattering of
  the radial 3{D} cubic nonlinear {S}chr\"odinger equation}, Comm. Math. Phys.
  \textbf{282} (2008), no.~2, 435--467. \MR{2421484 (2009h:35403)}

\bibitem{Ka}
Tosio Kato, \emph{An {$L^{q,r}$}-theory for nonlinear {S}chr\"odinger
  equations}, Spectral and scattering theory and applications, Adv. Stud. Pure
  Math., vol.~23, Math. Soc. Japan, Tokyo, 1994, pp.~223--238. \MR{1275405
  (95i:35276)}

\bibitem{KM}
Carlos~E. Kenig and Frank Merle, \emph{Global well-posedness, scattering and
  blow-up for the energy-critical, focusing, non-linear {S}chr\"odinger
  equation in the radial case}, Invent. Math. \textbf{166} (2006), no.~3,
  645--675. \MR{2257393 (2007g:35232)}

\bibitem{KPV}
Carlos~E. Kenig, Gustavo Ponce, and Luis Vega, \emph{Well-posedness and
  scattering results for the generalized {K}orteweg-de {V}ries equation via the
  contraction principle}, Comm. Pure Appl. Math. \textbf{46} (1993), no.~4,
  527--620. \MR{1211741 (94h:35229)}

\bibitem{Ker}
Sahbi Keraani, \emph{On the defect of compactness for the {S}trichartz
  estimates of the {S}chr\"odinger equations}, J. Differential Equations
  \textbf{175} (2001), no.~2, 353--392. \MR{1855973 (2002j:35281)}

\bibitem{KV}
Rowan Killip and Monica Visan, \emph{The focusing energy-critical nonlinear
  {S}chr\"odinger equation in dimensions five and higher}, Amer. J. Math.
  \textbf{132} (2010), no.~2, 361--424. \MR{2654778 (2011e:35357)}

\bibitem{Ko}
Youngwoo Koh, \emph{Improved inhomogeneous {S}trichartz estimates for the
  {S}chr\"odinger equation}, J. Math. Anal. Appl. \textbf{373} (2011), no.~1,
  147--160. \MR{2684466 (2011j:35224)}

\bibitem{Ma}
S.~Masaki, \emph{On minimal non-scattering solution to focusing
  mass-subcritical nonlinear {S}chr\"odinger equation}, archived as {\tt
  arXiv:1301.1742.}, 2013.

\bibitem{NS-CVPDE}
K.~Nakanishi and W.~Schlag, \emph{Global dynamics above the ground state energy
  for the cubic {NLS} equation in 3{D}}, Calc. Var. Partial Differential
  Equations \textbf{44} (2012), no.~1-2, 1--45. \MR{2898769}

\bibitem{Na}
Kenji Nakanishi, \emph{Asymptotically-free solutions for the short-range
  nonlinear {S}chr\"odinger equation}, SIAM J. Math. Anal. \textbf{32} (2001),
  no.~6, 1265--1271 (electronic). \MR{1856248 (2002g:35197)}

\bibitem{NO}
Kenji Nakanishi and Tohru Ozawa, \emph{Remarks on scattering for nonlinear
  {S}chr\"odinger equations}, NoDEA Nonlinear Differential Equations Appl.
  \textbf{9} (2002), no.~1, 45--68. \MR{1891695 (2003a:35177)}

\bibitem{Sj}
Per Sj{\"o}lin, \emph{Regularity of solutions to the {S}chr\"odinger equation},
  Duke Math. J. \textbf{55} (1987), no.~3, 699--715. \MR{904948 (88j:35026)}

\bibitem{St}
W.~A. Strauss, \emph{Nonlinear scattering theory}, Scattering Theory in
  Mathematical Physics, J. A. Lavita and J-P. Marchand, eds. Reidel, Dordrecht,
  Holland-Boston (1974), 53--78.

\bibitem{TV}
Terence Tao and Monica Visan, \emph{Stability of energy-critical nonlinear
  {S}chr\"odinger equations in high dimensions}, Electron. J. Differential
  Equations (2005), No. 118, 28. \MR{2174550 (2006e:35307)}

\bibitem{Tsu2}
Yoshio Tsutsumi, \emph{Scattering problem for nonlinear {S}chr\"odinger
  equations}, Ann. Inst. H. Poincar\'e Phys. Th\'eor. \textbf{43} (1985),
  no.~3, 321--347. \MR{824843 (87g:35221)}

\bibitem{Tsu1}
\bysame, \emph{{$L^2$}-solutions for nonlinear {S}chr\"odinger equations and
  nonlinear groups}, Funkcial. Ekvac. \textbf{30} (1987), no.~1, 115--125.
  \MR{915266 (89c:35143)}

\bibitem{TY}
Yoshio Tsutsumi and Kenji Yajima, \emph{The asymptotic behavior of nonlinear
  {S}chr\"odinger equations}, Bull. Amer. Math. Soc. (N.S.) \textbf{11} (1984),
  no.~1, 186--188. \MR{741737 (85k:35216)}

\bibitem{V}
Monica Visan, \emph{The defocusing energy-critical nonlinear {S}chr\"odinger
  equation in higher dimensions}, Duke Math. J. \textbf{138} (2007), no.~2,
  281--374. \MR{2318286 (2008f:35387)}

\bibitem{Ve}
Luis Vega, \emph{Schr\"odinger equations: pointwise convergence to the initial
  data}, Proc. Amer. Math. Soc. \textbf{102} (1988), no.~4, 874--878.
  \MR{934859 (89d:35046)}

\bibitem{Vi}
M.~C. Vilela, \emph{Inhomogeneous {S}trichartz estimates for the
  {S}chr\"odinger equation}, Trans. Amer. Math. Soc. \textbf{359} (2007),
  no.~5, 2123--2136 (electronic). \MR{2276614 (2008a:35226)}

\bibitem{W-CMP}
Michael~I. Weinstein, \emph{Nonlinear {S}chr\"odinger equations and sharp
  interpolation estimates}, Comm. Math. Phys. \textbf{87} (1982/83), no.~4,
  567--576. \MR{691044 (84d:35140)}

\end{thebibliography}
\end{document}